\documentclass[reqno]{amsart}

\usepackage[T1]{fontenc}
\usepackage[utf8]{inputenc}
\usepackage{graphicx}
\usepackage{mathtools}
\usepackage{tikz}
\usetikzlibrary{arrows, backgrounds, calc, chains, decorations, patterns, positioning, shapes}

\usepackage{geometry}
\IfFormatAtLeastTF{2026-06-01}{}{\usepackage{thmtools}}
\usepackage[pdfpagelabels]{hyperref}
\usepackage{cleveref}
\usepackage{subcaption}
\usepackage{parskip}

\usepackage{amsfonts}
\usepackage{amsmath}
\usepackage{amsrefs}
\usepackage{amssymb}
\usepackage{amstext}
\usepackage{amsthm}

\usepackage{silence}
\theoremstyle{plain}
\newtheorem{theorem}{Theorem}[section]

\newtheorem{corollary}[theorem]{Corollary}
\newtheorem{lemma}[theorem]{Lemma}
\newtheorem{proposition}[theorem]{Proposition}

\theoremstyle{definition}
\newtheorem{definition}[theorem]{Definition}

\theoremstyle{remark}
\newtheorem{remark}[theorem]{Remark}
\newtheorem{example}[theorem]{Example}

\makeatletter%
\DeclareRobustCommand*{\bfseries}{%
    \not@math@alphabet\bfseries\mathbf
    \fontseries\bfdefault\selectfont
    \boldmath
}
\makeatother

\renewcommand{\th}{^\mathsf{th}}

\DeclareMathOperator{\D}{D}
\DeclareMathOperator{\Exp}{Exp}

\DeclareMathOperator{\LD}{LD}

\DeclareMathOperator{\R}{R}
\DeclareMathOperator{\DP}{DP}
\DeclareMathOperator{\area}{area}
\DeclareMathOperator{\dinv}{dinv}
\DeclareMathOperator{\touch}{touch}

\DeclareMathOperator{\Des}{Des}
\DeclareMathOperator{\cc}{c}

\newcommand{\N}{\mathbb{N}}
\newcommand{\Z}{\mathbb{Z}}
\newcommand{\Q}{\mathbb{Q}}

\newcommand{\A}{\mathbb{A}}%\newcommand{\A}{\mathcal{A}}
\newcommand{\Ht}{\widetilde{H}}
\newcommand{\z}{\widehat{z}}

\let\oldTheta\Theta
\let\Theta\undefined
\DeclareMathOperator{\Theta}{\oldTheta}

\makeatletter
\providecommand*{\shuffle}{%
  \mathbin{\mathpalette\shuffle@{}}%
}
\newcommand*{\shuffle@}[2]{%
  \sbox0{$#1\vcenter{}$}%
  \kern .15\ht0 % side bearing
  \rlap{\vrule height .25\ht0 depth 0pt width 2.5\ht0}%
  \raise.1\ht0\hbox to 2.5\ht0{%
    \vrule height 1.75\ht0 depth -.1\ht0 width .17\ht0 %
    \hfill
    \vrule height 1.75\ht0 depth -.1\ht0 width .17\ht0 %
    \hfill
    \vrule height 1.75\ht0 depth -.1\ht0 width .17\ht0 %
  }%
  \kern .15\ht0 % side bearing
}
\makeatother

\newcommand{\qbinom}[3][q]{\genfrac{[}{]}{0pt}{}{#2}{#3}_{#1}}

\newcommand{\<}{\langle}
\renewcommand{\>}{\rangle}

\definecolor{valleys}{RGB}{0, 100, 0}
\definecolor{rises}{RGB}{128, 0, 0}
\definecolor{rho}{RGB}{255, 127, 0}
\definecolor{rhoprime}{RGB}{106, 90, 205}
\definecolor{ud}{RGB}{0, 0, 0}

\makeatletter
\pgfkeys{
    /tikz/sharp angle/.code={%
        \pgfsetarrowoptions{sharp >}{#1}%
        \pgfsetarrowoptions{sharp <}{-#1}%
    },
    /tikz/sharp > angle/.code={%
        \pgfsetarrowoptions{sharp >}{#1}%
    },
    /tikz/sharp < angle/.code={%
        \pgfsetarrowoptions{sharp <}{#1}%
    },
    /tikz/sharp protrude/.code=\csname if#1\endcsname\qrr@tikz@sharp@z@-0.05\p@\else\qrr@tikz@sharp@z@\z@\fi,
    /tikz/sharp protrude/.default=true
}

\newdimen\qrr@tikz@sharp@z@
\qrr@tikz@sharp@z@\z@
\pgfarrowsdeclare{sharp >}{sharp >}{%
    \edef\pgf@marshal{\noexpand\pgfutil@in@{and}{\pgfgetarrowoptions{sharp >}}}%
    \pgf@marshal
    \ifpgfutil@in@
    \edef\pgf@tempa{\pgfgetarrowoptions{sharp >}}
    \expandafter\qrr@tikz@sharp@parse\pgf@tempa\@qrr@tikz@sharp@parse
    \else
    \qrr@tikz@sharp@parse\pgfgetarrowoptions{sharp >}and-\pgfgetarrowoptions{sharp >}\@qrr@tikz@sharp@parse
    \fi
    \pgfmathparse{max(\pgf@tempa,\pgf@tempb,0)}%
    \let\qrr@tikz@sharp@max\pgfmathresult
    \pgfmathsetlength\pgf@xa{.5*\pgflinewidth * tan(\qrr@tikz@sharp@max)}%
    \pgfarrowsleftextend{+\pgf@xa}%
    \pgfarrowsrightextend{+\pgf@xa}%
}{%
    \edef\pgf@marshal{\noexpand\pgfutil@in@{and}{\pgfgetarrowoptions{sharp >}}}%
    \pgf@marshal
    \ifpgfutil@in@
    \edef\pgf@tempa{\pgfgetarrowoptions{sharp >}}
    \expandafter\qrr@tikz@sharp@parse\pgf@tempa\@qrr@tikz@sharp@parse
    \else
    \qrr@tikz@sharp@parse\pgfgetarrowoptions{sharp >}and-\pgfgetarrowoptions{sharp >}\@qrr@tikz@sharp@parse
    \fi
    \pgfmathsetlength\pgf@ya{.5*\pgflinewidth * tan(max(\pgf@tempa,\pgf@tempb,0))}%
    \pgfmathsetlength\pgf@xa{-.5*\pgflinewidth * tan(\pgf@tempa)}%
    \pgfmathsetlength\pgf@xb{-.5*\pgflinewidth * tan(\pgf@tempb)}%
    \advance\pgf@xa\pgf@ya
    \advance\pgf@xb\pgf@ya
    \ifdim\pgf@xa>\pgf@xb
    \pgftransformyscale{-1}%
    \pgf@xc\pgf@xb
    \pgf@xb\pgf@xa
    \pgf@xa\pgf@xc
    \fi
    \pgfpathmoveto{\pgfqpoint{\qrr@tikz@sharp@z@}{.5\pgflinewidth}}%
    \pgfpathlineto{\pgfqpoint{\pgf@xa}{.5\pgflinewidth}}%
    \pgfpathlineto{\pgfqpoint{\pgf@ya}{+0pt}}%
    \pgfpathlineto{\pgfqpoint{\pgf@xb}{-.5\pgflinewidth}}%
    \pgfpathlineto{\pgfqpoint{\qrr@tikz@sharp@z@}{-.5\pgflinewidth}}%
    \pgfusepathqfill
}
\pgfarrowsdeclare{sharp <}{sharp <}{%
    \edef\pgf@marshal{\noexpand\pgfutil@in@{and}{\pgfgetarrowoptions{sharp <}}}%
    \pgf@marshal
    \ifpgfutil@in@
    \edef\pgf@tempa{\pgfgetarrowoptions{sharp <}}
    \expandafter\qrr@tikz@sharp@parse\pgf@tempa\@qrr@tikz@sharp@parse
    \else
    \expandafter\qrr@tikz@sharp@parse\pgfgetarrowoptions{sharp <}and-\pgfgetarrowoptions{sharp <}\@qrr@tikz@sharp@parse
    \fi
    \pgfmathparse{max(\pgf@tempa,\pgf@tempb,0)}%
    \let\qrr@tikz@sharp@max\pgfmathresult
    \pgfmathsetlength\pgf@xa{.5*\pgflinewidth * tan(\qrr@tikz@sharp@max)}%
    \pgfarrowsleftextend{+\pgf@xa}%
    \pgfarrowsrightextend{+\pgf@xa}%
}{%
    \edef\pgf@marshal{\noexpand\pgfutil@in@{and}{\pgfgetarrowoptions{sharp <}}}%
    \pgf@marshal
    \ifpgfutil@in@
    \edef\pgf@tempa{\pgfgetarrowoptions{sharp <}}
    \expandafter\qrr@tikz@sharp@parse\pgf@tempa\@qrr@tikz@sharp@parse
    \else
    \expandafter\qrr@tikz@sharp@parse\pgfgetarrowoptions{sharp <}and-\pgfgetarrowoptions{sharp <}\@qrr@tikz@sharp@parse
    \fi
    \pgfmathsetlength\pgf@ya{.5*\pgflinewidth * tan(max(\pgf@tempa,\pgf@tempb,0))}%
    \pgfmathsetlength\pgf@xa{-.5*\pgflinewidth * tan(\pgf@tempa)}%
    \pgfmathsetlength\pgf@xb{-.5*\pgflinewidth * tan(\pgf@tempb)}%
    \advance\pgf@xa\pgf@ya
    \advance\pgf@xb\pgf@ya
    \ifdim\pgf@xa>\pgf@xb
    \pgftransformyscale{-1}%
    \pgf@xc\pgf@xb
    \pgf@xb\pgf@xa
    \pgf@xa\pgf@xc
    \fi
    \pgfpathmoveto{\pgfqpoint{\qrr@tikz@sharp@z@}{.5\pgflinewidth}}%
    \pgfpathlineto{\pgfqpoint{\pgf@xa}{.5\pgflinewidth}}%
    \pgfpathlineto{\pgfqpoint{\pgf@ya}{+0pt}}%
    \pgfpathlineto{\pgfqpoint{\pgf@xb}{-.5\pgflinewidth}}%
    \pgfpathlineto{\pgfqpoint{\qrr@tikz@sharp@z@}{-.5\pgflinewidth}}%
    \pgfusepathqfill
}
\def\qrr@tikz@sharp@parse#1and#2\@qrr@tikz@sharp@parse{\def\pgf@tempa{#1}\def\pgf@tempb{#2}}
\makeatother

\title{Leaving the Hall: explicit formulas for Negu\textcommabelow{t} operators}

\author{Michele D'Adderio}
\address{
    Dipartimento di Matematica \newline \indent
    Università di Pisa \newline \indent
    Pisa, PI, 56127, Italia
}
\email{michele.dadderio@unipi.it}

\author{Giovanni Interdonato}
\address{
    Classe di Scienze \newline \indent
    Scuola Normale Superiore \newline \indent
    Pisa, PI, 56126, Italia
}
\email{giovanni.interdonato@sns.it}

\author{Alessandro Iraci}
\address{
    Facoltà di Ingegneria e Informatica \newline \indent
    Università Pegaso \newline \indent
    Napoli, NA, 80143, Italia
}
\email{alessandro.iraci@unipegaso.it}

\author{Roberto Pagaria}
\address{
    Dipartimento di Matematica \newline \indent
    Università di Bologna \newline \indent
    Bologna, BO, 40126, Italia
}
\email{roberto.pagaria@unibo.it}

\subjclass[2020]{Primary 05E05; Secondary 05A19, 05E10}
\keywords{Macdonald polynomials, Dyck path algebra, Negu\textcommabelow{t} operators, Theta operators, rational shuffle theorem, Theta conjecture}

\begin{document}

\begin{abstract}
    Recent major breakthroughs in $q,t$-combinatorics include the introduction of the Dyck path algebra $\A_{q,t}$ by Carlsson and Mellit and of the Catalanimals by Blasiak et al., both of which led, among other things, to independent proofs of different extensions of the rational shuffle conjecture of Bergeron et al.

    The first main contribution of this paper is a simple, explicit formula inside the algebra $\A_{q,t}$ for the Negu\textcommabelow{t} operators, yielding a direct, elementary connection between the original operators of the rational shuffle conjecture and the corresponding Catalanimals. Our formula bypasses the elliptic Hall algebra, turning these operators into transparent, workable tools whose action we can compute exactly and efficiently on any symmetric function, not just constants.

    Our second main contribution consists of a series of explicit formulas relating the Negu\textcommabelow{t} operators to the Theta operators introduced by D'Adderio et al. To prove these formulas, we provide an extension of the aforementioned Theta operators to the entire algebra $\A_{q,t}$, allowing us to obtain a series of new combinatorial results. The algebraic computations underlying this extension have been formalized in Lean.

    To showcase the power of our results, we give a proof, also partially formalized in Lean, of the Theta conjecture of D'Adderio et al., first stated in 2019.
\end{abstract}

\maketitle
\tableofcontents

\section{Introduction}

In the 1990s, Garsia and Haiman set out to prove the Schur positivity of the (modified) Macdonald polynomials by showing them to be the bigraded Frobenius characteristic of certain Garsia--Haiman modules \cite{GarsiaHaiman1993GradedRepresentationModel}. Their prediction was confirmed in 2001, when Haiman used the algebraic geometry of the Hilbert scheme to prove that the dimension of their modules is $n!$ \cite{Haiman2001nFactorial}, thus proving the $n!$ theorem. In the course of these developments, it became clear that remarkable connections were to be found between Macdonald polynomial theory and the representation theory of the symmetric group. For example, during their quest for Macdonald positivity, Garsia and Haiman introduced the $\mathfrak{S}_n$-module of \emph{diagonal harmonics}, i.e.\ the coinvariants of the diagonal action of $\mathfrak{S}_n$ on polynomials in two sets of $n$ variables, and they conjectured that its Frobenius characteristic is given by $\nabla e_n$, where $\nabla$ is the \emph{nabla} operator on symmetric functions introduced in \cite{BergeronGarsiaHaimanTesler1999IdentitiesPositivityConjectures}, which acts diagonally on Macdonald polynomials. Haiman proved this conjecture in 2002 \cite{Haiman2002HilbertScheme}.

Over the years, this subject has revealed itself to be extremely fruitful and to have striking connections to other fields of mathematics, including elliptic Hall algebras \cites{SchiffmannVasserot2011EllipticHallAlgebra,BHMPS2023ShuffleAnyLine}, affine Hecke algebras \cite{CarlssonMellit2018ShuffleConjecture}, Springer fibers \cite{Mellit2020SpringerFibers}, the homology of torus knots \cites{GorskyNegut2015RefinedKnotInvariants, Mellit2022HomologyTorusKnots}, and the shuffle algebra of symmetric functions \cite{Negut2014ShuffleAlgebra}. This rich framework is a strength of the field and often leads to theorems whose proofs draw on different theories.

The combinatorial side of this story solidified when Haglund et al.\ formulated their \emph{shuffle conjecture} \cite{HaglundHaimanLoehrRemmelUlyanov2005ShuffleConjecture}: they predicted a combinatorial formula for $\nabla e_n$ in terms of labeled Dyck paths, which are lattice paths using north and east steps going from $(0,0)$ to $(n,n)$ and staying weakly above the line connecting these two points (called the \emph{main diagonal}). This remarkable conjecture, which resisted all attempts at proof for many years, opened the way to several refinements and extensions. Notably, Haglund, Morse, and Zabrocki conjectured a \emph{compositional} refinement of the shuffle conjecture, which also specified all the points where the Dyck paths return to the main diagonal \cite{HaglundMorseZabrocki2012CompositionalShuffleConjecture}. Not long after, Gorsky and Negu\textcommabelow{t} (cf.\ \cites{GorskyNegut2015RefinedKnotInvariants,Negut2014ShuffleAlgebra}) formulated a \emph{rational shuffle conjecture}, whose combinatorial side involved rectangular labeled Dyck paths, i.e.\ labeled lattice paths going from $(0,0)$ to $(m,n)$ and staying weakly above the line connecting these two points. The symmetric function side is obtained by acting on the constant symmetric function $1$ with suitable elements of the elliptic Hall algebra $\mathcal{E}$ of Burban and Schiffmann. Soon after, in \cite{BergeronGarsiaSergelXin2015CompositionalShuffleConjectures} Bergeron et al.\ formulated a \emph{compositional} refinement of the rational shuffle conjecture, at the same time making the symmetric function side more explicit, i.e.\ relying only on basic facts that do not require any understanding of the elliptic Hall algebra $\mathcal{E}$.

The first breakthrough occurred in \cite{CarlssonMellit2018ShuffleConjecture}, where Carlsson and Mellit proved the compositional shuffle conjecture. For their proof, they introduced their algebra $\A_{q,t}$, which turned out to be a powerful tool for proving several other conjectures. For example, in \cite{Mellit2021Rational} Mellit used the algebra $\A_{q,t}$ to prove the compositional rational shuffle conjecture as well.

Around the time of the announcement of the proof of Carlsson and Mellit \cite{CarlssonMellit2018ShuffleConjecture}, Haglund, Remmel, and Wilson formulated their \emph{Delta conjecture} \cite{HaglundRemmelWilson2018DeltaConjecture}: this is actually a pair of conjectures for the symmetric function $\Delta'_{e_{n-k-1}}e_n$ in terms of decorated Dyck paths, where $k$ decorations are placed on either \emph{rises} or \emph{valleys} of the path. The symmetric function operator $\Delta'_f$ (also introduced in \cite{BergeronGarsiaHaimanTesler1999IdentitiesPositivityConjectures}) acts diagonally on the Macdonald polynomials and, in a sense, generalizes $\nabla$. In \cite{DAdderioIraciVandenWyngaerd2021ThetaOperators}, a compositional refinement of the Delta conjecture was proposed using a new tool: the \emph{Theta operators} $\Theta_f$. Finally, the rise version of the compositional Delta conjecture was proved in \cite{DAdderioMellit2022CompositionalDelta}, using once again the $\A_{q,t}$ algebra. Remarkably, the valley version of the Delta conjecture is still wide open (cf.\ \cite{DAdderioIraci2023} for some partial results).

It should be noted that in \cite[Conjecture~9.1]{DAdderioIraciVandenWyngaerd2021ThetaOperators}, the authors tried to merge the two versions of the Delta conjecture into what they called the \emph{Theta conjecture}, suggesting a combinatorial interpretation for $\left. \Theta_{e_k} \Theta_{e_l} \nabla e_{n-k-l} \right\rvert_{q=1}$ in terms of labeled Dyck paths supporting decorations on both rises and valleys, via a formula that specializes to the two versions of the Delta conjecture when $k$ or $l$ is set to $0$. The suggested formula still lacks a dinv statistic, which justifies the specialization $q=1$. Due to its conjectural connection to the coinvariants of the diagonal action of $\mathfrak{S}_n$ on polynomials in two sets of $n$ commuting variables and two sets of $n$ anticommuting variables (cf. \cite[Section 8]{DAdderioIraciVandenWyngaerd2021ThetaOperators}), we consider finding such a statistic an outstanding open problem in $q,t$-combinatorics (cf.\ \cite{IraciNadeauVandenWyngaerd2024Smirnov} for some partial progress).

Soon after the proof of the rise version of the compositional Delta conjecture in \cite{DAdderioMellit2022CompositionalDelta}, Blasiak et al.\ provided an independent proof of the rational shuffle conjecture (which encompasses the shuffle conjecture) in \cite{BHMPS2023ShuffleAnyLine}, extending these formulas to labeled Dyck paths staying weakly above ``any line''. This initiated a remarkable series of articles by the same authors \cites{BHMPS2023ProofExtendedDelta,BHMPS2024LLT,BHMPS2025LoehrWarrington,BHMPS2025RaisingOperatorFormula}, which settle and extend several outstanding conjectures in the field. The proofs in \cite{BHMPS2023ShuffleAnyLine} are divided into two steps. The first step shows that the combinatorial side of the formulas coincides with a certain Hall-Littlewood series. The second step consists of proving that this Hall-Littlewood series matches the symmetric-function side of the earlier conjectures: to achieve this, the authors have to use an action of the elliptic Hall algebra $\mathcal{E}$, via an implicit isomorphism with the shuffle algebra studied by Negu\textcommabelow{t} in \cite{Negut2014ShuffleAlgebra}. In particular, it is shown that certain elements $\D_\gamma$ in $\mathcal{E}$ (called Negu\textcommabelow{t} elements in \cites{BHMPS2023ShuffleAnyLine,BHMPS2023ProofExtendedDelta} and \cite{BenDaliBonzomDoulega2026PathOperators}) act as the relevant operators on the constant symmetric function $1$. Generalizations of these Hall-Littlewood series are called \emph{Catalanimals} in \cites{BHMPS2024LLT,BHMPS2025LoehrWarrington,BHMPS2025RaisingOperatorFormula}.

The first main contribution of the present article is to connect the two approaches to the rational shuffle conjecture that we recalled above: we provide a simple, explicit formula on symmetric functions for the operators corresponding to the Negu\textcommabelow{t} elements $\D_\gamma$, thereby identifying them inside the $\A_{q,t}$ algebra of Carlsson and Mellit. The advantage of our formula is threefold: (1) since we reprove in an elementary way that $\D_\gamma$ acting on $1$ gives the Hall-Littlewood series (Catalanimal) obtained in \cite{BHMPS2023ShuffleAnyLine}, our results connect this series to the original operators of the rational shuffle conjecture, completely bypassing the elliptic Hall algebra $\mathcal{E}$; (2) our formula enables the exact evaluation of $\D_\gamma$ on arbitrary symmetric functions (rather than constants alone), yielding an algorithm (computable, e.g., in SageMath) requiring a number of operator steps that is linear, rather than exponential, in the length of $\gamma$; (3) our formula allows us to commute the action of $\D_\gamma$ with other operators, yielding new formulas.

The second main contribution of this paper is related to point (3) above: we derive explicit commutation relations between the Negu\textcommabelow{t} operators $\D_\gamma$ and the Theta operators $\Theta_{e_k}$ introduced in \cite{DAdderioIraciVandenWyngaerd2021ThetaOperators}, and exploit these relations to prove new formulas about both. To achieve this, we first extend the definition of the Theta operators to the entire algebra $\A_{q,t}$, broadening their scope; in this context, the Theta operators are easier to compute and manipulate, allowing us to obtain a series of new combinatorial results in the form of explicit formulas. The computations used to define this extension have been formalized in Lean 4 using Harmonic's \emph{Aristotle} \cite{Achim2025Aristotle}, an AI system for formal verification.

As a notable application of our new formulas, we prove the Theta conjecture \cite[Conjecture~9.1]{DAdderioIraciVandenWyngaerd2021ThetaOperators} using (among other tools) a bijective argument that we also formalized in Lean 4 using \emph{Aristotle}.

Our formulas potentially have many more applications to open problems in the literature. For example, the decorated framework of the Delta conjectures and the rational framework of the rational shuffle conjecture have recently been unified in \cite{IraciPagariaPaolini2026FallingStars}, where a decorated version of the rational shuffle theorem was first conjectured and then proved, thanks to techniques introduced in \cite{GillespieGorskyGriffin2025Skewing}.

Our formulas can be adapted to this context as well, and in principle they can be used to prove several of the conjectures appearing in these papers. For the moment, we consider these applications outside the scope of this paper and leave them for future work.

This paper is structured as follows. In \Cref{sec:symmetric_functions} we introduce the symmetric function tools we need and in \Cref{sec:Aqt} we do the same with the Dyck path algebra $\A_{q,t}$; in \Cref{sec:Dgamma}, we give an alternative definition for the $\D_\gamma$ operators as elements of (an extension of) $\A_{q,t}$, and prove that they satisfy all the properties we expect; in \Cref{sec:Theta}, we define an endomorphism $\Theta$ of $\A_{q,t}$, show that it extends the usual $\Theta_f$ operators on symmetric functions, and derive its commutation relations with the $\D_\gamma$ operators; finally, in \Cref{sec:theta_conjecture}, we demonstrate the power of these new operators by proving the Theta conjecture.

\section{Symmetric function tools}
\label{sec:symmetric_functions}

We denote by $\Lambda$ the graded algebra of symmetric functions with coefficients in $\mathbb{Q}(q,t)$, and by $\<\, , \>$ the \emph{Hall scalar product} on $\Lambda$, defined by declaring that the Schur functions form an orthonormal basis.

The standard bases of symmetric functions that will appear in our calculations are the monomial $\{m_\lambda\}_{\lambda}$, complete $\{h_{\lambda}\}_{\lambda}$, elementary $\{e_{\lambda}\}_{\lambda}$, power $\{p_{\lambda}\}_{\lambda}$, and Schur $\{s_{\lambda}\}_{\lambda}$ bases.

For $f \in \Lambda$, we denote by $f^\perp$ the operator adjoint to multiplication by $f$ with respect to the Hall scalar product, that is, for every $g, h \in \Lambda$, we have $\< f^\perp g, h \> = \< g, fh \>$.

For a partition $\mu \vdash n$, we denote by \[ \Ht_\mu \coloneqq \Ht_\mu[X] = \Ht_\mu[X; q,t] = \sum_{\lambda \vdash n} \widetilde{K}_{\lambda \mu}(q,t) s_{\lambda} \] the \emph{(modified) Macdonald polynomials}, where \[ \widetilde{K}_{\lambda \mu} \coloneqq \widetilde{K}_{\lambda \mu}(q,t) = K_{\lambda \mu}(q,1/t) t^{n(\mu)} \] are the \emph{(modified) Kostka coefficients} (see \cite[Chapter~2]{Haglund2008Book} for more details).

Macdonald polynomials form a basis of the algebra of symmetric functions $\Lambda$. This basis is a modification of the one introduced by Macdonald \cite{Macdonald1995Book}.

If we identify the partition $\mu$ with its Young diagram, i.e.\ with the collection of cells $\{(i,j)\mid 1\leq i\leq \mu_j, 1\leq j\leq \ell(\mu)\}$, then for each cell $c\in \mu$ we define the \emph{arm}, \emph{leg}, \emph{co-arm}, and \emph{co-leg} (denoted, respectively, by $a_\mu(c), l_\mu(c), a_\mu'(c), l_\mu'(c)$) to be the numbers of cells in $\mu$ that are strictly to the right, below, to the left, and above $c$ in $\mu$, respectively (see \Cref{fig:notation}).

\begin{figure}
    \centering
    \begin{tikzpicture}[scale=0.4]
        \draw[gray,opacity=.6](0,0) grid (15,10);
        \fill[white] (1,-0.1)|-(3,1) |- (6,3) |- (9,7) |- (15.1,8) |- (1,-.1);
        \fill[blue, opacity=.15] (0,7) rectangle (9,8) (3,3) rectangle (4,10);
        \fill[blue, opacity=.5] (3,7) rectangle (4,8);
        \draw (6,7.5) node {\tiny{Arm}} (3.5,5) node[rotate=90] {\tiny{Leg}} (3.5, 9) node[rotate = 90] {\tiny{Co-leg}} (1.5,7.5) node {\tiny{Co-arm}} ;
    \end{tikzpicture}
    \caption{Arm, leg, co-arm, and co-leg of a cell of a partition.}
    \label{fig:notation}
\end{figure}

For every partition $\mu$, we define the following constants:

\[ B_{\mu} \coloneqq B_{\mu}(q,t) = \sum_{c \in \mu} q^{a_{\mu}'(c)} t^{l_{\mu}'(c)}, \qquad
    \Pi_{\mu} \coloneqq \Pi_{\mu}(q,t) = \prod_{c \in \mu / (1)} (1-q^{a_{\mu}'(c)} t^{l_{\mu}'(c)}). \]

Notice that in the definition of $\Pi_\mu$, the exponents $a_{\mu}'$ and $l_{\mu}'$ are evaluated with respect to the shape $\mu$, not $\mu / (1)$; the latter notation only indicates that the cell $(1,1)$ should be omitted from the product (otherwise the product would be $0$).

We will make extensive use of the \emph{plethystic notation} (cf.\ \cite[Chapter~1]{Haglund2008Book}).

\begin{definition}
    For every expression $X$ for which plethystic substitution is defined, we define the \emph{plethystic exponential} $\Exp[X]$  as
    \[\Exp[X] \coloneqq \sum_{n\geq 0} h_n[X].\]
\end{definition}

\begin{definition}
    For every expression $Y$ for which plethystic substitution is defined, we define the \emph{(plethystic) translation operator} $\mathcal{T}_Y$ by setting, for every $F[X] \in \Lambda$,
    \[ \mathcal{T}_Y F[X] \coloneqq F[X+Y]. \]
\end{definition}

\begin{definition}
    For every expression $Z$ for which plethystic substitution is defined, we define the \emph{(plethystic) multiplication operator} $\mathcal{P}_Z$ by setting, for every $F[X]\in \Lambda$,
    \[ \mathcal{P}_Z F[X] \coloneqq \Exp[XZ] F[X]. \]
\end{definition}

These operators are related by the following identity, which is straightforward to check:
\begin{equation}
    \label{eq:TPExp}
    \mathcal{T}_Y \mathcal{P}_Z= \Exp[YZ] \mathcal{P}_Z \mathcal{T}_Y.
\end{equation}

We recall the definitions of the following linear operators on $\Lambda$.

\begin{definition}[{\cite[Equation~(3.11)]{BergeronGarsia1999ScienceFiction}}]
    \label{def:nabla}
    We define the linear operator $\nabla \colon \Lambda \rightarrow \Lambda$ on the eigenbasis of Macdonald polynomials as \[ \nabla \Ht_\mu = e_{\lvert \mu \rvert}[B_\mu] \Ht_\mu. \]
\end{definition}

\begin{definition}
    \label{def:pi}
    We define the linear operator $\mathbf{\Pi} \colon \Lambda \rightarrow \Lambda$ on the eigenbasis of Macdonald polynomials as \[ \mathbf{\Pi} \Ht_\mu = \Pi_\mu \Ht_\mu \] where we conventionally set $\Pi_{\varnothing} \coloneqq 1$.
\end{definition}

\begin{definition}
    \label{def:delta}
    For $f \in \Lambda$, we define the linear operators $\Delta_f, \Delta'_f \colon \Lambda \rightarrow \Lambda$ on the eigenbasis of Macdonald polynomials as \[ \Delta_f \Ht_\mu = f[B_\mu] \Ht_\mu, \qquad \qquad \Delta'_f \Ht_\mu = f[B_\mu-1] \Ht_\mu. \]
\end{definition}

Observe that, on the vector space of homogeneous symmetric functions of degree $n$, denoted by $\Lambda^{(n)}$, the operator $\nabla$ equals $\Delta_{e_n}$. Also note that \[ \mathbf{\Pi} = \sum_{n=0}^\infty (-1)^n \Delta'_{e_n}. \] 

We will use the symmetric functions $E_{n,k}$ (cf.\ \cite[Section~1.3]{DAdderioRomero2023ThetaIdentities}), which can be defined as
\[E_{n,k}[X]=q^k\sum_{r=0}^k q^{\binom{r}{2}} \qbinom{k}{r}(-1)^re_n\left[X\frac{1-q^{-r}}{1-q}\right],\]
where $\qbinom{k}{r}$ denotes the usual $q$-binomial coefficient.

\begin{remark}
    \label{rem:Enk}
    It is well known (cf.\ \cite[Theorem 7.4]{DAdderioRomero2023ThetaIdentities}) that $\nabla E_{n,n}=\widetilde{H}_{(n)}$, and that $\widetilde{H}_{(n)}$ depends only on $q$ (and not on $t$), so that $\nabla E_{n,n} \rvert_{q=1} = \widetilde{H}_{(n)} \rvert_{q=1}$ equals $e_1^n$, like any modified Macdonald polynomial of degree $n$ at $q=t=1$.
\end{remark}

From now on, let $M \coloneqq (1-q)(1-t)$.

\begin{definition}[{\cite[Equation~(28)]{DAdderioIraciVandenWyngaerd2021ThetaOperators}}]
    \label{def:theta}
    For any symmetric function $f \in \Lambda^{(n)}$, we define the \emph{Theta operators} on $\Lambda$ as follows: for every $F \in \Lambda^{(m)}$ we set
    \begin{equation*}
        \Theta_f F  \coloneqq
        \left\{\begin{array}{ll}
            0                                                           & \text{if } n \geq 1 \text{ and } m=0 \\
            f \cdot F                                                   & \text{if } n=0 \text{ and } m=0      \\
            \mathbf{\Pi} f \left[\frac{X}{M}\right] \mathbf{\Pi}^{-1} F & \text{otherwise}
        \end{array}
        \right. ,
    \end{equation*}
    and we extend the definition by linearity to all $f, F \in \Lambda$.
\end{definition}

It is clear that $\Theta_f$ is linear. In addition, if $f$ is homogeneous of degree $k$, then $\Theta_f$ raises degree by $k$:
\[\Theta_f \Lambda^{(n)} \subseteq \Lambda^{(n+k)} \qquad \text{ for } f \in \Lambda^{(k)}. \]

Finally, we need to refer to \cite[Algorithm~4.1]{BergeronGarsiaSergelXin2015CompositionalShuffleConjectures} (see also \cite[Definition~1.1, Theorem~2.5]{BergeronGarsiaSergelXin2016PlethysticOperators}).

\begin{definition}
    Let $m, n > 0$. Let $a,b,c,d \in \mathbb{N}$ be such that $a+c=m$, $b+d=n$, and $ad-bc = \gcd(m,n)$. We recursively define $Q_{m,n}$ as an operator on $\Lambda$ by \[ Q_{m,n} = \frac{1}{M} \left( Q_{c,d} Q_{a,b} - Q_{a,b} Q_{c,d} \right), \] with the base cases \[ Q_{1,0} = \D_0 = \mathsf{id} - M \Delta_{e_1} \quad \text{ and } \quad Q_{0,1} = - e_1^\bullet \] (where $f^\bullet$ denotes multiplication by $f$).
\end{definition}

\begin{definition}
    \label{def:hall-algebra-op}
    For a coprime pair $(a,b)$ and $F \in \Lambda^{(d)}$, we define $F_{ad,bd}$ as follows. Let \[ f = \sum_{\lambda \vdash d} c_\lambda(q,t) \left( \frac{qt}{qt-1} \right)^{\ell(\lambda)} h_\lambda \left[ \frac{1-qt}{qt} X \right]. \] Then, we define \[ F_{ad,bd} \coloneqq \sum_{\lambda \vdash d} c_\lambda(q,t) \prod_{i=1}^{\ell(\lambda)} Q_{\lambda_i a, \lambda_i b}, \]
    which is an operator acting on symmetric functions.
\end{definition}

As conjectured in \cite{BergeronGarsiaSergelXin2016PlethysticOperators} and proved in \cite{Mellit2021Rational}, $(-1)^{d(b+1)} f_{ad,bd}$ coincides with the operator 

\begin{remark}
    \label{rmk:eha-notation}
    Comparing the normalization of the $Q_{m,n}$-operators in Algorithm~3.1 of~\cite{BergeronGarsiaSergelXin2015CompositionalShuffleConjectures} with the Schiffmann--Vasserot realization of the elliptic Hall algebra used in~\cite{BHMPS2023ShuffleAnyLine}, one obtains, for $F \in \Lambda^{(d)}$,
    \[
        F_{da,db}= (-1)^{d(b+1)} F[-MX^{a,b}],
    \]
    where $F[-MX^{a,b}]$ is the operator arising from the shuffle algebra \cite{Negut2014ShuffleAlgebra}.

    Indeed, each homogeneous generator of degree $r$ contributes a factor $(-1)^{r(b+1)}$. Since every monomial occurring in the expansion of $F$ has total degree $d$, the resulting factor is
    \[
        \prod_i (-1)^{\lambda_i(b+1)}
        =(-1)^{(b+1)\sum_i\lambda_i}
        =(-1)^{d(b+1)}.
    \]
\end{remark}

Throughout the rest of this paper, we will use $F[-MX^{a,b}]$ as notation for these expressions, unless otherwise specified.

\section{The Dyck path algebra}
\label{sec:Aqt}

The Carlsson--Mellit algebra, or Dyck path algebra $\A_{q,t}$, was first introduced in \cite{CarlssonMellit2018ShuffleConjecture} as a groundbreaking tool for proving the shuffle theorem. Its action on the space of symmetric functions $\Lambda$ was later modified in \cite{Mellit2021Rational} to prove the rational shuffle theorem as well. In this section, we first recall the definition and some basic facts about it, and then extend it to accommodate elements of negative degree and define an action on Laurent polynomials with coefficients in $\Lambda$.

\subsection{The algebra \texorpdfstring{$\A_q$}{Aq}}

\begin{definition}[{\cite[Definition~3.1]{CarlssonMellit2018ShuffleConjecture}}]
    \label{def:Aq}
    The algebra $\A_q$ (over $\Q(q)$) is the path algebra of the quiver with vertex set $\N$, arrows $d_+$ from $k$ to $k+1$, arrows $d_-$ from $k+1$ to $k$, and loops $T_1, \dots, T_{k-1}$ from $k$ to $k$ (for $k > 0$) subject to the following relations:
    \begin{align}
        (T_i-1)(T_i+q) = 0 & \label{eq:skein} \tag{S} \\
        T_i T_{i+1} T_i = T_{i+1} T_i T_{i+1} & \label{eq:braid} \tag{B} \\
        T_i T_j = T_j T_i & \qquad \text{ for } \lvert i-j \rvert > 1 \label{eq:commuting} \tag{C} \\
        d_- T_i = T_i d_- & \qquad \text{ for } 2 \leq i \leq k-2 \label{eq:r1} \tag{R1} \\
        d_+ T_i = T_{i+1} d_+ & \label{eq:r2} \tag{R2} \\
        T_1 d_+^2 = d_+^2 & \label{eq:r3} \tag{R3} \\
        d_-^2T_{k-1} = d_-^2 & \label{eq:r4} \tag{R4} \\
        d_- [d_+,d_-] T_{k-1} = q [d_+,d_-] d_- & \qquad \text{ for } k \geq 2 \label{eq:r5} \tag{R5} \\
        T_1 [d_+,d_-] d_+ = q d_+ [d_+,d_-] & \qquad \text{ for } k \geq 1 \label{eq:r6} \tag{R6}
    \end{align}
    where in each identity $k$ denotes the index of the vertex where the corresponding paths begin.
    
    The idempotent corresponding to the empty path from vertex $k$ will be denoted by $\epsilon_k$.
\end{definition}

\begin{definition}
    We use the following shorthands from \cite{CarlssonMellit2018ShuffleConjecture}. If $i \leq j$, set
    \begin{align*}
        & T_{i \nearrow j} = T_i T_{i+1} \cdots T_{j-1}, && \quad T_{j \searrow i} = T_{j-1} T_{j-2} \cdots T_{i}, && \quad T_{i  \nearrow i} = T_{i \searrow i} = \mathsf{id}, \\
        & T_{i \nearrow j}^* = T_i^{-1} T_{i+1}^{-1} \cdots T_{j-1}^{-1}, && \quad T_{j \searrow i}^* = T_{j-1}^{-1} T_{j-2}^{-1} \cdots T_{i}^{-1}, && \quad T_{i \nearrow i}^* = T_{i \searrow i}^* = \mathsf{id}.
    \end{align*}
    If $i > j$, we set $T_{i \nearrow j} \coloneqq T_{i \searrow j}^*$ and $T_{j \searrow i} \coloneqq T_{j \nearrow i}^*$.
\end{definition}

\begin{proposition}[{\cite[Lemma~5.5]{CarlssonMellit2018ShuffleConjecture}}]
    For $k > 0$, define loops $y_1, \dots, y_k$ from $k$ to $k$ as
    \[ y_1 \coloneqq \frac{[d_+,d_-]}{q^{k-1}(q-1)}T_{k \searrow 1} \quad \text{ and } \quad y_{i+1} \coloneqq q T_i^{-1} y_i T_i^{-1}. \]
    Then the following identities hold.
    \begin{align}
        [y_i, T_j] & = 0 \quad \text{ for } i \neq j, j+1  \label{eq:yT_comm} \\
        [d_-, y_i] & = 0 \label{eq:yd-_comm} \\
        [y_i, T^*_{i+1 \searrow 1} d_+] & = 0 \label{eq:yd+_comm} \\
        [y_i,y_j] & = 0. \label{eq:yy_comm}
    \end{align}
\end{proposition}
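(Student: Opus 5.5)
The plan is to argue purely inside $\A_q$ from the relations (S), (B), (C), (R1)--(R6). I would first rewrite $y_1$ in a more usable form and then obtain each $y_i$ from $y_1$ by conjugation. From the recursion $y_{i+1} = qT_i^{-1}y_iT_i^{-1}$ one gets
\[ y_i = q^{i-1}\, T^*_{i\searrow 1}\, y_1\, T^*_{1\nearrow i}, \]
so each claimed identity reduces to a statement about $y_1$ together with braid manipulations.

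\textbf{Identity \eqref{eq:yT_comm}, case $j \geq 2$.} By the definition of $y_1$,
\[ y_1 = \frac{1}{q^{k-1}(q-1)}\,[d_+,d_-]\, T_{k\searrow 1}. \]
\begin{itemize}
\item For $2 \leq j \leq k-2$, relations (R1) and (R2) give $T_j d_+ d_- = d_+ T_{j-1} d_- = d_+ d_- T_{j-1}$. The same holds for $d_- d_+$, again using (R1) and (R2) with the appropriate index shifts at vertex $k$. Hence $T_j[d_+,d_-] = [d_+,d_-]T_{j-1}$.
\item The braid relations give $T_{j-1}T_{k\searrow 1} = T_{k\searrow 1}T_j$.
\item Combining the two, $T_j$ commutes with $y_1$ for $2 \leq j \leq k-2$.
\item The boundary case $j = k-1$ needs a separate argument with (R4) and (R5).
\end{itemize}
Having $y_1$, I would propagate to $y_i$ with $j \neq i, i-1$ using the conjugation formula and the braid relations.

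\textbf{Identity \eqref{eq:yT_comm} at $j = k-1$.} This is where (R5) enters: $d_-[d_+,d_-]T_{k-1} = q[d_+,d_-]d_-$. I expect this to be the main obstacle. The plan is to follow Carlsson--Mellit and first prove the auxiliary identity
\[ y_1 d_- = d_- y_1 \quad \text{(with indices at vertex } k \text{ and } k-1). \]
The argument would expand $y_1$ at vertex $k$, move $d_-$ past $T_{k\searrow 1}$ via (R1) and (R4), and then apply (R5) to absorb the factor $q$ and the $q^{k-1}$ versus $q^{k-2}$ normalization.

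\textbf{Identity \eqref{eq:yd-_comm}.} For $y_1$ this is the auxiliary identity above. For $y_i$ with $i < k$, $d_-$ commutes with $T_1, \dots, T_{k-2}$ by (R1) and the braid-type reasoning, so the conjugation formula transfers the commutation. The case $i = k$ needs (R4) once more.

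\textbf{Identity \eqref{eq:yd+_comm}.} This is the mirror argument using (R2), (R3), and (R6). Relation (R6), $T_1[d_+,d_-]d_+ = q\,d_+[d_+,d_-]$, yields $y_1 T^*_{2\searrow 1}d_+ = T^*_{2\searrow 1}d_+ y_1$ after inserting $T_{k\searrow 1}$ and shifting indices with (R2). The general $i$ then follows by conjugation, since (R2) turns $T_j d_+$ into $d_+ T_{j-1}$.

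\textbf{Identity \eqref{eq:yy_comm}.} I would avoid direct computation. It suffices to show $[y_1, y_2] = 0$ together with the fact that $y_1$ and $y_2$ commute with the $T_j$ for $j \geq 3$; the conjugation formula and the braid relations then give all $[y_i, y_j] = 0$. For $[y_1, y_2]$, write $y_2 = qT_1^{-1}y_1T_1^{-1}$, so the claim becomes $y_1 T_1^{-1} y_1 T_1 = T_1 y_1 T_1^{-1} y_1$, an affine Hecke relation. I would prove it by expressing $y_1$ through $d_-$ and $d_+$ and using \eqref{eq:yd-_comm} and \eqref{eq:yd+_comm} to move the $y$'s through the $d$'s, reducing it to (R3) and (R4). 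If this turns out too messy, the fallback is the faithful polynomial representation of Carlsson--Mellit, where the $y_i$ act as commuting operators, to verify the relation.
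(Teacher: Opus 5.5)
The paper does not prove this proposition; it quotes it from Carlsson--Mellit. Your argument therefore has to stand on its own, and it does not yet do so at the one genuinely nontrivial point, $[y_1,y_2]=0$.

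\textbf{The gap: $[y_1,y_2]=0$.} Your primary route is only an intention, and its forecast is wrong: (R3) and (R4) play no role. The fallback is not a proof either. Faithfulness of $V$ as an $\A_q$-module is not available here; the paper only records Mellit's description of the annihilator of $V$ over $\A_{q,t}$, which is nonzero. So verifying the relation on $V$ gives it only modulo the annihilator, not in $\A_q$.

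The computation you need is short. Work at vertex $k\geq 2$.
\begin{itemize}
\item Apply \eqref{eq:yd+_comm} with $i=1$, starting one vertex lower. This gives $y_2 d_+ = qT_1^{-1}y_1T_1^{-1}d_+ = qT_1^{-2}d_+y_1$.
\item Combine this with \eqref{eq:yd-_comm} for $y_1$, and for $y_2$ starting one vertex higher, together with $d_-T_1=T_1d_-$. You get $y_2d_+d_- = qT_1^{-2}d_+d_-y_1$ and $y_2d_-d_+ = d_-y_2d_+ = qT_1^{-2}d_-d_+y_1$, that is, $y_2[d_+,d_-] = qT_1^{-2}[d_+,d_-]y_1$.
\item Multiply on the right by $T_{k\searrow 1}/(q^{k-1}(q-1))$. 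Use $[d_+,d_-]/(q^{k-1}(q-1)) = y_1T_{k\searrow 1}^{-1}$ and $[y_1,T_j]=0$ for $j\geq 2$:
\[ y_2y_1 = qT_1^{-2}\,y_1T_{k\searrow 1}^{-1}y_1T_{k\searrow 1} = qT_1^{-2}y_1T_1^{-1}y_1T_1 = T_1^{-1}y_2y_1T_1. \]
\item Hence $y_2y_1T_1 = T_1y_2y_1 = qy_1T_1^{-1}y_1$, so $y_2y_1 = qy_1T_1^{-1}y_1T_1^{-1} = y_1y_2$.
\end{itemize}
Your reduction of \eqref{eq:yy_comm} to this case is fine.

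\textbf{Smaller corrections.}
\begin{itemize}
\item In \eqref{eq:yT_comm} there is no boundary case. Your computation $T_j[d_+,d_-]=[d_+,d_-]T_{j-1}$ is valid for every $2\leq j\leq k-1$. For $j=k-1$ it uses $T_{k-1}d_-=d_-T_{k-1}$ starting at vertex $k+1$ and $d_+T_{k-2}=T_{k-1}d_+$ starting at vertex $k-1$, both legitimate instances of (R1) and (R2). The identity $d_-y_1=y_1d_-$ would not help there anyway.
\item \eqref{eq:yd-_comm} only makes sense for $i\leq k-1$, because the right-hand $y_i$ lives at vertex $k-1$. So there is no case $i=k$.
\item (R4) is not needed for \eqref{eq:yd-_comm}. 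Applying (R5) and then (R1) gives directly $d_-[d_+,d_-]T_{k-1}T_{k-2}\cdots T_1 = q[d_+,d_-]T_{k-2}\cdots T_1\, d_-$, and $q/q^{k-1}=1/q^{k-2}$ is exactly the normalization at vertex $k-1$.
\item Throughout, including the step above, you need (R1) for $T_1$ as well, i.e.\ in Carlsson--Mellit's range $1\leq i\leq k-2$.
\end{itemize}
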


\subsection{The algebra \texorpdfstring{$\A_{q,t}$}{Aqt}}

\begin{definition}[{\cite[Definition~3.9]{Mellit2021Rational}}]
    \label{def:Aqt}
    The algebra $\A_{q,t}$ (over $\Q(q,t)$) is the quotient of the free product of the algebras $\A_q$ and $\A_{q^{-1}}$ by the relations given below. To distinguish elements of the second algebra from the corresponding elements of the first algebra, we mark the former with $*$. Set $z_i = y_i^*$, and $\z_i = z_i/qt$. The relations are
    \[ \epsilon_k^* = \epsilon_k, \quad T_i^* = T_i^{-1}, \quad d_-^* = d_-, \]
    and
    \begin{align}
        \z_{i+1} d_+ & = d_+ \z_i \label{eq:Q1} \tag{Q1} \\
        y_{i+1} d_+^* & = d_+^* y_i \label{eq:Q1*} \tag{Q1*} \\
        \z_1 d_+ & = - q^k y_1 d_+^*. \label{eq:Q2} \tag{Q2}
    \end{align}
\end{definition}

Explicitly, the $z_i$ are defined as
\[ z_1 \coloneqq \frac{q^k [d_+^*,d_-]}{(1-q)}T_{k \searrow 1}^* \quad \text{ and } \quad z_{i+1} \coloneqq q^{-1} T_i z_i T_i. \]

\begin{remark}
    The algebra $\A_{q,t}$ is $\mathbb{Z}^2$-graded, with
    \[ \deg(d_+) = (0,1), \quad \deg(d_+^*) = (1,0), \quad \deg(d_-) = (0,0), \quad \deg(T_i) = (0,0) \; \forall i. \]
    We will refer to the $\mathbb{Z}^2$-grading as \emph{bidegree}, and to the restriction to the second component of the bigrading simply as \emph{degree}.
\end{remark}

\subsection{The action on Laurent polynomials}

For $k \in \N$, we set 
\[ V_k \coloneqq \Lambda \otimes \Q(q,t)[y_1^{\pm 1}, \dots, y_k^{\pm 1}], \quad V \coloneqq \bigoplus_{k=0}^\infty V_k. \]

Notice that this differs from the definition in \cites{CarlssonMellit2018ShuffleConjecture, Mellit2021Rational} because we consider Laurent polynomials in $y_1, \dots, y_k$ rather than just polynomials. 

Given a Laurent monomial $m$ in a given set of variables, we denote by $\langle m \rangle B$ the coefficient of $m$ in the Laurent series $B$. For example, $\langle x^0\rangle f(x)$ denotes the constant term of the Laurent series $f(x)$ in the variables $x_1,x_2,\dots$. The algebra $\A_{q,t}$ acts on $V$ as follows.

\begin{proposition}[{\cite[Propositions~3.3 and 3.4]{Mellit2021Rational}}]
    There is an action of $\A_{q,t}$ on $V$ defined as follows: for any $F\in V_k$,
    \begin{align}
        T_i F & = \frac{(q-1) y_i F + (y_{i+1} - q y_i) s_i F}{y_{i+1} - y_i}, \\
        d_- F & = \langle y_k^0 \rangle F[X - (q-1) y_k;y_1,\ldots,y_k] \Exp[-y_k^{-1} X], \\
        d_+ F & = -T_{1 \nearrow k+1}\; (y_{k+1} F[X+(q-1) y_{k+1}; y_1, \ldots, y_k]), \\
        d_+^* F & = \eta\left(F[X + (q-1) y_{k+1}; y_1, \ldots, y_k]\right),
    \end{align}
    where $s_i$ is the operator that interchanges the variables $y_i$ and $y_{i+1}$, and $\eta$ is the operator that sends $y_i$ to $y_{i+1}$ for $i<k+1$ and $y_{k+1}$ to $t y_1$. Moreover, the operators $y_i \in \A_q$ act precisely by multiplication by $y_i$.

    In particular, $T_i, d_-, d_+$ define an action of $\A_q$, while $T_i^{-1}, d_-, d_+^*$ define an action of $\A_{q^{-1}}$.
\end{proposition}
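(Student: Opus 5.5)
The plan is to deduce the statement from Mellit's result on the polynomial subspace $\Lambda \otimes \Q(q,t)[y_1, \dots, y_k]$ rather than redo all relation checks. There are two things to show: that the four formulas give well-defined operators on $V$, and that these operators satisfy all relations of $\A_q$, of $\A_{q^{-1}}$, and \eqref{eq:Q1}, \eqref{eq:Q1*}, \eqref{eq:Q2}. The argument should only ever use the formulas as identities of formal expressions, never the fact that exponents are nonnegative.

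\emph{Well-definedness.} This is clear for $T_i$, since the divided difference $\frac{F - s_iF}{y_{i+1}-y_i}$ of a Laurent polynomial is again a Laurent polynomial. It is also clear for $d_+^*$, since plethystic translation by $(q-1)y_{k+1}$ and the substitution $\eta$ preserve Laurent polynomials. For $d_+$ it then follows, because it is $T$'s applied to such an expression. The only real point is $d_-$. Here $\Exp[-y_k^{-1}X]$ is an infinite series in $y_k^{-1}$, but $F[X-(q-1)y_k; y]$ has $y_k$-degree bounded above. Hence the coefficient of $y_k^0$ involves only finitely many terms. It is a Laurent polynomial in $y_1,\dots,y_{k-1}$ with coefficients in $\Lambda$, graded appropriately.

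\emph{Relations.} I would first isolate the relations involving only $T_i$ and multiplication by $y_i$: \eqref{eq:skein}, \eqref{eq:braid}, \eqref{eq:commuting}, and the fact that the $y_i \in \A_q$ act by multiplication. These are the standard Demazure--Lusztig identities and are linear over symmetric Laurent polynomials. Next, \eqref{eq:r1}, \eqref{eq:r2}, \eqref{eq:r3}, \eqref{eq:Q1}, \eqref{eq:Q1*} are commutation relations between $T_i$ or $y_i$ and a single $d_\pm$ or $d_+^*$. They reduce to checking how $d_\pm, d_+^*$ intertwine $s_i$ and multiplication by $y_i$, and those checks are visibly insensitive to the sign of exponents. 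For the remaining relations \eqref{eq:r4}, \eqref{eq:r5}, \eqref{eq:r6}, \eqref{eq:Q2}, I would follow Mellit's computations, which are carried out on the generating series $\Exp[X\sum_i y_i^{-1}u_i]$-type kernels or directly on an arbitrary $F$. I would check line by line that each step is a formal identity valid for all Laurent $F$.

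\emph{Main obstacle.} I expect the hard step to be the relations containing $d_-$, notably \eqref{eq:r4}, \eqref{eq:r5}, and \eqref{eq:Q2} (through $y_1$ and the commutator $[d_+,d_-]$). The constant-term extraction there might, in the original proof, implicitly use that negative powers of $y_k$ do not occur in $F$.

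If that happens, my fallback is a reduction to polynomials. Fix a relation $A=B$ between paths from $k$ to $k'$, and a Laurent $F$. Write $F = \sum_a c_a y^a$, and note that both sides are $\Q(q,t)$-linear and homogeneous in total $y$-degree. I would then show that on each monomial, the $a$-dependence of $(A-B)(X^\lambda y^a)$ is given by an expression rational in $q^{a_i}$ and $t^{a_i}$. This follows by expanding the plethystic translations as $y^a \mapsto \prod_i y_i^{a_i}$ times $\Exp$-factors, and noting that $d_-$ on $y_k^{a_k}\Exp[\cdots]$ is a $q$-binomial-type sum. Such an expression vanishes for all $a \in \N^k$ by Mellit, hence identically, and in particular on negative exponents. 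Making this ``polynomial identity in $q^{a_i}$'' argument precise for $d_-$ is where I expect most of the work to lie.
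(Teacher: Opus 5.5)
Your well-definedness paragraph is fine. The gap is in the relations, and that is where all the content lies. The paper itself gives no argument here: it cites Mellit and simply asserts that the same formulas work on Laurent polynomials.

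\textbf{Your sorting of the relations hides the crux.} Relations built only from $T_i$ and multiplication by $y_j$ do transfer, because these operators commute with multiplication by $(y_1\cdots y_k)^{\pm1}$. That is exactly what fails for $d_-$ and $d_+$.

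\begin{itemize}
\item The claim that $y_i\in\A_q$ acts by multiplication is not a Demazure--Lusztig identity. Since $y_1$ is defined as $\frac{[d_+,d_-]}{q^{k-1}(q-1)}T_{k\searrow 1}$, the claim is the operator identity $[d_+,d_-]=q^{k-1}(q-1)\,y_1T^*_{1\nearrow k}$ on $V_k$. This involves the constant-term extraction in $d_-$.
\item Likewise \eqref{eq:Q1} is not a relation with a single $d_\pm$: $\z_i$ is built from $[d_+^*,d_-]$ and does not act by multiplication.
\end{itemize}

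What must actually be checked on Laurent input is these commutator identities, together with \eqref{eq:r4}, \eqref{eq:Q1}, \eqref{eq:Q2}, and the $\A_{q^{-1}}$ analogues of \eqref{eq:r5} and \eqref{eq:r6}. Once the $[d_+,d_-]$ identity is known, \eqref{eq:r5}, \eqref{eq:r6} and \eqref{eq:Q1*} reduce to affine Hecke relations plus the index shift in $\eta$. Saying you would ``check line by line'' that Mellit's computations are formal is a plan, not a proof, and you leave open whether it succeeds.

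\textbf{Your fallback cannot fill the gap.} There is no uniform dependence of $(A-B)(Gy^a)$ on $a$ that is rational in $q^{a_i},t^{a_i}$.
\begin{itemize}
\item Outputs for different $a$ lie in different graded pieces of $\Lambda$.
\item Constant-term extraction truncates: on $V_1$, $d_-(y_1^a)=(-1)^ae_a$ for $a\geq 0$ but $0$ for $a<0$.
\item More decisively, compare the identity with $F\mapsto\langle w^0\rangle F(w)\sum_{n\geq0}y^nw^{-n}$, the projection onto nonnegative exponents, which is a constant-term operator of the same type as $d_-$. The two agree on every polynomial and differ on $y^{-1}$. So no density or continuation principle can carry identities from $a\in\N^k$ to $a\in\Z^k$; you have to look at the kernels.
\end{itemize}

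\textbf{What does work is the direct kernel computation, which never uses polynomiality.} Every constant term that occurs is of a Laurent polynomial bounded above in the extracted variable times a series in its inverse. So inside $d_-T_k$ you may expand $\frac{1}{y_{k+1}-y_k}$ in powers of $y_k/y_{k+1}$ and split terms. For $k=1$, set $\Phi(w)=F[X+(q-1)(y_1-w);w]\Exp[-w^{-1}X]$ and $u=y_1/w$. This gives
\[ d_-d_+F=-(q-1)y_1F-y_1\langle w^0\rangle \Phi(w)\tfrac{1-qu}{1-u}, \qquad d_+d_-F=-y_1\langle w^0\rangle \Phi(w)\tfrac{1-qu}{1-u}. \]
Hence $[d_+,d_-]F=(q-1)y_1F$ for every Laurent $F$. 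For example, $F=y_1^{-p}$ gives $d_-F=0$ and $d_-d_+F=-(q-1)y_1^{1-p}$.

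Similarly, \eqref{eq:r4} reduces to $T_{k-1}$-invariance of the functional $G\mapsto\langle y_{k-1}^0y_k^0\rangle G\,\frac{1-y_{k-1}/y_k}{1-qy_{k-1}/y_k}$. You can check this directly on $y_{k-1}^{-m}y_k^{m}$ for every $m\in\Z$, negative $m$ included. Computations of this kind, one for each relation involving $d_-$ or $\z_i$, are the missing content of your proposal.
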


\begin{theorem}[{\cite[Theorem~3.3]{Mellit2021Rational}}]
    \label{thm:annihilator}
    The annihilator of $V$ as an $\A_{q,t}$-module is generated by the relations\footnote{Equation \eqref{eq:I2} in \cite{Mellit2021Rational} has the factor $q^k$ missing. The corrected equation reported here can be found in Theorem~3.6 of the arXiv version.}:
    \begin{align}
        (d_- d_+^* -1) d_+^{*k} \epsilon_0 & = 0 \label{eq:I1} \tag{I1} \\
        (d_+ + q^k y_1 d_+^* ) d_+^{*k} \epsilon_0 & = 0. \label{eq:I2} \tag{I2}
    \end{align}
\end{theorem}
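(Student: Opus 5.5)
The plan has two parts: first show that the relations \eqref{eq:I1} and \eqref{eq:I2} act as zero on $V$, then show that they generate everything that acts as zero. Before either, I would reduce from our Laurent module to the polynomial module of \cite{Mellit2021Rational}. The polynomial part $V^{\mathrm{pol}} = \bigoplus_k \Lambda\otimes\Q(q,t)[y_1,\dots,y_k]$ is stable under $T_i, d_\pm, d_+^*$. Since the formulas are identical, it is an $\A_{q,t}$-submodule of $V$, so $\operatorname{Ann}(V)\subseteq\operatorname{Ann}(V^{\mathrm{pol}})$.

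\textbf{Step 1: the relations hold on $V$.} On $V_0=\Lambda$, the element $d_+^{*k}\epsilon_0$ is computed by iterating the definition of $d_+^*$. Each application of $\eta$ shifts indices, and the wrap-around $y_{k+1}\mapsto ty_1$ produces powers of $t$. The result is $F\mapsto F[X+(q-1)(y_1+\dots+y_k)]$, up to an explicit substitution of $y_i$ by $t$-multiples that I would write out by induction.

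For \eqref{eq:I1}, apply $d_+^*$ once more and then $d_-$. The plethystic shift in $y_{k+1}$ created by $d_+^*$ is exactly cancelled by the shift $X-(q-1)y_{k+1}$ in $d_-$. What remains is $\langle y_{k+1}^0\rangle$ of $G\cdot\Exp[-y_{k+1}^{-1}X]$, where $G$ does not involve $y_{k+1}$; this constant term is $G$. The point is that after the cancellation no positive power of $y_{k+1}$ survives to pair with $\Exp[-y_{k+1}^{-1}X]$.

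For \eqref{eq:I2}, on elements of the form $d_+^{*k}F$ I would compare $d_+$, which is $-T_{1\nearrow k+1}$ applied to $y_{k+1}(\cdot)$, with $-q^ky_1d_+^*$. The image $d_+^{*k}F$ is symmetric in the appropriate sense, so the Hecke operators $T_i$ act on it by scalars. This lets $T_{1\nearrow k+1}$ collapse to a power of $q$ times the transposition moving $y_{k+1}$ to position $1$, which is exactly what $\eta$ does. This step is a direct computation using only the explicit formulas.

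\textbf{Step 2: generation.} Let $I$ be the left ideal generated by \eqref{eq:I1} and \eqref{eq:I2}. Consider the surjection $\A_{q,t}\epsilon_0/I\to V$ sending $\epsilon_0\mapsto 1\in V_0$. I would prove it is injective, following Mellit's strategy, in three parts.
\begin{enumerate}
\item A spanning set. Using \eqref{eq:Q1}, \eqref{eq:Q1*}, \eqref{eq:Q2} and the $\A_q$ relations, together with \eqref{eq:I1} and \eqref{eq:I2} to remove $d_-d_+^*$ and $d_+$ when they act on $d_+^{*k}\epsilon_0$, show that $\A_{q,t}\epsilon_0/I$ is spanned in each $\epsilon_k$-component by elements of the form $y^{a}\,T_w\,d_+^{*k}\,\Phi\,\epsilon_0$. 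Here $\Phi$ runs over a spanning set of $\epsilon_0\A_{q,t}\epsilon_0$, which acts on $\Lambda$.
\item Matching sizes. Show that the images of these spanning elements are linearly independent in $V^{\mathrm{pol}}$, for example by a leading-term argument in the $y$-variables. This shows the map onto $V^{\mathrm{pol}}$ is an isomorphism.
\item Passing to the Laurent module. Deduce $\operatorname{Ann}(V^{\mathrm{pol}})=I$ as Mellit does. Then $I\subseteq\operatorname{Ann}(V)\subseteq\operatorname{Ann}(V^{\mathrm{pol}})=I$, so the Laurent extension does not change the annihilator.
\end{enumerate}

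\textbf{Main obstacle.} I expect the spanning/normal-form argument in Step 2 to be the hardest part. One has to control how $d_-$ and $d_+$ commute past the $d_+^*$'s using only \eqref{eq:Q1}, \eqref{eq:Q1*}, \eqref{eq:Q2} and the $\A_q$ relations, with no hidden relations. In practice I would import this part from \cite[Theorem~3.3]{Mellit2021Rational}, checking the corrected factor $q^k$ in \eqref{eq:I2} against the computation in Step 1. The reduction at the start is what makes this import legitimate.
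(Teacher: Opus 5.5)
Your Step 1 is false as written, because it misreads what the theorem asserts. The trailing $\epsilon_0$ in \eqref{eq:I1} and \eqref{eq:I2} stands for the cyclic vector $1\in V_0$. The statement is that the kernel of $\A_{q,t}\epsilon_0\to V$, $a\mapsto a\cdot 1$, is the left ideal $I$ generated by these elements; this is how it is used in \Cref{lem:s*_acts_as_0} and \Cref{lem:preserve_kernel}.

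The relations do \emph{not} hold as operator identities on all of $V_0=\Lambda$, and the ``exact cancellation'' you invoke does not occur. Because of the wrap-around $y_{k+1}\mapsto ty_1$ in $\eta$, one has $d_+^{*(k+1)}F=F[X+(q-1)t(y_1+\dots+y_{k+1})]$. After applying $d_-$, the surviving shift is $\bigl((q-1)t-(q-1)\bigr)y_{k+1}=My_{k+1}$, which pairs nontrivially with $\Exp[-y_{k+1}^{-1}X]$. Already for $k=0$ this is the computation behind \Cref{thm:dgamma-recursion}: on $\Lambda$ one gets $d_-d_+^*=\D_0=\mathsf{id}-M\Delta_{e_1}$, e.g.\ $d_-d_+^*e_1=(1-M)e_1$. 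Likewise $(d_++y_1d_+^*)F=y_1\bigl(F[X+(q-1)ty_1]-F[X+(q-1)y_1]\bigr)$, which equals $My_1^2$ for $F=e_1$. So your Hecke-symmetry argument for \eqref{eq:I2} cannot work for general $F$ either.

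What is true, and all that is needed, is the check on the generator. First, $d_+^{*k}\cdot 1=1\in V_k$ and $d_-\cdot 1=1$ give \eqref{eq:I1}. Second, $T_i(y_{i+1})=qy_i$ gives $T_{1\nearrow k+1}(y_{k+1})=q^ky_1$. Hence $d_+\cdot 1=-q^ky_1$ in $V_{k+1}$, which gives \eqref{eq:I2} with exactly the factor $q^k$ of the footnote.

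The same confusion breaks your reduction. The chain $I\subseteq\operatorname{Ann}(V)\subseteq\operatorname{Ann}(V^{\mathrm{pol}})=I$ uses the two-sided annihilator of a module. But $I$ is a left ideal of $\A_{q,t}\epsilon_0$ and, by the counterexamples above, is not contained in that annihilator. In the correct reading the Laurent extension is a non-issue: $\A_{q,t}\cdot 1\subseteq V^{\mathrm{pol}}\subseteq V$, so the annihilator of $1$ is the same in both modules. Note also that $\A_{q,t}\epsilon_0/I\to V$ is not onto the Laurent module, only onto $V^{\mathrm{pol}}$.

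Step 2 is just Mellit's theorem, which is also all the paper does: it gives no proof beyond the citation. If you wanted to carry it out, your size-matching as phrased would fail. Letting $\Phi$ run over a spanning set of $\epsilon_0\A_{q,t}\epsilon_0$ cannot give linearly independent images. You would first need that the $\epsilon_0$-component of $\A_{q,t}\epsilon_0/I$ injects into $\Lambda$. That component involves no $y$'s, so this is the heart of the theorem rather than something a leading-term argument in the $y$'s can supply.
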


The following statement is a generalization of \cite[Proposition~3.24]{Mellit2021Rational}.

\begin{proposition}
    \label{prop:tau}
    The operator
    \[ \tau(u) F = F[X+u] \Exp\left[- u \sum_{i=1}^k y_i^{-1}\right] \]
    extends the translation operator $\mathcal{T}_{u}$ to $V$. Moreover, it commutes with $y_i$ and $d_-$, and satisfies
    \[ \tau(u)\ d_+^* = \Exp[-u/y_1] d_+^* \tau(u). \]
\end{proposition}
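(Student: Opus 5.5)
We check the three claims directly from the explicit formulas for the action on $V$. The plan is to use only plethystic manipulations and the identity $\Exp[A+B]=\Exp[A]\Exp[B]$.

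\textbf{Extension and commutation with $y_i$.} On $V_0=\Lambda$ the sum $\sum_{i=1}^k y_i^{-1}$ is empty, so $\tau(u)F=F[X+u]=\mathcal{T}_uF$. Since $\tau(u)$ acts only on the symmetric-function variable $X$ and multiplies by a series in the $y_i^{-1}$, it commutes with multiplication by each $y_i$. The operator is well defined on $V_k$ if we allow formal series in $u$, or equivalently treat $\Exp$ as a formal series in $u$.

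\textbf{Commutation with $d_-$.} Take $F\in V_k$ and set $Y=\sum_{i\le k}y_i^{-1}$ and $Y'=\sum_{i<k}y_i^{-1}$. First compute
\[d_-\tau(u)F=\langle y_k^0\rangle F[X-(q-1)y_k+u]\,\Exp[-uY]\,\Exp[-y_k^{-1}X].\]
Next compute
\[\tau(u)d_-F=\langle y_k^0\rangle F[X+u-(q-1)y_k]\,\Exp[-y_k^{-1}(X+u)]\,\Exp[-uY'].\]
Both use the same extraction $\langle y_k^0\rangle$, and $\tau(u)$ applied to the output does not involve $y_k$, so it may be moved inside the constant-term extraction. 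The two integrands agree because
\[\Exp[-uY]=\Exp[-uY']\,\Exp[-u/y_k].\]

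\textbf{The twisted relation with $d_+^*$.} For $F\in V_k$ we have $d_+^*F=\eta\bigl(F[X+(q-1)y_{k+1}]\bigr)$, which is a function of $y_2,\dots,y_{k+1}$ and $ty_1$. Applying $\tau(u)$ in $V_{k+1}$ gives
\[\tau(u)d_+^*F=\eta\bigl(F[X+u+(q-1)y_{k+1}]\bigr)\,\Exp\Bigl[-u\sum_{i=1}^{k+1}y_i^{-1}\Bigr].\]
On the other side, $\eta$ applied to $\Exp\bigl[-u\sum_{i\le k}y_i^{-1}\bigr]$ gives $\Exp\bigl[-u\sum_{i=2}^{k+1}y_i^{-1}\bigr]$. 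Hence
\[d_+^*\tau(u)F=\eta\bigl(F[X+u+(q-1)y_{k+1}]\bigr)\,\Exp\Bigl[-u\sum_{i=2}^{k+1}y_i^{-1}\Bigr].\]
The first factor uses that the translation $X\mapsto X+u$ commutes with $X\mapsto X+(q-1)y_{k+1}$ and with $\eta$. The two expressions differ exactly by the factor $\Exp[-u/y_1]$. This factor is multiplication by a function of $y_1$ only, so it may be written on the left, giving $\tau(u)d_+^*=\Exp[-u/y_1]\,d_+^*\tau(u)$.

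\textbf{Main obstacle.} The delicate step is the bookkeeping of $\eta$: we must check that $\eta$ sends $y_{k+1}\mapsto ty_1$ only inside $F$, and that the new variable $y_1$ appears solely through the $\tau$ factor on the $\tau(u)d_+^*$ side. We should also confirm that the constant-term extraction in $d_-$ is legitimate with Laurent series. This holds as long as we expand $\Exp[-u/y_k]$ as a power series in $u$, so that each coefficient is a Laurent polynomial.
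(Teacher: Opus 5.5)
Your proof is correct and follows essentially the same route as the paper: a direct computation from the explicit formulas for $d_-$ and $d_+^*$. Your factorization $\Exp[-y_k^{-1}(X+u)]=\Exp[-y_k^{-1}X]\,\Exp[-u/y_k]$ is exactly the relation \eqref{eq:TPExp} the paper uses to move $\mathcal{T}_u$ past $\mathcal{P}_{-1/y_k}$, and your bookkeeping of $\eta$ is the unpacked form of the paper's remark that $\mathcal{T}_u$ commutes with $d_+^*$.
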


\begin{proof}
    The commutation with $y_i$ is clear, as $\tau(u)$ does not act on the constants.
    Notice that, for every $F \in V_k$ with $k \geq 1$,
    \[ d_- F = \langle y_k^0 \rangle \mathcal{P}_{-\frac{1}{y_k}} \mathcal{T}_{-(q-1)y_k}F, \]
    hence
    \begin{align*}
        \tau(u)\ d_- F & = \Exp\left[- u \sum_{i=1}^{k-1} y_i^{-1}\right] \mathcal{T}_{u} d_- F \\
        & = \Exp\left[- u \sum_{i=1}^{k-1} y_i^{-1}\right] \mathcal{T}_{u} \langle y_k^0 \rangle\mathcal{P}_{-\frac{1}{y_k}} \mathcal{T}_{-(q-1)y_k} F \\
        & = \langle y_k^0 \rangle\mathcal{P}_{-\frac{1}{y_k}} \mathcal{T}_{-(q-1)y_k} \Exp\left[- u \sum_{i=1}^{k-1} y_i^{-1}\right] \Exp \left[-u y_k^{-1}\right] \mathcal{T}_{u} F && \text{by \eqref{eq:TPExp}} \\
        & = d_- \Exp\left[- u \sum_{i=1}^{k} y_i^{-1}\right] \mathcal{T}_{u} F \\
        & = d_- \tau(u)\ F,
    \end{align*}
    as desired. Moreover, since
    \[ d_+^* F = \eta(\mathcal{T}_{(q-1)y_{k+1}} F), \]
    $\mathcal{T}_{u}$ commutes with $d_+^*$, and the commutation of $\tau(u)$ with $d_+^*$ follows.
\end{proof}

We will need the following technical lemma.

\begin{lemma}
    \label{lemma:yz_commutation}
    We have the identity \[ \z_1 y_1 = y_1 (d_+^* d_- + q^{1-k} z_1 T_{1 \nearrow k}) T_{k \searrow 1} \] as operators on $V_k$. In particular, when $k=1$, we have \[ \z_1 y_1 = y_1 (d_+^* d_- + z_1). \]
\end{lemma}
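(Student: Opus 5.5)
Since both sides are operators on $V_k$, I plan a direct computation with the explicit action of Mellit's operators rather than a derivation from the defining relations of $\A_{q,t}$. First I expand $\z_1 = z_1/(qt)$ using $z_1 = \frac{q^k[d_+^*,d_-]}{1-q}T^*_{k\searrow 1}$. Then $\z_1 y_1 = \frac{q^{k-1}}{t(1-q)}(d_+^* d_- - d_- d_+^*)T^*_{k\searrow1}\, y_1$. The first step is to move $y_1$ leftward past $T^*_{k\searrow 1} = T_{k-1}^{-1}\cdots T_1^{-1}$. For this I use the definition $y_{i+1} = qT_i^{-1}y_iT_i^{-1}$, i.e.\ $T_i^{-1} y_i = q^{-1} y_{i+1} T_i$, together with \eqref{eq:yT_comm}. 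The result is $T^*_{k\searrow1} y_1 = q^{1-k} y_k T_{k\searrow 1}$, where the $T$'s now appear uninverted in the order $T_{k-1}\cdots T_1$, which is $T_{k\searrow 1}$. This already produces the common right factor $T_{k\searrow 1}$ appearing in the statement. It reduces the claim to an identity of the form
\[ \frac{1}{t(1-q)}\,[d_+^*,d_-]\, y_k = y_1\bigl(d_+^* d_- + q^{1-k} z_1 T_{1\nearrow k}\bigr) \]
on $V_k$.

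Next I handle the two terms of the commutator separately. For $d_-\, d_+^*\, y_k$, the $d_+^*$ step from Definition/Proposition (the action) sends $y_k\mapsto y_{k+1}$ and $X\mapsto X+(q-1)y_{k+1}$, with $y_{k+1}\mapsto ty_1$ under $\eta$. So $d_+^* y_k F = y_{k+1}\, d_+^* F$ when $k\ge 2$; equivalently this follows from \eqref{eq:Q1*} after iterating. For $k=1$ the factor instead becomes $ty_1$. Then $d_-$ takes a constant term in $y_{k+1}$. For $d_+^*\, d_-\, y_k$, the variable $y_k$ is exactly the one whose constant term $d_-$ extracts, and here the Laurent extension matters. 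Writing $d_- F=\langle y_k^0\rangle \mathcal{P}_{-1/y_k}\mathcal{T}_{-(q-1)y_k}F$ as in the proof of \Cref{prop:tau}, multiplying by $y_k$ shifts the extracted coefficient from $y_k^0$ to $y_k^{-1}$. I will expand $\Exp[-y_k^{-1}X]=\sum_n (-1)^n e_n[X]y_k^{-n}$ to compare the two coefficients.

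The intended strategy is to express $[d_+^*,d_-]y_k$ as $y_1$ times ($d_+^*d_-$ plus a multiple of $[d_+^*,d_-]$ conjugated by $T$'s). I then recombine the latter into $z_1 T_{1\nearrow k}$, using $T^*_{k\searrow1}T_{1\nearrow k}$-type cancellations. A cleaner route may be to prove the $k=1$ case $\z_1 y_1 = y_1(d_+^*d_- + z_1)$ first. There everything is explicit: $V_1$, no $T$'s, $\eta(y_1)=ty_1$. I would then reduce general $k$ to it, noting that $d_-$ and $d_+^*$ touch only $y_k$ and $y_{k+1}$, and using the $T$-conjugation relations to move the variable into place.

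The main obstacle I expect is the $d_+^* d_- y_k$ term. Extracting the $y_k^{-1}$ coefficient instead of the $y_k^0$ one produces $e_1$-type corrections, and I must recognize those corrections as $t(1-q)$ times the $z_1$ term. I would first verify the $k=1$ identity numerically on small inputs $F=e_1^a y_1^b$, to fix signs and the powers of $q$ and $t$, before doing the general bookkeeping.
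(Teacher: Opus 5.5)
Your reduction is correct. Since $T^*_{k\searrow1}y_1=q^{1-k}y_kT_{k\searrow1}$, the lemma is equivalent to $\frac{1}{t(1-q)}[d_+^*,d_-]\,y_k=y_1\bigl(d_+^*d_-+q^{1-k}z_1T_{1\nearrow k}\bigr)$ on $V_k$. Also $T^*_{k\searrow1}T_{1\nearrow k}=1$, so $q^{1-k}z_1T_{1\nearrow k}=\frac{q}{1-q}[d_+^*,d_-]$ exactly, with no $T$'s left to conjugate. The target is therefore
\[ [d_+^*,d_-]\,y_k=t\,y_1\bigl(d_+^*d_--q\,d_-d_+^*\bigr). \]

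The gap is that this identity is the whole content of the lemma, and you do not prove it. Worse, one of the concrete claims you make about it is false. For $d_+^*\colon V_k\to V_{k+1}$, the map $\eta$ sends $y_k\mapsto y_{k+1}$ for every $k\geq1$, so $d_+^*y_k=y_{k+1}d_+^*$ also holds for $k=1$; this is \eqref{eq:Q1*} with $i=k$. The factor $ty_1$ comes from the translation variable $y_{k+1}$, not from $y_k$. With your rule $d_+^*y_1=ty_1d_+^*$, the $k=1$ case already fails on $F=1$: it would require $-e_1-qty_1=(1-q)ty_1$.

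The mechanism is also not ``$e_1$-type corrections'' in $d_+^*d_-y_k$ to be matched with the $z_1$ term. That term is itself the commutator $\frac{q}{1-q}[d_+^*,d_-]$, and equality only appears once all four terms are written as coefficient extractions of one series. Finally, reducing to $k=1$ because ``$d_\pm$ only touch $y_k,y_{k+1}$'' is inaccurate, since $\eta$ relabels every variable. That reduction is not needed anyway.

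The missing computation works uniformly in $k$. For $F\in V_k$ set
\[ K(w)=\mathcal{T}_{(q-1)ty_1}\bigl(\Exp[-X/w]\,F[X-(q-1)w;y_2,\dots,y_k,w]\bigr). \]
Directly from the formulas for $d_-$ and $d_+^*$, you get $d_+^*d_-\,y_k^aF=\langle w^{-a}\rangle K(w)$. Commuting $\mathcal{T}_{(q-1)ty_1}$ past $\mathcal{P}_{-1/w}$ via \eqref{eq:TPExp} gives $d_-d_+^*\,y_k^aF=\langle w^{-a}\rangle E(w)K(w)$, where $E(w)=\Exp[(q-1)ty_1/w]=\frac{1-ty_1/w}{1-qty_1/w}$. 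Since
\[ 1-E(w)=\frac{(1-q)ty_1/w}{1-qty_1/w},\qquad 1-qE(w)=\frac{1-q}{1-qty_1/w}, \]
both sides agree. Indeed $[d_+^*,d_-]y_kF=\langle w^{-1}\rangle(1-E)K$ and $ty_1(d_+^*d_--qd_-d_+^*)F=ty_1\langle w^0\rangle(1-qE)K$, and each equals $(1-q)ty_1\langle w^0\rangle\frac{K(w)}{1-qty_1/w}$.

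For comparison, the paper expands $y_1=\frac{[d_+,d_-]}{q^{k-1}(q-1)}T_{k\searrow1}$ instead of $\z_1$. Then $\z_1$ commutes past $d_-$ and meets $d_+$, where \eqref{eq:Q2} turns $\z_1d_+$ into $-q^{k-1}y_1d_+^*$ (resp.\ $-q^ky_1d_+^*$), and $y_1$ commutes past $d_-$. This yields $\frac{1}{1-q}y_1(d_+^*d_--qd_-d_+^*)T_{k\searrow1}$ in a few lines, entirely inside $\A_{q,t}$. Expanding $\z_1$ instead leaves $y_k$ facing $d_-$, where no relation applies. That is why your route needs the explicit series computation above. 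Once that computation is supplied, it is a valid, if longer, proof on $V_k$.
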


\begin{proof}
    We have
    \begin{align*}
        \z_1 y_1 & = \z_1 \frac{d_+ d_- - d_- d_+}{q^{k-1}(q-1)} T_{k \searrow 1} \\
        & = \frac{1}{q^{k-1} (q-1)} \left( \z_1 d_+ d_- - \z_1 d_- d_+ \right) T_{k \searrow 1} \\
        & = \frac{1}{q^{k-1} (q-1)} \left( \z_1 d_+ d_- - d_- \z_1 d_+ \right) T_{k \searrow 1}\\
        & = \frac{1}{q^{k-1} (q-1)} \left( -q^{k-1} y_1 d_+^* d_- + q^k d_- y_1 d_+^* \right) T_{k \searrow 1}\\
        & = \frac{1}{1-q} y_1 \left( d_+^* d_- - q d_- d_+^* \right) T_{k \searrow 1}\\
        & = \frac{1}{1-q} y_1 \left( d_+^* d_- - q d_+^* d_- + q d_+^* d_- - q d_- d_+^* \right) T_{k \searrow 1}\\
        & = \frac{1}{1-q} y_1 \left( (1-q) d_+^* d_- + q (d_+^* d_- - d_- d_+^*) \right) T_{k \searrow 1}\\
        & = y_1 (d_+^* d_- + q^{1-k} z_1 T_{1 \nearrow k}) T_{k \searrow 1},
    \end{align*}
    as expected.
\end{proof}

Although this is not needed for our purposes, it is convenient to make the negative powers of the $y_i$ explicit in the algebra $\A_{q,t}$. This can be achieved in the following way.

\begin{definition}
    \label{def:Aqt_Y}
    We define $\A_{q,t}^\pm$ to be the algebra obtained from $\A_{q,t}$ by adding a loop $Y$ from $k$ to $k$ for $k > 0$, subject to the relation $Y y_1 = y_1 Y = \epsilon_k$.
\end{definition}

The action of $\A_{q,t}$ on $V$ can be extended to $\A_{q,t}^{\pm}$ by setting $Y F = y_1^{-1} F$. 
\Cref{thm:annihilator} extends to $\A_{q,t}^\pm$ as well, since the action of $Y$ is invertible.

\section{The \texorpdfstring{$\D_\gamma$}{Dgamma} operators}
\label{sec:Dgamma}

The $\D_\gamma$ operators arise from certain distinguished elements of the shuffle algebra \cite[Proposition~6.1]{Negut2014ShuffleAlgebra}, which are viewed as operators on symmetric functions as in \cite[Equation~(52)]{BHMPS2023ShuffleAnyLine}.

In their seminal series of papers, Blasiak et al.\ showed that the action of these elements on the constant symmetric function $1$ produces combinatorially interesting symmetric functions. These results eventually led to proofs of the shuffle theorem under any line \cite{BHMPS2023ShuffleAnyLine}, the extended Delta conjecture \cite{BHMPS2023ProofExtendedDelta}, and other impressive results.

However, their results rely on an implicit isomorphism between the shuffle algebra and the positive part of the elliptic Hall algebra $\mathcal{E}$, whose image can be computed only on special elements satisfying additional properties.

In this section, we give an alternative definition of these operators as elements of $\A_{q,t}^\pm$ and show that the two definitions are equivalent. The advantage of our alternative definition is that it is completely explicit, computationally efficient, and can be applied to any symmetric function.

We also reprove some properties of these operators from scratch, bypassing the shuffle algebra entirely.

\subsection{An explicit formula for the \texorpdfstring{$\D_\gamma$}{Dgamma} operators}

Throughout this section, let $\gamma = (\gamma_1, \dots, \gamma_\ell) \in \bigcup_{n > 0} \mathbb{Z}^n$ with $\ell(\gamma) = \ell$. For any such $\gamma$, define $\gamma_+ = (\gamma_1, \dots, \gamma_\ell + 1)$ and $\gamma_- = (\gamma_1 - 1, \dots, \gamma_\ell)$.

\begin{definition}
    \label{def:D_gamma}
    We define the operator $\D_\gamma \colon \Lambda \rightarrow \Lambda$ as \[ \D_\gamma F = d_- (-y_1)^{\gamma_1-1} \z_1 (-y_1)^{\gamma_2} \cdots \z_1 (-y_1)^{\gamma_\ell} \z_1 d_+ F. \] 
\end{definition}

\begin{remark}
    Even when $\gamma \not \geq 0$, the expression in \Cref{def:D_gamma} is well-defined using only the action of $\A_{q,t}$ on $V$; thus, this construction does not rely on \Cref{def:Aqt_Y}.
    
    However, if negative powers of $y_1$ occur, the element $\D_\gamma$ itself can be viewed only as an 
    element of $\A_{q,t}^\pm$, not as an element of $\A_{q,t}$.

    Notice that, using \Cref{lemma:yz_commutation}, we can rewrite the operator $\D_\gamma$ as \[ \D_\gamma F = d_- (-y_1)^{\gamma_1} (d_+^* d_- + z_1) (-y_1)^{\gamma_2} \cdots (d_+^* d_- + z_1) (-y_1)^{\gamma_\ell} d_+^* F, \]
    so it is an element of $\A_{q,t}$ whenever $\gamma \geq 0$, despite $\gamma_1 - 1$ appearing as an exponent in the definition.
\end{remark}

Our goal is to show that these operators coincide with the $\D_\gamma$ operators from \cite{BHMPS2023ShuffleAnyLine}. We do so by proving that they satisfy the same recurrence relations and that these relations completely determine the operators. In particular, when $\gamma = (m)$ has a single part, the operator $\D_m$ agrees with the one defined in \cite{BHMPS2023ShuffleAnyLine} and differs by a sign $(-1)^m$ from the corresponding operator defined in \cite{GarsiaHaimanTesler1999ExplicitPlethysticFormulas}.
 
Given $\alpha=(\alpha_1,\alpha_2,\dots,\alpha_r) \in \Z^r$ and $\beta=(\beta_1,\beta_2,\dots,\beta_s) \in \Z^s$, we denote by $\alpha\beta$ the concatenation
\[ \alpha\beta \coloneqq (\alpha_1,\alpha_2,\dots,\alpha_r,\beta_1,\beta_2,\dots,\beta_s)\in \Z^{r+s}. \]

\begin{theorem}
    \label{thm:dgamma-recursion}
    For $F\in\Lambda$, $m\in\Z$, and nonempty integer sequences $\alpha$ and $\beta$, the operators $\D_\gamma$ satisfy the relations
    \begin{enumerate}
        \item \label{dop:onepart} $\D_{(m)} F = \D_m F = (-1)^m \langle z^m \rangle \Exp[-zX] F[X + M/z]$ for $m \in \mathbb{Z}$;
        \item \label{dop:neg} $\D_{\alpha (m)} F = 0$ if $m < -\deg(F)$;
        \item \label{dop:recursion} $\D_{\alpha \beta} F = \D_\alpha \D_\beta F + qt \D_{\alpha_+ \beta_-} F$.
    \end{enumerate}
\end{theorem}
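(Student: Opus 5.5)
The plan is to prove the three relations directly from \Cref{def:D_gamma}, using only the explicit action of $\A_{q,t}$ on $V$ and \Cref{lemma:yz_commutation}. The natural order is to establish (3) first, since it is purely algebraic, and then treat (1) and (2) as computations in $V_1$.

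\textbf{Relation (3).} Write $\gamma = \alpha\beta$ and split the word defining $\D_{\alpha\beta}$ at the junction between $\alpha$ and $\beta$. The relevant piece is $\z_1 (-y_1)^{\beta_1}$, with the last factor of the $\alpha$-part on its left. Rewrite $\z_1 (-y_1)^{\beta_1} = \z_1 (-y_1)(-y_1)^{\beta_1-1}$ and apply \Cref{lemma:yz_commutation} at $k=1$: this is legitimate, because after the initial $d_+$ every intermediate element lies in $V_1$, since $\z_1$ and $y_1$ preserve $V_1$. The lemma gives
\[ \z_1 (-y_1) = (-y_1)\bigl(d_+^* d_- + z_1\bigr) = (-y_1)\, d_+^* d_- + qt\,(-y_1)\,\z_1 . \]
Substituting this, the $z_1$-term produces exactly the word for $qt\,\D_{\alpha_+\beta_-}$: the last exponent of $\alpha$ rises by one, and $\beta_1-1$ appears. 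The $d_+^* d_-$-term splits the word as
\[ \bigl[d_- \cdots (-y_1)^{\alpha_r+1} d_+^*\bigr]\,\bigl[d_- (-y_1)^{\beta_1-1}\z_1 \cdots \z_1 d_+\bigr]. \]
The right bracket is $\D_\beta$. For the left bracket to equal $\D_\alpha$, we need the identity
\[ (-y_1)\, d_+^* G = \z_1 d_+ G \quad \text{for } G \in V_0=\Lambda, \]
which is precisely \eqref{eq:Q2} with $k=0$. With this, relation (3) follows. One must also check that the sign convention $(-y_1)^{\gamma_1-1}$ is consistent when $\alpha$ has a single part; this is bookkeeping.

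\textbf{Relation (1).} For $\gamma=(m)$, write $\D_m F = d_-(-y_1)^{m-1}\z_1 d_+ F$ and use \eqref{eq:Q2} at $k=0$ to get $\D_m F = d_-(-y_1)^m d_+^* F$. Then compute explicitly: $d_+^* F = \eta(F[X+(q-1)y_1]) = F[X+(q-1)ty_1]$. Multiplying by $(-y_1)^m$ and applying $d_-$ gives
\[ \langle y_1^0\rangle\, (-y_1)^m\, F\bigl[X-(q-1)y_1+(q-1)ty_1\bigr]\,\Exp[-X/y_1]. \]
The plethystic argument simplifies to $X-(q-1)(1-t)y_1 = X + M y_1$. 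Setting $z=1/y_1$ converts this to $(-1)^m\langle z^m\rangle \Exp[-zX]\,F[X+M/z]$, as claimed.

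\textbf{Relation (2).} This is the step needing the most care. It suffices to show that the last application, $d_-(-y_1)^{\gamma_1-1}\z_1\cdots$, vanishes for the relevant degree. A cleaner route is induction on $\ell(\gamma)$ via (3): apply (3) with $\beta=(m)$, giving
\[ \D_{\alpha(m)}F = \D_\alpha \D_m F + qt\,\D_{\alpha_+(m-1)}F . \]
Here $\D_m F = 0$ for $m<-\deg F$ by (1), since $F[X+M/z]$ only involves $z^{-j}$ with $0\le j\le \deg F$. The second term has even smaller last part. So one inducts downward, showing that for fixed $F$ and fixed length, the terms vanish once $m$ is sufficiently negative, which must then be combined into a full argument.

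To avoid that circularity, I would instead argue directly: after $\z_1 d_+F$, the element of $V_1$ is a Laurent polynomial in $y_1$ whose lowest $y_1$-power is governed by $\deg F$. This lower bound is what makes $\D_{\alpha(m)}F$ vanish when $m<-\deg F$, and establishing it is the main obstacle. The key fact to show is that
\[ \z_1 d_+ F = -y_1\, d_+^* F = -y_1\,F[X+(q-1)ty_1] \]
has $y_1$-exponents in $[1,\deg F+1]$. Multiplying by $(-y_1)^m$ with $m<-\deg F$ then leaves only negative $y_1$-powers. I would then show that the operators $\z_1$ (via \Cref{lemma:yz_commutation}, i.e.\ $y_1 d_+^* d_- + y_1 z_1$ after factoring out $y_1^{-1}$) and the final $d_-$ preserve the property ``only negative powers of $y_1$ up to the needed shift'', so that the constant-term extraction in $d_-$ returns $0$. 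Tracking these $y_1$-degree bounds through $z_1$, whose action via $d_+^*$ and $\eta$ introduces a factor of $t$ but does not alter the $y_1$-degree structure, is the technical core of this step.
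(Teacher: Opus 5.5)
Your arguments for relations (1) and (3) are correct and coincide with the paper's. In both, \eqref{eq:Q2} at $k=0$ turns $\z_1 d_+$ into $-y_1 d_+^*$ on $V_0$. For (3), \Cref{lemma:yz_commutation} at $k=1$ splits $\z_1(-y_1)$ into a $d_+^*d_-$ term, which gives $\D_\alpha\D_\beta$, and a $qt\,\z_1$ term, which gives $qt\,\D_{\alpha_+\beta_-}$.

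The gap is in relation (2), and neither of your routes closes it.

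\textbf{Route A.} It only yields $\D_{\alpha(m)}F=(qt)^j\D_{\alpha'(m-j)}F$ for every $j\ge 0$, where $\alpha'$ is $\alpha$ with its last entry raised by $j$. This never terminates, as you noticed.

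\textbf{Route B.} It rests on an invariant that is false. On $V_1$, $\z_1=[d_+^*,d_-]/(t(1-q))$ does not preserve ``only nonpositive powers of $y_1$''. The operators $d_-$ and $d_+^*$ shift $X$ by multiples of the $y$'s, and $d_-$ then extracts a constant term, so $X$-degree is converted into $y_1$-degree. For instance, a direct computation gives $d_-(y_1^{-1}h_2)=0$ and $d_-d_+^*(y_1^{-1}h_2)=-(1-q)^2t\,y_1$, hence $\z_1\bigl(y_1^{-1}h_2\bigr)=(1-q)\,y_1$. Your claim that $z_1$ ``does not alter the $y_1$-degree structure'' therefore fails.

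Even a correct bound of this kind would be destroyed by the later factors $(-y_1)^{\alpha_i}$ with $\alpha_i>0$, which shift all exponents up by arbitrary amounts. So no statement of the form ``negative powers up to a shift'' can force the final constant term in $d_-$ to vanish. The starting claim is also off by one: for $F=1$ and $m=-1$, the element $(-y_1)^{m+1}d_+^*F$ equals $1$.

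\textbf{The missing idea.} The element is killed by the very next operator, so nothing needs to be tracked through the rest of the word. Write
\[ \z_1(-y_1)^m\z_1d_+F=\z_1(-y_1)^{m+1}d_+^*F=(-y_1)\bigl(d_+^*d_-+z_1\bigr)(-y_1)^md_+^*F. \]
The paper shows $\bigl(d_+^*d_-+z_1\bigr)(-y_1)^md_+^*F=0$ using the explicit formula of \Cref{lemma:z1y1a}. Both contributions are coefficients of $w_2^{-m}$ in
\[ \Exp\bigl[-(X+(q-1)tw_1)/w_2\bigr]\,F[X+(q-1)tw_1+Mw_2], \]
the $z_1$ contribution carrying an extra factor $1-\Exp[(q-1)tw_1/w_2]$. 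The only nonnegative powers of $w_2$ come from $F[\,\cdot+Mw_2]$, so they are at most $\deg F<-m$, and both coefficients vanish.

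Equivalently, the invariant you want is total $(X,y_1)$-degree rather than $y_1$-degree. The element $(-y_1)^{m+1}d_+^*F$ has all components of total degree at most $\deg F+m+1\le 0$. The same computation shows that $\z_1$ annihilates every element of $V_1$ of total degree $\le 0$.
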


\begin{proof}
    First, we check \eqref{dop:onepart}. We have
    \begin{align*}
        d_- (-y_1)^{m-1} \z_1 d_+ F & = d_- (-y_1)^m d_+^* F \\
        & = d_- (-y_1)^m F[X + (q-1)ty_1] \\
        & = \langle y_1^0 \rangle \Exp[-y_1^{-1} X] (-y_1)^m F[X + (q-1)ty_1 - (q-1)y_1] \\
        & = \langle y_1^0 \rangle \Exp[-y_1^{-1} X] (-y_1)^m F[X + M y_1] \\
        (z = y_1^{-1}) & = \langle z^0 \rangle \Exp[-zX] (-z)^{-m} F[X + M/z] \\
        & = (-1)^m \langle z^m \rangle \Exp[-zX] F[X + M/z] \\
        & = \D_{m} F
    \end{align*}
    as desired.

    Now we check \eqref{dop:neg}. If $\gamma$ has one part, the claim follows from \eqref{dop:onepart}. Otherwise, using the alternative definition, it is enough to prove that $(d_+^* d_- + z_1) (-y_1)^m d_+^* F = 0$ if $m < - \deg(F)$.
    Using \eqref{eq:Q2} and \Cref{lemma:z1y1a} applied to $d_+^* F$, the contribution of the $d_+^* d_-$ term is
    \[ 
        \left. \left(\langle w_2^{-m} \rangle \mathcal{T}_{(q-1)t w_1} \mathcal{P}_{-\frac{1}{w_2}} \mathcal{T}_{-(q-1)w_2} d_+^* F \right) \right\rvert_{w_1=y_1}
    \]
    while, up to a coefficient, the contribution of the $z_1$ term is
    \[ 
        \left. \left(\langle w_2^{-m} \rangle \left(1-\Exp\left[(q-1)t\frac{w_1}{w_2}\right] \right) \mathcal{T}_{(q-1)tw_1}\mathcal{P}_{-\frac{1}{w_2}} \mathcal{T}_{-(q-1)w_2} d_+^* F \right) \right\rvert_{w_1=y_1}.
    \]
    In either expression, the terms of $\mathcal{T}_{-(q-1)w_2} d_+^* F$ involve only powers of $w_2$ with nonnegative exponents, all at most $\deg(F)$. Since $-m > \deg(F)$, extracting the corresponding coefficient gives $0$, as expected.

    Finally, we check \eqref{dop:recursion}. Let $\alpha \in \mathbb{Z}^r$ and $\beta \in \mathbb{Z}^s$. We have
    \begin{align*}
        \D_{\alpha \beta} F & = d_- (-y_1)^{\alpha_1-1} \z_1 \cdots (-y_1)^{\alpha_r} \z_1 (-y_1)^{\beta_1} \cdots \z_1 (-y_1)^{\beta_s} \z_1 d_+ F \\
        & = d_- (-y_1)^{\alpha_1-1} \z_1 \cdots (-y_1)^{\alpha_r} \z_1 (-y_1) (-y_1)^{\beta_1-1} \cdots \z_1 (-y_1)^{\beta_s} \z_1 d_+ F \\
        & = d_- (-y_1)^{\alpha_1-1} \z_1 \cdots (-y_1)^{\alpha_r} (-y_1) (d_+^* d_- + qt \z_1) (-y_1)^{\beta_1-1} \cdots \z_1 (-y_1)^{\beta_s} \z_1 d_+ F \\
        & = d_- (-y_1)^{\alpha_1-1} \z_1 \cdots (-y_1)^{\alpha_r} (-y_1) d_+^* d_- (-y_1)^{\beta_1-1} \cdots \z_1 (-y_1)^{\beta_s} \z_1 d_+ F \\
        & \quad + qt \cdot d_- (-y_1)^{\alpha_1-1} \z_1 \cdots (-y_1)^{\alpha_r + 1} \z_1 (-y_1)^{\beta_1-1} \cdots \z_1 (-y_1)^{\beta_s} \z_1 d_+ F \\
        & = d_- (-y_1)^{\alpha_1-1} \z_1 \cdots (-y_1)^{\alpha_r} \z_1 d_+ d_- (-y_1)^{\beta_1-1} \cdots \z_1 (-y_1)^{\beta_s} \z_1 d_+ F \\
        & \quad + qt \cdot d_- (-y_1)^{\alpha_1-1} \z_1 \cdots (-y_1)^{\alpha_r + 1} \z_1 (-y_1)^{\beta_1-1} \cdots \z_1 (-y_1)^{\beta_s} \z_1 d_+ F \\
        & = \D_\alpha \D_\beta F + qt \D_{\alpha_+ \beta_-} F,
    \end{align*}
    as desired.
\end{proof}

\begin{proposition}
    \label{prop:dgamma-uniquely-determined}
    $\D_\gamma F$ is uniquely determined by the relations \eqref{dop:onepart}, \eqref{dop:neg}, and \eqref{dop:recursion}.
\end{proposition}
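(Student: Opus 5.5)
We argue by induction, showing that any family of operators $\{\D'_\gamma\}$ satisfying \eqref{dop:onepart}, \eqref{dop:neg}, \eqref{dop:recursion} has $\D'_\gamma F$ computable from these relations alone, for every $\gamma$ and every homogeneous $F$. By linearity we may take $F \in \Lambda^{(d)}$ homogeneous. The one-part case is fixed by \eqref{dop:onepart}, so assume $\ell(\gamma) = \ell \geq 2$. Induct on $\ell$, and for fixed $\ell$ run a secondary induction on the last entry $\gamma_\ell$.

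Split $\gamma = \alpha (m)$ with $\alpha = (\gamma_1, \dots, \gamma_{\ell-1})$ and $m = \gamma_\ell$. Applying \eqref{dop:recursion} with $\beta = (m)$ gives
\[ \D_{\alpha(m)} F = \D_\alpha \D_m F + qt\, \D_{\alpha_+ (m-1)} F. \]
In the first term, $\D_m F$ is determined by \eqref{dop:onepart}. It is a symmetric function, homogeneous of degree $d+m$, or zero. So $\D_\alpha(\D_m F)$ is determined by the induction hypothesis on length, since $\ell(\alpha) = \ell - 1$. The second term has the same length $\ell$, but its last entry is $m-1 < m$.

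Iterating this rewriting on the second term, after $j$ steps we reach $\D_{\alpha'(m-j)} F$ with $\alpha' = (\gamma_1, \dots, \gamma_{\ell-1}+j)$. Once $m - j < -d$, relation \eqref{dop:neg} kills this term. So the descending induction on the last entry terminates after at most $m + d + 1$ steps, independently of the other entries. This yields the finite expansion
\[ \D_{\alpha(m)} F = \sum_{j=0}^{m+d} (qt)^j\, \D_{\alpha^{(j)}} \D_{m-j} F, \qquad \alpha^{(j)} = (\gamma_1, \dots, \gamma_{\ell-2}, \gamma_{\ell-1}+j), \]
where every term involves only operators of length $<\ell$ applied to symmetric functions. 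If $m < -d$, the sum is empty and the value is $0$, consistent with \eqref{dop:neg}.

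The main point to check is well-foundedness. The induction is lexicographic: first on $\ell$, then on the last entry, which is bounded below by $-d-1$ for fixed $d$. The one subtlety is that the inner induction on length is applied to the new argument $\D_{m-j}F$, whose degree is $d+m-j$ and differs from $d$. This is harmless, because the outer induction hypothesis is stated for all homogeneous inputs of every degree. The entries of $\alpha^{(j)}$ grow, but that does not matter since length strictly drops.

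Hence any two families satisfying the three relations agree on every $F$, which proves the claim. Combined with \Cref{thm:dgamma-recursion}, this will let us identify our $\D_\gamma$ with the operators of \cite{BHMPS2023ShuffleAnyLine}, provided theirs satisfy the same relations.
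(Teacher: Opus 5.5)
Your proof is correct and follows essentially the same route as the paper. Both arguments use a double induction on $\ell(\gamma)$ and the last entry $\gamma_\ell$, apply \eqref{dop:recursion} only with $\beta = (\gamma_\ell)$, and take \eqref{dop:onepart} and \eqref{dop:neg} as the base cases. Your explicit finite expansion, and your remark that the length induction must cover inputs of every degree (since $\D_m F$ has degree $d+m$), just spell out what the paper's induction uses implicitly.
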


\begin{proof}
    Given $\gamma \in \bigcup_{n > 0} \mathbb{Z}^n$ and $F \in \Lambda$, we will actually show that $\D_\gamma F$ is completely determined by the relations \eqref{dop:onepart}, \eqref{dop:neg}, and \eqref{dop:recursion} with $\alpha = (\gamma_1, \dots, \gamma_{\ell-1})$ and $\beta = (\gamma_\ell)$; then \Cref{thm:dgamma-recursion} ensures that \eqref{dop:recursion} is satisfied in full generality, since the left-hand side does not depend on the choice of the split of $\gamma$ into $\alpha$ and $\beta$.

    Let $\deg F = n$. We proceed by double induction on $\ell(\gamma)$ and $\gamma_\ell$. If $\ell(\gamma) = 1$, then \eqref{dop:onepart} uniquely determines $\D_\gamma F$; if $\gamma_\ell < -n$, then by \eqref{dop:neg} we have $\D_\gamma F = 0$, so the base cases are covered. If $\ell(\gamma) > 1$ and $\gamma_\ell \geq -n$, by \eqref{dop:recursion} we have \[ \D_\gamma F = \D_{(\gamma_1, \dots, \gamma_{\ell-1})} \D_{(\gamma_\ell)} F + qt \D_{(\gamma_1, \dots, \gamma_{\ell-1}+1, \gamma_\ell-1)} F, \] and now the right-hand side of the equation is uniquely determined by induction.
\end{proof}

In the rest of this work, we reprove every statement using our definition. The following result is an example of a statement that is known for the operators from \cite{BHMPS2023ShuffleAnyLine}, but which we reprove for our operators.

\begin{lemma}
    \label{lem:Dgamma_q1}
    For any $\gamma \in \mathbb{N}^n$, upon specialization at $q=1$, $\D_\gamma$ acts by multiplication by $\left( \D_\gamma \cdot 1 \right)_{q=1} $.
\end{lemma}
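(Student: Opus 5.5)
We prove the statement by strong induction on the length $\ell$ of $\gamma$, and within a fixed length by induction on the last part $\gamma_\ell$. The recursion \eqref{dop:recursion} of \Cref{thm:dgamma-recursion} with $\alpha=(\gamma_1,\dots,\gamma_{\ell-1})$ and $\beta=(\gamma_\ell)$ reduces everything to the one-part case. We need one point of care: the recursion produces $\alpha_+\beta_-$, whose last entry $\gamma_\ell-1$ may become negative even if $\gamma\in\N^n$. So we prove the stronger statement for all $\gamma\in\Z^n$ with all parts except possibly the last nonnegative, restricted to the range where things are nonzero. Concretely, we claim that for every such $\gamma$ and every $F$,
\[ (\D_\gamma F)_{q=1} = (\D_\gamma 1)_{q=1}\cdot F_{q=1} \quad\text{when }\gamma_\ell\ge 0, \]
while $\D_\gamma$ with $\gamma_\ell<0$ vanishes at $q=1$.

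\emph{One-part case.} At $q=1$ we have $M=(1-q)(1-t)=0$, so by \eqref{dop:onepart}
\[ (\D_m F)_{q=1} = (-1)^m\langle z^m\rangle \Exp[-zX]\,F[X] = (-1)^m h_m[-X]\,F = e_m F , \]
which is multiplication by $e_m=(\D_m 1)_{q=1}$. For $m<0$ this is $0$. This is the key simplification: the plethystic translation by $M/z$ disappears at $q=1$.

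\emph{Inductive step.} Apply \eqref{dop:recursion}:
\[ \D_\gamma F = \D_\alpha\D_{\gamma_\ell}F + qt\,\D_{\alpha_+(\gamma_\ell-1)}F . \]
We must be careful that specializing $q=1$ commutes with composition. Since all operators here are defined over $\Q[q^{\pm1},t^{\pm1}]$ acting on $\Lambda$ (one can check the formula only involves polynomial coefficients: $\z_1$ contains $1/(qt)$ and $y_1,d_\pm$ have integral action), specialization is a ring map and commutes with composition. Then $(\D_\alpha\D_{\gamma_\ell}F)_{q=1}=(\D_\alpha 1)_{q=1}\, e_{\gamma_\ell}F$ by induction on length. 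The second term, $\D_{\alpha_+(\gamma_\ell-1)}F$, is handled by induction on the last part, with base case $\gamma_\ell-1<0$.

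\emph{Base case with a negative last part.} This is the main obstacle, because \eqref{dop:neg} only kills $\D_{\alpha(m)}F$ when $m<-\deg F$, not whenever $m<0$. So I would instead show directly that at $q=1$ the term $\D_{\alpha(m)}F$ with $m<0$ expands, via the recursion in the opposite direction, into terms each of which still factors as a multiplication. Alternatively, and more cleanly, I would use the inner formula: at $q=1$ we have $z_1$, $d_+^*$ and $d_-$ with $M=0$, and $d_+^*$ becomes pure substitution $F\mapsto F$ with no $y$-dependence. The whole word then acts on $F$ by multiplying it by a Laurent expression in $y$ produced from $1$, because every translation $\mathcal T_{(q-1)\cdot}$ is trivial. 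Thus I would verify directly from the explicit action formulas that at $q=1$ each of $d_-$, $d_+^*$, $y_1$, $T_i$ (which becomes $s_i$) and $z_1$ is $\Lambda$-linear, meaning it commutes with multiplication by symmetric functions in $X$. Given the rewritten form
\[ \D_\gamma F = d_- (-y_1)^{\gamma_1}(d_+^*d_-+z_1)\cdots d_+^*F, \]
it then follows immediately that $(\D_\gamma F)_{q=1}=F\cdot(\D_\gamma 1)_{q=1}$, making the induction unnecessary.

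The delicate part is checking that $z_1$ is $\Lambda$-linear at $q=1$. Its definition has $(1-q)$ in the denominator, so it must first be rewritten as a genuine polynomial-coefficient operator, for instance through the $\z_1$ expression in \Cref{lemma:yz_commutation} or via \eqref{eq:Q2}. After that we check that the resulting divided-difference operator involves only $y$'s and the multiplication by $\Exp[-X/y]$, so that all $X$-translations carry a factor $(q-1)$.
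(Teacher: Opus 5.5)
Your fallback argument is correct and differs from the paper's. However, the ``main obstacle'' that made you abandon the induction is not real, and that double induction is exactly the paper's proof. Prove the single statement $(\D_\gamma F)_{q=1}=(\D_\gamma\cdot 1)_{q=1}\,F_{q=1}$ with the last part of $\gamma$ ranging over $\Z$; for $\gamma_\ell<0$ the right-hand side is $0$ by \eqref{dop:neg} applied to $F=1$. Start the inner induction at $\gamma_\ell<-\deg F$, where \eqref{dop:neg} makes both sides vanish. For $-\deg F\le\gamma_\ell<0$ the recursion step goes through verbatim: $(\D_{\gamma_\ell}F)_{q=1}=0$ kills the first term of \eqref{dop:recursion}, and the second term has smaller last part. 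So $\gamma_\ell-1<0$ should never be treated as a base case, and no reverse recursion is needed. In this route, regularity at $q=1$ and compatibility of specialization with composition are automatic, because \eqref{dop:onepart} and \eqref{dop:recursion} have coefficients polynomial in $q,t$. Your generator-level justification of this overlooked the $1/(1-q)$ in $z_1$.

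Your direct route shows that every factor of $d_-(-y_1)^{\gamma_1-1}\z_1\cdots\z_1 d_+$ acts $\Lambda$-linearly at $q=1$. It needs no induction and no bookkeeping of negative parts, and it holds verbatim for all $\gamma\in\Z^\ell$ and for any word of the same shape between $V_0$ and $V_1$. The paper's route instead uses only the three relations of \Cref{thm:dgamma-recursion}, so it applies to any family of operators satisfying them.

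The price of your route is its one nontrivial input: regularity and $\Lambda$-linearity of $\z_1$ on $V_1$ at $q=1$. The tools you name do not provide this. \Cref{lemma:yz_commutation} still contains $z_1$, and \eqref{eq:Q2} only rewrites $\z_1 d_+$, whereas the inner $\z_1$'s act on arbitrary elements of $V_1$. Use \Cref{lemma:z1y1a} instead. Its prefactor $\frac{1-\Exp[(q-1)tw_1/w_2]}{t(1-q)}=\frac{w_1/w_2}{1-qtw_1/w_2}$ is regular at $q=1$, both translations are by multiples of $q-1$, and $\mathcal{P}_{-1/w_2}$ commutes with multiplication by elements of $\Lambda$. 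So at $q=1$ one gets $\z_1 y_1^aF=\langle w_2^{-a}\rangle\frac{w_1/w_2}{1-tw_1/w_2}\Exp[-X/w_2]\,F[X;w_2]\rvert_{w_1=y_1}$, which is visibly $\Lambda$-linear. Each coefficient extraction here is a finite sum, so it commutes with specialization.
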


\begin{proof}
    We proceed by double induction on $\ell(\gamma)$ and $\gamma_{\ell(\gamma)}$, letting the last component vary in $\mathbb{Z}$ rather than $\mathbb{N}$. Let $F \in \Lambda$ be regular at $q=1$ (i.e.\ its coefficients when expanded in the Schur basis have no poles at $q=1$, so that the specialization is well-defined).
    
    If $\gamma = (m)$, we have
    \begin{align*}
        \left( \D_{(m)} F \right)_{q=1} & = (-1)^m \langle z^m \rangle \Exp[-zX] F[X + M/z] \rvert_{q=1} \\
        & = (-1)^m \langle z^m \rangle \Exp[-zX] F[X] \rvert_{q=1} \\
        & = e_m F \rvert_{q=1}
    \end{align*}
    if $m \geq 0$ and $0$ otherwise, so the base case for the length holds. If $\gamma_{\ell(\gamma)} < - \deg(F)$, then $\D_\gamma F = 0$ by \Cref{thm:dgamma-recursion}. Finally, if $\gamma = (\gamma', m)$, we have
    \[ \left. \D_\gamma F \right\rvert_{q=1} = \left. \left( \D_{\gamma'} \D_{(m)} + qt \D_{(\gamma'_+, m-1)} \right) F \right\rvert_{q=1} \]
    and the claim follows by induction.
\end{proof}

\subsection{Identification with the \texorpdfstring{Negu\textcommabelow{t}}{Negut} operators}

In this subsection, we show that the operators $\D_\gamma$ defined in \Cref{def:D_gamma} coincide with the operators $\D_\gamma$ defined in \cite[Equation~(52)]{BHMPS2023ShuffleAnyLine}.
Until this identification is proved, $\D_\gamma$ in this subsection denotes the latter operators. In particular, the two lemmas below are proved independently of \Cref{def:D_gamma}.

Notice that the results in this paper are completely independent from this identification and hold for the operators in \Cref{def:D_gamma}: outside of this subsection, we do not rely on any results from \cite{BHMPS2023ShuffleAnyLine} or \cite{Negut2014ShuffleAlgebra} related to the $\D_\gamma$ operators.

\begin{theorem}
    \label{thm:same-operators}
    The operators $\D_\gamma$ defined in \cite[Equation~(52)]{BHMPS2023ShuffleAnyLine} satisfy the relations \eqref{dop:onepart}, \eqref{dop:neg}, and \eqref{dop:recursion}.
    In particular, they coincide with the operators $\D_\gamma$ from \Cref{def:D_gamma}.
\end{theorem}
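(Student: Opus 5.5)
The plan is to recall the definition of the Negu\textcommabelow{t} operators in \cite[Equation~(52)]{BHMPS2023ShuffleAnyLine}. For $\gamma=(\gamma_1,\dots,\gamma_\ell)$, the operator $\D_\gamma$ is the image in $\mathcal{E}$ of the shuffle-algebra element
\[ \frac{z_1^{\gamma_1}\cdots z_\ell^{\gamma_\ell}}{\prod_{i<j}\bigl(1-qt\,z_i/z_j\bigr)}, \]
symmetrized (in Negu\textcommabelow{t}'s normalization) and then acting on $\Lambda$. Equivalently, it is the constant-term (or coefficient-extraction) formula
\[ \D_\gamma F = \langle z_1^{\gamma_1}\cdots z_\ell^{\gamma_\ell}\rangle\, \frac{(-1)^{|\gamma|}\prod_{i<j}\bigl(1-z_i/z_j\bigr)\cdots}{\prod_{i<j}\bigl(1-qt\,z_i/z_j\bigr)}\,\Exp\Bigl[-X\sum_i z_i\Bigr]\,F\Bigl[X+M\sum_i z_i^{-1}\Bigr], \]
expanded as a series in the region $|z_1|\ll\cdots\ll|z_\ell|$ dictated by the order of the operators. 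I will work with this explicit formula, which in \cite{BHMPS2023ShuffleAnyLine} is written as a composition of the one-part operators $\D_{\gamma_i}$ twisted by the kernel $\prod_{i<j}(1-z_i/z_j)/(1-qt z_i/z_j)$ (the "$\mathcal{H}$-type" kernel). The first task is to fix precisely this normal-ordered form, since everything below is read off from it.

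Relation \eqref{dop:onepart} is immediate from this form: for $\ell=1$ there is no kernel, and the formula reduces to $(-1)^m\langle z^m\rangle\Exp[-zX]F[X+M/z]$, which matches the convention of \cite{BHMPS2023ShuffleAnyLine} (up to the sign discussion made after \Cref{def:D_gamma}). Relation \eqref{dop:neg} comes from the innermost variable $z_\ell$. In the normal-ordered expansion, $z_\ell$ appears with negative powers only through $F[X+M/z_\ell+\cdots]$, which contributes powers $z_\ell^{-j}$ with $0\le j\le\deg F$. Every other factor ($\Exp[-z_\ell X]$, and the kernel expanded in $z_i/z_\ell$ for $i<\ell$) contributes only nonnegative powers of $z_\ell$. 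So the coefficient of $z_\ell^m$ vanishes when $m<-\deg F$. Care is needed to choose the expansion direction so that the kernel really is a power series in $z_i/z_\ell$.

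The main step is \eqref{dop:recursion}. Splitting $\gamma=\alpha\beta$ at position $r$, I will use the identity of rational functions
\[ \frac{1-z_r/z_{r+1}}{1-qt\,z_r/z_{r+1}} = 1 - \frac{(1-qt)\,z_r/z_{r+1}}{1-qt\,z_r/z_{r+1}}, \]
or, more usefully, rewrite $\frac{1}{1-qtu}=1+\frac{qtu}{1-qtu}$ with $u=z_r/z_{r+1}$. In the composition $\D_\alpha\D_\beta$, the cross kernel factors between the $\alpha$-block and the $\beta$-block are absent. The difference $\D_{\alpha\beta}-\D_\alpha\D_\beta$ should therefore be exactly the term carrying an extra factor $qt\,z_r/z_{r+1}$ times the full kernel, and extracting coefficients shifts $\gamma_r\mapsto\gamma_r+1$ and $\gamma_{r+1}\mapsto\gamma_{r+1}-1$, which gives $qt\,\D_{\alpha_+\beta_-}$. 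This step is the main obstacle. The cross factors involve all pairs $(i,j)$ with $i\le r<j$, not only the adjacent pair, so a naive manipulation does not isolate a single adjacent factor.

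To get around this, I would first prove \eqref{dop:recursion} only for $\beta$ of length one, which is all that \Cref{prop:dgamma-uniquely-determined} needs. I would then invoke the symmetrization in Negu\textcommabelow{t}'s definition, or the equivalent "$\prod_{i<j}$ over all pairs" form of \cite[Equation~(52)]{BHMPS2023ShuffleAnyLine}, to reduce the remaining cross factors to the adjacent one, arguing by induction on $\ell(\alpha)$. Once the three relations hold, \Cref{prop:dgamma-uniquely-determined} together with \Cref{thm:dgamma-recursion} gives equality with the operators of \Cref{def:D_gamma}.
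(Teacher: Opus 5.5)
Your argument depends on the sentence ``Equivalently, it is the constant-term formula \dots'', and that is where it breaks. In \cite[Equation~(52)]{BHMPS2023ShuffleAnyLine}, $\D_\gamma=\psi(\eta_\gamma)$. Here $\eta_\gamma$ is a Laurent polynomial with $\mathbf{H}^{(\ell)}_{q,t}(\eta_\gamma)=\mathbf{H}^{(\ell)}_{q,t}(R_\gamma)$, where $R_\gamma=x^\gamma/\prod_{j=1}^{\ell-1}(1-qt\,x_j/x_{j+1})$ has adjacent pairs only. The map $\psi$ sends a monomial $x^a$ to the ordered product $\D_{a_1}\cdots\D_{a_\ell}$. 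Composing the one-part formulas does give
\[ \D_{a_1}\cdots\D_{a_\ell}F=(-1)^{\lvert a\rvert}\langle z^a\rangle \prod_{i<j}\Exp[-Mz_j/z_i]\,\Exp\Bigl[-X{\textstyle\sum_i} z_i\Bigr]\,F\Bigl[X+M{\textstyle\sum_i} z_i^{-1}\Bigr], \]
with the kernel expanded in $z_j/z_i$ for $i<j$. However, nothing in \cite{BHMPS2023ShuffleAnyLine} lets you replace $\eta_\gamma$ by the geometric expansion of $R_\gamma$. That is, nothing there gives $\psi(\eta_\gamma)=\sum_{k\in\mathbb{N}^{\ell-1}}(qt)^{\lvert k\rvert}\D_{\gamma_1+k_1}\D_{\gamma_2-k_1+k_2}\cdots\D_{\gamma_\ell-k_{\ell-1}}$. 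This would require $\psi$ to be compatible with that expansion, which is not automatic. An iterated constant term, taken in a fixed region, of a function with poles at $z_i=qz_j$ and $z_i=tz_j$ is not symmetric, so it does not visibly depend only on $\mathbf{H}^{(\ell)}_{q,t}$. The explicit evaluation available from \cite{BHMPS2023ShuffleAnyLine} is $\D_\gamma\cdot 1$, through the polynomial part. Moreover, this compatibility is equivalent to the theorem itself: by \Cref{cor:Dgamma1plethystic}, the operators of \Cref{def:D_gamma} are given by exactly that expansion. Your plan therefore assumes what it is supposed to prove.

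The formula as you wrote it is also wrong in ways that derail the individual steps. First, the denominator runs over adjacent pairs only. Second, the cross factors between the $\alpha$- and $\beta$-blocks are not absent from $\D_\alpha\D_\beta$: composition produces $\prod_{i<j}\Exp[-Mz_j/z_i]$ over all pairs. With the correct kernel, \eqref{dop:recursion} follows immediately from $1/(1-qtu)=1+qtu/(1-qtu)$ at the split, so your ``main obstacle'' is an artifact; in any case \eqref{dop:recursion} is \cite[Equation~(47)]{BHMPS2023ProofExtendedDelta}. Third, your expansion region is reversed: expanded in $z_i/z_\ell$, the kernel contributes unboundedly negative powers of $z_\ell$, and your argument for \eqref{dop:neg} fails.

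The paper avoids any explicit formula for these operators. It cites \eqref{dop:onepart} and \eqref{dop:recursion}, and proves \eqref{dop:neg} in three steps:
\begin{enumerate}
\item $\D_\gamma\cdot 1=0$ for $\gamma_\ell<0$, via the Catalanimal formula and Weyl straightening (\Cref{lem:on-one}).
\item $[\D_\gamma,p_k^\bullet]=(-1)^k(1-q^k)(1-t^k)\sum_i\D_{\gamma+k\mathbf{e}_i}$ (\Cref{lem:commutator}). This holds because multiplication by the symmetric $p_k(x_1,\dots,x_\ell)$ preserves the kernel of $\mathbf{H}^{(\ell)}_{q,t}$.
\item Induction on $\deg F$ with $F=p_kG$: the shifts raise the last index by at most $k$ while the degree drops by $k$.
\end{enumerate}
\Cref{prop:dgamma-uniquely-determined} then gives the identification.
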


Relation~\eqref{dop:onepart} is \cite[Proposition~3.3.4]{BHMPS2023ShuffleAnyLine}, and relation~\eqref{dop:recursion} is \cite[Equation~(47)]{BHMPS2023ProofExtendedDelta}. Relation~\eqref{dop:neg} is more involved, and its proof is going to take the rest of this subsection.

\begin{lemma}
    \label{lem:on-one}
    If $\gamma \in \Z^\ell$ and $\gamma_\ell<0$, then $\D_{\gamma} \cdot 1 = 0$.
\end{lemma}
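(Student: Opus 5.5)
Here $\D_\gamma$ denotes the operators of \cite[Equation~(52)]{BHMPS2023ShuffleAnyLine}, so we may only use facts established for them. The natural route is the known formula for $\D_\gamma \cdot 1$. In \cite{BHMPS2023ShuffleAnyLine}, the Negu\textcommabelow{t} elements acting on $1$ are given by a Hall--Littlewood type series. Concretely, $\D_\gamma \cdot 1$ is the polynomial part (in the sense of taking $\langle z^0\rangle$ after expanding in the region $|z_1|\ll \dots \ll |z_\ell|$, or equivalently applying the operator $\omega$ / $\sigma$ symmetrization) of
\[ z_1^{\gamma_1}\cdots z_\ell^{\gamma_\ell}\,\prod_{i<j}\frac{1-qt\,z_i/z_j}{(1-q z_i/z_j)(1-t z_i/z_j)} \prod_{i<j}(1-z_i/z_j)\cdots, \]
evaluated against $\Exp[X\sum_i z_i^{-1}]$-type kernels. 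I would first state precisely which formula from \cite{BHMPS2023ShuffleAnyLine} (their Proposition relating $\D_\gamma\cdot 1$ to $H(z^\gamma \prod_{i<j}\ldots)$, i.e.\ the ``Catalanimal'' expression) is used. The plan is then to show that the coefficient extraction kills every term when $\gamma_\ell<0$.

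The key observation I would make is the following. In that expression, the symmetric function attached to a Laurent monomial $z^\lambda$ is given by polynomial truncation of the form $\langle z^0\rangle\, z^{\lambda}\prod_{i}\Omega[\ldots]$. The variable $z_\ell$ is the outermost one in the ordering. Expanding every factor $\frac{1}{1-u z_i/z_\ell}$ as a power series in $z_i/z_\ell$ produces only nonpositive powers of $z_\ell$. Hence every monomial arising has $z_\ell$-exponent at most $\gamma_\ell<0$. The kernel contributes only nonnegative powers, pairing $z_\ell^{-a}$ with $h_a$-type terms. So a monomial with negative total $z_\ell$-exponent contributes zero to the polynomial/constant part, and the whole series vanishes.

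An alternative I would keep in reserve is a direct argument inside the elliptic Hall algebra. Here $\D_{\gamma}$ with $\gamma_\ell<0$ would be written via the shuffle algebra as a product whose rightmost factor is a negative-slope element annihilating the vacuum $1$, since elements $P_{a,b}$ with $b<0$ (or the appropriate $e_{-k}$-type lowering operators) kill $1$. I would try to use the recursion \eqref{dop:recursion}, which is known for these operators, to reduce to the case $\ell=1$. The one-part case follows from \eqref{dop:onepart}: for $m<0$, $\langle z^m\rangle \Exp[-zX]\cdot 1=0$. The induction on $\ell$ would go as follows: $\D_{\alpha(m)}1=\D_\alpha\D_m 1+qt\D_{\alpha_+(m-1)}1$. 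The first term vanishes by the base case, and the second has last entry $m-1<0$ but the same length, so it does not directly reduce.

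This is the main obstacle: the recursion does not terminate by itself, since the last entry decreases indefinitely. Hence I would rely on the series formula, where the vanishing is a clean degree argument. The delicate step is to verify carefully the direction of expansion in \cite{BHMPS2023ShuffleAnyLine}, so that $z_\ell$ indeed only acquires nonpositive powers from the cross factors. If the convention is opposite, I would apply the argument to $z_1$ after the standard reversal symmetry.
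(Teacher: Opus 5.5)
Your main route is correct and is essentially the paper's: both invoke \cite[Corollary~3.7.2]{BHMPS2023ShuffleAnyLine} and observe that every monomial obtained by expanding the cross factors in $z_i/z_j$ ($i<j$) has last exponent at most $\gamma_\ell<0$. The only difference is the last step: the paper kills each $\boldsymbol{\sigma}(x^\mu)$ by Weyl straightening ($\lambda_\ell=\min_i(\mu_i+\rho_i)\leq\mu_\ell<0$), whereas you use the constant-term formula of \Cref{lem:poly_part}, in which $\Exp[\overline{W}X]$ and $\prod_{i<j}(1-w_i/w_j)$ contribute only nonpositive (not ``nonnegative'', as you wrote) powers of $z_\ell$. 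This works equally well, provided you also quote the Schur-length bound from that corollary (or use an infinite alphabet $X$ in the kernel) to pass from the $\ell$-variable identity to $\D_\gamma\cdot 1=0$ as a symmetric function.
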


\begin{proof}
    By \cite[Corollary~3.7.2 and Equation~(46)]{BHMPS2023ShuffleAnyLine},
    \[
        (\omega(\D_\gamma\cdot 1))(x_1,\dots,x_\ell)
        =\left[\boldsymbol{\sigma}\left(
        x^\gamma\frac{\prod_{i+1<j}(1-qt x_i/x_j)}
        {\prod_{i<j}(1-q x_i/x_j)(1-t x_i/x_j)}
        \right)\right]_{\mathrm{pol}},
    \]
    where $\boldsymbol{\sigma}$ is Weyl symmetrization and the denominator factors are expanded geometrically in $x_i/x_j$ for $i<j$. Every monomial $x^\mu$ inside this Weyl symmetrization has exponent vector
    \[ \mu=\gamma+\sum_{i<j} a_{ij}(\mathbf e_i-\mathbf e_j),\qquad a_{ij}\geq 0. \]
    Hence $\mu_\ell=\gamma_\ell-\sum_{i<\ell}a_{i\ell}<0$.

    To see that such a monomial has zero polynomial part after symmetrization, set $\rho=(\ell-1,\dots,1,0)$. If $\mu+\rho$ has repeated coordinates, then $\boldsymbol{\sigma}(x^\mu)=0$. Otherwise, Weyl straightening gives $\boldsymbol{\sigma}(x^\mu)=\pm\chi_\lambda$, where $\lambda+\rho$ is the decreasing rearrangement of $\mu+\rho$. In particular,
    \[ \lambda_\ell=\min_i(\mu_i+\rho_i)\leq\mu_\ell<0, \]
    so $\chi_\lambda$ has zero polynomial part. Thus the displayed specialization of $\omega(\D_\gamma\cdot 1)$ vanishes. The Schur-length bound in the same corollary says that this symmetric function is determined by its specialization to $\ell$ variables, and therefore $\D_\gamma\cdot 1=0$.
\end{proof}

\begin{lemma}
    \label{lem:commutator}
    For every $\gamma=(\gamma_1,\dots,\gamma_\ell)\in\Z^\ell$ and $k\geq 1$,
    \begin{equation}        
        \label{eq:commutator}
        [\D_{\gamma}, p_k^\bullet] = (-1)^k (1-q^k)(1-t^k) \sum_{i=1}^{\ell} \D_{\gamma+k\mathbf{e}_i},
    \end{equation}
    where $p_k^\bullet$ denotes multiplication by $p_k$ and $\mathbf{e}_i$ is the $i$-th standard basis vector of $\Z^\ell$.
\end{lemma}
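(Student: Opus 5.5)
We work throughout with the operators $\D_\gamma$ of \cite[Equation~(52)]{BHMPS2023ShuffleAnyLine}, which are given by the action of shuffle algebra elements, and prove the commutator identity at that level. Recall that in the shuffle algebra picture $\D_\gamma$ is the image of the symmetrized rational function
\[ \boldsymbol{\sigma}\left( x^\gamma \frac{\prod_{i+1<j}(1-qt\, x_i/x_j)}{\prod_{i<j}(1-q x_i/x_j)(1-t x_i/x_j)} \right), \]
suitably normalized. The multiplication operator $p_k^\bullet$ is, up to normalization, the action of the elliptic Hall algebra generator $P_{0,k}$ (the "vertical" element). Its commutator with a shuffle element $R(x_1,\dots,x_\ell)$ is the standard one: up to the scalar $(-1)^k(1-q^k)(1-t^k)$ fixed by the normalization conventions of \Cref{rmk:eha-notation}, $\operatorname{ad}$ of $P_{0,k}$ acts on the degree-$\ell$ shuffle component by multiplication by the power sum $x_1^k+\cdots+x_\ell^k$. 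This is the defining cross relation between the vertical Heisenberg subalgebra and the shuffle algebra (cf.\ \cite{Negut2014ShuffleAlgebra}).

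The proof then proceeds in three steps. First, we fix the scalar by checking the case $\ell=1$ directly from \eqref{dop:onepart}. Since $\D_m F = (-1)^m\langle z^m\rangle \Exp[-zX]F[X+M/z]$, we have
\[ \D_m(p_kF)=(-1)^m\langle z^m\rangle \Exp[-zX]\,(p_k+M_k z^{-k})F[X+M/z], \]
where $p_k[M/z]=(1-q^k)(1-t^k)z^{-k}$. Writing $M_k = (1-q^k)(1-t^k)$, it follows that
\[ [\D_m,p_k^\bullet]F=(-1)^m M_k\langle z^{m+k}\rangle(\cdots)=(-1)^k M_k\,\D_{m+k}F. \]
This confirms both the sign and the constant. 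Second, we note that multiplying the symmetrized function by the symmetric polynomial $p_k(x)$ commutes with $\boldsymbol{\sigma}$. The result is therefore $\sum_i \boldsymbol{\sigma}(x^{\gamma+k\mathbf e_i}\cdots)$, which is exactly $\sum_i \D_{\gamma+k\mathbf e_i}$. Third, we transport this identity through the identification of the shuffle algebra with the operators on $\Lambda$ used in \cite{BHMPS2023ShuffleAnyLine}.

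An alternative route, which avoids the shuffle algebra cross relation, is induction on $\ell$ using relation \eqref{dop:recursion}, that is, \cite[Equation~(47)]{BHMPS2023ProofExtendedDelta}. Writing $\gamma=\alpha(m)$, we have
\[ [\D_\gamma,p_k]=[\D_\alpha,p_k]\D_m+\D_\alpha[\D_m,p_k]+qt[\D_{\alpha_+(m-1)},p_k]. \]
For the last bracket we apply induction on the last entry; this requires a base case where the operator vanishes, which is exactly relation \eqref{dop:neg}, the statement this lemma is meant to help prove. Hence the induction would be circular unless we induct on $\gamma_\ell$ downward, which has no base. For this reason the shuffle-algebra argument above is the intended route.

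The main obstacle is the normalization in the first route. We must confirm that, in the conventions of \cite[Equation~(52)]{BHMPS2023ShuffleAnyLine}, $p_k^\bullet$ corresponds to the vertical element whose adjoint action on the shuffle algebra is multiplication by $\sum_i x_i^k$, with exactly the scalar $(-1)^kM_k$ and no extra $\ell$-dependent factor. The $\ell=1$ computation pins down the constant. Since the cross relation is uniform in $\ell$ and is compatible with the shuffle product, where $\sum_i x_i^k$ is additive over concatenated variable sets, the constant is then the same for all $\ell$.
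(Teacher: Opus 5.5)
Your proposal is correct and follows essentially the paper's route: establish the one-part commutator, extend it to all $\ell$ through the shuffle algebra using the derivation property of $[\,\cdot\,,p_k^\bullet]$, note that the symmetric $p_k(x_1,\dots,x_\ell)$ passes through $\mathbf{H}_{q,t}^{(\ell)}$ and turns $R_\gamma$ into $\sum_i R_{\gamma+k\mathbf{e}_i}$, and transport back via $\psi$. The differences are minor. You verify the one-part case directly from \eqref{dop:onepart} instead of citing \cite[Lemma~3.3.3]{BHMPS2023ShuffleAnyLine}. Where you invoke the Heisenberg--shuffle cross relation and argue that its constant is uniform in $\ell$, the paper derives the relation outright from the $\ell=1$ case, applying the Leibniz rule to ordered products $\D_{a_1}\cdots\D_{a_\ell}$ in the tensor-algebra presentation, so no external cross relation is needed.
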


\begin{proof}
    If $\gamma = (m)$, by \cite[Lemma~3.3.3]{BHMPS2023ShuffleAnyLine} we have
    \[ \left[\bigl(\omega p_k[-X/M]\bigr)^\bullet, \D_m \right] = -\D_{m+k}. \]
    Since
    \[ \omega p_k[-X/M] = \frac{(-1)^k}{(1-q^k)(1-t^k)} p_k[X], \]
    we can rewrite this as
    \begin{equation}
        [\D_m, p_k^\bullet] = (-1)^k (1-q^k)(1-t^k) \D_{m+k}, \label{eq:one-part-final}
    \end{equation}
    which is the desired relation in the one-part case. Now let
    \[ \psi \colon \mathcal{S} \longrightarrow \mathcal{E}^+ \]
    be the shuffle-algebra isomorphism of \cite[Proposition~3.5.1]{BHMPS2023ShuffleAnyLine}. We use the tensor-algebra quotient presentation from \cite[Section~3.5]{BHMPS2023ShuffleAnyLine}, and write $\psi(\phi)$ for the image of the class of a Laurent polynomial $\phi$. In this presentation, the monomial $x_1^{a_1}\cdots x_\ell^{a_\ell}$ acts as the ordered product $\D_{a_1}\cdots\D_{a_\ell}$. Set
    \[ c_k=(-1)^k(1-q^k)(1-t^k),\qquad P_k(x)=\sum_{i=1}^\ell x_i^k. \]
    The Leibniz rule for commutators and \eqref{eq:one-part-final} give
    \[
        [\D_{a_1}\cdots\D_{a_\ell},p_k^\bullet]
        =c_k\sum_{i=1}^\ell
        \D_{a_1}\cdots\D_{a_{i-1}}\D_{a_i+k}\D_{a_{i+1}}\cdots\D_{a_\ell}.
    \]
    Since tensor monomials span the Laurent polynomials, it follows by linearity that
    \[ [\psi(\phi),p_k^\bullet]=c_k\,\psi(P_k\phi) \]
    as operators on $\Lambda$, for every Laurent polynomial $\phi$ in $\ell$ variables. Multiplication by $P_k$ is well-defined on the quotient: its symmetry implies
    $\mathbf H_{q,t}^{(\ell)}(P_k\phi)=P_k\mathbf H_{q,t}^{(\ell)}(\phi)$, so it preserves the kernel defining $\mathcal S$.

    For completeness, the distinguished element $\D_{\gamma}$ is represented
    in~\cite[(51)--(52)]{BHMPS2023ShuffleAnyLine} by the rational shuffle expression
    \[ R_{\gamma}(x)
        = \frac{x_1^{\gamma_1}\cdots x_\ell^{\gamma_\ell}} {\prod_{j=1}^{\ell-1}(1-qt\,x_j/x_{j+1})}.
    \]
    More precisely, the construction in \cite[Section~3.6]{BHMPS2023ShuffleAnyLine}, using \cite[Proposition~6.1]{Negut2014ShuffleAlgebra}, provides a Laurent polynomial $\eta_{\gamma}$ such that
    \[
        \mathbf{H}_{q,t}^{(\ell)}(\eta_{\gamma}) =\mathbf{H}_{q,t}^{(\ell)}(R_{\gamma}),
        \qquad \D_{\gamma}=\psi(\eta_{\gamma}).
    \]
    Applying this construction also to each $\gamma+k\mathbf e_i$, choose corresponding Laurent polynomials $\eta_{\gamma+k\mathbf e_i}$. Because $p_k(x_1,\dots,x_\ell)$ is symmetric, it can be pulled through the symmetrization operator, so
    \begin{align*}
        \mathbf{H}_{q,t}^{(\ell)}(p_k(x_1,\dots,x_\ell) \eta_{\gamma})
        & = p_k(x_1,\dots,x_\ell) \mathbf{H}_{q,t}^{(\ell)}(\eta_{\gamma})\\
        & = p_k(x_1,\dots,x_\ell) \mathbf{H}_{q,t}^{(\ell)}(R_{\gamma})\\
        & = \mathbf{H}_{q,t}^{(\ell)}(p_k(x_1,\dots,x_\ell) R_{\gamma})\\
        & = \sum_{i=1}^{\ell} \mathbf{H}_{q,t}^{(\ell)}(R_{\gamma + k\mathbf{e}_i}).
    \end{align*}
    Thus the Laurent polynomial $P_k\eta_\gamma-\sum_i\eta_{\gamma+k\mathbf e_i}$ lies in the kernel defining $\mathcal S$. Applying $\psi$ and the commutator formula for Laurent polynomials proves~\eqref{eq:commutator}.
\end{proof}

\begin{proof}[Proof of \Cref{thm:same-operators}]
    We already know that the operators $\D_\gamma$ from \cite{BHMPS2023ShuffleAnyLine} satisfy relations~\eqref{dop:onepart} and~\eqref{dop:recursion}. By \Cref{prop:dgamma-uniquely-determined}, it suffices to show that they also satisfy relation~\eqref{dop:neg}, that is, 
    \[ m < -\deg(F) \implies \D_{\alpha (m)} F = 0. \]
    It suffices to prove this when $F \in \Lambda^{(n)}$ is homogeneous; we proceed by strong induction on $n$. If $n=0$, the assertion reduces to \Cref{lem:on-one}.

    Now let $n>0$ and suppose that the assertion holds in every degree smaller than $n$. The power-sum monomials $p_\lambda$, for $\lambda\vdash n$, form a basis of $\Lambda^{(n)}$, so it is enough by linearity to take $F=p_kG$ with $1\leq k\leq n$ and $G\in\Lambda^{(n-k)}$. By \Cref{lem:commutator}, for any $\gamma\in\Z^\ell$,
    \begin{equation}
        \label{eq:induction}
        \D_{\gamma}(p_kG) = p_k \D_{\gamma} G + (-1)^k (1-q^k)(1-t^k) \sum_{i=1}^{\ell}\D_{\gamma + k \mathbf{e}_i} G.
    \end{equation}
    If $\gamma_\ell=m<-n$, then the last component of $\gamma$ and of every $\gamma+k\mathbf e_i$ is at most
    \[ m+k<-n+k=-(n-k). \]
    All operators on the right-hand side of \eqref{eq:induction} therefore annihilate $G$ by the induction hypothesis.

    The identification with \Cref{def:D_gamma} now follows.
\end{proof}

\subsection{Computational implementation and benchmarks}
\label{sec:benchmarks}

A major practical advantage of our new \Cref{def:D_gamma} is that it provides a direct linear algorithm for computing $\D_\gamma F$ for any symmetric function $F \in \Lambda$ and any vector of integers $\gamma \in \mathbb{Z}^\ell$.

In contrast to recursive implementations based on \Cref{thm:dgamma-recursion} (which generate an evaluation tree of depth $\ell-1$ with $O(2^{\ell-1})$ leaf operations), our formula evaluates $\D_\gamma F$ via $O(\ell)$ sequential operator steps in the extended module $V_1 = \Lambda \otimes \mathbb{Q}(q,t)[y_1^{\pm 1}]$.

We evaluated both implementations (\texttt{D\_new} for \Cref{def:D_gamma} and \texttt{D\_old} for \Cref{thm:dgamma-recursion}) in SageMath on CoCalc. Table~\ref{tab:benchmarks_a} reports execution times for evaluating $\D_\gamma \cdot 1$ for compositions $\gamma \in \mathbb{Z}^\ell$ as the length $\ell(\gamma)$ increases. Table~\ref{tab:benchmarks_b} reports timings for evaluating small compositions on various Schur basis elements $F \in \Lambda$.

\begin{table}[ht]
    \centering
    \small
    \begin{tabular}{|l|c|c|r|r|r|}
        \hline
        Composition $\gamma$ & Length $\ell$ & Input $F$ & \texttt{D\_new} & \texttt{D\_old} & Speedup \\
        \hline
        $(3)$ & 1 & $1$ & $0.0029$\,s & $0.0285$\,s & $9.8\times$ \\
        $(2, 1)$ & 2 & $1$ & $0.0116$\,s & $0.0583$\,s & $5.0\times$ \\
        $(0, 3)$ & 2 & $1$ & $0.0261$\,s & $0.1334$\,s & $5.1\times$ \\
        $(1, 1, 2)$ & 3 & $1$ & $0.0610$\,s & $0.9231$\,s & $15.1\times$ \\
        $(2, -1, 2)$ & 3 & $1$ & $0.0383$\,s & $0.2084$\,s & $5.4\times$ \\
        $(1, 2, 1, 1)$ & 4 & $1$ & $0.1078$\,s & $5.7311$\,s & $53.1\times$ \\
        $(3, 0, -1, 2)$ & 4 & $1$ & $0.0541$\,s & $0.9364$\,s & $17.3\times$ \\
        $(1, 1, 3, 0, 2)$ & 5 & $1$ & $0.7124$\,s & $> 300$\,s & $>420\times$ \\
        $(1, -2, 3, 0, 1)$ & 5 & $1$ & $0.1809$\,s & $3.4109$\,s & $18.9\times$ \\
        \hline
    \end{tabular}
    \caption{Benchmark Part A: fixed input $F = 1$, varying composition length $\ell(\gamma)$.}
    \label{tab:benchmarks_a}
\end{table}

\begin{table}[ht]
    \centering
    \small
    \begin{tabular}{|l|c|c|r|r|r|}
        \hline
        Composition $\gamma$ & Length $\ell$ & Input $F$ & \texttt{D\_new} & \texttt{D\_old} & Speedup \\
        \hline
        $(0, 2)$ & 2 & $s_1$ & $0.0394$\,s & $0.1272$\,s & $3.2\times$ \\
        $(0, 2)$ & 2 & $s_2$ & $0.0819$\,s & $0.4007$\,s & $4.9\times$ \\
        $(2, -1, 1)$ & 3 & $s_1$ & $0.0514$\,s & $0.2102$\,s & $4.1\times$ \\
        $(2, -1, 1)$ & 3 & $s_{2,1}$ & $0.2758$\,s & $4.2279$\,s & $15.3\times$ \\
        $(3, -1, 2)$ & 3 & $s_2$ & $0.2338$\,s & $11.2977$\,s & $48.3\times$ \\
        $(3, -1, 2)$ & 3 & $s_{2,2}$ & $0.8745$\,s & $216.8674$\,s & $248.0\times$ \\
        $(1, 0, -1, 2)$ & 4 & $s_{2,1} $ & $0.9265$\,s & $35.8861$\,s & $38.7\times$ \\
        \hline
    \end{tabular}
    \caption{Benchmark Part B: small compositions evaluated on nontrivial input symmetric functions $F \in \Lambda$.}
    \label{tab:benchmarks_b}
\end{table}

The code is available at \cite[{\texttt{benchmarks.ipynb}}]{Iraci2026Code}. The implementation of the $\A_{q,t}$ algebra is based on code originally written by Anton Mellit.

\subsection{Commutation with skewing operators}

By \Cref{prop:tau}, for $F \in \Lambda$, we have
\[ \tau(u) F = F[X+u] = \sum_{k=0}^\infty u^k h_k^\perp F \quad \text{ and } \quad \tau(-u) F = F[X-u] = \sum_{k=0}^\infty (-u)^k e_k^\perp F. \]

We can derive the following commutation relations between skewing operators and $\D_\gamma$ operators.

\begin{theorem}
    For every $n \geq 0$ and $\gamma\in \mathbb{Z}^\ell$, we have
    \begin{equation}
        e_n^\perp \D_\gamma = \sum_{\beta \in \mathbb{N}^\ell} \D_{\gamma-\beta} e_{n-\lvert \beta \rvert}^\perp
    \end{equation}
    and
    \begin{equation}
        h_n^\perp \D_\gamma = \sum_{\beta \in \{0,1\}^\ell} \D_{\gamma-\beta} h_{n-\lvert \beta \rvert}^\perp
    \end{equation}
    where the difference is taken componentwise.
\end{theorem}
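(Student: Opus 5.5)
We will prove both identities at once, in generating-function form. Recall that $\tau(u)F = F[X+u] = \sum_k u^k h_k^\perp F$ and $\tau(-u)F = \sum_k (-u)^k e_k^\perp F$ on $\Lambda$. The two identities are then the coefficient of $u^n$ (respectively $(-u)^n$) in
\[
\tau(\pm u)\, \D_\gamma = \Bigl(\sum_\beta (\pm u)^{|\beta|}\D_{\gamma-\beta}\Bigr) \tau(\pm u),
\]
where the sum runs over $\beta\in\{0,1\}^\ell$ for the $+$ sign and over $\beta\in\mathbb{N}^\ell$ for the $-$ sign. So it suffices to push $\tau(u)$ through the word
\[
d_- (-y_1)^{\gamma_1-1}\z_1\cdots\z_1 d_+
\]
from \Cref{def:D_gamma}, or equivalently through the rewritten form $d_-(-y_1)^{\gamma_1}(d_+^*d_-+z_1)\cdots(-y_1)^{\gamma_\ell}d_+^*$ from the remark following that definition.

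The ingredients come from \Cref{prop:tau}: $\tau(u)$ commutes with $d_-$ and with $y_1$, and $\tau(u)d_+^* = \Exp[-u/y_1]\,d_+^*\tau(u)$. On $V_1$ the factor $\Exp[-u/y_1]$ is central enough to pass through the powers of $y_1$. We also need $\tau(u)$ to commute with $z_1$ on $V_1$. Since $z_1 = \tfrac{q}{1-q}[d_+^*,d_-]$ on $V_1$, we compute
\[
\tau d_+^* d_- = \Exp[-u/y_1]\, d_+^* d_- \tau \quad\text{on } V_1,
\]
and
\[
\tau d_- d_+^* = d_-\Exp[-u/y_1]\, d_+^*\tau \quad\text{on } V_1,
\]
where in the second line the $y_1$ lives in $V_1$ before $d_-$ is applied.

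The main obstacle is that $\Exp[-u/y_1]$ does not commute with $d_-$ or $d_+^*$ in general, because $d_+^*$ shifts the variable and $\eta$ relabels it. The cleanest fix is to work with the form $d_- (-y_1)^{\gamma_1-1}\z_1\cdots$. First I would verify that $\tau(u)\z_1 = \Exp[-u/y_1]\,\z_1\,\Exp[u/y_1]\,\tau(u)$ on $V_1$. The strategy for this is to use \eqref{eq:Q2}, i.e.\ $\z_1 d_+ = -q y_1 d_+^*$, together with a commutation relation of $\tau(u)$ with $d_+$. That relation can be derived from the explicit formula for $d_+$ on $V_0$, which makes it directly comparable to $d_+^*$. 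If this direct route fails, the fallback is to compute $\tau\z_1\tau^{-1}$ explicitly on $V_1$ from the formulas for $d_\pm^*$. Either way, the expected outcome is that conjugating each $\z_1$ by $\tau$ produces factors $\Exp[-u/y_1]$ sitting between consecutive $\z_1$'s. The $d_-$ at the left end absorbs nothing extra, since $d_-$ commutes with $\tau$ and the $\Exp[-u\sum y_i^{-1}]$ factor is trivial on $V_0$.

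Granting that, $\tau(u)\D_\gamma$ becomes
\[
d_- (-y_1)^{\gamma_1-1} E\, \z_1\, E\cdots E\,\z_1\, E'\, d_+\,\tau(u),
\]
with one factor $E$ attached to each $y_1$-slot, $\ell$ factors in total. For $\tau(-u)$ the factor is $E = \Exp[u/y_1] = \sum_j u^j y_1^{-j}$, whose coefficients are all $1$. Expanding each slot independently lowers $\gamma_i$ by $\beta_i\in\mathbb{N}$, and after accounting for the sign $(-y_1)^{-\beta_i}$ this gives the $e^\perp$ formula. For $\tau(u)$ the factor is $E = \Exp[-u/y_1] = 1 - u/y_1$, since $\Exp[-x] = \sum_k e_k[-(-x)]\ldots$ and only the degree-$\le 1$ terms survive for the single-variable sign pattern. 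This yields $\beta_i\in\{0,1\}$ and the $h^\perp$ formula. The sign bookkeeping, $(-y_1)^{-1}$ versus $u/y_1$, must be checked so that all signs cancel. Finally, because the $e^\perp$ and $h^\perp$ generating series are inverse to each other, it suffices to establish one of the two identities carefully and derive the other by inversion.
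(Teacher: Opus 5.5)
Your framework is the paper's: encode $h_n^\perp$ and $e_n^\perp$ through $\tau(\pm u)$, push $\tau$ through the word of \Cref{def:D_gamma}, and expand one factor $\Exp[\mp u/y_1]$ per slot. The gap is that the one nontrivial step is never carried out, and the route you propose for it fails. What is needed is
\[ \tau(u)\,\z_1 = \Exp[-u/y_1]\,\z_1\,\tau(u) \quad \text{on } V_1. \]
So $\tau(u)$ does not commute with $z_1$; it creates a factor on the left only. This follows immediately from the two identities you already displayed. The ``main obstacle'' you name is not there. In your second identity the factor $\Exp[-u/y_1]$ acts on $V_2$, and $d_-\colon V_2\to V_1$ only extracts the constant term in $y_2$. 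Hence it commutes with multiplication by any function of $y_1$ (this is \eqref{eq:yd-_comm}). Therefore $\tau(u)\,d_-d_+^* = \Exp[-u/y_1]\,d_-d_+^*\,\tau(u)$ on $V_1$. Subtracting this from your first identity gives the displayed relation, since $\z_1 = [d_+^*,d_-]/(t(1-q))$ on $V_1$. No factor ever has to cross $d_+^*$ or $d_-$ afterwards; each one stays beside the power of $y_1$ that absorbs it.

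The relation you instead plan to verify, $\tau(u)\z_1 = \Exp[-u/y_1]\,\z_1\,\Exp[u/y_1]\,\tau(u)$, is false. Combined with the correct one it forces $\z_1 y_1^{-1}=0$ on $V_1$, whereas a direct computation (e.g.\ via \Cref{lemma:z1y1a}) gives $\z_1 y_1^{-1}\cdot y_1^2=\z_1 y_1\cdot 1=y_1$. It is also incompatible with the outcome you then grant. Since $\Exp[\pm u/y_1]$ commutes with powers of $y_1$, the factors $\Exp[u/y_1]$ and $\Exp[-u/y_1]$ between consecutive $\z_1$'s would cancel, leaving a factor only in the first slot. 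So ``granting that'' assumes precisely the content of the theorem.

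The right end is also left open (your unspecified $E'$). Use \eqref{eq:Q2} at $k=0$: $\z_1 d_+ = -y_1 d_+^*$ on $V_0$. Then the last block is $(-y_1)^{\gamma_\ell+1}d_+^*$, and \Cref{prop:tau} produces exactly one factor there, for $\ell$ in total. Pushing $\tau$ through $d_+$ directly instead yields an extra factor, which is harmless only because $\z_1 y_1^{-j} d_+ = 0$ on $V_0$ for $j\geq 1$.

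With these repairs, your expansion, sign bookkeeping and coefficient extraction are correct. The inversion argument is unnecessary, since the same computation works verbatim with $-u$ in place of $u$.
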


\begin{proof}
    From \Cref{prop:tau}, we can deduce that $\tau(u)\ \z_1 = \Exp[-u/y_1] \z_1 \tau(u)$. Since
    \[ \Exp[-u/y_1] = (1 - u y_1^{-1}) \quad \text{ and } \quad \Exp[u/y_1] = (1 - u y_1^{-1})^{-1}, \]
    we have
    \begin{align*}
        \tau(u) \D_\gamma & = \tau(u)\ d_- (-y_1)^{\gamma_{1}-1} \z_1 (-y_1)^{\gamma_{2}} \z_1 \cdots (-y_1)^{\gamma_{\ell}} \z_1 d_+ \\
        & = d_- (-y_1)^{\gamma_{1}-1} (1 - uy_1^{-1}) \z_1 \cdots (-y_1)^{\gamma_{\ell}} (1 - uy_1^{-1}) \z_1 d_+ \tau(u) \\
    \end{align*}
    and
    \begin{align*}
        \tau(-u) \D_\gamma & = \tau(-u)\ d_- (-y_1)^{\gamma_{1}-1} \z_1 (-y_1)^{\gamma_{2}} \z_1 \cdots (-y_1)^{\gamma_{\ell}} \z_1 d_+ \\
        & = d_- (-y_1)^{\gamma_{1}-1} (1 - uy_1^{-1})^{-1} \z_1 \cdots (-y_1)^{\gamma_{\ell}} (1 - uy_1^{-1})^{-1} \z_1 d_+ \tau(-u) \\
    \end{align*}
    and the result follows by taking the coefficient of $u^n$.
\end{proof}

\subsection{Action of \texorpdfstring{$\D_\gamma$}{Dgamma} on \texorpdfstring{$1$}{1}}
\label{section:Dgammaon1}

The goal of this subsection is to prove directly from our explicit definition of $\D_\gamma$ that $\D_\gamma \cdot 1$ equals the Hall-Littlewood series stated in \cite[Corollary~3.7.2]{BHMPS2023ShuffleAnyLine}. We need to recall some definitions and basic facts from \cite[Section~2]{BHMPS2023ShuffleAnyLine}.

Consider the action of the symmetric group $\mathfrak{S}_\ell$ on the Laurent power series in variables $x_1,x_2,\dots,x_\ell$ with coefficients in $\mathbb{Q}(q,t)$ by permuting the variables. Given a Laurent polynomial $f(x) = f(x_1,x_2,\dots,x_\ell) \in \mathbb{Q}(q,t)[x_1^{\pm 1}, x_2^{\pm 1}, \dots, x_\ell^{\pm 1}]$, set
\[
    \textbf{$\sigma$}\, f(x_1,x_2,\dots,x_\ell) \coloneqq \sum_{w\in \mathfrak{S}_\ell}w\left(\frac{f(x)}{\prod_{i<j}(1-x_j/x_i)}\right),
\]
and
\[
    \mathbf{H}_{q,t}^{(\ell)}(f(x_1,x_2,\dots,x_\ell)) \coloneqq \textbf{$\sigma$}\, \left( \frac{f(x) \prod_{i<j}(1-qtx_i/x_j) }{\prod_{i<j}(1-qx_i/x_j)(1-tx_i/x_j)}\right).
\]

A vector $\lambda=(\lambda_1,\lambda_2,\dots,\lambda_\ell) \in \mathbb{Z}^\ell$ is called \emph{dominant} if $\lambda_1\geq \lambda_2\geq\cdots \geq \lambda_\ell$. Given $\lambda\in \mathbb{Z}^\ell$, set $x^\lambda \coloneqq x_1^{\lambda_1}x_2^{\lambda_2}\cdots x_\ell^{\lambda_\ell}$, and for a dominant vector $\lambda\in \mathbb{Z}^\ell$ set
\[\chi_\lambda \coloneqq \textbf{$\sigma$}(x^\lambda).\]
When $\lambda_\ell\geq 0$, $\chi_\lambda$ coincides with the Schur polynomial $s_\lambda(x_1,x_2,\dots,x_\ell)$.

Any symmetric formal Laurent series $f(x) = f(x_1,x_2,\dots,x_\ell)$ can be written uniquely as 
\[ f(x) = \sum_{\lambda\text{ dominant}} c_\lambda \chi_\lambda \] 
for some coefficients $c_\lambda\in \mathbb{Q}(q,t)$. We denote by $f(x)_{\text{pol}}$ the \emph{polynomial part} of $f(x)$, i.e.\ 
\[ f(x)_{\text{pol}} \coloneqq \sum_{\lambda\text{ dominant s.t.\ }\lambda_\ell\geq 0} c_\lambda \chi_\lambda, \]
which is a symmetric power series in $x_1,x_2,\dots,x_\ell$.

The following lemma is a straightforward consequence of \cite[Lemma~2.3.1, Equation~(21)]{BHMPS2023ShuffleAnyLine}.
\begin{lemma}
    \label{lem:poly_part}
    For any Laurent series $f(x)$ in $x_1,x_2,\dots,x_\ell$, we have
    \[\boldsymbol{\sigma} (f(x))_{\text{pol}}=\langle w^0 \rangle f(w_1,w_2,\dots,w_\ell) \Exp[\overline{W}X]\prod_{i<j}(1-w_i/w_j),\]
    where $X=x_1+x_2+\cdots +x_\ell$ and $\overline{W}=w_1^{-1}+w_2^{-1}+\cdots +w_\ell^{-1}$.
\end{lemma}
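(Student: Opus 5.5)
The plan is to reduce the statement to two classical facts: the Weyl character formula for $\chi_\lambda$, and a Cauchy-type constant-term formula for the polynomial part. Since $\boldsymbol{\sigma}$ and the polynomial-part map are both linear and continuous in the relevant formal topology, I would first treat a single Laurent monomial $f = x^\mu$ and then extend by linearity to Laurent series. (For series, only finitely many monomials contribute to each coefficient of the constant term, given the expansion conventions.)

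\emph{Step 1: the left-hand side.} By the definition of $\boldsymbol{\sigma}$, it is antisymmetrization against the Weyl denominator. Hence $\boldsymbol{\sigma}(x^\mu)$ is either $0$ or $\pm\chi_\lambda$, where $\lambda+\rho$ is the decreasing rearrangement of $\mu+\rho$ and $\rho=(\ell-1,\dots,0)$. This is the straightening already used in the proof of \Cref{lem:on-one}. So $\boldsymbol{\sigma}(x^\mu)_{\text{pol}}$ is $\pm s_\lambda(x_1,\dots,x_\ell)$ if $\lambda_\ell\ge 0$, and $0$ otherwise.

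\emph{Step 2: the right-hand side.} Expand $\Exp[\overline W X]=\sum_\nu s_\nu[X]\,s_\nu[\overline W]$ by the Cauchy identity, with $\nu$ ranging over partitions of length at most $\ell$. We then need
\[ \langle w^0\rangle\, w^\mu \prod_{i<j}(1-w_i/w_j)\, s_\nu(w_1^{-1},\dots,w_\ell^{-1}). \]
Write $s_\nu(w^{-1})=\det(w_j^{-(\nu_i+\ell-i)})/\det(w_j^{-(\ell-i)})$. Then $\prod_{i<j}(1-w_i/w_j)$ times the inverse Vandermonde in $w^{-1}$ simplifies to a monomial: up to a fixed sign, $\prod_{i<j}(w_j-w_i)/w_j$ divided by the Vandermonde in the $w_j^{-1}$ is a monomial. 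Concretely,
\[ \prod_{i<j}(1-w_i/w_j) \cdot s_\nu(w^{-1}) = w^{-\rho'} \sum_{\sigma}\operatorname{sgn}(\sigma)\, w^{-\sigma(\nu+\rho)} \]
for the appropriate shift $\rho'$. The constant term of $w^\mu$ times this picks out exactly one $\sigma$, namely when $\mu+\rho$ rearranges to $\nu+\rho$, with sign $\operatorname{sgn}(\sigma)$. This reproduces $\pm s_\lambda[X]$ with $\lambda=\nu$, a genuine partition, so automatically $\lambda_\ell\ge0$; otherwise it gives $0$. This matches Step 1.

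\emph{Step 3.} Check that the signs and the shift $\rho$ agree between the two sides. This bookkeeping is the main obstacle: the orientation of $\prod_{i<j}(1-x_j/x_i)$ in $\boldsymbol{\sigma}$ versus $\prod_{i<j}(1-w_i/w_j)$ on the right must be matched carefully. I would fix it by testing $\mu=0$, where both sides should equal $1$: we have $\boldsymbol{\sigma}(1)=\chi_0=1$, and on the right only $\nu=\varnothing$ contributes, with $\langle w^0\rangle\prod_{i<j}(1-w_i/w_j)=1$. That confirms the normalization. Alternatively, as the paper indicates, the statement is just \cite[Lemma~2.3.1, Equation~(21)]{BHMPS2023ShuffleAnyLine} rewritten in plethystic notation, so one can cite their identity and translate $\Omega$ into $\Exp$.
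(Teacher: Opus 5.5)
Your proposal is correct, and its closing sentence is in fact the paper's entire argument: the paper gives no proof beyond calling the lemma a straightforward consequence of \cite[Lemma~2.3.1, Equation~(21)]{BHMPS2023ShuffleAnyLine}. The body of your proposal instead gives a self-contained verification.

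\textbf{Why the comparison works.} Since $\prod_{i<j}(1-x_j/x_i)=x^{-\rho}\prod_{i<j}(x_i-x_j)$, one has $\boldsymbol{\sigma}(x^\mu)=a_{\mu+\rho}(x)/a_\rho(x)$, a ratio of alternants. On the other side, $\prod_{i<j}(1-w_i/w_j)=w^{\rho}\prod_{i<j}(w_i^{-1}-w_j^{-1})$, and this factor cancels the Vandermonde denominator of $s_\nu(w^{-1})$ exactly. Consequently, on both sides the coefficient of $s_\nu(x)$ is the same finite alternating sum $\sum_{\pi\in\mathfrak{S}_\ell}\operatorname{sgn}(\pi)\,c_{\pi(\nu+\rho)-\rho}$. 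Here $c_\mu$ is the coefficient of $x^\mu$ in $f$, and $\pi$ permutes coordinates.

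\textbf{The two points you left soft.} This observation settles both of them.
\begin{itemize}
\item Sign bookkeeping. Step~3 is not really an obstacle and should not rest on the test case $\mu=0$, which only fixes a global constant. Both factorizations above use the same $\rho$ and carry no extra sign, so on each side the sign is that of the permutation sorting $\mu+\rho$.
\item Passage to series. No topology is needed. Each $\chi_\lambda$ receives contributions from at most $\ell!$ monomials $x^\mu$. Each $s_\nu$-coefficient on the right is a constant term of $f$ against a Laurent polynomial. So the identity holds coefficientwise for arbitrary formal Laurent series, with $\boldsymbol{\sigma}$ extended termwise---which is exactly how it is applied to the geometric expansions in the proof of \Cref{thm:Don1}.
\end{itemize}

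\textbf{What each route buys.} Your route is independent of the $\Omega$-notation and normalization conventions of \cite{BHMPS2023ShuffleAnyLine}; in particular, the orientation of $\prod_{i<j}(1-w_i/w_j)$ is verified rather than inherited. The citation buys brevity.
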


We can now state the main theorem of this section: cf.\ \cite[Corollary~3.7.2]{BHMPS2023ShuffleAnyLine}.
\begin{theorem}
    \label{thm:Don1}
    For any $\gamma=(\gamma_1,\dots,\gamma_{\ell})\in \mathbb{Z}^{\ell}$, we have
    \[\omega \D_\gamma \cdot 1 = \mathbf{H}_{q,t}^{(\ell)}\left(\frac{x^\gamma}{\prod_{i=1}^{\ell-1}(1-qt x_i/x_{i+1})}\right)_{\mathrm{pol}}.\]
\end{theorem}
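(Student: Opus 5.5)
Both sides will be brought to the constant-term form of \Cref{lem:poly_part}, and the resulting integrands will be compared directly.

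\emph{Right-hand side.} Apply \Cref{lem:poly_part} with
\[ f(x)=\frac{x^\gamma\prod_{i<j}(1-qtx_i/x_j)}{\prod_{i=1}^{\ell-1}(1-qtx_i/x_{i+1})\prod_{i<j}(1-qx_i/x_j)(1-tx_i/x_j)}. \]
The factors $(1-qtx_i/x_{i+1})$ cancel, so the right-hand side becomes
\[ \langle w^0\rangle\, w^\gamma\,\Exp[\overline W X]\prod_{i<j}(1-w_i/w_j)\frac{\prod_{i+1<j}(1-qtw_i/w_j)}{\prod_{i<j}(1-qw_i/w_j)(1-tw_i/w_j)}. \]
The denominators are expanded geometrically in $w_i/w_j$ for $i<j$. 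Using $\prod_{i<j}\Exp[-Mw_i/w_j]=\prod_{i<j}\frac{(1-w_i/w_j)(1-qtw_i/w_j)}{(1-qw_i/w_j)(1-tw_i/w_j)}$, this is $\langle w^0\rangle\, w^\gamma\Exp[\overline WX]\prod_{i<j}\Exp[-Mw_i/w_j]\big/\prod_{i=1}^{\ell-1}(1-qtw_i/w_{i+1})$.

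\emph{Left-hand side.} Evaluate $\D_\gamma\cdot 1$ from \Cref{def:D_gamma} by iterating the action on $V_1$, tracking one fresh variable per factor. The engine is the formula used in the proof of \Cref{thm:dgamma-recursion}\eqref{dop:neg}, namely \Cref{lemma:z1y1a}. Via \eqref{eq:Q2}, it expresses $\z_1$ on $V_1$, composed with a power of $y_1$, as
\[ \langle w_2^{\cdot}\rangle\,\mathcal{T}_{(q-1)tw_1}\mathcal{P}_{-1/w_2}\mathcal{T}_{-(q-1)w_2}, \]
with the kernel $(1-\Exp[(q-1)tw_1/w_2])$ in place of $1$ for the $z_1$-part. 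Start from $d_+\cdot 1=-y_1$, or equivalently $\z_1 d_+\cdot1=-q y_1 d_+^*\cdot 1$ by \eqref{eq:Q2}. Applying the $\ell$ blocks $(-y_1)^{\gamma_i}\z_1$ from right to left then produces a nested constant term in variables $z_\ell,\dots,z_1$ (playing the role of $w_j^{-1}$). The final $d_-$ supplies the last $\langle y^0\rangle$ and the factor $\Exp[-y^{-1}X]$. Each $\mathcal P_{-1/w}$ contributes a factor $\Exp[-X/w]$, and commuting the translations past later $\mathcal P$'s with \eqref{eq:TPExp} produces the pairwise factors $\Exp[-Mw_i/w_j]$, since $(q-1)t-(q-1)=M$-type combinations appear. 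Applying $\omega$ turns $\Exp[-X/w]$ into $\Exp[\pm X/w]$, matching $\Exp[\overline WX]$ up to the sign convention $w\mapsto -w$, which is absorbed by the $(-y_1)$ powers. The $\z_1$-specific kernel $1-\Exp[(q-1)tw_i/w_{i+1}]$, divided by the normalizing factors, should resum into the geometric factor $1/(1-qtw_i/w_{i+1})$, which appears only for consecutive indices, exactly as in $\D_\gamma$.

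A cleaner alternative is induction on $\ell$ via \Cref{thm:dgamma-recursion}\eqref{dop:recursion}. Write $R_\gamma$ for the right-hand side. For consecutive indices one has
\[ \frac{1}{1-qtw_{\ell-1}/w_\ell}=1+qt\frac{w_{\ell-1}}{w_\ell}\cdot\frac{1}{1-qtw_{\ell-1}/w_\ell}, \]
so $R_\gamma=R'+qtR_{\gamma'_+(\gamma_\ell-1)}$, where $R'$ is the integrand with the last consecutive factor removed. It then suffices to show $R'=\omega\D_{\gamma'}\D_{\gamma_\ell}\cdot1$. Here $\D_{\gamma_\ell}\cdot 1=(-1)^{\gamma_\ell}\langle z^{\gamma_\ell}\rangle\Exp[-zX]$ by \eqref{dop:onepart}, so one needs a generalized inductive statement: a constant-term formula for $\D_{\gamma'}\Exp[-zX]$, i.e.\ $\D_{\gamma'}$ applied to a plethystic exponential with a formal parameter. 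This follows by the same induction, since $\mathcal T$ acts on $\Exp[-zX]$ by a scalar factor $\Exp[-zY]$, which produces precisely the cross factors $\Exp[-Mw_i/w_\ell]$. The recursion is infinite in $\gamma_\ell$, but for fixed output degree only finitely many terms are nonzero by \eqref{dop:neg} and \Cref{lem:on-one}-type support.

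The main obstacle is this generalized statement: proving a closed constant-term formula for $\D_\gamma\Exp[-zX]$ and checking that the $\mathcal T/\mathcal P$ commutations give exactly $\prod_{i<j}\Exp[-Mw_i/w_j]$ with the correct expansion region. Getting the signs from $(-y_1)$, $\omega$ and $d_+\cdot1=-y_1$ right will also need care. I would first verify the case $\ell=2$ explicitly, and then run the induction on $\ell$.
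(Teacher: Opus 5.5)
Your direct computation is essentially the paper's proof. The paper iterates \Cref{lemma:z1y1a} into \Cref{cor:Dgamma1plethystic}, which is valid for any $F\in V_1$; there the kernel identity $1-\Exp[(q-1)ta]=\frac{(1-q)ta}{1-qta}$ is exact, and the resulting $\prod_{i=1}^{\ell-1} w_i/w_{i+1}=w_1/w_\ell$ cancels against the shifted exponents $\gamma_1-1$ and $\gamma_\ell+1$. It then applies this to $d_+^*\cdot 1=1$ and matches the two integrands through the factorization of $\Exp[-Mw_i/w_j]$ and \Cref{lem:poly_part}, exactly as you propose. Two small points: on $V_0$ relation \eqref{eq:Q2} has $k=0$, so $\z_1 d_+\cdot 1=-y_1 d_+^*\cdot 1$ with no factor $q$; and your inductive alternative is unnecessary, since the general-input constant-term formula you identify as its main obstacle is precisely what \Cref{cor:Dgamma1plethystic} already provides.
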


The remainder of this section is devoted to the proof of this result. We start with a lemma.

\begin{lemma}
    \label{lemma:z1y1a}
    For $F = F[X;y_1]\in V_1$ and $a \in \mathbb{Z}$, the term $\z_1 y_1^a F[X;y_1]$ is equal to
    \[ \left. \frac{1}{t(1-q)}\left(\langle w_2^{-a} \rangle \left(1-\Exp\left[(q-1)t\frac{w_1}{w_2}\right] \right) \mathcal{T}_{(q-1)tw_1}\mathcal{P}_{-\frac{1}{w_2}} \mathcal{T}_{-(q-1)w_2}F[X;w_2]\right)\right\rvert_{w_1=y_1}. \]
\end{lemma}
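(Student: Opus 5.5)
The plan is to compute $\z_1$ directly on $V_1$ from its explicit definition and then evaluate the two halves of the commutator separately. For $k=1$ the product $T_{k\searrow 1}^*$ is empty, so
\[ \z_1 = \frac{z_1}{qt} = \frac{[d_+^*,d_-]}{t(1-q)} \]
as an operator on $V_1$. Set $G \coloneqq y_1^a F[X;y_1] \in V_1$. It then suffices to compute $d_+^* d_- G$ and $d_- d_+^* G$, write each as a coefficient extraction in an auxiliary variable $w_2$, and subtract.

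\emph{First term.} By the formula for $d_-$ on $V_1$, renaming $y_1$ as $w_2$,
\[ d_- G = \langle w_2^0\rangle\, w_2^a\, \mathcal{P}_{-\frac{1}{w_2}} \mathcal{T}_{-(q-1)w_2} F[X;w_2] = \langle w_2^{-a}\rangle\, \mathcal{P}_{-\frac{1}{w_2}} \mathcal{T}_{-(q-1)w_2} F[X;w_2], \]
which lies in $V_0=\Lambda$. On $V_0$, $d_+^*$ substitutes $X \mapsto X+(q-1)y_1$ and then $\eta$ sends $y_1 \mapsto t y_1$. So $d_+^*$ acts as $\mathcal{T}_{(q-1)tw_1}$ with $w_1=y_1$, and
\[ d_+^* d_- G = \Bigl(\langle w_2^{-a}\rangle\, \mathcal{T}_{(q-1)tw_1}\mathcal{P}_{-\frac{1}{w_2}}\mathcal{T}_{-(q-1)w_2}F[X;w_2]\Bigr)\Big\rvert_{w_1=y_1}. \]

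\emph{Second term.} On $V_1$, $d_+^*$ first gives $G[X+(q-1)y_2;y_1]$. Then $\eta$ sends $y_1\mapsto y_2$ and $y_2\mapsto ty_1$, producing $G[X+(q-1)ty_1;y_2]\in V_2$. Next, $d_-$ on $V_2$ substitutes $X\mapsto X-(q-1)y_2$, multiplies by $\Exp[-X/y_2]$, and takes $\langle y_2^0\rangle$. Writing $w_2=y_2$ and $w_1=y_1$, this is
\[ \langle w_2^{-a}\rangle\, \mathcal{P}_{-\frac{1}{w_2}}\mathcal{T}_{(q-1)tw_1}\mathcal{T}_{-(q-1)w_2}F[X;w_2]. \]
The translations commute with each other. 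Applying \eqref{eq:TPExp} with $Y=(q-1)tw_1$ and $Z=-1/w_2$ gives
\[ \mathcal{P}_{-\frac{1}{w_2}}\mathcal{T}_{(q-1)tw_1} = \Exp\!\left[(q-1)t\tfrac{w_1}{w_2}\right]\mathcal{T}_{(q-1)tw_1}\mathcal{P}_{-\frac{1}{w_2}}. \]
Since this scalar factor does not involve $X$, it can be pulled out in front.

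Subtracting the second term from the first and dividing by $t(1-q)$ yields exactly the claimed expression. The one delicate point is the legitimacy of the coefficient extraction, since $\Exp[(q-1)tw_1/w_2]$ is an infinite series in $w_1/w_2$. I would handle this by interpreting everything as formal series in $w_1$ (equivalently $y_1$) with Laurent-polynomial coefficients in $w_2$. For each fixed power of $w_1$ and each fixed degree in $X$, only finitely many terms contribute to $\langle w_2^{-a}\rangle$, so the extraction is well defined and agrees with the $\langle y_2^0\rangle$ prescribed by $d_-$. Apart from this check and the bookkeeping of $\eta$ (which variable becomes $ty_1$), the argument is routine.
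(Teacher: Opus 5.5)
Your proposal is correct and takes essentially the same route as the paper: write $\z_1 = [d_+^*,d_-]/(t(1-q))$ on $V_1$, compute $d_+^*d_-$ and $d_-d_+^*$ separately as coefficient extractions in $w_2$, and use \eqref{eq:TPExp} to pull the factor $\Exp[(q-1)tw_1/w_2]$ out of the second term. The differences are cosmetic. You apply $\eta$ directly to $y_1^aF$, whereas the paper first moves $y_1^a$ past $d_+^*$ using \eqref{eq:Q1*}. You also justify the formal coefficient extraction, which the paper leaves implicit.
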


\begin{proof}
    First, observe that for every $F = F[X;y_1]\in V_1$,
    \[ d_- F =\langle y_1^0 \rangle\mathcal{P}_{-\frac{1}{y_1}} \mathcal{T}_{-(q-1)y_1}F, \]
    and for every $G = G[X] \in V_0$,
    \[ d_+^* G = G[X + (q-1) t y_1] = \mathcal{T}_{(q-1)t y_1} G. \]
    Therefore, since
    \[ \z_1 y_1^a F[X;y_1] = \frac{1}{t(1-q)}(d_+^*d_- - d_- d_+^*) y_1^a F[X;y_1], \]
    we compute
    \begin{align*}
        d_+^*d_- y_1^a F[X;y_1] & = \mathcal{T}_{(q-1)t y_1} \langle y_1^0 \rangle\mathcal{P}_{-\frac{1}{y_1}} \mathcal{T}_{-(q-1)y_1} y_1^a F[X;y_1] \\
        & = \left. \left(\mathcal{T}_{(q-1)t w_1} \langle w_2^0 \rangle\mathcal{P}_{-\frac{1}{w_2}} \mathcal{T}_{-(q-1)w_2} w_2^a F[X;w_2]\right) \right\rvert_{w_1=y_1} \\
        & = \left. \left(\langle w_2^{-a} \rangle \mathcal{T}_{(q-1)t w_1} \mathcal{P}_{-\frac{1}{w_2}} \mathcal{T}_{-(q-1)w_2}F[X;w_2]\right) \right\rvert_{w_1=y_1}.
    \end{align*}
    Similarly,
    \begin{align*}
        & d_-d_+^* y_1^a F[X;y_1] = d_- y_2^a d_+^* F[X;y_1] && \text{by \eqref{eq:Q1*}} \\
        & = d_- y_2^a \eta \mathcal{T}_{(q-1) y_2} F[X;y_1] \\
        & = d_- y_2^a \eta F[X+(q-1) y_2;y_1] \\
        & = d_- y_2^a  F[X+(q-1) ty_1;y_2] \\
        & = d_- y_2^a  \mathcal{T}_{(q-1)t y_1}F[X;y_2] \\
        & = \langle y_2^0\rangle \mathcal{P}_{-\frac{1}{y_2}} \mathcal{T}_{-(q-1)y_2} y_2^a \mathcal{T}_{(q-1)t y_1}F[X;y_2] \\
        & =\langle y_2^0\rangle \Exp\left[(q-1)t\frac{y_1}{y_2}\right]\mathcal{T}_{(q-1)t y_1} \mathcal{P}_{-\frac{1}{y_2}} \mathcal{T}_{-(q-1)y_2} y_2^a F[X;y_2] && \text{by \eqref{eq:TPExp}} \\
        & = \left. \left(\langle w_2^{-a} \rangle \Exp\left[(q-1)t\frac{w_1}{w_2}\right] \mathcal{T}_{(q-1)t w_1} \mathcal{P}_{-\frac{1}{w_2}} \mathcal{T}_{-(q-1)w_2} F[X;w_2]\right) \right\rvert_{w_1=y_1}.
    \end{align*}
    Putting these pieces together gives the result.
\end{proof}

We have the following corollary.

\begin{corollary}
    \label{cor:Dgamma1plethystic}
    Given $F = F[X;y_1]\in V_1$, we have
    \begin{align*}
        & d_- y_1^{\gamma_{1}-1} \z_1 y_1^{\gamma_{2}} \cdots \z_1 y_1^{\gamma_{\ell-1}} \z_1 y_1^{\gamma_{\ell}+1} F \\
        & \quad = \langle w^{-\gamma} \rangle \prod_{i=1}^{\ell-1}\left(\frac{1}{1-qtw_{i}/w_{i+1}}\right) \prod_{i<j} \Exp\left[-\frac{w_{i}}{w_j} M\right] \mathcal{P}_{-\overline{W}} \mathcal{T}_{MW} \mathcal{T}_{-t(q-1)w_\ell}F[X;w_\ell],
    \end{align*}
    where $ W = w_1+w_2+\cdots +w_{\ell}$ and $\overline{W} = {w_1}^{-1}+{w_2}^{-1}+\cdots +{w_\ell}^{-1}$.
\end{corollary}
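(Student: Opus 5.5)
We will prove the corollary by induction on $\ell$, peeling off one factor $\z_1 y_1^{a}$ at a time from the right and applying \Cref{lemma:z1y1a} repeatedly. Each application introduces a new variable $w$, a coefficient extraction, and the operators $\mathcal{T}_{(q-1)tw_{\text{new}}}\mathcal{P}_{-1/w_{\text{old}}}\mathcal{T}_{-(q-1)w_{\text{old}}}$. The final $d_-$ contributes the last extraction $\langle w_1^{0}\rangle \mathcal{P}_{-1/w_1}\mathcal{T}_{-(q-1)w_1}$. We will then use \eqref{eq:TPExp} to move all translations to the right of all multiplications and collect the resulting scalar factors.

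\emph{Base case $\ell=1$.} The left-hand side is $d_- y_1^{\gamma_1}\cdot y_1\cdots$; more precisely it is $d_- y_1^{\gamma_1} F$ with the shift absorbed, since $y_1^{\gamma_1-1}\cdot y_1^{\gamma_1+1}$ only arises in the merged form. This equals $\langle w_1^{-\gamma_1}\rangle \mathcal{P}_{-1/w_1}\mathcal{T}_{-(q-1)w_1}F[X;w_1]$. We rewrite $-(q-1)w_1$ as $Mw_1 - t(q-1)w_1$, which matches the right-hand side with empty products. The index conventions, namely whether the exponent is $\gamma_\ell+1$ together with the $(1-qt w_i/w_{i+1})^{-1}$ factors, must be tracked carefully here. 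We will fix them by checking $\ell=2$ explicitly.

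\emph{General step.} We define $F_\ell=F$ and
\[ G_j=\z_1 y_1^{\gamma_{j}}\cdots\z_1 y_1^{\gamma_\ell+1}F, \]
and show by descending induction on $j$ that $G_j$ has the form
\[ \langle w_{j+1}^{\cdot}\cdots w_\ell^{\cdot}\rangle\,(\text{scalar rational function})\,\mathcal{T}_{(q-1)tw_j}\,\mathcal{P}_{-\sum_{i>j} w_i^{-1}}\,\mathcal{T}_{M\sum_{i>j}w_i}\,\mathcal{T}_{-t(q-1)w_\ell}F \]
evaluated at $w_j=y_1$. Applying \Cref{lemma:z1y1a} to $y_1^{\gamma_{j}}G_{j+1}$ does three things. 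It replaces $y_1$ by $w_{j+1}$ in the existing expression, which turns $\mathcal{T}_{(q-1)tw_{j+1}}$ into a translation. It prepends $\mathcal{T}_{-(q-1)w_{j+1}}$, and these two combine into $\mathcal{T}_{Mw_{j+1}}$ modulo ordering. It then adds $\mathcal{P}_{-1/w_{j+1}}$ and a factor $\bigl(1-\Exp[(q-1)tw_j/w_{j+1}]\bigr)/(t(1-q))$.

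Commuting the new $\mathcal{P}_{-1/w_{j+1}}$ past the old translations $\mathcal{T}_{Mw_i}$ (for $i>j+1$) and $\mathcal{T}_{-t(q-1)w_\ell}$ via \eqref{eq:TPExp} produces the factors $\Exp[-Mw_{j+1}\cdot\text{stuff}/w_i]$. Here the $\mathcal{T}_{(q-1)tw_{i}}$ and $\mathcal{T}_{-(q-1)w_i}$ pieces give exactly $\Exp[-Mw_{j+1}/w_i]$ for the pairs $i<j$ pattern. Meanwhile the old $\mathcal{T}_{(q-1)tw_{j+1}}$ acting past $\mathcal{P}_{-1/w_{j+2}}$ gives $\Exp[(q-1)tw_{j+1}/w_{j+2}]$. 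This last factor should combine with $\bigl(1-\Exp[(q-1)tw_{j+1}/w_{j+2}]\bigr)/(t(1-q))$.

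\emph{Main obstacle.} The hardest step is showing that the adjacent-pair scalar $\bigl(1-\Exp[(q-1)t u]\bigr)/(t(1-q))$, with $u=w_j/w_{j+1}$, together with the leftover $\Exp$ factors from reordering, collapses to
\[ \Exp[-Mu]\cdot\frac{1}{1-qtu}\cdot(\text{shift of the extracted exponent by one}). \]
Since $\Exp[(q-1)tu]=(1-tu)/(1-qtu)$, we get
\[ 1-\Exp[(q-1)tu]=\frac{(1-q)tu}{1-qtu}. \]
Dividing by $t(1-q)$ therefore gives $u/(1-qtu)$. The monomial $u=w_j/w_{j+1}$ is exactly what shifts the coefficient extraction, converting $\langle w^{-\gamma}\rangle$ with the exponents $\gamma_1-1$ and $\gamma_\ell+1$ into the uniform $\langle w^{-\gamma}\rangle$. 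This accounts for the $\gamma_1-1$ and $\gamma_\ell+1$ in the statement, so we expect the bookkeeping to close.

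We will verify the sign of $M$ in $\Exp[-Mw_i/w_j]$ for non-adjacent pairs. These arise as $\Exp[-w_i(q-1)t/w_j]\cdot\Exp[w_i(q-1)/w_j]$ from moving $\mathcal{T}_{(q-1)(tw_i)}$, hidden inside $\mathcal{T}_{Mw_i}$, past $\mathcal{P}_{-1/w_j}$, and we will check that this product equals $\Exp[-Mw_i/w_j]$. Finally, the $d_-$ step adds $\mathcal{P}_{-1/w_1}$ with no new $z$-factor, which completes $\mathcal{P}_{-\overline W}\mathcal{T}_{MW}$.
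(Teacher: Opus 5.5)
Your overall route is the paper's: iterate \Cref{lemma:z1y1a}, merge $\mathcal{T}_{-(q-1)w_i}\mathcal{T}_{(q-1)tw_i}=\mathcal{T}_{Mw_i}$, reorder with \eqref{eq:TPExp}, simplify $1-\Exp[(q-1)tu]=(1-q)tu/(1-qtu)$, and let $\prod_i w_i/w_{i+1}=w_1/w_\ell$ absorb the shifts $\gamma_1-1$ and $\gamma_\ell+1$. The paper iterates the lemma completely and then reorders once. You reorder inside a descending induction, which is equivalent once the indexing is fixed: take $G_\ell=F$ and $G_j=\z_1y_1^{\gamma_{j+1}}\cdots\z_1y_1^{\gamma_\ell+1}F$, since as written $\gamma_j$ would be extracted from $w_{j+1}$.

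This corollary is pure bookkeeping, so the reordering has to be stated correctly, and your description of it is wrong. Up to the scalar and the coefficient extraction, the inductive step produces
\[ \mathcal{T}_{(q-1)tw_j}\,\mathcal{P}_{-1/w_{j+1}}\,\mathcal{T}_{Mw_{j+1}}\,\mathcal{P}_{-\sum_{i>j+1}w_i^{-1}}\,\mathcal{T}_{M\sum_{i>j+1}w_i}\,\mathcal{T}_{-t(q-1)w_\ell}F. \]
Here the new $\mathcal{P}_{-1/w_{j+1}}$ is already in place and must not be moved. The only move is the freshly merged $\mathcal{T}_{Mw_{j+1}}$ passing to the right of the old $\mathcal{P}_{-1/w_i}$, $i>j+1$, which yields $\prod_{i>j+1}\Exp[-Mw_{j+1}/w_i]$. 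Moving $\mathcal{P}_{-1/w_{j+1}}$ past the $\mathcal{T}_{Mw_i}$ as you propose would give factors with $w_i$ over $w_{j+1}$, and would push translations to the left, against your stated goal. Also, $\mathcal{T}_{-t(q-1)w_\ell}$ is always rightmost and never crosses a multiplication operator.

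Adjacent pairs are not special, and seeing this dissolves your ``main obstacle''. Your claim that $\mathcal{T}_{(q-1)tw_{j+1}}$ passing $\mathcal{P}_{-1/w_{j+2}}$ gives $\Exp[(q-1)tw_{j+1}/w_{j+2}]$ has the wrong sign. By \eqref{eq:TPExp} the factor is $\Exp[-(q-1)tw_{j+1}/w_{j+2}]$. It pairs with $\Exp[(q-1)w_{j+1}/w_{j+2}]$ from $\mathcal{T}_{-(q-1)w_{j+1}}$ to give $\Exp[-Mw_{j+1}/w_{j+2}]$. This is exactly the mechanism of your last paragraph, and it holds for every pair $i<j$, adjacent or not.

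This factor does not combine with the lemma's scalar; it simply lands in $\prod_{i<j}\Exp[-Mw_i/w_j]$. The scalar $(1-\Exp[(q-1)tu])/(t(1-q))$ becomes $u/(1-qtu)$ on its own. With this accounting nothing is left to check:
\begin{itemize}
\item each pair $i<j$ contributes $\Exp[-Mw_i/w_j]$;
\item each adjacent pair contributes an extra $u/(1-qtu)$ with $u=w_i/w_{i+1}$;
\item the product of the $u$'s cancels the shift in the extraction, exactly as in the paper.
\end{itemize}
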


\begin{proof}
    Iterating \Cref{lemma:z1y1a}, we compute
    \begin{align*}
        d_- y_1^{\gamma_{1}-1} \z_1 y_1^{\gamma_{2}} \cdots \z_1 y_1^{\gamma_{\ell-1}} \z_1 y_1^{\gamma_{\ell}+1} F & = \langle w^{-\gamma} \rangle \frac{w_\ell}{w_1} \frac{1}{t^{\ell-1}(1-q)^{\ell-1}}\prod_{i=1}^{\ell-1} \left(1-\Exp\left[(q-1)t \frac{w_{i}}{w_{i+1}} \right] \right)
        \\
        & \quad \times
        \mathcal{P}_{-\frac{1}{w_1}} \mathcal{T}_{Mw_1} \cdots \mathcal{P}_{-\frac{1}{w_{\ell-1}}} \mathcal{T}_{Mw_{\ell-1}}\mathcal{P}_{-\frac{1}{w_\ell}} \mathcal{T}_{-(q-1)w_\ell}F[X;w_\ell].
    \end{align*}
    Now, iterating \eqref{eq:TPExp} and the fact that
    \[1-\Exp\left[(q-1)t\frac{w_i}{w_{i+1}}\right]=t\frac{w_i}{w_{i+1}}\cdot \frac{(1-q)}{1-qtw_i/w_{i+1}}, \]
    we deduce that the previous expression is equal to
    \[
        \langle w^{-\gamma} \rangle \prod_{i=1}^{\ell-1}\left(\frac{1}{1-qtw_{i}/w_{i+1}}\right) \prod_{i<j} \Exp\left[-\frac{w_{i}}{w_j} M\right] \mathcal{P}_{-\overline{W}} \mathcal{T}_{MW} \mathcal{T}_{-t(q-1)w_\ell} F[X;w_\ell]
    \]
    as stated.
\end{proof}

We are finally ready to prove the main result of this section.

\begin{proof}[Proof of \Cref{thm:Don1}]
    Applying \Cref{cor:Dgamma1plethystic} to $d_+^*(1)=1 \in V_1$ and then applying $\omega$, we get 
    \begin{align*}
    \omega \D_\gamma \cdot 1 & = \omega \left( d_- (-y_1)^{\gamma_{1}-1} \z_1 (-y_1)^{\gamma_{2}} \z_1 (-y_1)^{\gamma_{3}} \cdots \z_1 (-y_1)^{\gamma_{\ell-1}} \z_1 (-y_1)^{\gamma_{\ell}+1} d_+^* (1) \right) \\
        & = (-1)^{\lvert \gamma \rvert} \cdot \omega \left( \langle w^{-\gamma} \rangle \prod_{i=1}^{\ell-1} \left(\frac{1}{1-qtw_{i}/w_{i+1}}\right) \prod_{i<j} \Exp\left[-\frac{w_{i}}{w_j} M\right] \mathcal{P}_{-\overline{W}}  \mathcal{T}_{MW} \mathcal{T}_{-t(q-1)w_\ell} 1 \right).
    \end{align*}
    Now observe that $\mathcal{T}_Y \cdot 1 = 1$, while $\mathcal{P}_Z \cdot 1 = \Exp[XZ]$ for all expressions $Y$ and $Z$ for which the operators are defined. Therefore, the last expression becomes
    \begin{align*}
        & (-1)^{\lvert \gamma \rvert} \cdot \omega \left( \langle w^{-\gamma} \rangle \prod_{i=1}^{\ell-1} \left(\frac{1}{1-qtw_{i}/w_{i+1}}\right) \prod_{i<j} \Exp\left[-\frac{w_{i}}{w_j} M\right] \Exp\left[-\overline{W}X\right] \right) \\
        & \quad = \langle w^0 \rangle \left(\frac{w^\gamma}{\prod_{i=1}^{\ell-1}(1-qtw_{i}/w_{i+1})}\right) \prod_{i<j}\Exp\left[-\frac{w_{i}}{w_j} M\right] \Exp\left[\overline{W}X\right],
    \end{align*}
    because the constant term is symmetric in the variables $x_1,x_2,\dots$. The sign in the plethystic exponential $\Exp[-\overline{W}X]$ cancels out with $(-1)^{\lvert \gamma \rvert}$ and $\omega$, and the $w^\gamma$ appears because of the shift in the coefficient extraction.
    Now, since
    \[ \Exp\left[-\frac{w_i}{w_j}M\right] = \frac{(1-qtw_i/w_j)(1-w_i/w_j)}{(1-qw_i/w_j)(1-tw_i/w_j)} \]
    the last expression reduces to
    \begin{align*}
        &   \langle w^0 \rangle \left(\frac{w^\gamma}{\prod_{i=1}^{\ell-1}(1-qtw_{i}/w_{i+1})} \cdot \frac{\prod_{i<j}(1-qtw_i/w_j)}{\prod_{i<j}(1-qw_i/w_j)(1-tw_i/w_j)}\right) \Exp\left[\overline{W}X\right] \prod_{i<j} \left(1-\frac{w_{i}}{w_j}\right),
    \end{align*}
    and now by \Cref{lem:poly_part} this gives precisely
    \[
        \textbf{$\sigma$} \left(\frac{w^\gamma}{\prod_{i=1}^{\ell-1}(1-qtw_{i}/w_{i+1})}\cdot \frac{\prod_{i<j}(1-qtw_i/w_j)}{\prod_{i<j}(1-qw_i/w_j)(1-tw_i/w_j)}\right)_{\text{pol}}
    \]
    which is equal to
    \[ 
        \mathbf{H}_{q,t}^{(\ell)}\left(\frac{x^\gamma}{\prod_{i=1}^{\ell-1}(1-qtx_{i}/x_{i+1})} \right)_{\mathrm{pol}}
    \]
    by definition. This concludes the proof.
\end{proof}

\subsection{Connection to the rational shuffle theorem}
\label{section:D1}
We are now able to connect the approaches of Mellit and Blasiak et al.\ by outlining a proof of the rational shuffle theorem which relies on Mellit's algebraic construction \cite{Mellit2021Rational} (but not on the combinatorial recursion in terms of braids) and on the combinatorial interpretation of
\[ \mathbf{H}_{q,t}^{(\ell)}\left(\frac{x^\gamma}{\prod_{i=1}^{\ell-1}(1-qt x_i/x_{i+1})}\right)_{\mathrm{pol}} \]
by Blasiak et al.\ (but not on any theory of the elliptic Hall algebra).
We start by recalling the following basic proposition.

\begin{proposition}
    \label{prop:SL2}
    The monoid $\mathsf{SL}_2(\mathbb{Z})_+$, consisting of matrices with nonnegative integer entries and determinant $1$, is freely generated by the elements \[ N = \begin{pmatrix} 1 & 1 \\ 0 & 1 \end{pmatrix} \quad \text{ and } \quad S = \begin{pmatrix} 1 & 0 \\ 1 & 1 \end{pmatrix}. \]
\end{proposition}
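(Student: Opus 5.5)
We argue by a Euclidean-type descent on the entries, proving separately that every element of $\mathsf{SL}_2(\mathbb{Z})_+$ is a product of $N$'s and $S$'s, and that such a product is unique.

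\emph{Existence.} Take $A = \begin{pmatrix} a & b \\ c & d \end{pmatrix} \in \mathsf{SL}_2(\mathbb{Z})_+$ and induct on $a+b+c+d$. If $A \neq I$, then $ad - bc = 1$ with all entries nonnegative forces $a,d \geq 1$.

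We claim that either $(a,b) \geq (c,d)$ componentwise or $(c,d) \geq (a,b)$ componentwise, with inequality in at least one entry.

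To see this, suppose instead that $a > c$ and $b < d$. Then $a \geq c+1$ and $d \geq b+1$, so
\[ ad \geq (c+1)(b+1) = bc + b + c + 1 . \]
Combined with $ad - bc = 1$, this gives $b = c = 0$. Then $a = d = 1$, contradicting $a > c$. The case $a < c$, $b > d$ is symmetric.

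Similarly, $a = c$ and $b = d$ would give $ad - bc = 0$, which is impossible.

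Now:
\begin{itemize}
    \item If the top row dominates, then $N^{-1}A = \begin{pmatrix} a-c & b-d \\ c & d \end{pmatrix}$ has nonnegative entries, determinant $1$, and strictly smaller entry sum.
    \item If the bottom row dominates, then $S^{-1}A = \begin{pmatrix} a & b \\ c-a & d-b \end{pmatrix}$ has the same properties.
\end{itemize}
By induction $A$ is a word in $N$ and $S$.

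\emph{Freeness.} A word $A = NA'$ with $A' \in \mathsf{SL}_2(\mathbb{Z})_+$ has top row equal to the top row of $A'$ plus its bottom row. Since $A'$ has determinant $1$, its bottom row is nonzero. Hence in $A$ the top row dominates the bottom row and differs from it.

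Similarly, a word $A = SA''$ has bottom row strictly dominating the top row.

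These two situations are mutually exclusive. Strict dominance of the top row means $a \geq c$, $b \geq d$ with one inequality strict; strict dominance of the bottom row means $c \geq a$, $d \geq b$ with one inequality strict. Both together would force $a = c$ and $b = d$, which is impossible.

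So the first letter of any word representing $A$ is determined by $A$. Cancelling it (matrices are invertible in $\mathsf{GL}_2$) and inducting on word length shows that two words representing the same matrix coincide. The empty word is the only word giving $I$, since any nonempty word has entry sum greater than $2$.

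The only step needing real care is the dominance claim in the existence part; everything else is bookkeeping.
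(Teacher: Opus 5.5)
Your proof is correct, and it is the standard Euclidean descent. The paper gives no proof of this proposition and simply recalls it as a basic fact, but the example right after it runs your reduction on the column $UA^t$, applying $N^{-1}$ or $S^{-1}$ according to which entry decreases.

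Three small slips are worth fixing.

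\begin{itemize}
\item In the dominance claim, $b=c=0$ and $a=d=1$ give $A=I$. That contradicts your standing hypothesis $A\neq I$, not $a>c$; indeed $1>0$ holds.
\item In the freeness step, the top row of $NA'$ exceeds its bottom row by the \emph{top} row of $A'$. So it is the top row whose nonvanishing you need. This is harmless, since $\det A'=1$ forces both rows to be nonzero.
\item The case $a<c$, $b>d$ is not literally symmetric to the other one, but it is immediate: $bc\geq(a+1)(d+1)>ad$, which is incompatible with $ad-bc=1$.
\end{itemize}
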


\begin{definition}
    \label{def:lowest_path}
    Let $A \in \mathsf{SL}_2(\mathbb{Z})_+$. Let $U = (0,1)$ denote a vertical (up) step and $R = (1,0)$ a horizontal (right) step on the lattice grid $\mathbb{Z}^2$. By \Cref{prop:SL2}, $A$ admits a unique decomposition as a product of positive powers of $N$ and $S$. Using the assignments
    \[ N U = U R, \quad N R = R, \quad S U = U, \quad S R = U R, \]
    we define $P_A$ to be the lattice path from $(0,0)$ to $U A^t$ obtained by acting with $A$ on $U$.
\end{definition}

\begin{example}
    Let 
    \[ A_\gamma = \begin{pmatrix} 2 & 5 \\ 1 & 3 \end{pmatrix}  \in \mathsf{SL}_2(\mathbb{Z})_+. \]
    To obtain the factorization, consider $U A_\gamma^t = (5,3)$. We use the Euclidean algorithm:
    \[ (5,3) \rightarrow (2,3) \rightarrow (2,1) \rightarrow (1,1) \rightarrow (0,1), \]
    where decreasing the first component corresponds to $N^{-1}$ and decreasing the second component corresponds to $S^{-1}$. Reversing the steps, we obtain $A_\gamma = NSNN$, which is the desired factorization. To build $P_{A_\gamma}$, we act on $U = (0,1)$ in the same way, obtaining (see \Cref{fig:lowest_path_53})
    \[ U \stackrel{N}{\rightarrow} UR \stackrel{N}{\rightarrow} URR \stackrel{S}{\rightarrow} UURUR \stackrel{N}{\rightarrow} URURRURR. \]
\end{example}

\begin{figure}
    \centering
    \begin{tikzpicture}[scale=.72]
        \draw[step=1.0, gray!60, thin] (0,0) grid (5,3) (0,0) -- (5,3);
        \draw[blue!60, line width = 1.6 pt] (0,0) -- (0,1) -- (1,1) -- (1,2) -- (2,2) --  (3,2) -- (3,3) -- (4,3) -- (5,3); 
    \end{tikzpicture}
    \caption{The lowest lattice path above the diagonal from $(0,0)$ to $(5,3)$.}
    \label{fig:lowest_path_53}
\end{figure}

\begin{lemma}
    \label{lem:lowest_path}
    For $A \in \mathsf{SL}_2(\mathbb{Z})_+$, $P_A$ is the lowest lattice path above the diagonal from $(0,0)$ to $U A^t$ in the lattice grid.
\end{lemma}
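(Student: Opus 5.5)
Induction on the length of the factorization of $A$ into $N$ and $S$ (\Cref{prop:SL2}) seems the natural approach, using the recursive construction of $P_A$ in \Cref{def:lowest_path}. Write $(m,n) = UA^t$. The base case is $A = I$, where $P_A = U$ is the only path from $(0,0)$ to $(0,1)$. For the inductive step, write $A = NA'$ or $A = SA'$ and use the fact that $P_A$ is obtained from $P_{A'}$ by applying a substitution to every step. Under $N$ we substitute $U \mapsto UR$ and $R \mapsto R$. Under $S$ we substitute $U \mapsto U$ and $R \mapsto UR$. Here I take the factorization to be applied outermost-last, consistently with the example, where the leftmost factor is applied last. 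If $UA'^t = (m',n')$, the new endpoint is $(m'+n', n')$ for $N$ and $(m', m'+n')$ for $S$.

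The key tool is the pair of linear shears $\phi_N(x,y) = (x+y, y)$ and $\phi_S(x,y) = (x, x+y)$. I will check that the substitutions are the discrete versions of these maps. Applying $N$ to a path $P$ gives the path whose lattice points after each complete block are the images under $\phi_N$ of the lattice points of $P$. The intermediate points are obtained by inserting the extra step $R$ right after each $U$ (resp. $U$ right before each $R$ for $S$). Since $\phi_N,\phi_S$ have determinant $1$ and preserve the closed upper half-plane relative to the diagonal, they map the diagonal from $(0,0)$ to $(m',n')$ onto the diagonal from $(0,0)$ to $(m,n)$. They also preserve the region weakly above it, in the sense $n x \le m y$ becomes $n' x' \le m' y'$ after the change of variables. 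So weakly-above-diagonal paths for $(m',n')$ map injectively to weakly-above-diagonal paths for $(m,n)$, and one checks that the inserted intermediate points also stay above the diagonal.

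To get ``lowest'', I will prove a converse. Every path $Q$ from $(0,0)$ to $(m,n)$ weakly above the diagonal lies weakly above $\phi(P')$ for some path $P'$ weakly above the diagonal for $(m',n')$. It is cleanest to define ``lowest'' as pointwise minimal height at each $x$-coordinate, or equivalently by lattice points below the path, and argue via the inverse shear. For $N$, the case $m>n$, the inverse $\phi_N^{-1}(x,y) = (x-y,y)$ sends the region above the diagonal bijectively to the region above the diagonal for $(m',n')$. Because the shear preserves the order of heights along vertical lines, up to the horizontal reindexing, the lowest path must be the image of the lowest path, after checking that the substitution indeed realizes the lowest lattice path through the image points. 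The $S$ case is symmetric, with $\phi_S^{-1}(x,y)=(x,y-x)$ applied to columns.

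I expect the main obstacle to be this converse/minimality step at the lattice level. A cleaner alternative I would try first is the arithmetic characterization of the lowest path. Since $\gcd(m,n)=1$, the lowest path weakly above the line has its $j$-th $R$ step, $j=1,\dots,m$, at height $\lceil jn/m \rceil$. In fact the height is $\lceil jn/m\rceil$ for $j<m$, with the endpoint being the last one; this holds regardless, because all rational points with $0<j<m$ are strict. I would then verify by induction that $P_A$ satisfies this formula. For $S$ this means checking that the heights $\lceil jn'/m'\rceil$ transform to $\lceil j(m'+n')/m'\rceil = j + \lceil jn'/m'\rceil$, which is immediate because each $R$ gains one preceding $U$. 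For $N$, I would instead use the dual description by positions of $U$ steps: the $i$-th $U$ step sits at $x = \lfloor (i-1)m/n \rfloor$. This becomes $x = (i-1) + \lfloor (i-1)m'/n'\rfloor$, which again matches since each $U$ gains a following $R$. This reduces everything to two floor/ceiling identities plus the equivalence of the two descriptions of the lowest path.
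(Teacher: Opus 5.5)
Your arithmetic route is correct and is essentially the paper's proof. The paper also inducts on the word in $N$ and $S$, and observes that under $N$ the vertical step in each row and the diagonal both shift right by the same integer, so the horizontal gap between them stays in $[0,1)$; it argues dually for $S$ with the horizontal steps in each column. That is exactly your pair of identities $\lfloor (i-1)m/n \rfloor = (i-1) + \lfloor (i-1)m'/n' \rfloor$ and $\lceil jn/m \rceil = j + \lceil jn'/m' \rceil$. You can drop the shear/converse paragraph: as written it does not establish minimality, and the arithmetic characterization makes it unnecessary.
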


\begin{proof}
    We proceed by induction on the length of $A$ when expressed as a word in $N$ and $S$. If the length is $0$, then $A = I$ and the statement is trivial.

    If the claim holds for some matrix $A$, then it suffices to check that it holds for $NA$ and $SA$. By induction, $P_A$ is the lowest lattice path above the diagonal from $(0,0)$ to $(a,b) = U A^t$ in the lattice grid.
    
    We have $U (NA)^t = (a+b, b)$. For $1 \leq i \leq b$, the intersections between the line $y=i$ and the diagonals from $(0,0)$ to $(a,b)$ and $(a+b,b)$ are exactly $i$ units apart, and so are the starting points of the vertical steps in row $i$ of $P_A$ and $P_{NA}$. In particular, the horizontal distances between those points are the same, and since those for $P_A$ are less than $1$ (because it is the lowest path above the diagonal), so are those for $P_{NA}$.

    The same argument holds for $SA$ by considering the intersections with the lines $x=i$ for $1 \leq i \leq a$ and looking at the vertical distances instead. The claim follows.
\end{proof}

\begin{proposition}[{\cite[Proposition~3.16]{Mellit2021Rational}}]
    \label{prop:north_south}
    The assignments
    \[ N(\epsilon_k) = S(\epsilon_k) = \epsilon_k, \quad N(T_i) = S(T_i) = T_i, \quad N(d_-) = S(d_-) = d_-, \]
    \[ N(d_+) = - \z_1 d_+, \quad N(d_+^*) = d_+^*, \quad S(d_+) = d_+, \quad S(d_+^*) = -y_1 d_+^*, \]
    extend to endomorphisms $N, S$ of $\A_{q,t}$ which preserve the kernel of the action on $V$.
\end{proposition}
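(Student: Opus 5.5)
We have to show two things: that $N$ and $S$ are well-defined endomorphisms of $\A_{q,t}$, and that they preserve the kernel of the action on $V$. The plan is to check that every defining relation of $\A_{q,t}$ maps to a relation, and then to check that the images of the generators (I1) and (I2) of the annihilator (\Cref{thm:annihilator}) again act by zero on $V$. Since $N$ and $S$ fix $\epsilon_k$, $T_i$ and $d_-$, and swap the roles of $d_+$ and $d_+^*$ up to the twists $-\z_1$ and $-y_1$, there is a symmetry we will use throughout. The relations are symmetric under the involution exchanging $\A_q$ and $\A_{q^{-1}}$: it sends $d_+\leftrightarrow d_+^*$, $y_i\leftrightarrow z_i$ and $q\leftrightarrow q^{-1}$, with suitable normalization. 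Under this involution $S$ corresponds to $N$. So we treat $N$ in detail and obtain $S$ by symmetry.

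For $N$, the relations among $T_i$ and $d_-$ alone, namely (S), (B), (C), (R1) and (R4), are trivially preserved. The relations of $\A_{q^{-1}}$ involve only $d_+^*$, $d_-$ and $T_i^{-1}$, all of which are fixed, so they are preserved as well.

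The substantive relations are those of $\A_q$ that involve $d_+$, namely (R2), (R3), (R5) and (R6), together with the mixed relations (Q1), (Q1*) and (Q2).
\begin{itemize}
\item For (R2), we need $\z_1 d_+ T_i = T_{i+1}\z_1 d_+$. This follows from (R2) and \eqref{eq:yT_comm} applied to $z_1$.
\item For (R3), we need $T_1 \z_1 d_+ \z_1 d_+ = \z_1 d_+\z_1 d_+$. Using (Q1), rewrite $d_+\z_1=\z_2 d_+$, which reduces the left side to $T_1\z_1\z_2 d_+^2$. Since $\z_1\z_2$ is symmetric and commutes with $T_1$ (a standard consequence of the relations $z_{i+1}=q^{-1}T_iz_iT_i$ in $\A_{q^{-1}}$), (R3) finishes the check.
\item For (R5) and (R6), I would compute $N(y_1)$. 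Using $[N(d_+),d_-]=-[\z_1 d_+,d_-]=-\z_1[d_+,d_-]$ (because $d_-$ commutes with $z_1$ by \eqref{eq:yd-_comm} in the starred algebra), one expects $N(y_i)=c\,\z_i y_i$ for an explicit scalar $c$. This makes the $y$-relations formal consequences of commutativity of $y$'s and $z$'s; that commutativity itself comes from (Q1), (Q1*) and (Q2).
\item For (Q1*), the claim $N(y_{i+1})d_+^*=d_+^*N(y_i)$ follows from (Q1*) and its $z$-analogue.
\item For (Q1) and (Q2), the same computation works, with (Q2) used to convert $\z_1 d_+$ into $-q^ky_1d_+^*$.
\end{itemize}

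Kernel preservation is checked on the generators. Applying $N$ to (I1) gives $(d_-d_+^*-1)d_+^{*k}\epsilon_0$, which is unchanged. Applying $N$ to (I2) gives $(-\z_1 d_+ + q^k N(y_1) d_+^*)d_+^{*k}\epsilon_0$. Substituting $N(y_1)=c\,\z_1y_1$ and factoring out $\z_1$, this becomes a multiple of (I2). For $S$, the image of (I1) is $(-d_-y_1d_+^*-1)(-y_1d_+^*)^k\epsilon_0$. Using (Q1*) to move the $y$'s past the $d_+^*$'s, and then using $d_-y_1d_+^*$ acting on degree-$k$ elements together with \eqref{eq:yd-_comm}, this should reduce to (I1) and (I2). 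This step can alternatively be verified directly on $V$ with the explicit operators: $S(d_+^*)=-y_1d_+^*$ is the operator $F\mapsto -y_1\eta(\cdots)$.

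The main obstacle is the identity $N(y_1)=c\,\z_1y_1$, with the correct scalar $c$ and the correct ordering given the $T_{k\searrow1}$ factor. This identity is exactly where \Cref{lemma:yz_commutation} and \eqref{eq:yd+_comm} have to enter, and every other check depends on it. I would derive it first, testing the case $k=1$ against the explicit action on $V_1$.
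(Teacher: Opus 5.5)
\textbf{Genuine gap: kernel preservation for $S$.} This step cannot work, and no clever manipulation will fix it. Take $k=0$ in \eqref{eq:I1}. Then $S\bigl((d_-d_+^*-1)\epsilon_0\bigr)=-(d_-y_1d_+^*+1)\epsilon_0$. By \eqref{eq:Q2}, $-d_-y_1d_+^*\epsilon_0=d_-\z_1d_+\epsilon_0=\D_1$, so this element sends $1\in V_0$ to $e_1-1\neq 0$. Similarly, $S\bigl((d_++y_1d_+^*)\epsilon_0\bigr)\cdot 1=-y_1-y_1^2\neq 0$.

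The general reason is a grading argument. The action on $V$ is graded by the second component of the bidegree, and $S$ raises it by one on each $d_+^*$. The degree-$k$ component of $S$ applied to \eqref{eq:I1} is $-(-y_1d_+^*)^k\epsilon_0$, which sends $1$ to $(-1)^{k+1}y_1\cdots y_k\neq 0$. It cannot be cancelled, so no use of \eqref{eq:Q1*} or $[d_-,y_i]=0$ reduces it to \eqref{eq:I1} and \eqref{eq:I2}.

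What this shows is that \eqref{eq:I1} and \eqref{eq:I2} generate the annihilator of the vector $1\in V_0$, a left ideal, and not the annihilator of the module $V$. Indeed, \eqref{eq:I1} with $k=0$ acts on $V_0$ as $\D_0-\mathsf{id}=-M\Delta_{e_1}\neq 0$, and $S$ does not preserve that left ideal. So if the statement is to hold for $S$, the kernel must mean the two-sided annihilator of $V$. For that ideal, testing the images of \eqref{eq:I1} and \eqref{eq:I2} is the wrong criterion. Your check for $N$ is correct, but it too only shows that $N$ preserves the annihilator of $1$. The kernel part for $S$ needs a genuinely different argument, and that is exactly what the paper takes from Mellit without proving it.

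\textbf{Second problem: the formula for $N(y_i)$.} The identity you single out as the main obstacle is immediate for $i=1$ and false for $i\ge 2$.
\begin{itemize}
\item Since $d_-\z_1=\z_1d_-$, you get $N([d_+,d_-])=-\z_1d_+d_-+d_-\z_1d_+=-\z_1[d_+,d_-]$. Hence $N(y_1)=-\z_1y_1$ at every vertex $k\ge 1$, and this is all that \eqref{eq:Q2} and the image of \eqref{eq:I2} require.
\item The defining recursion then gives $N(y_i)=-T^*_{i\searrow 1}\,\z_1\,T_{1\nearrow i}\,y_i$. For example, $N(y_2)=-\z_2y_2+(1-q)\z_1y_1T_1^{-1}$, which is not $-\z_2y_2$.
\item The $y$'s also do not commute with the $z$'s. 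By \Cref{lemma:yz_commutation}, on $1\in V_1$ one has $\z_1y_1\cdot 1=y_1$ but $y_1\z_1\cdot 1=0$.
\end{itemize}
So \eqref{eq:r5} and \eqref{eq:r6} are not ``formal consequences of commutativity''. The relation $\z_{i+1}d_+^*=d_+^*\z_i$ that your \eqref{eq:Q1*} argument needs is false; the correct relation is $d_+^*\z_1=T_1^{-1}\z_1T_1d_+^*$.

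These checks can be repaired:
\begin{itemize}
\item \eqref{eq:r5} needs only $[d_-,\z_1]=0$.
\item \eqref{eq:r6} follows from $[d_+,d_-]\z_1=\z_2[d_+,d_-]$, which comes from \eqref{eq:Q1} and $[d_-,\z_2]=0$, together with $[T_1,\z_1\z_2]=0$.
\item \eqref{eq:Q1*} reduces to $i=1$, because higher $i$ follow from $y_{i+1}=qT_i^{-1}y_iT_i^{-1}$ and $d_+^*T_i^{-1}=T_{i+1}^{-1}d_+^*$. The case $i=1$ follows from the formula for $N(y_2)$ and $d_+^*\z_1=T_1^{-1}\z_1T_1d_+^*$.
\end{itemize}

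Finally, your mirror involution must also send $t\mapsto t^{-1}$, which is forced by \eqref{eq:Q2}, so it is only semilinear. With $d_+\mapsto(qt)^{-1}d_+^*$ and $d_+^*\mapsto(qt)^{-1}d_+$ it conjugates $N$ exactly to $S$, so the endomorphism part for $S$ does go through.
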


Via these assignments and \Cref{prop:SL2}, we obtain an action of $\mathsf{SL}_2(\mathbb{Z})_+$ on $\A_{q,t}$ which preserves the kernel of the action on $V$.

\begin{remark}
    \label{rmk:intertwine}
    For $A \in \mathsf{SL}_2(\mathbb{Z})_+$ and $L \in \A_{q,t}$ let $A(L)$ denote the image of $L$ under the action of $A$ via the action from \Cref{prop:north_south}, and let $A^{\text{old}}(L)$ denote the image of $L$ under the action of $A$ via the action from \cite{BergeronGarsiaSergelXin2015CompositionalShuffleConjectures}. If $J$ is the operator on $\A_{q,t}$ which multiplies an element of bidegree $(a,b)$ by $(-1)^{a+b}$, then $J(A(L)) = A^{\text{old}}(J(L))$ \cite[Section~3, p.\ 4143]{Mellit2021Rational}.
    
    In particular, let $L \in \epsilon_0 \A_{q} \epsilon_0$ of degree $d$, corresponding to the multiplication by a symmetric function $F = L \cdot 1$ of degree $d$; its bidegree as an element of $\A_{q,t}$ is then $(0,d)$. It follows that (cf.\ \cite[Conjecture~4.2]{BergeronGarsiaSergelXin2015CompositionalShuffleConjectures})
    \[ J(A(L)) = A^{\text{old}}(J(L)) = (-1)^d A^{\text{old}}(L) = (-1)^d F_{ad,bd}, \]
    where $(a,b) = (0,1) A^t$ (so, $A$ maps the $(0,1)$ component of the elliptic Hall algebra to the $(a,b)$ component).
\end{remark}

We can now reprove \cite[Proposition~3.6.1]{BHMPS2023ShuffleAnyLine}, which is a special case of \cite[Proposition~6.7]{Negut2014ShuffleAlgebra}.

\begin{theorem}
    \label{thm:rational_shuffle}
    Let $m, n$ be positive integers, let $d = \gcd(m,n)$, and let $m = ad$, $n = bd$. For $1 \leq i \leq m$, let $\gamma_i = \lceil ib/a \rceil - \lceil (i-1)b/a \rceil$. Then
    \[ \D_\gamma = e_d \left[ -MX^{a,b} \right]. \] 
\end{theorem}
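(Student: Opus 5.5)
The plan is to realize $e_d[-MX^{a,b}]$ inside $\A_{q,t}$ via the $\mathsf{SL}_2(\mathbb{Z})_+$-action of \Cref{prop:north_south}, and then compare the result with \Cref{def:D_gamma}.

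Take $A \in \mathsf{SL}_2(\mathbb{Z})_+$ with $(0,1)A^t = (a,b)$. Take $L \in \epsilon_0\A_q\epsilon_0$ to be an element of degree $d$ acting on $\Lambda$ as multiplication by $e_d$. The natural choice is the Carlsson--Mellit expression $L = d_-^d d_+^d$, up to a normalization which I would check directly from the action on $V$. Multiplication by $e_d$ is $(-1)^d$ times multiplication by $e_d[-X]$; I would first determine exactly which word in $d_\pm$ gives $e_d$ versus $h_d$, since that fixes the sign.

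By \Cref{rmk:intertwine}, $J(A(L)) = (-1)^d F_{ad,bd}$ with $F = e_d$. Combining this with \Cref{rmk:eha-notation}, which gives $F_{ad,bd} = (-1)^{d(b+1)} e_d[-MX^{a,b}]$, and with the fact that $J$ acts on bidegree $(a d, b d)$ by $(-1)^{d(a+b)}$, I get
\[ e_d[-MX^{a,b}] = \pm A(L) \]
for an explicit sign. Since each of $N$ and $S$ preserves the kernel of the action, this is an identity of operators on $\Lambda$. It therefore remains to compute $A(L) = A(d_-)^d A(d_+)^d$ explicitly.

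Since $N$ and $S$ fix $d_-$ and $T_i$, I track only the image of $d_+$, and of $d_+^*$ at intermediate stages. Write $A$ as a word in $N$ and $S$, as in \Cref{def:lowest_path}. Using \eqref{eq:Q1} and \eqref{eq:Q1*}, each application of $N$ or $S$ turns a word in $d_+, d_+^*$ (decorated by $\z_1, y_1$) into a word reading like the path $P_A$. Up steps correspond to $d_+$ and right steps to factors $-y_1$ or $d_+^*$.

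After that, I would use \eqref{eq:Q2} and \Cref{lemma:yz_commutation} to normalize everything to the form $d_-(-y_1)^{\gamma_1-1}\z_1(-y_1)^{\gamma_2}\cdots\z_1 d_+$. Here $\gamma_i$ counts the right steps of the lowest path between consecutive up steps. By \Cref{lem:lowest_path}, this count is exactly $\lceil ib/a\rceil - \lceil(i-1)b/a\rceil$, in the paper's indexing with $m$ entries.

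For $d>1$ I would do the same computation with $L=d_-^dd_+^d$: the $d$ copies of the primitive path concatenate to the lowest path to $(m,n)$, and the intermediate $d_- d_+$ pairs are absorbed via the $d_+^*d_-$ term of \Cref{lemma:yz_commutation}. This needs care but is the same mechanism as the recursion \eqref{dop:recursion}.

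I expect the main obstacle to be the bookkeeping in this normalization step. That means showing that the image under $A$ of $d_-^d d_+^d$, after rewriting with \eqref{eq:Q2} and \eqref{eq:Q1}, collapses to a single term with exactly the exponents of $\gamma$ and no extra $d_+^*d_-$ corrections, together with tracking the global sign.

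My first attempt would be an induction on the word length of $A$, proving a stronger statement. The stronger statement is that $A(d_+)$, acting from $V_{k}$, equals $\pm(-y_1)^{c_1}\z_1(-y_1)^{c_2}\cdots\z_1 d_+$, suitably conjugated by $T$'s, with exponents read off the path $P_A$. Under this statement, applying $N$ prepends a $-\z_1$ and applying $S$ multiplies each $d_+^*$ step by $-y_1$, matching the recursive path construction in \Cref{def:lowest_path}.

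If the direct computation becomes unwieldy, the fallback is \Cref{prop:dgamma-uniquely-determined}. There I would verify that $e_d[-MX^{a,b}]$ satisfies \eqref{dop:onepart}--\eqref{dop:recursion} along the relevant $\gamma$. I find this less clean, since those relations involve non-path $\gamma$'s.
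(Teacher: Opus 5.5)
Your strategy is the paper's: realize $e_d$ by some $L\in\epsilon_0\A_q\epsilon_0$, apply $A$ via \Cref{prop:north_south}, combine \Cref{rmk:intertwine} with \Cref{rmk:eha-notation}, and read $A(L)$ as the lowest path. For $d=1$ your $L=d_-d_+$ is exactly the paper's choice. For $d\geq 2$, however, $L=d_-^dd_+^d$ is the wrong element, and the problem is not one of normalization or sign. In the action on $V$ one finds $d_+1=-y_1$, then $d_+(-y_1)=T_1(y_1y_2)=y_1y_2$, then $d_-(y_1y_2)=-y_1e_1$, and finally $d_-(-y_1e_1)=e_1^2+(q-1)e_2$. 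So $d_-^2d_+^2$ acts as multiplication by $s_2+qs_{11}$, which is neither $\pm e_2$ nor $\pm h_2$. By \Cref{rmk:intertwine}, $A(d_-^dd_+^d)$ acts as $\pm F_{ad,bd}$ with $F=d_-^dd_+^d\cdot 1\neq\pm e_d$. Hence no rewriting with \eqref{eq:Q2} and \Cref{lemma:yz_commutation} can make it collapse to a single $\pm\D_\gamma$. Your fallback through \Cref{prop:dgamma-uniquely-determined} does not rescue this either. That proposition characterizes the whole family $\{\D_\gamma\}$, and \eqref{dop:recursion} for a lowest-path $\gamma$ involves operators that are not of the form $e_{d'}[-MX^{a',b'}]$, so there is nothing to check on a single operator.

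The missing idea is to take $L=d_-(-y_1)^{d-1}d_+$. Then $L\cdot 1=d_-\bigl((-y_1)^d\bigr)=(-1)^d\langle y_1^{-d}\rangle\Exp[-X/y_1]=e_d$. This $L$ is already the word of the vertical path $U^d$, with $d_+$ as the first $U$ and $-y_1$ as the others. No normalization step is needed afterwards. From $y_1\propto[d_+,d_-]T_{k\searrow1}$, $z_1\propto[d_+^*,d_-]T^*_{k\searrow1}$ and $[d_-,y_1]=[d_-,\z_1]=0$ you get
\[ N(-y_1)=(-\z_1)(-y_1),\quad N(\z_1)=\z_1,\quad S(y_1)=y_1,\quad S(-\z_1)=(-y_1)(-\z_1). \]
Together with $N(d_+)=-\z_1d_+$ and $S(d_+)=d_+$, these are, up to the reading convention, the substitution rules of \Cref{def:lowest_path}. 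So $A(L)$ is literally the word of $(P_A)^d$, which is the lowest path to $(m,n)$ by \Cref{lem:lowest_path}, i.e.\ $A(L)=(-1)^m\D_\gamma$. The signs $(-1)^{m+n}$ from $J$ and $(-1)^{d(b+1)}$ from \Cref{rmk:eha-notation} then give the result. Note also that your dictionary is transposed. The factor $-y_1$ has bidegree $(0,1)$ and $\z_1$ has bidegree $(1,0)$, so $-y_1$ gives up steps and $\z_1$ gives right steps. Accordingly, $\gamma_i$ counts up steps between consecutive right steps, not the other way around.
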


\begin{proof}
    Since $\gcd(a,b) = 1$, by Bézout's identity there exist unique integers $0 < x \leq a$ and $0 \leq y < b$ such that $xb - ya = 1$. Let
    \[ A_\gamma = \begin{pmatrix} x & a \\ y & b \end{pmatrix}  \in \mathsf{SL}_2(\mathbb{Z})_+. \]

    For $A = A_\gamma$ and $L = d_- (-y_1)^{d-1} d_+$, we have \[ L \cdot 1 = d_- (-y_1)^{d-1} d_+ \cdot 1 = d_- (-y_1)^{d-1} \z_1 d_+ \cdot 1 = \D_d \cdot 1 = e_d, \]
    so by \Cref{rmk:intertwine} we obtain
    \[ (-1)^{m+n} A_\gamma(L) = (-1)^d e_{m,n} = (-1)^d (-1)^{d(b+1)} e_d \left[ -MX^{a,b} \right] = (-1)^{n} e_d \left[ -MX^{a,b} \right], \]
    where the factor $(-1)^{d(b+1)}$ arises from \Cref{rmk:eha-notation}. Since $n = bd$, we have $(-1)^d (-1)^{d(b+1)} = (-1)^{db} = (-1)^n$, and we only need to show that $A_\gamma(L) = (-1)^m \D_\gamma$.
    
    The Euclidean algorithm guarantees that, by construction, $\gamma$ is such that the lowest lattice path above the diagonal from $(0,0)$ to $(m,n)$ in the lattice grid is exactly \[ P_\gamma = U^{\gamma_1} R U^{\gamma_2} R \cdots U^{\gamma_m} R, \]
    which consists of $d$ copies of the lowest lattice path above the diagonal from $(0,0)$ to $(a,b)$. By \Cref{lem:lowest_path}, $P_\gamma = (P_{A_\gamma})^d$.

    By definition, if, in $P_\gamma$, we replace each occurrence of $U$ with $-y_1$ (except the first one, which is replaced by $d_+$), each occurrence of $R$ with $-\z_1$, and add a $d_-$ to the left, we obtain $(-1)^m \D_\gamma$.

    Performing the same replacements in \Cref{prop:north_south} yields the assignments in \Cref{def:lowest_path}; applying them to $(P_{A_\gamma})^d$ yields $A_\gamma(L)$. Since $P_\gamma = (P_{A_\gamma})^d$, the two sequences of replacements yield $A_\gamma(L) = (-1)^m \D_\gamma$, as desired.
\end{proof}

The rational shuffle theorem now follows.

\begin{theorem}
    Let $m, n$ be positive integers, let $d = \gcd(m,n)$, and let $m = ad$, $n = bd$. Then
    \[ e_d \left[ -MX^{a,b} \right] \cdot 1 = \sum_{\pi \in \LD(m,n)} q^{\dinv(\pi)} t^{\area(\pi)} x^\pi, \]
    where the original definitions can be found in \cite[Conjecture~4.2]{BergeronGarsiaSergelXin2015CompositionalShuffleConjectures}.
\end{theorem}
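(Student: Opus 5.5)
The plan is to combine \Cref{thm:rational_shuffle} with \Cref{thm:Don1} and then invoke the combinatorial half of the argument of Blasiak et al.\ as a black box. First, \Cref{thm:rational_shuffle} gives $e_d[-MX^{a,b}]\cdot 1 = \D_\gamma\cdot 1$, where $\gamma_i = \lceil ib/a\rceil - \lceil (i-1)b/a\rceil$ for $1\leq i\leq m$. Here $\gamma$ records the number of north steps preceding the $i$-th east step of the lowest lattice path above the diagonal from $(0,0)$ to $(m,n)$ (\Cref{lem:lowest_path}). Applying $\omega$ and \Cref{thm:Don1}, we obtain
\[ \omega\bigl(e_d[-MX^{a,b}]\cdot 1\bigr) = \mathbf{H}_{q,t}^{(m)}\left(\frac{x^\gamma}{\prod_{i=1}^{m-1}(1-qt\,x_i/x_{i+1})}\right)_{\mathrm{pol}}. \]
This step uses only our explicit formula for $\D_\gamma$ and the $\mathsf{SL}_2(\mathbb{Z})_+$ action of \Cref{prop:north_south}; no elliptic Hall algebra theory enters.

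Second, we identify this Hall--Littlewood series with the $\omega$-image of the combinatorial side. For this we follow the first half of the proof in \cite{BHMPS2023ShuffleAnyLine}, which is purely combinatorial and independent of the shuffle algebra. One expands $\mathbf{H}_{q,t}^{(m)}$ of the displayed rational function as a sum over Dyck-type paths lying weakly above the line through $(0,0)$ and $(m,n)$: the geometric expansion of $1/(1-qt\,x_i/x_{i+1})$ together with the factors $(1-qx_i/x_j)^{-1}(1-tx_i/x_j)^{-1}$ accounts for raising the path above the lowest path $P_\gamma$ (tracked by $t^{\area}$) and for the attacking relations (tracked by $q^{\dinv}$). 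The polynomial part is then rewritten as a positive sum of LLT polynomials indexed by tuples of single cells, i.e.\ $\sum_{\pi} q^{\dinv(\pi)}t^{\area(\pi)}\omega(x^\pi)$ over labeled paths $\pi\in\LD(m,n)$, after matching the $\dinv$ statistic of \cite[Conjecture~4.2]{BergeronGarsiaSergelXin2015CompositionalShuffleConjectures} with the one arising from the inversion count in the LLT expansion. Applying $\omega$ once more yields the statement.

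The main obstacle is this second step, specifically confirming that the combinatorial identity in \cite{BHMPS2023ShuffleAnyLine} is stated for exactly the vector $\gamma$ coming from the lowest path of slope $n/m$, with $t^{\area}$ normalized relative to that path, and that their $\dinv$ agrees with the rational $\dinv$ of Bergeron et al., including any constant shift. I would check this first on the case $\gcd(m,n)=1$, where the relevant path is the lowest lattice path, and then observe that for general $d$ the path is the $d$-fold concatenation $(P_{A_\gamma})^d$, so the same argument applies verbatim. Signs also need tracking: the $(-1)^{|\gamma|}$ and $\omega$ conventions were already absorbed in \Cref{thm:Don1}, and the sign relating $e_{m,n}$ to $e_d[-MX^{a,b}]$ was fixed in \Cref{rmk:eha-notation}, so no further sign should appear.
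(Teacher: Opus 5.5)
Your proposal matches the paper's proof step for step: \Cref{thm:rational_shuffle} replaces $e_d[-MX^{a,b}]$ by $\D_\gamma$, \Cref{thm:Don1} turns $\D_\gamma\cdot 1$ into the Hall--Littlewood series, and the combinatorial argument of \cite[Theorem~5.5.1]{BHMPS2023ShuffleAnyLine} is used as a black box for the final identification. The only extra reference in the paper is \cite[Remark~3.25]{Mellit2021Rational}, which matches the left-hand side with the symmetric function side of \cite[Conjecture~4.2]{BergeronGarsiaSergelXin2015CompositionalShuffleConjectures}; your sign bookkeeping via \Cref{rmk:eha-notation} serves the same purpose.
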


\begin{proof}
    By \cite[Remark~3.25]{Mellit2021Rational}, the left-hand side coincides with the one from \cite[Conjecture~4.2]{BergeronGarsiaSergelXin2015CompositionalShuffleConjectures}; by \Cref{thm:rational_shuffle}, that is the same as $\D_\gamma \cdot 1$ for an appropriate $\gamma$; by \Cref{thm:Don1}, this is equal to
    \[ \omega \left( \mathbf{H}_{q,t}^{(\ell)}\left(\frac{x^\gamma}{\prod_{i=1}^{\ell-1}(1-qt x_i/x_{i+1})}\right)_{\mathrm{pol}} \right); \]
    finally, by the combinatorial argument in \cite[Theorem~5.5.1]{BHMPS2023ShuffleAnyLine}, this is equal to the right-hand side.
\end{proof}

\subsection{Combinatorics of \texorpdfstring{$\D_\gamma$}{Dgamma} when \texorpdfstring{$q=1$}{q=1}}

Recall that, for a word $w = w_1\cdots w_r$, its \emph{descent set} is the set
\[ \Des(w) \coloneqq \left\{ i \in [r-1] \mid w_i>w_{i+1} \right\} = \left\{d_1<\dots<d_\ell\right\} \]
and its \emph{descent composition} is the composition that records the lengths of the successive segments determined by the descents, namely
\[ \left(d_1,d_2-d_1,\dots,d_\ell-d_{\ell-1},r-d_\ell\right). \]
With an abuse of notation, we will denote the descent composition of $w$ by $\Des(w)$ as well, where the context will make it clear whether we are referring to the descent set or the descent composition.

\begin{example}
    For the word $w=34229845$, its descent set is $\{2,5,6\}$ and its descent composition is $(2,3,1,2)$.
\end{example}

Let $\gamma \in \mathbb{N}^\ell$ with $\lvert \gamma \rvert = n$, that is, $\gamma = (\gamma_1,\dots,\gamma_\ell) \vDash n$ is a weak composition of $n$ of length at most $\ell$ (disregarding possible trailing zeros). The expression $\left.\D_\gamma\cdot1\right\rvert_{q=1}$ has a known combinatorial interpretation \cite[Equation~(148)]{BHMPS2023ShuffleAnyLine}, which we recover from our definition.

\begin{definition}
    We define the \emph{path associated to $\gamma$}, denoted by $\delta(\gamma)$, as the lattice path in the grid $n\times\ell$, starting at $(0,0)$ and ending at $(\ell,n)$, that alternates $\gamma_i$ unit vertical steps and one unit horizontal step, for $i=1,\dots,\ell$.
\end{definition}

\begin{definition}
    We define the \emph{set of paths above $\delta\left(\gamma\right)$}, denoted by $\R(\gamma)$, as the set of lattice paths in the grid $n\times\ell$, starting at $(0,0)$ and ending at $(\ell,n)$, staying weakly above the path $\delta(\gamma)$.
\end{definition}

\begin{definition}
    Given an element $\tau\in \R(\gamma)$, we define its \emph{area}, denoted by $\area(\tau)$, as the number of whole squares between $\tau$ and $\delta(\gamma)$.
\end{definition}

\begin{remark}
    \label{rmk:heights}
    A lattice path $\rho$ can be encoded by the vector of the heights of its horizontal steps $(\rho_1,\dots,\rho_\ell)$. Unless otherwise specified, we will assume that the path lies in the $\rho_\ell\times\ell$ grid.

    With this notation, $\delta(\gamma)$ is the path $(\gamma_1,\gamma_1+\gamma_2,\dots,\gamma_1+\dots+\gamma_\ell)$, and a path $\tau=(\tau_1,\dots,\tau_\ell)$ belongs to $\R(\gamma)$ if and only if $\tau \geq \delta(\gamma)$ componentwise.
\end{remark}

\begin{definition}
    Given a lattice path $\rho$, we define its \emph{rise composition $\mu(\rho)$} as the composition that records how many consecutive vertical steps $\rho$ has. Namely, if $(\rho_1,\dots,\rho_\ell)$ is the vector of the heights of the horizontal steps of $\rho$, then $\mu(\rho)$ is the composition obtained from $(\rho_1,\rho_2-\rho_1,\dots,\rho_\ell-\rho_{\ell-1})$ after removing the zeros.
\end{definition}

In particular, $\gamma$ (stripped of the zeros) is the rise composition of $\delta(\gamma)$, and for any $\tau \in \R(\gamma)$, $\mu(\tau) \vDash n$ is a composition of $n$ of length at most $\ell$.

\begin{example}
    For the descent composition $\gamma=(2,3,1,2)$ of the word $w=34229845$ from the previous example, $\delta(\gamma)$ is the path $(2,5,6,8)$ in blue in \Cref{fig:small-path-example}. The red path in the same figure is an element $\tau=(4,6,6,8)$ of $\R(\gamma)$, with $\area(\tau)=3$ and rise composition $\mu(\tau)=(4,2,2)$.
\end{example}

\begin{figure}[ht]
    \centering
    \begin{tikzpicture}[scale=.72]
        \draw[step=1.0, gray!60, thin] (0,0) grid (4,8);
        \draw[blue!60, line width = 1.6 pt] (0,0) -- (0,1) -- (0,2) -- (1,2) -- (1,3) --  (1,4) -- (1,5) -- (2,5) --(2,6) -- (3,6) -- (3,7) -- (3,8) -- (4,8); 
        \begin{scope}[shift={(7,0)}]
            \fill[green!20] (0,2) rectangle (1,3);
            \fill[green!20] (0,3) rectangle (1,4);
            \fill[green!20] (1,5) rectangle (2,6);
            \draw[step=1.0, gray!60, thin] (0,0) grid (4,8);
            \draw[blue!60, line width = 1.6 pt] (0,0) -- (0,1) -- (0,2) -- (1,2) -- (1,3) --  (1,4) -- (1,5) -- (2,5) --(2,6) -- (3,6) -- (3,7) -- (3,8) -- (4,8);
            \begin{scope}[shift={(-0.05,0.05)}]
                \draw[red!60, line width = 1.6 pt] (0,-0.05) -- (0,1) -- (0,4) -- (1,4) -- (1,6) --  (3,6) -- (3,7) -- (3,8) -- (4.05,8); 
            \end{scope}
        \end{scope}
    \end{tikzpicture}
    \caption{The path $\delta(\gamma)$ is shown in blue, and a path in $\R(\gamma)$ is shown in red. The three green squares contribute to the area statistic.}
    \label{fig:small-path-example}
\end{figure}

\begin{theorem}[{\cite[Equation~(148)]{BHMPS2023ShuffleAnyLine}}]
    \label{thm:D.1_comb}
    Let $\gamma=(\gamma_1,\dots,\gamma_\ell)\vDash n$ be a composition of $n$. Then
    \[
        \left.\D_\gamma\cdot1\right\rvert_{q=1} = \sum_{\tau\in \R(\gamma)} t^{\area(\tau)} e_{\mu(\tau)}.
    \]
\end{theorem}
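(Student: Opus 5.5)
We prove the formula by induction on $\ell = \ell(\gamma)$, using the recursion \eqref{dop:recursion} of \Cref{thm:dgamma-recursion} specialized at $q=1$ together with \Cref{lem:Dgamma_q1}. The statement is for compositions, but the induction naturally produces weak compositions (the recursion creates zeros and shifts units between parts). So we prove the statement for all $\gamma \in \mathbb{N}^\ell$ with $\lvert\gamma\rvert = n$, interpreting $\delta(\gamma)$ and $\R(\gamma)$ through the height vectors of \Cref{rmk:heights}. We also allow the last part to become negative, in which case both sides vanish: the left side by \eqref{dop:neg}, the right side because the condition $\tau_\ell = \delta(\gamma)_\ell$ can be made compatible only after the shift is compensated. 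The base case $\ell = 1$ is immediate: by the computation in the proof of \Cref{lem:Dgamma_q1}, $\D_{(m)}\cdot 1\rvert_{q=1} = e_m$, and $\R((m))$ consists only of the single path with area $0$ and rise composition $(m)$.

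For the inductive step, write $\gamma = (\gamma', m)$ with $\gamma' \in \mathbb{N}^{\ell-1}$, and apply \eqref{dop:recursion} with $\alpha = \gamma'$, $\beta = (m)$:
\[ \D_\gamma \cdot 1 = \D_{\gamma'} \D_{(m)} \cdot 1 + qt\, \D_{(\gamma'_+, m-1)} \cdot 1 . \]
At $q=1$, \Cref{lem:Dgamma_q1} turns the first term into $e_m \cdot \D_{\gamma'}\cdot 1\rvert_{q=1}$. Iterating on the second term until the last part reaches $0$ gives
\[ \D_\gamma\cdot 1\rvert_{q=1} = \sum_{j=0}^{m} t^{j}\, e_{m-j}\, \D_{\gamma' + j\mathbf{e}_{\ell-1}}\cdot 1 \rvert_{q=1}. \]
Here we use $\D_{\alpha(0)}\cdot 1 = \D_\alpha \cdot 1$ (since $\D_0 \cdot 1 = 1$), and the term with last part $-1$ vanishes by \eqref{dop:neg}. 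At $j=m$ the factor is $e_0 = 1$, which correctly drops a zero part. By induction each $\D_{\gamma'+j\mathbf{e}_{\ell-1}}\cdot 1\rvert_{q=1}$ is a sum over $\R(\gamma'+j\mathbf{e}_{\ell-1})$.

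The combinatorial step is to match this with a decomposition of $\R(\gamma)$ according to the height $\tau_{\ell-1} = \delta(\gamma)_{\ell-1} + j$ of the penultimate horizontal step, where $0 \le j \le m$. Paths $\tau \in \R(\gamma)$ with this value of $j$ correspond bijectively to paths $\tau' = (\tau_1,\dots,\tau_{\ell-1}) \in \R(\gamma' + j\mathbf{e}_{\ell-1})$, since the lower boundaries agree in the first $\ell-1$ columns after the shift. Under this bijection:
\begin{itemize}
\item the area splits as $\area(\tau) = \area(\tau') + j$, because the extra $j$ squares lie in column $\ell-1$ above $\delta(\gamma)_{\ell-1}$, and the areas in earlier columns agree since $\delta$ is unchanged there;
\item the rise composition satisfies $\mu(\tau) = \mu(\tau')$ followed by the final rise $m - j$, with the zero removed when $j = m$.
\end{itemize}
This gives $t^{\area(\tau)} e_{\mu(\tau)} = t^j e_{m-j}\, t^{\area(\tau')} e_{\mu(\tau')}$, which matches the recursion term by term.

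The main obstacle I expect is bookkeeping rather than substance, in two places. First, the area convention must be checked: the area of $\tau'$ has to be measured against $\delta(\gamma' + j\mathbf{e}_{\ell-1})$ rather than against $\delta(\gamma')$, so the $j$ extra squares in column $\ell-1$ must not be double counted. If that accounting fails, I would instead decompose by the last column, comparing $\tau_\ell$ against $\delta(\gamma)_\ell = n$. Second, the extension to weak compositions with zero parts must be consistent, which requires verifying $\D_{\alpha(0)}\cdot 1 = \D_\alpha\cdot 1$ directly from \Cref{def:D_gamma}.
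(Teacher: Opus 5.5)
Your proof is correct, but it peels $\gamma$ from the opposite end compared to the paper.

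\textbf{The paper's route.} It writes $\gamma=(m,\gamma')$ and applies \eqref{dop:recursion} once with $\alpha=(m)$, $\beta=\gamma'$. It matches the two terms with a one-cell split of $\R(\gamma)$: paths with $\tau_1=m$ give $e_m$ times the sum over $\R(\gamma')$, and paths passing above the cell $(0,m)$ give $t$ times the sum over $\R((m+1,\gamma'_-))$. Because the second term has the same length, this needs a double induction on $\ell(\gamma)$ and $\lvert\gamma\rvert-\gamma_1$. The shifted vector $(m+1,\gamma'_-)$ can also acquire a zero or negative entry, which that induction has to accommodate. In exchange, at $q=1$ it only uses the one-part case of \Cref{lem:Dgamma_q1}, namely that $\D_m$ acts as multiplication by $e_m$.

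\textbf{Your route.} You take $\alpha=\gamma'$, $\beta=(m)$ and unroll the recursion completely in the last part. The unrolling terminates by \eqref{dop:neg} and gives $\sum_{j=0}^{m}t^j e_{m-j}\,\D_{\gamma'+j\mathbf{e}_{\ell-1}}\cdot 1$ at $q=1$. This matches term by term the partition of $\R(\gamma)$ according to $\tau_{\ell-1}=\lvert\gamma'\rvert+j$. You need only induction on $\ell$, and the induction hypothesis is only applied to vectors $\gamma'+j\mathbf{e}_{\ell-1}\in\mathbb{N}^{\ell-1}$, which have positive parts if $\gamma$ does. The cost is that you invoke the full multiplicativity of \Cref{lem:Dgamma_q1} for the multi-part operator $\D_{\gamma'}$.

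\textbf{Your two flagged worries.} Both are fine.
\begin{itemize}
\item Area: $\tau'_{\ell-1}$ is the endpoint height of $\delta(\gamma'+j\mathbf{e}_{\ell-1})$, so column $\ell-1$ contributes nothing to $\area(\tau')$. Hence $\area(\tau)=\area(\tau')+j$ exactly, with no double counting.
\item Zero last part: $\D_{\alpha(0)}\cdot 1=\D_\alpha\cdot 1$ is just the final step of your unrolling, $\D_\alpha\D_0\cdot 1+qt\,\D_{\alpha_+(-1)}\cdot 1$, with the second term killed by \eqref{dop:neg}. No separate check from \Cref{def:D_gamma} is needed.
\end{itemize}

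\textbf{Negative last parts.} Your remark about them is never used, and its justification is garbled. If you keep it, the correct reason $\R(\gamma)$ is empty is that $\tau_{\ell-1}\geq\lvert\gamma'\rvert>\lvert\gamma\rvert=\tau_\ell$.
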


\begin{proof}
    We proceed by double induction on $\ell(\gamma)$ and $\lvert \gamma \rvert - \gamma_1$. Let \[ \R_{t,x}(\gamma) = \sum_{\tau \in \R(\gamma)} t^{\area(\tau)} e_{\mu(\tau)}. \]

    If $\gamma = (m)$, we have $\D_{(m)} \cdot 1 = e_m$ and $\R_{t,x}((m)) = e_m$, so the base case holds.

    Let $\gamma = (m, \gamma')$. Consider the lattice cell $c$ with coordinates $(0, m)$, that is, the first cell in the first column that is above $\delta(\gamma)$.
    Then \[ \R_{t,x}(\gamma) = \R_{t,x}((m))\R_{t,x}((\gamma')) + t \R_{t,x}((m+1, \gamma'_-)), \] where the first summand corresponds to paths that do not lie above $c$, the second to paths that do, and the factor $t$ counts the contribution of $c$ to the area. By induction,
    \[
        \R_{t,x}(\gamma) = \left. \D_{(m)} \cdot 1 \right\rvert_{q=1} \left. \D_{\gamma'} \cdot 1 \right\rvert_{q=1} + t \left. \D_{(m+1, \gamma'_-)} \cdot 1 \right\rvert_{q=1}
        = \left. \D_\gamma \cdot 1 \right\rvert_{q=1}
    \]
    by \Cref{lem:Dgamma_q1} and \Cref{thm:dgamma-recursion}. The claim follows.
\end{proof}

\section{Theta operators}
\label{sec:Theta}

In this section, we define an endomorphism $\Theta$ of $\A_{q,t}$, giving its action on the generators and then showing that the images satisfy the appropriate commutation relations.

We then show that this operator extends the $\Theta_f$ operators from \cite{DAdderioIraciVandenWyngaerd2021ThetaOperators} to the whole $\A_{q,t}$, and use this extension to prove commutation relations with the $\D_\gamma$ operators.

As an additional check on the computations in this section, the GitHub repository \cite{Iraci2026Code} contains a Lean formalization of several finite algebraic identities from this section, assisted by Harmonic's \emph{Aristotle} \cite{Achim2025Aristotle}.

We emphasize that this is \emph{not} a formalization of the entire paper or of the complete construction in this section: in particular, it does not construct the concrete symmetric-function action or the level-graded endomorphism on the completion.
Rather, the formalization checks the correctness of our computations and demonstrates the feasibility of formalizing this kind of mathematics in Lean.

We hope that this will inspire other mathematicians to contribute to the formalization of algebraic combinatorics, which is currently underrepresented in the Lean mathematical library \cite{Mathlib2020}.

\subsection{The Theta operator as endomorphism of \texorpdfstring{$\A_{q,t}$}{Aqt}}

This subsection is devoted to the proof of the following theorem.

\begin{theorem}
    \label{thm:Theta_def}
    The assignments
    \begin{itemize}
        \item $\Theta(T_i) = T_i$,
        \item $\Theta(d_-) = d_-$,
        \item $\Theta(d_+) = (1 - u (1 + u y_1)^{-1} y_1 \z_1) d_+$,
        \item $\Theta(d_+^*) = (1 - u (1 + u y_1)^{-1} \z_1 y_1)^{-1} (1 + uy_1)^{-1} d_+^*= (1 + u (1 - \z_1) y_1)^{-1} d_+^*$,
    \end{itemize}
    extend to an endomorphism $\Theta \colon \A_{q,t}[[u]] \rightarrow \A_{q,t}[[u]]$.
\end{theorem}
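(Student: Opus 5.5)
We need to check that the proposed images satisfy all defining relations of $\A_{q,t}$: the relations of $\A_q$ (S), (B), (C), (R1)--(R6), the corresponding starred relations of $\A_{q^{-1}}$, and the mixing relations (Q1), (Q1*), (Q2). The plan is to first make sure the assignments are well defined. The inverses $(1+uy_1)^{-1}$ and $(1+u(1-\z_1)y_1)^{-1}$ exist in $\A_{q,t}[[u]]$ as geometric series in $u$, since their constant terms are $\epsilon_k$. Next I would verify the equality of the two expressions for $\Theta(d_+^*)$. This is a purely formal manipulation: write $1 - u(1+uy_1)^{-1}\z_1 y_1 = (1+uy_1)^{-1}\bigl((1+uy_1) - u\z_1 y_1\bigr)$, which requires $y_1$ and $(1+uy_1)^{-1}$ to commute (they do). Inverting then gives $(1+u(1-\z_1)y_1)^{-1}(1+uy_1)$, and the factor $(1+uy_1)$ cancels against $(1+uy_1)^{-1}$.

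The relations not involving $d_+$ or $d_+^*$ are preserved trivially, because $T_i$ and $d_-$ are fixed. For relations involving $d_+$ but not $d_+^*$ ((R2), (R3), (R5), (R6)), I would first compute $\Theta(y_i)$ and $\Theta(z_i)$. Since $y_1$ is defined through $[d_+,d_-]$, we get
\[ \Theta(y_1)\propto \bigl[(1-u(1+uy_1)^{-1}y_1\z_1)d_+ , d_-\bigr]T_{k\searrow 1}. \]
The key observation I would aim to prove is that the prefactor $\phi = 1-u(1+uy_1)^{-1}y_1\z_1$ lives on vertex $k+1$ and commutes with $d_-$ only in a controlled way. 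The cleanest route is to use \Cref{lemma:yz_commutation} together with \eqref{eq:yd-_comm}, \eqref{eq:yT_comm}, and (Q1) to move $\phi$ past $T_i$ for $i\ge 2$ and past $d_+$ (using $\z_2 d_+=d_+\z_1$). The hoped-for consequence is that $\Theta(y_i)=y_i$ and $\Theta(\z_i)$ is an explicit conjugate of $\z_i$. If that holds, (R2) and (R3) follow: on vertex $k$, $\phi$ involves only $y_1,\z_1$, so $T_{i+1}$ commutes with it for $i\geq 1$, and $T_1\phi d_+^2=\phi' T_1 d_+^2$ holds via (Q1)/(Q1*).

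The mixing relations are where the real work lies. (Q1) becomes $\Theta(\z_2)\phi d_+=\phi d_+\Theta(\z_1)$, and (Q2) becomes
\[ \Theta(\z_1)\phi d_+ = -q^k y_1 (1+u(1-\z_1)y_1)^{-1}d_+^*. \]
I would attack (Q2) by multiplying both sides on the left by $(1+u(1-\z_1)y_1)$, expanding, and reducing everything to words in $y_1,\z_1,d_+$ using (Q2) itself and \Cref{lemma:yz_commutation} on vertex $k+1$. This should give an identity polynomial in $u$ that can be checked coefficientwise, and it is exactly the kind of finite algebraic identity the paper says was checked in Lean.

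I expect the main obstacle to be computing $\Theta(\z_1)$ (equivalently $\Theta$ of $[d_+^*,d_-]$) in closed form and then verifying (R5)/(R6) and their starred versions, since these are cubic relations in which the inverted series interact with the commutators. If a direct approach stalls, I would fall back on Theorem~\ref{thm:annihilator}. In that approach one works modulo the kernel of the action on $V$, reducing each relation to an identity of operators on $V_k$ that can be checked with the explicit formulas for $d_\pm$, $d_+^*$, and \Cref{lemma:z1y1a}. The endomorphism is then defined on $\A_{q,t}[[u]]$ modulo that kernel.
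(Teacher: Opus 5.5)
Your overall strategy (check every defining relation on the proposed images) is the paper's. Your preliminary steps are also fine: invertibility in $\A_{q,t}[[u]]$, the equality of the two expressions for $\Theta(d_+^*)$, the trivially preserved relations, and (R2). The gap is the intermediate claim on which the rest of your plan rests: $\Theta(y_i)=y_i$ is false.

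Since $d_-$ commutes with $y_1$ and with $\z_1$ (by \eqref{eq:yd-_comm} and its starred version, as $d_-^*=d_-$), the prefactors pull straight out of the commutators defining $y_1$ and $z_1$. Writing $s=u(1+uy_1)^{-1}y_1\z_1$ (your $\phi$ is $1-s$) and $s^*=u(1-\z_1)y_1$, you get at once
\[ \Theta(y_1)=(1-s)\,y_1, \qquad \Theta(\z_1)=(1+s^*)^{-1}\z_1 . \]
So computing $\Theta(\z_1)$ is no obstacle, and (R5) and its dual become immediate. But your target form of (Q2) is wrong: its right-hand side must be $-q^k(1-s)y_1(1+s^*)^{-1}d_+^*$.

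The identity you wrote is in fact false. Its left-hand side equals $(1+uy_1)^{-1}\z_1d_+=-q^k(1+uy_1)^{-1}y_1d_+^*$. On $1\in V_0$, where $s^*d_+^*$ acts by $0$ (\Cref{lem:s*_acts_as_0}), this gives $-(1+uy_1)^{-1}y_1$, whereas your right-hand side gives $-y_1$; they already differ at order $u$. What actually proves (Q2) are the exchange identities
\[ y_1(1+s^*)=(1+uy_1)(1-s)\,y_1, \qquad \z_1(1-s)=(1+s^*)(1+uy_1)^{-1}\z_1 \]
of \Cref{lem:theta_q2}. They turn $\Theta(\z_1d_+)$ and $\Theta(-q^ky_1d_+^*)$ into $(1+uy_1)^{-1}$ times the two sides of (Q2).

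You also do not identify what makes (R3), (R6) and their duals work. Using \eqref{eq:Q1} and \eqref{eq:yd+_comm} one gets $d_+s=q^{-1}T_1sT_1d_+$, hence $\Theta(d_+^2)=(1-s)(1-q^{-1}T_1sT_1)d_+^2$. The needed fact is then $[T_1,(1-s)(1-q^{-1}T_1sT_1)]=0$:
\begin{itemize}
\item the part $s+q^{-1}T_1sT_1$ commutes with $T_1$ by the quadratic relation (S) alone;
\item $sT_1sT_1$ must be rewritten, using $\z_1T_1y_1=y_2\z_1T_1$ and $T_1\z_1T_1=q\z_2$, as a symmetric expression in $y_1,y_2$ times $\z_1\z_2$, which commutes with $T_1$.
\end{itemize}
Your sketch ``$T_1\phi d_+^2=\phi'T_1d_+^2$ via (Q1)/(Q1*)'' does not supply this. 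Likewise, (Q1) and (Q1*) need specific rearrangements such as those in \Cref{lemma:proof_Q1} and \Cref{lemma:proof_Q1*}, not just a restatement.

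Finally, the fallback cannot give the theorem as stated. By \Cref{thm:annihilator} the action on $V$ has a nonzero annihilator, so checking relations as operators on $V$ only shows they hold modulo that ideal. Even then, you would also have to check that $\Theta$ maps (I1) and (I2) into the ideal (this is \Cref{lem:preserve_kernel}). The result would be an endomorphism of the quotient, not of $\A_{q,t}[[u]]$.
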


It is convenient to define
\[ s \coloneqq u (1 + u y_1)^{-1} y_1 \z_1 \quad \text{ and } \quad s^* \coloneqq u (1 - \z_1) y_1, \] so that
\[ \Theta(d_+) = (1-s) d_+ \quad \text{ and } \quad \Theta(d_+^*) = (1+s^*)^{-1} d_+^*. \]

\begin{remark}
    \label{rmk:u}
    Since $u$ always appears paired with $y_1$ and vice versa, its sole role is to keep track of the increase in (the second component of) the degree: if we set $\A_{q,t}^{(a)}$ to be the submodule of $\A_{q,t}$ consisting of elements that are homogeneous of degree $a$, then
    \[ \Theta \left( \A_{q,t}^{(a)} \right) \in \bigoplus_{k \geq 0} u^k \A_{q,t}^{(a+k)}. \]
\end{remark}

By \Cref{rmk:u}, instead of defining $\Theta$ on $\A_{q,t}[[u]]$, it is equivalent to set $u=1$ and work in the completion of $\A_{q,t}$ with respect to the degree. Throughout this section, we will do so to avoid dealing with $u$ (which adds no information) and to lighten the notation. The parameter $u$ will be useful when restricting to symmetric functions.

\begin{remark}
    The operator $\Theta$ becomes the identity if $u=0$.
\end{remark}

Notice that
\begin{equation}
    d_+ s = q^{-1} T_1 s T_1 d_+ \quad \text{ and } \quad d_+^* s^* = q T_1^{-1} s^* T_1^{-1} d_+^*.
\end{equation}

\begin{lemma}
    The following commutation relations hold:
    \begin{align}
        [T_1, (1-s)(1 - q^{-1} T_1 s T_1)] & = 0, \label{eq:T_comm} \\
        [T_1, (1+s^*)^{-1}(1 + q T_1^{-1} s^* T_1^{-1})^{-1}] & = 0 .\label{eq:T_comm*}
    \end{align}
\end{lemma}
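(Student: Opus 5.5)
The plan is to reduce the claim to an identity that uses only the quadratic relation \eqref{eq:skein} for $T_1$, plus a single genuinely algebraic fact about $s$. Set $s' \coloneqq q^{-1} T_1 s T_1$ and expand
\[ (1-s)(1-s') = 1 - s - s' + s s'. \]
I will treat the linear part $s + s'$ and the quadratic part $s s'$ separately.

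For the linear part I expect $[T_1, s + q^{-1}T_1 s T_1] = 0$ for an \emph{arbitrary} loop $s$, using only $T_1^2 = (1-q)T_1 + q$. Indeed,
\[ q^{-1}\bigl(T_1^2 s T_1 - T_1 s T_1^2\bigr) = q^{-1}\bigl((1-q)T_1 s T_1 + q s T_1 - (1-q) T_1 s T_1 - q T_1 s\bigr) = s T_1 - T_1 s, \]
which cancels exactly against $[T_1, s]$. So the content of \eqref{eq:T_comm} is the statement $[T_1, s T_1 s T_1] = 0$.

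For the quadratic part, suppose $s$ commutes with $T_1 s T_1$. Then
\[ T_1 s T_1 s T_1 = (T_1 s T_1)\, s\, T_1 = s\, (T_1 s T_1)\, T_1 = (s T_1 s T_1)\, T_1, \]
which is the required commutation. Thus everything reduces to
\[ [\,s,\; T_1 s T_1\,] = 0, \qquad s = (1+y_1)^{-1} y_1 \z_1. \]
This is the analogue of $[y_1, y_2] = 0$ in \eqref{eq:yy_comm}. Recall $y_2 = q T_1^{-1} y_1 T_1^{-1}$ and $\z_2 = q^{-1} T_1 \z_1 T_1$, so $s'$ plays the role of a ``second-index'' copy of $s$. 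The catch is that $s$ is not of the form $f(y_1)$ alone, so $T_1 s T_1$ is not simply $s$ with indices shifted. This step is the main obstacle.

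My first attempt will use the relation $d_+ s = q^{-1} T_1 s T_1 d_+$ recorded just above, together with \eqref{eq:Q1} and \eqref{eq:yd+_comm}, to move the problem one vertex down, where the corresponding identity should follow from \eqref{eq:yy_comm}, from the $\A_{q^{-1}}$ version $[z_1, z_2] = 0$, and from \Cref{lemma:yz_commutation} to handle the mixed products $y_1 \z_1$ and $\z_1 y_1$ (both $y_1$ and $\z_1$ commute with $T_j$ for $j \geq 2$ by \eqref{eq:yT_comm}). If that becomes unmanageable, I will fall back on checking $[s, T_1 s T_1] = 0$ as operators on $V$ via the explicit action. By \Cref{thm:annihilator}, the kernel of the action is explicitly described, and this suffices for the purpose of defining $\Theta$ on the image algebra acting on $V$. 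This finite identity is also the kind of computation one would hand to the Lean formalization.

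For \eqref{eq:T_comm*}, an element commutes with $T_1$ iff its inverse does. So it suffices to show that $(1 + q T_1^{-1} s^* T_1^{-1})(1+s^*)$ commutes with $T_1$, or equivalently with $T_1^{-1}$. Since $T_1^{-1}$ satisfies the quadratic relation with $q$ replaced by $q^{-1}$, the linear-part computation above goes through verbatim with $(T_1^{-1}, q^{-1}, -s^*)$ in place of $(T_1, q, s)$; note that the factors now appear in the opposite order. The quadratic part again reduces to $[s^*, T_1^{-1} s^* T_1^{-1}] = 0$ with $s^* = (1-\z_1) y_1$. This I would prove in the same way, using \eqref{eq:Q1*} in place of \eqref{eq:Q1} and the relation $d_+^* s^* = q T_1^{-1} s^* T_1^{-1} d_+^*$.
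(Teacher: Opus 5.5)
Your overall structure is sound and matches the paper. The linear part $[T_1, s + q^{-1}T_1 s T_1]=0$ needs only the quadratic relation \eqref{eq:skein}. The $s^*$ statement is handled by passing to the inverse, and you correctly note the reversed order. So everything reduces to the quadratic identities $[s, T_1 s T_1]=0$ and $[s^*, T_1^{-1}s^*T_1^{-1}]=0$.

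That is where all the content lies, and you do not prove it. The route you sketch cannot work as described. The relation $d_+ s = q^{-1}T_1 s T_1\, d_+$ only controls what happens after right multiplication by $d_+$, and $d_+$ cannot be cancelled on the right: already $T_1 d_+^2 = d_+^2$ by \eqref{eq:r3}, while $T_1 \neq 1$. Moreover, $s\,d_+ = (1+y_1)^{-1}y_1\z_1 d_+$ is turned by \eqref{eq:Q2} into a $d_+^*$ term, so the problem does not descend to a smaller vertex.

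None of the tools you list controls what is actually needed, namely how $y_1$ and $\z_1$ pass through $T_1$. The relations $[y_1,y_2]=0$ and $[z_1,z_2]=0$ are unmixed, and \Cref{lemma:yz_commutation} only introduces $d_+^*d_-$ terms. The fallback of checking on $V$ would only give the identity modulo the annihilator of $V$. That is weaker than the lemma, which is used to build an endomorphism of $\A_{q,t}[[u]]$ in \Cref{thm:Theta_def}. It is also not a finite computation, since it must hold on every $V_k$ with $k \geq 2$.

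The missing ingredient is the cross relation $\z_1 T_1 y_1 = y_2\, \z_1 T_1$, equivalently $y_1 T_1^{-1}\z_1 = \z_2\, y_1 T_1^{-1}$. With $g(y) = (1+y)^{-1}y$ it gives $\z_1 T_1\, g(y_1) = g(y_2)\, \z_1 T_1$. Combining this with $T_1\z_1T_1 = q\z_2$ and $T_1^{-1}y_1T_1^{-1} = q^{-1}y_2$, you get
\[ s T_1 s T_1 = g(y_1)g(y_2)\,\z_1 T_1 \z_1 T_1 = q\, g(y_1)g(y_2)\,\z_1\z_2, \qquad q\, s^* T_1^{-1} s^* T_1^{-1} = (1-\z_1)(1-\z_2)\, y_1 y_2 . \]
Each right-hand side is a product of a symmetric expression in $y_1,y_2$ and a symmetric expression in $\z_1,\z_2$. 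Since $T_1$ commutes with $y_1+y_2$, $y_1y_2$, $\z_1+\z_2$ and $\z_1\z_2$, it commutes with both products. This gives $[T_1, sT_1sT_1]=0$ and $[T_1, s^*T_1^{-1}s^*T_1^{-1}]=0$, which are equivalent to your two quadratic identities. This is exactly how the paper closes the argument.
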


\begin{proof}
    It is easy to check the following identities:
    \begin{align}
        & \z_1 T_1 y_1 = y_2 \z_1 T_1 & & y_1 T_1^{-1} \z_1 = \z_2 y_1 T_1^{-1} \label{eq:z1_y1_comm} \\
        & q s T_1 s T_1 = (-y_1) (1+y_1)^{-1} (-y_2)(1+y_2)^{-1} \z_1 \z_2 & & q s^* T_1^{-1} s^* T_1^{-1} = (1 - \z_1) (1 - \z_2) y_1 y_2 \\
        & [T_1, y_1+y_2] = 0 & & [T_1, y_1 y_2] = 0 \label{eq:T1_symy_comm} \\
        & [T_1, \z_1+\z_2] = 0 & & [T_1, \z_1 \z_2] = 0 \label{eq:T1_symz_comm}
    \end{align}
    In particular, $T_1$ commutes with all symmetric polynomials in $y_1$ and $y_2$ (resp.\ in $\z_1$ and $\z_2$) and so 
    \begin{align*}
        & [T_1, q^{-1} s T_1 s T_1] = 0 &&  [T_1, q s^* T_1^{-1} s^* T_1^{-1}] = 0
    \end{align*}
    because $q s^* T_1^{-1} s^* T_1^{-1} = (1-\z_1)(1-\z_2)y_1y_2$.
    Moreover,
    \begin{align*}
        & (q^{-1}T_1-T_1^{-1})s = \frac{1-q}{q} s = s(q^{-1}T_1-T_1^{-1})
    \end{align*}
    equivalently
    \[q^{-1} T_1 s T_1 T_1^{-1} + s T_1^{-1} = T_1^{-1} s + T_1^{-1} q^{-1} T_1 s T_1 \]
    and so
    \[ [T_1, s + q^{-1} T_1 s T_1] = 0. \]
    An analogous argument shows $[T_1, q T_1^{-1} s^* T_1^{-1} + s^*] = 0$.
    This concludes the proof.
\end{proof}

\begin{lemma}
    \label{lemma:proof_Q1}
    We have the identity
    \[ T_{i+1 \searrow 1} (1+s^*)^{-1} \z_1 T_{1 \nearrow i+1} (1-s) = (1-s) T_{i+1 \searrow 1} \left( 1 + \left( 1 - \z_1 q^{-1} T^2_1 \right) y_1 \right)^{-1} \z_1 T_{1 \nearrow i+1}.\]
\end{lemma}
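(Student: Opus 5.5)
The plan is to first strip off the outer braid factors, reducing the statement to the case $i=1$. After that, clear the inverses and verify the resulting identity in the rank-two subalgebra generated by $T_1$, $y_1,y_2$ and $\z_1,\z_2$. Throughout I set $u=1$ as in \Cref{rmk:u}. All inverses are formal power series in the degree-raising elements $y_1$ and $(1-\z_1)y_1$, so every manipulation can be checked degree by degree in the completion.

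\emph{Reduction to $i=1$.} Write $T_{i+1 \searrow 1} = T_{i+1\searrow 2}T_1$ and $T_{1\nearrow i+1} = T_1 T_{2\nearrow i+1}$. The elements $s=(1+y_1)^{-1}y_1\z_1$ and $s^*=(1-\z_1)y_1$ are built only from $y_1$ and $\z_1$. By \eqref{eq:yT_comm}, and by its $*$-analogue for $z_1$ (obtained by applying the $*$-involution to $\A_q$), these commute with $T_j$ for $j\geq 2$. Hence $(1-s)$ commutes with $T_{2\nearrow i+1}$ and with $T_{i+1\searrow 2}$. Likewise $(1+s^*)^{-1}\z_1$ commutes with the outer factors, although this is not even needed. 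Both sides therefore have the form $T_{i+1\searrow 2}(\cdots)T_{2\nearrow i+1}$, and the claim reduces to
\[ A(1-s) = (1-s)B, \qquad A = T_1(1+s^*)^{-1}\z_1T_1,\quad B = T_1\bigl(1+(1-\z_1q^{-1}T_1^2)y_1\bigr)^{-1}\z_1T_1. \]

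\emph{Rewriting the rank-two identity.} By the relation $z_{2}=q^{-1}T_1z_1T_1$, the factor $A$ equals $T_1(1+s^*)^{-1}T_1^{-1}\cdot q\,\z_2$. Also $T_1(1+s^*)^{-1}T_1^{-1} = (1+T_1s^*T_1^{-1})^{-1}$. I will push $T_1$ through $(1-\z_1)y_1$ using \eqref{eq:z1_y1_comm}, $[T_1,y_1+y_2]=0$ and $[T_1,\z_1+\z_2]=0$ from \eqref{eq:T1_symy_comm}--\eqref{eq:T1_symz_comm}, and the quadratic relation $T_1^2=(1-q)T_1+q$. The aim is to express $T_1s^*T_1^{-1}$ as $(1-\z_2)y_2$ plus correction terms carrying one factor $T_1$. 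In $B$, the inner term $\z_1q^{-1}T_1^2y_1$ can likewise be rewritten via $q^{-1}T_1\z_1T_1=\z_2$ together with \eqref{eq:z1_y1_comm}.

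\emph{Clearing denominators.} Multiply the target on the left by $(1+T_1s^*T_1^{-1})$ and on the right by the inverse of the inner factor of $B$, suitably conjugated by $T_1$. This turns it into a polynomial identity in $y_1,y_2,\z_1,\z_2,T_1$, still with the harmless factor $(1+y_1)^{-1}$ coming from $s$. That factor can be cleared as well by multiplying on the right by $(1+y_1)$, after commuting it across using $y$-commutations. Both sides are then finite sums of words. I expect to normalise them by moving all $T_1$'s to the right, using \eqref{eq:z1_y1_comm} and its mirror $T_1y_1=y_2T_1+(q-1)y_1$-type straightening rules coming from $y_2=qT_1^{-1}y_1T_1^{-1}$, and finally to compare coefficients.

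\emph{Main obstacle.} The hardest step will be this last normalisation. The $y$'s and $\z$'s do not commute with each other (\Cref{lemma:yz_commutation} shows $\z_1y_1$ produces a $d_+^*d_-$ term). I will therefore arrange the computation so that only the orderings $\z_1T_1y_1$ and $y_1T_1^{-1}\z_1$, covered by \eqref{eq:z1_y1_comm}, ever need to be swapped. If this fails, I will fall back to verifying the identity degree by degree in the expansion variable $u$. The first two orders in $u$ are short, and they should reveal the telescoping pattern: the expression $1-\z_1q^{-1}T_1^2$ is exactly what arises from $(1+s^*)^{-1}\z_1$ after conjugating $s$ through $\z_1T_1$. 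This would then be proved in general by induction on the degree.
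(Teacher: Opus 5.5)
Your reduction to $i=1$ is correct, and it is also the paper's first step. The rewritings $T_1\z_1T_1=q\z_2$ and $T_1s^*T_1^{-1}=(1-\z_2)y_2+q^{-1}(1-q)T_1y_2$ are also right. The gap is in the step that is supposed to do the work: normalising both sides by moving the $T_1$'s to the right and then comparing coefficients.

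You have no normal form or basis for the subalgebra generated by $T_1,y_1,y_2,\z_1,\z_2$, so comparing coefficients proves nothing. The words with $T_1$ pushed to the right are not linearly independent. For example, combining \eqref{eq:z1_y1_comm} with $T_1y_1=y_2T_1+(1-q)y_1$ gives $\z_1y_2T_1-y_2\z_1T_1=(q-1)\z_1y_1$. A bare product $\z_1y_1$ (or $y_1\z_1$) cannot be reordered with these relations at all, since by \Cref{lemma:yz_commutation} it involves $d_+^*d_-$.

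Your clearing of denominators produces exactly such bare products. Left multiplication by $1+T_1s^*T_1^{-1}$ places $\z_2y_2$ directly against the term $(1+y_1)^{-1}y_1\z_1$ of $1-s$. Similarly, the factor $(1+y_1)^{-1}$ inside $s$ sits directly against $\z_1$ with no $T_1$ in between, so it cannot be commuted to the end and cleared. Your fallback has no inductive step, so it is not a proof. That fallback is to check low orders in $u$ and then invoke an unspecified induction on the degree.

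The missing idea is a way to absorb $(1+s^*)^{-1}$ without ever moving $\z_1$ past $y_1$. The paper uses two identities for this.
\begin{itemize}
\item $\z_1(1-s)=(1+s^*)(1+y_1)^{-1}\z_1$, which is \eqref{eq:z1(1-s)} and is a pure regrouping.
\item $[T_1,(1-s)(1-q^{-1}T_1sT_1)]=0$, which is \eqref{eq:T_comm}. It holds because $s+q^{-1}T_1sT_1$ commutes with $T_1$ for any $s$, by the quadratic relation. In addition, $sT_1sT_1$ is a symmetric function of $y_1,y_2$ times $\z_1\z_2$.
\end{itemize}

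With these, the argument runs as follows. First, push-through rewrites the inverse on the right-hand side as
\[ \bigl(1+(1-\z_1q^{-1}T_1^2)y_1\bigr)^{-1}\z_1T_1=(1+y_1)^{-1}\z_1T_1\,(1-q^{-1}T_1sT_1)^{-1}. \]
Next, multiply both sides on the right by $1-q^{-1}T_1sT_1$. Then move $T_1$ across the block $(1-s)(1-q^{-1}T_1sT_1)$ and apply \eqref{eq:z1(1-s)}; the factor $(1+s^*)^{-1}$ now cancels exactly.

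What remains is $T_1(1+y_1)^{-1}\z_1\,q^{-1}T_1sT_1=sT_1(1+y_1)^{-1}\z_1$. Here every $y$ lying to the right of a $\z_1$ is separated from it by a $T_1$. So \eqref{eq:z1_y1_comm} moves all the $y$'s to the left, and the claim reduces to $T_1$ commuting with $(1+y_1)^{-1}(1+y_2)^{-1}\z_1\z_2$.
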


\begin{proof}
    The elements $T_j$ for $j>1$ commute with $y_1$ and $\z_1$, so it suffices to prove the case $i=1$:
    \begin{equation}
        \label{eq:Q1_step1}
        T_1(1+s^*)^{-1} \z_1 T_1 (1-s) = (1-s) T_1 \left( 1 + \left( 1 - \z_1 q^{-1} T^2_1 \right) y_1 \right)^{-1} \z_1 T_1.
    \end{equation}
    Notice that
    \begin{align*}
        \left( 1 + \left( 1 - \z_1 q^{-1} T^2_1 \right) y_1 \right)^{-1} \z_1 T_1 &= (1+y_1)^{-1} \left( 1 - \z_1 q^{-1} T^2_1  y_1 (1+y_1)^{-1} \right)^{-1} \z_1 T_1 \\
        & = (1+y_1)^{-1} \z_1 T_1 ( 1 - q^{-1} T_1 s T_1)^{-1} 
    \end{align*}
    Our claim \eqref{eq:Q1_step1} becomes
    \begin{equation}
        \label{eq:Q1_step2}
        T_1(1+s^*)^{-1} \z_1 T_1 (1-s) (1 - q^{-1} T_1 s T_1) = (1-s) T_1 (1+y_1)^{-1} \z_1 T_1
    \end{equation}
    and the left-hand side of \eqref{eq:Q1_step2} is
    \begin{align*}
        T_1(1+s^*)^{-1} \z_1 T_1 (1-s) (1 - q^{-1} T_1 s T_1) &= T_1(1+s^*)^{-1} \z_1 (1-s) (1 - q^{-1} T_1 s T_1) T_1  \\
        &= T_1 (1 + y_1)^{-1} \z_1 ( 1 - q^{-1} T_1 s T_1) T_1 
    \end{align*}
    by \eqref{eq:T_comm} and \eqref{eq:z1(1-s)}.

    Now \eqref{eq:Q1_step2} simplifies to
    \[ T_1 (1 + y_1)^{-1} \z_1 (1 - q^{-1} T_1 s T_1) = (1-s) T_1 (1 + y_1)^{-1} \z_1 \]
    which is equivalent to 
    \[ T_1 (1 + y_1)^{-1} \z_1 q^{-1} T_1 s T_1 = s T_1 (1 + y_1)^{-1} \z_1. \]
    Up to a sign, this expands as
    \begin{equation}
        \label{eq:Q1_step3}
        T_1 (1 + y_1)^{-1} \z_1 q^{-1} T_1 y_1 (1 + y_1)^{-1} \z_1 T_1 = y_1 (1 + y_1)^{-1} \z_1 T_1 (1 + y_1)^{-1} \z_1.
    \end{equation}
    Using \eqref{eq:z1_y1_comm} the left-hand side of \eqref{eq:Q1_step3} is
    \begin{align*}
        T_1 (1 + y_1)^{-1} \z_1 q^{-1} T_1 y_1 (1 + y_1)^{-1} \z_1 T_1  & = q^{-1} T_1 y_2 (1 + y_1)^{-1} \z_1  T_1  (1 + y_1)^{-1} \z_1 T_1 \\
        & = y_1 T_1^{-1} (1 + y_1)^{-1} \z_1  T_1 (1 + y_1)^{-1} \z_1 T_1
    \end{align*}
    It remains to prove that $[T_1, (1 + y_1)^{-1} \z_1 T_1 (1 + y_1)^{-1} \z_1] = 0$.
    Finally,
    \begin{align*}
        (1 + y_1)^{-1} \z_1 T_1 (1 + y_1)^{-1} \z_1 &= (1 + y_1)^{-1} (1 + y_2)^{-1} \z_1 T_1 \z_1 \\
        & = q (1 + y_1)^{-1} (1 + y_2)^{-1} \z_1 \z_2 T_1^{-1}
    \end{align*}
    and $T_1$ commutes with symmetric polynomials in $y_1, y_2$ and $\z_1, \z_2$. In particular, it commutes with $(1 + y_1)^{-1} (1 + y_2)^{-1}$ and $\z_1  \z_2$, and this completes the proof.
\end{proof}

\begin{lemma} 
    \label{lemma:proof_Q1*}
    We have the identity
    \[ qT^*_{i+1 \searrow 1} (1-s) y_1 T^*_{1 \nearrow i+1} (1+s^*)^{-1} = (1+s^*)^{-1} T^*_{i+1 \searrow 2} \left( 1 - y_2 (1 + y_2)^{-1} T_1^{-1} \z_1 T_1 \right) y_2 T^*_{2 \nearrow i+1}.\]
\end{lemma}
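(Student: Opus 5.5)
We will mirror the proof of \Cref{lemma:proof_Q1}, with the roles of $(d_+,s,T_1)$ and $(d_+^*,s^*,T_1^{-1})$ interchanged.

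\textbf{Step 1: reduce to $i=1$.} The $T_j^{-1}$ with $j\geq 3$ commute with $y_1,y_2,\z_1,\z_2$, and hence with $s$, $s^*$ and $(1+y_2)^{-1}$ (as formal power series in the completion). The strings $T^*_{i+1\searrow 1}$ and $T^*_{1\nearrow i+1}$ split as $T^*_{i+1\searrow 3}T_2^{-1}T_1^{-1}$ and $T_1^{-1}T_2^{-1}T^*_{3\nearrow i+1}$. The outer factors $T^*_{i+1\searrow 3}$, $T^*_{3\nearrow i+1}$ can therefore be pulled out on both sides. This reduces the claim to an identity involving only $T_1^{\pm1}$, $T_2^{\pm1}$ and the variables $y_1,y_2,\z_1$.

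In fact the statement is not symmetric in the index range: the right-hand side starts at index $2$. I would therefore first try conjugating by $T_1$: write $y_2=qT_1^{-1}y_1T_1^{-1}$ and use \eqref{eq:z1_y1_comm} to move $T_1^{\pm1}$ past the $y$'s. The goal is to bring the right-hand side into the form $q\,T_1^{-1}\bigl(1-y_1(1+y_1)^{-1}\cdot(\text{something in }\z)\bigr)y_1 T_1^{-1}$. The basic case then becomes an identity of the shape
\[
qT_1^{-1}(1-s)y_1T_1^{-1}(1+s^*)^{-1} = (1+s^*)^{-1}\bigl(1-y_2(1+y_2)^{-1}T_1^{-1}\z_1T_1\bigr)y_2.
\]

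\textbf{Step 2: clear the inverse.} Multiply on the left by $(1+s^*)$, so that only one inverse remains. Then use \eqref{eq:T_comm*} to commute $T_1^{-1}$ past the symmetric combination $(1+s^*)^{-1}(1+qT_1^{-1}s^*T_1^{-1})^{-1}$, exactly as \eqref{eq:T_comm} was used for \eqref{eq:Q1_step2}. I also need the analogue of the auxiliary identity used there (labelled \eqref{eq:z1(1-s)}). In the present setting this is a relation of the form
\[
(1-\z_1)\,y_1\,(1+s^*)^{-1} = \text{(simple factor)},
\]
obtained by expanding $s^*=(1-\z_1)y_1$ as a geometric series. The effect is to collapse $(1+s^*)^{-1}$ against $(1-s)y_1$ into a factor like $(1+y_1)^{-1}$.

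\textbf{Step 3: reduce to a commutation statement.} After these cancellations, the identity should reduce, as in \eqref{eq:Q1_step3}, to
\[
[T_1,\; (1+y_1)^{-1}(1+y_2)^{-1}\z_1\z_2] = 0
\]
or a similar symmetric expression. To reach it I would use
\begin{itemize}
\item $y_1T_1^{-1}\z_1=\z_2y_1T_1^{-1}$ from \eqref{eq:z1_y1_comm},
\item $\z_2=q^{-1}\ldots$, i.e.\ the defining relation $z_{2}=q^{-1}T_1z_1T_1$ in hat form.
\end{itemize}
The final commutation then follows from \eqref{eq:T1_symy_comm} and \eqref{eq:T1_symz_comm}.

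\textbf{Main obstacle.} I expect the hard part to be Step 2, namely getting the order of the noncommuting factors right. Unlike $s$, the element $s^*$ has $(1-\z_1)$ to the left of $y_1$, and $\z_1$ and $y_1$ do not commute (cf.\ \Cref{lemma:yz_commutation}). So the "collapse" identity must be derived carefully, by expanding $(1+s^*)^{-1}$ as a geometric series and pushing $y_1$ through with \eqref{eq:z1_y1_comm}. If that becomes unwieldy, the fallback is to verify both sides on the faithful representation $V$ (\Cref{thm:annihilator}) degree by degree.
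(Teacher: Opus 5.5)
\textbf{There is a genuine gap, located in your Step~2.}

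Your reduction to $i=1$ lands on the correct basic identity, but the detour through $T_2$ and conjugation by $T_1$ is unnecessary. Write $T^*_{i+1\searrow 1}=T^*_{i+1\searrow 2}T_1^{-1}$ and $T^*_{1\nearrow i+1}=T_1^{-1}T^*_{2\nearrow i+1}$. Every $T_j$ with $j\geq 2$ commutes with $y_1$ and $\z_1$, hence with $(1+s^*)^{-1}$, so the outer strings can be stripped from both sides. Nothing needs to commute with $y_2$ or $\z_2$.

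The problem in Step~2 is this. After you multiply on the left by $(1+s^*)$, no factor $(1+qT_1^{-1}s^*T_1^{-1})^{-1}$ appears anywhere, so \eqref{eq:T_comm*} has nothing to act on. The missing idea is the true mirror of the first move in the proof of \Cref{lemma:proof_Q1}, where the right-hand side was factored to expose $(1-q^{-1}T_1sT_1)^{-1}$. Here you must factor the bracket on the right. Since $y_2=qT_1^{-1}y_1T_1^{-1}$, you have $(1-T_1^{-1}\z_1T_1)y_2=qT_1^{-1}s^*T_1^{-1}$, and therefore
\[ \left(1-y_2(1+y_2)^{-1}T_1^{-1}\z_1T_1\right)y_2=(1+y_2)^{-1}y_2\left(1+qT_1^{-1}s^*T_1^{-1}\right). \]
Move $(1+qT_1^{-1}s^*T_1^{-1})$ to the left-hand side as an inverse; it then sits right after $(1+s^*)^{-1}$. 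Now \eqref{eq:T_comm*} pushes $T_1^{-1}$ to the far right. The collapse you want is \eqref{eq:y1(1+s*)}, in the form $(1-s)y_1(1+s^*)^{-1}=(1+y_1)^{-1}y_1$. This is a one-line check that needs neither geometric series nor any $T_1$-commutation. The relation you posit is not the right one: $(1-\z_1)y_1(1+s^*)^{-1}=s^*(1+s^*)^{-1}=1-(1+s^*)^{-1}$, which does not simplify.

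\textbf{Step~3 and the fallback are also off.} The endpoint you predict in Step~3 is not what comes out. Use \eqref{eq:z1_y1_comm} to rewrite $y_2T_1(1+qT_1^{-1}s^*T_1^{-1})=qT_1^{-1}\left(1+(1-\z_2)y_2\right)y_1$. After dropping the common right factor $y_1$ (that is, proving a stronger statement), the identity reduces to
\[ T_1(1+y_2)(1+s^*)=\left(1+(1-\z_2)y_2\right)(1+y_1)T_1 . \]
This follows from two facts:
\begin{itemize}
\item $[T_1,(1+y_1)(1+y_2)]=0$, which is \eqref{eq:T1_symy_comm};
\item $T_1(1+y_2)\z_1y_1=\z_2y_2(1+y_1)T_1$, which follows from $\z_2y_2=T_1\z_1y_1T_1^{-1}$ together with \eqref{eq:z1_y1_comm}.
\end{itemize}
No commutator with $\z_1\z_2$ is involved.

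Your fallback does not work either. $V$ is not a faithful module: \Cref{thm:annihilator} describes a nonzero annihilator generated by \eqref{eq:I1} and \eqref{eq:I2}; by the bidegree grading, $d_-d_+^*\epsilon_0-\epsilon_0\neq 0$ in $\A_{q,t}$. Checking on $V$ would therefore only prove the identity modulo that annihilator. \Cref{thm:Theta_def} needs it in (the completion of) $\A_{q,t}$ itself.
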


\begin{proof}
    The elements $T_j$ for $j>1$ commute with $y_1$ and $\z_1$, so it suffices to prove the case $i=1$:
    \begin{equation}
        \label{eq:Q1*_step1}
        qT^{-1}_1 (1-s) y_1 T^{-1}_1 (1+s^*)^{-1} = (1+s^*)^{-1}  \left(1 - y_2 (1 + y_2)^{-1} T_1^{-1} \z_1 T_1 \right) y_2
    \end{equation} 
    Notice that
    \begin{align*}
        \left( 1 - y_2 (1 + y_2)^{-1} T_1^{-1} \z_1 T_1 \right) y_2 
        &= (1 + y_2)^{-1} \left( 1 + y_2 \left( 1 - T_1^{-1} \z_1 T_1 \right) \right)  y_2 \\
        &= (1 + y_2)^{-1} y_2 \left( 1 + \left( 1 - T_1^{-1} \z_1 T_1 \right) y_2 \right)  \\
        &= (1 + y_2)^{-1} y_2 \left( 1 + qT_1^{-1} s^* T_1^{-1} \right). 
    \end{align*}
    Our claim \eqref{eq:Q1*_step1} becomes
    \begin{equation}
        \label{eq:Q1*_step2}
        qT^{-1}_1 (1-s) y_1 T^{-1}_1 (1+s^*)^{-1} \left( 1 + qT_1^{-1} s^* T_1^{-1} \right)^{-1} = (1+s^*)^{-1}  (1 + y_2)^{-1} y_2
    \end{equation} 
    and the left-hand side of \eqref{eq:Q1*_step2} is
    \[ qT^{-1}_1 (1-s) y_1 T^{-1}_1 (1+s^*)^{-1} \left( 1 + qT_1^{-1} s^* T_1^{-1} \right)^{-1} = T_1^{-1} (1 + y_1)^{-1} y_1 \left( 1 + qT_1^{-1} s^* T_1^{-1} \right)^{-1} T^{-1}_1 \]
    by \eqref{eq:T_comm*} and \eqref{eq:y1(1+s*)}.
    Now \eqref{eq:Q1*_step2} simplifies to
    \begin{equation}
        \label{eq:Q1*_step3}
        qT_1^{-1} (1 + y_1)^{-1} y_1 = (1+s^*)^{-1} (1 + y_2)^{-1} y_2 T_1 \left( 1 + qT_1^{-1} s^* T_1^{-1} \right)
    \end{equation}
    and the right-hand side of \eqref{eq:Q1*_step3} can be manipulated using the identity
    \begin{align*}
        y_2 T_1 \left( 1 + qT_1^{-1} s^* T_1^{-1} \right)
        & = q T_1^{-1} y_1 \left( 1 + qT_1^{-1} s^* T_1^{-1} \right) \\
        & = q T_1^{-1}  \left( 1 + (1 - \z_2) y_2 \right) y_1 
    \end{align*}
    which follows by \eqref{eq:z1_y1_comm}.
    We will prove the stronger equality
    \[ T_1^{-1} (1 + y_1)^{-1} =  (1+s^*)^{-1}  (1 + y_2)^{-1} T_1^{-1}  \left( 1 + (1 - \z_2) y_2 \right) \]
    which is equivalent to
    \begin{equation}
        \label{eq:Q1*_step4}
        T_1 (1 + y_2) (1+s^*) = \left( 1 + (1 - \z_2) y_2 \right) (1 + y_1) T_1.
    \end{equation}
    The above equation \eqref{eq:Q1*_step4} follows from the identities
    \begin{align}
        T_1 (1 + y_2) (1 + y_1) &= (1 + y_2) (1 + y_1) T_1 \\
        T_1 (1 + y_2) \z_1 y_1 &= \z_2 y_2 (1 + y_1) T_1
    \end{align}
    The former identity holds because $T_1$ commutes with symmetric polynomials in $y_1$ and $y_2$, see \eqref{eq:T1_symy_comm}.
    The latter holds because $T_1 \z_1 y_1 = \z_2 y_2 T_1$ and $T_1 y_2 \z_1 y_1 = T_1 \z_1 T_1 y_1 T_1^{-1} y_1 = \z_2 y_1 y_2 T_1$ (by \eqref{eq:z1_y1_comm}).
    This completes the proof.
\end{proof}

\begin{lemma}
    \label{lem:theta_q2}
    We have the identities
    \begin{equation}
        \label{eq:y1(1+s*)}
        y_1 (1 + s^*) = (1 + y_1) (1-s) y_1
    \end{equation}
    and
    \begin{equation}
        \label{eq:z1(1-s)}
        \z_1 (1-s) = (1 + s^*) (1 + y_1)^{-1} \z_1.
    \end{equation}
\end{lemma}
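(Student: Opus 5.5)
Both identities should follow by direct expansion, with no relation of $\A_{q,t}$ needed beyond the fact that $y_1$ commutes with $(1+y_1)^{-1}$. As agreed after \Cref{rmk:u}, I will set $u=1$ and work in the completion of $\A_{q,t}$ with respect to the degree. There $(1+y_1)^{-1}=\sum_{k\geq 0}(-y_1)^k$ is a well-defined power series in $y_1$, so it commutes with $y_1$ and satisfies $(1+y_1)(1+y_1)^{-1}=1$. I will substitute the definitions
\[ s=(1+y_1)^{-1}y_1\z_1, \qquad s^*=(1-\z_1)y_1, \]
expand both sides, and compare. The order of the noncommuting factors $y_1$ and $\z_1$ must be preserved throughout, since they do not commute (cf.\ \Cref{lemma:yz_commutation}).

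For \eqref{eq:y1(1+s*)}, the left-hand side is $y_1+y_1(1-\z_1)y_1 = y_1+y_1^2-y_1\z_1y_1$. On the right-hand side,
\[ (1+y_1)(1-s)y_1=(1+y_1)y_1-(1+y_1)(1+y_1)^{-1}y_1\z_1y_1 = y_1+y_1^2-y_1\z_1y_1, \]
so the two sides agree.

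For \eqref{eq:z1(1-s)}, the left-hand side is $\z_1-\z_1(1+y_1)^{-1}y_1\z_1$. On the right-hand side,
\[ (1+s^*)(1+y_1)^{-1}\z_1=\bigl(1+y_1-\z_1y_1\bigr)(1+y_1)^{-1}\z_1=\z_1-\z_1y_1(1+y_1)^{-1}\z_1. \]
This matches the left-hand side because $y_1(1+y_1)^{-1}=(1+y_1)^{-1}y_1$.

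The only point needing care is that every inverse is a legitimate element of the completed algebra. If one prefers to keep $u$, the same computation goes through verbatim in $\A_{q,t}[[u]]$ with $s=u(1+uy_1)^{-1}y_1\z_1$ and $s^*=u(1-\z_1)y_1$, replacing $1+y_1$ by $1+uy_1$. I expect no real obstacle here: the lemma is a bookkeeping identity that moves $(1\pm\cdot)$ factors past $y_1$ and $\z_1$. Its role is to supply the simplifications \eqref{eq:y1(1+s*)} and \eqref{eq:z1(1-s)} that were invoked in the proofs of \Cref{lemma:proof_Q1} and \Cref{lemma:proof_Q1*}.
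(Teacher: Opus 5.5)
Your proof is correct and follows essentially the same route as the paper. Both substitute $s=(1+y_1)^{-1}y_1\z_1$ and $s^*=(1-\z_1)y_1$ and use only that $y_1$ commutes with $(1+y_1)^{-1}$. The paper arranges this as a single chain of factorizations, while you expand both sides and compare them.
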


\begin{proof}    
    Recall that \[ s = (1 + y_1)^{-1} y_1 \z_1 \quad \text{ and } \quad s^* = (1 - \z_1) y_1. \]
    We have

    \begin{minipage}{0.48\textwidth}
        \begin{align*}
            y_1 (1 + s^*) & = y_1 (1 + (1 - \z_1) y_1) \\
            & = y_1 (1 + y_1 - \z_1 y_1) \\
            & = (1 + y_1 - y_1 \z_1) y_1 \\
            & = (1 + y_1) (1 - s) y_1
        \end{align*}      
    \end{minipage}%
    \hfill
    \begin{minipage}{0.48\textwidth}
        \begin{align*}
            \z_1 (1-s) & = \z_1 (1 - (1 + y_1)^{-1} y_1 \z_1) \\
            & = (1 - \z_1 y_1 (1 + y_1)^{-1}) \z_1 \\
            & = (1 + y_1 - \z_1 y_1) (1 + y_1)^{-1} \z_1 \\
            & = (1 + s^*) (1 + y_1)^{-1} \z_1
        \end{align*}        
    \end{minipage}

    as expected.
\end{proof}

\begin{proof}[Proof of {\Cref{thm:Theta_def}}]
    We need to check that $\Theta$ preserves the relations from \Cref{def:Aq,def:Aqt}.
    
    Relations \eqref{eq:skein}, \eqref{eq:braid}, \eqref{eq:commuting}, \eqref{eq:r1}, and \eqref{eq:r4} are preserved since $\Theta$ acts as the identity on $T_i$ and $d_-$. We now go through the remaining relations.

    For~\eqref{eq:r2}, we have
    \begin{align*}
        \Theta(d_+ T_i) & = (1-s) d_+ T_i \\
        & = (1-s) T_{i+1} d_+  \\
        & = T_{i+1} (1-s) d_+ \\
        & = \Theta(T_{i+1} d_+).
    \end{align*}

    For~\eqref{eq:r3}, we have
    \begin{align*}
        \Theta(T_1 d_+^2) & = T_1(1-s) d_+ (1-s) d_+ \\
        & = T_1 (1-s)(1-q^{-1}T_1sT_1) d_+^2 \\
        & = (1-s)(1 - q^{-1} T_1 s T_1) T_1 d_+^2 \\
        & = (1-s)(1 - q^{-1} T_1 s T_1) d_+^2 \\
        & = (1-s) d_+ (1-s) d_+ \\
        & = \Theta(d_+^2).
    \end{align*}

    For~\eqref{eq:r5}, we have
    \begin{align*}
        \Theta(d_- [d_+, d_-] T_{k-1}) & = d_- [(1-s) d_+, d_-] T_{k-1} \\
        & = (1-s) d_- [d_+, d_-] T_{k-1} \\
        & = (1-s) q [d_+, d_-] d_- \\
        & = q [(1-s) d_+, d_-] d_- \\
        & = \Theta(q [d_+, d_-] d_-).
    \end{align*}

    For~\eqref{eq:r6}, we have
    \begin{align*}
        \Theta(T_1 [d_+, d_-] d_+) & = T_1 [(1-s) d_+, d_-] (1-s) d_+ \\
        & = T_1^{-1}(1-s)(1 - q^{-1} T_1 s T_1) [d_+,d_-] d_+ \\
        & = (1-s)(1 - q^{-1} T_1 s T_1) T_1^{-1} [d_+,d_-] d_+ \\
        & = (1-s)(1 - q^{-1} T_1 s T_1) q d_+[d_+,d_-] \\
        & = q (1-s) d_+ [(1-s) d_+, d_-] \\
        & = \Theta(q d_+ [d_+, d_-]).
    \end{align*}

    The proofs of the dual relations for $\A_{q^{-1}}$ are analogous to those for \eqref{eq:r1}--\eqref{eq:r6}, and we omit them.

    For~\eqref{eq:Q1}, we have
    \begin{align*}
        \Theta(\z_{i+1} d_+)
        & = q^{-i} T_{i+1 \searrow 1} (1+s^*)^{-1} \z_1 T_{1 \nearrow i+1}  (1-s) d_+ \\
        & = q^{-i} (1-s) T_{i+1 \searrow 1} \left( 1 + \left( 1 - \z_1 q^{-1} T^2_1 \right) y_1 \right)^{-1} \z_1 T_{1 \nearrow i+1} d_+ \\
        & = (1-s) T_{i+1 \searrow 1} \left( 1 + \left( 1 - \z_1 q^{-1} T^2_1 \right) y_1 \right)^{-1}  T_{1 \nearrow i+1}^* \z_{i+1} d_+ \\
        & = (1-s) d_+ T_{i \searrow 1} (1+s^*)^{-1} T_{1 \nearrow i}^* \z_i \\
        & = \Theta(d_+ \z_i).
    \end{align*}
    
    For~\eqref{eq:Q1*}, we have
    \begin{align*}
        \Theta(y_{i+1} d^*_+)
        & = q^{i} T_{i+1 \searrow 1}^* (1-s) y_1 T_{1 \nearrow i+1}^* (1+s^*)^{-1} d^*_+ \\
        & = q^{i-1} (1+s^*)^{-1} T^*_{i+1 \searrow 2} \left( 1 - y_2 (1 + y_2)^{-1} T_1^{-1} \z_1 T_1 \right) y_2 T^*_{2 \nearrow i+1} d^*_+ \\
        & = (1+s^*)^{-1} T^*_{i+1 \searrow 2} \left( 1 - y_2 (1 + y_2)^{-1} T_1^{-1} \z_1 T_1 \right) T^*_{2 \nearrow i+1} y_{i+1} d^*_+ \\
        & = (1+s^*)^{-1} d^*_+ T^*_{i \searrow 1} (1-s) T^*_{1 \nearrow i} y_{i} \\
        & = \Theta(d^*_+ y_i).
    \end{align*}

    For~\eqref{eq:Q2}, we have
    \begin{align*}
        \Theta(\z_1 d_+) & = (1+s^*)^{-1} \z_1 (1-s) d_+ \\
        & = (1 + y_1)^{-1} \z_1 d_+ && \text{by } \eqref{eq:z1(1-s)} \\
        & = -q^k (1 + y_1)^{-1} y_1 d_+^* && \text{by } \eqref{eq:Q2} \\
        & = -q^k (1-s) y_1 (1+s^*)^{-1} d_+^* && \text{by } \eqref{eq:y1(1+s*)} \\
        & = \Theta(-q^k y_1 d_+^*).
    \end{align*}

    This concludes the proof.
\end{proof}

\begin{corollary}
    \label{cor:theta_yz}
    We have the identities \[ \Theta(\z_1 d_+) = (1+y_1)^{-1} \z_1 d_+ \quad \text{ and } \quad \Theta(y_1 d_+^*) = (1+y_1)^{-1} y_1 d_+^*. \]
    The same identities hold after replacing $d_+$ with $y_1$ and $d_+^*$ with $\z_1$.
\end{corollary}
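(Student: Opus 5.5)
The plan is to read off the first identity from the chain already computed in the proof of \Cref{thm:Theta_def} for relation \eqref{eq:Q2}. Since $\Theta$ is an algebra endomorphism, $\Theta(\z_1 d_+) = \Theta(\z_1)\Theta(d_+)$. Here $\Theta(\z_1) = (1+s^*)^{-1}\z_1$, obtained from the definition of $\z_1$ via $[d_+^*,d_-]$ and $T_{k\searrow 1}^*$. Together with $\Theta(d_+) = (1-s)d_+$, this gives $\Theta(\z_1 d_+) = (1+s^*)^{-1}\z_1(1-s)d_+$. Applying \eqref{eq:z1(1-s)} from \Cref{lem:theta_q2} turns $\z_1(1-s)$ into $(1+s^*)(1+y_1)^{-1}\z_1$, and the factors $(1+s^*)^{-1}(1+s^*)$ cancel, leaving $(1+y_1)^{-1}\z_1 d_+$.

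For the second identity I will argue symmetrically. I need $\Theta(y_1)$; the computation used for \eqref{eq:Q1*} gives $\Theta(y_1) = (1-s)y_1$, from the definition of $y_1$ via $[d_+,d_-]T_{k\searrow 1}$. Then $\Theta(y_1 d_+^*) = (1-s)y_1(1+s^*)^{-1}d_+^*$. By \eqref{eq:y1(1+s*)}, $y_1(1+s^*) = (1+y_1)(1-s)y_1$. Multiplying on the left by $(1+y_1)^{-1}$ and on the right by $(1+s^*)^{-1}$ yields $(1-s)y_1(1+s^*)^{-1} = (1+y_1)^{-1}y_1$, which gives the claim. Alternatively, one can apply $\Theta$ to \eqref{eq:Q2} and use the first identity together with \eqref{eq:Q2} again.

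For the final sentence, replacing $d_+$ by $y_1$ gives $\Theta(\z_1 y_1) = (1+s^*)^{-1}\z_1(1-s)y_1$. The same cancellation via \eqref{eq:z1(1-s)} gives $(1+y_1)^{-1}\z_1 y_1$. Likewise $\Theta(y_1\z_1) = (1-s)y_1(1+s^*)^{-1}\z_1 = (1+y_1)^{-1}y_1\z_1$ by the rearranged form of \eqref{eq:y1(1+s*)}. Note that this only uses the formulas for $\Theta(\z_1)$ and $\Theta(y_1)$, not those for $\Theta(d_+)$ and $\Theta(d_+^*)$.

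The main obstacle is justifying $\Theta(\z_1) = (1+s^*)^{-1}\z_1$ and $\Theta(y_1) = (1-s)y_1$. These are implicit in the proof of \Cref{thm:Theta_def}, used there with the $T$-conjugated versions $\Theta(\z_{i+1})$ and $\Theta(y_{i+1})$. To establish them, I will compute, for instance, $\Theta([d_+,d_-]) = [(1-s)d_+,d_-] = (1-s)[d_+,d_-]$. This requires that $s$ commutes with $d_-$, which follows from \eqref{eq:yd-_comm} and its $*$-analogue for $\z_1$. Then $\Theta(y_1) = (1-s)y_1$ follows directly from the definition of $y_1$. The analogous computation with $d_+^*$ gives $\Theta(z_1) = (1+s^*)^{-1} z_1$. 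Since $d_-$ commutes with $s^*$ and $(1+s^*)^{-1}$ is a power series in $s^*$, the same reasoning applies.
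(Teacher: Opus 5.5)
Your proof is correct and essentially the same as the paper's: it also writes $\Theta(\z_1 d_+) = (1+s^*)^{-1}\z_1(1-s)d_+$ and $\Theta(y_1 d_+^*) = (1-s)y_1(1+s^*)^{-1}d_+^*$, then simplifies with \Cref{lem:theta_q2} exactly as you do, using $\Theta(\z_1)=(1+s^*)^{-1}\z_1$ and $\Theta(y_1)=(1-s)y_1$ without comment (your derivation from $[d_+,d_-]$, $[d_+^*,d_-]$ and the commutation of $d_-$ with $s$ and $s^*$ supplies that step). The only difference is in the last sentence of the statement: the paper deduces those identities from the fact that $d_-$ commutes with $y_1$ and $\z_1$, while you compute them directly from \Cref{lem:theta_q2}, and both routes are valid.
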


\begin{proof}
    Using \Cref{lem:theta_q2}, we have
    \[ \Theta(\z_1 d_+) = (1+s^*)^{-1} \z_1 (1-s) d_+ = (1+s^*)^{-1} (1 + s^*) (1 + y_1)^{-1} \z_1 d_+ = (1+y_1)^{-1} \z_1 d_+ \]
    and
    \[ \Theta(y_1 d_+^*) = (1-s) y_1 (1+s^*)^{-1} d_+^* = (1 + y_1)^{-1} y_1 (1 + s^*) (1+s^*)^{-1} d_+^* = (1+y_1)^{-1} y_1 d_+^*, \]
    as desired. The identities obtained by replacing $d_+$ with $y_1$ and $d_+^*$ with $\z_1$ follow immediately because $d_-$ commutes with $y_1$ and $\z_1$.
\end{proof}

\begin{proposition}
    The assignment
    \[ \Theta(Y) = (1+s^*)^{-1} (1+y_1) Y \]
    extends $\Theta$ to an endomorphism of $\A_{q,t}^\pm[[u]]$. 
\end{proposition}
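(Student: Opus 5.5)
The plan is to observe that $\A_{q,t}^\pm$ is obtained from $\A_{q,t}$ by adjoining a single loop $Y$ at each vertex $k>0$, subject only to $Y y_1 = y_1 Y = \epsilon_k$. An endomorphism of $\A_{q,t}^\pm[[u]]$ extending $\Theta$ is therefore determined by a choice of $\Theta(Y)$. Such an extension exists as soon as this choice satisfies
\[ \Theta(Y)\,\Theta(y_1) = \Theta(y_1)\,\Theta(Y) = \epsilon_k . \]
Equivalently, we need $\Theta(Y) = \Theta(y_1)^{-1}$, with the inverse taken in the completion with respect to the degree, where $u$ is set to $1$ as in \Cref{rmk:u}. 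So the whole task is to compute $\Theta(y_1)$ and check that $(1+s^*)^{-1}(1+y_1)Y$ is a two-sided inverse of it.

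First I compute $\Theta(y_1)$. I would not expand the definition $y_1 = [d_+,d_-]T_{k\searrow 1}/(q^{k-1}(q-1))$. Instead I use \Cref{cor:theta_yz}, which says that $\Theta(y_1 d_+^*) = (1+y_1)^{-1} y_1 d_+^*$ and that the same holds with $d_+^*$ replaced by $\z_1$. Comparing with $\Theta(y_1)\Theta(d_+^*) = \Theta(y_1)(1+s^*)^{-1}d_+^*$ suggests the formula
\[ \Theta(y_1) = (1+y_1)^{-1} y_1 (1+s^*) = (1-s)\,y_1 , \]
where the second equality is \eqref{eq:y1(1+s*)}. To establish it honestly, I would compute $\Theta([d_+,d_-])$ directly: it equals $[(1-s)d_+, d_-] = (1-s)[d_+,d_-]$, because $s$ is built from $y_1$ and $\z_1$, which commute with $d_-$ by \eqref{eq:yd-_comm} and its starred version. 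Since $\Theta$ fixes each $T_i$, this gives $\Theta(y_1) = (1-s)y_1$ directly.

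Next I invert. By \eqref{eq:y1(1+s*)}, $(1-s)y_1 = (1+y_1)^{-1}y_1(1+s^*)$. Its inverse in the completed algebra with $Y$ adjoined is
\[ (1+s^*)^{-1}\,Y\,(1+y_1) = (1+s^*)^{-1}(1+y_1)\,Y , \]
since $Y$ commutes with $y_1$. Both factors $(1+s^*)$ and $(1+y_1)$ are invertible in the completion, because $s^*$ and $y_1$ have positive degree. A direct check gives both products equal to $\epsilon_k$. For example,
\[ (1+s^*)^{-1}(1+y_1)Y\,(1+y_1)^{-1}y_1(1+s^*) = (1+s^*)^{-1}(1+s^*) = \epsilon_k , \]
and the product in the other order collapses similarly.

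The main obstacle is well-definedness in the completion. The inverse $(1+s^*)^{-1}$ involves $Y$-free series of positive degree, while $Y$ has degree $-1$. I would therefore verify that $u$ still pairs correctly: $\Theta(Y)$ expands as $Y$ times a power series in $u y_1$ and $u\z_1 y_1$, so each power of $u$ matches its degree increase. Hence $\Theta$ still maps into $\A_{q,t}^\pm[[u]]$ as in \Cref{rmk:u}. Everything else is formal.
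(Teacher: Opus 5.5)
Your proposal is correct and matches the paper's proof, which likewise reduces to checking $\Theta(y_1 Y)=\Theta(Y y_1)=1$: it rewrites $\Theta(y_1)=(1-s)y_1$ as $(1+y_1)^{-1}y_1(1+s^*)$ via \eqref{eq:y1(1+s*)} and cancels the factors against $(1+s^*)^{-1}(1+y_1)Y$. Your derivation of $\Theta(y_1)=(1-s)y_1$ from $\Theta([d_+,d_-])=(1-s)[d_+,d_-]$ makes explicit a step the paper uses tacitly, and your $u$-degree bookkeeping is a harmless extra.
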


\begin{proof}
    We only need to check that $\Theta(y_1 Y) = \Theta(Y y_1) = 1$. Indeed,
    \begin{align*}
        \Theta(y_1 Y) & = (1-s) y_1 (1+s^*)^{-1} (1+y_1) Y \\
        & = (1+y_1)^{-1} y_1 (1+s^*) (1+s^*)^{-1} (1+y_1) Y \\
        & = y_1 Y = 1 \\
    \end{align*}
    and
    \begin{align*}
        \Theta(Y y_1) & = (1+s^*)^{-1} (1+y_1) Y (1-s) y_1 \\
        & = (1+s^*)^{-1} (1+y_1) Y y_1 (1+y_1)^{-1} (1+s^*) \\
        & = (1+s^*)^{-1} (1+y_1) (1+y_1)^{-1} (1+s^*) = 1 \\
    \end{align*}
    as desired.    
\end{proof}

\subsection{The action on symmetric functions}

We will now show that this operator induces an operator on symmetric functions, which we use to recover the $\Theta_f$ operator from \cite[Equation~(28)]{DAdderioIraciVandenWyngaerd2021ThetaOperators}.

We start with the following lemma.

\begin{lemma}
    \label{lem:s*_acts_as_0}
    For $k \geq 1$, we have $s^* (d_+^*)^k \epsilon_0 = 0$.
\end{lemma}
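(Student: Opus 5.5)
The plan is to show that $\z_1$ acts as the identity on the vector $y_1 (d_+^*)^k \epsilon_0$. Since $s^* = u(1-\z_1)y_1$, this gives
\[ s^* (d_+^*)^k \epsilon_0 = u\bigl(y_1 (d_+^*)^k\epsilon_0 - \z_1 y_1 (d_+^*)^k \epsilon_0\bigr) = 0 . \]
The statement is really about the action on $V$ (equivalently, modulo the annihilator from \Cref{thm:annihilator}). So I will work with the element $G \coloneqq (d_+^*)^{k-1}\epsilon_0$, which starts at vertex $0$ and ends at vertex $k-1$, and write $(d_+^*)^k\epsilon_0 = d_+^* G$.

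The first step uses relation \eqref{eq:I2} with index $k-1$:
\[ \bigl(d_+ + q^{k-1} y_1 d_+^*\bigr) G = 0, \qquad\text{so}\qquad y_1 d_+^* G = -q^{-(k-1)} d_+ G . \]
This trades the $d_+^*$ for a $d_+$, which is what relation \eqref{eq:Q2} can act on.

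The second step applies \eqref{eq:Q2}. The path $d_+ G$ starts with $d_+$ at vertex $k-1$, so \eqref{eq:Q2} reads $\z_1 d_+ = -q^{k-1} y_1 d_+^*$ there. Hence
\[ \z_1 y_1 d_+^* G = -q^{-(k-1)} \z_1 d_+ G = -q^{-(k-1)}\bigl(-q^{k-1}\bigr) y_1 d_+^* G = y_1 d_+^* G . \]
Therefore $(1-\z_1) y_1 (d_+^*)^k \epsilon_0 = 0$, which is the claim.

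The main obstacle is bookkeeping rather than any real computation. I will need to check that the exponent $q^{k}$ in \eqref{eq:Q2} and in the corrected \eqref{eq:I2} refers to the same vertex. In both relations the exponent is indexed by the vertex where $d_+$ or $d_+^*$ starts, which here is $k-1$ in both, so the powers of $q$ cancel exactly.

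I also need to be explicit that the identity holds in $\A_{q,t}$ modulo the annihilator of $V$. Equivalently, it holds on $d_+^{*k}\cdot 1$ as an element of $V_k$, which is all that is used afterwards.
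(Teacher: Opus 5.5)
Your proof is correct and follows the paper's argument. You reduce to showing that $\z_1$ fixes $y_1(d_+^*)^k\epsilon_0$, rewrite $y_1 d_+^*(d_+^*)^{k-1}\epsilon_0$ as $-q^{1-k}d_+(d_+^*)^{k-1}\epsilon_0$ via \eqref{eq:I2} at vertex $k-1$, and then apply \eqref{eq:Q2} at the same vertex so that the powers of $q$ cancel. Your bookkeeping of the vertex index and your remark that the identity holds modulo the annihilator of $V$ (which is how it is used in \Cref{lem:preserve_kernel}) are both accurate.
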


\begin{proof}
    We have $s^* = (1 - \z_1) y_1$, so it suffices to show that $\z_1$ acts as the identity on $y_1 (d_+^*)^k \epsilon_0$. Indeed,
    \begin{align*}
        \z_1 y_1 (d_+^*)^k \epsilon_0
        & = -q^{1-k} \z_1 d_+ (d_+^*)^{k-1} \epsilon_0 && \text{by \eqref{eq:I2}} \\
        & = y_1 (d_+^*)^k \epsilon_0 && \text{by \eqref{eq:Q2}} \\
    \end{align*}
    as desired.
\end{proof}

The following result proves that $\Theta$ induces an operator on $V$.

\begin{lemma}
    \label{lem:preserve_kernel}
    The operator $\Theta$ preserves the kernel of the action of $\A_{q,t}$ on $V$.
\end{lemma}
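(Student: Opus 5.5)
By \Cref{thm:annihilator}, the kernel of the action of $\A_{q,t}$ on $V$ is the two-sided ideal generated by the elements \eqref{eq:I1} and \eqref{eq:I2}. Since $\Theta$ is an algebra endomorphism (by \Cref{thm:Theta_def}, working in the degree completion), it maps this ideal into the two-sided ideal generated by the images of the generators. It therefore suffices to show that $\Theta\big((d_-d_+^*-1)d_+^{*k}\epsilon_0\big)$ and $\Theta\big((d_+ + q^k y_1 d_+^*)d_+^{*k}\epsilon_0\big)$ act as zero on $V$ for every $k\geq 0$. We must also check that the completion causes no trouble. On each graded piece, only finitely many terms of the power series in $u$ (equivalently, in $y_1$) contribute, so the action of the completed algebra on $V$ is well defined and the kernel statement makes sense degree by degree.

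The key step is to compute $\Theta(d_+^{*k}\epsilon_0)$. We have $\Theta(d_+^*)=(1+s^*)^{-1}d_+^*$, and by \Cref{lem:s*_acts_as_0}, $s^*$ annihilates $d_+^{*j}\epsilon_0$ for $j\geq 1$. The relevant statement is modulo the kernel, and the expressions are power series in $s^*$ applied to such vectors, so $(1+s^*)^{-1}$ acts as the identity on them. Working from the right, we get inductively that $\Theta(d_+^{*k}\epsilon_0)$ acts exactly as $d_+^{*k}\epsilon_0$. Since $\Theta$ is an algebra map, $\Theta(X d_+^{*k}\epsilon_0)=\Theta(X)\Theta(d_+^{*k}\epsilon_0)$. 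Hence it suffices to show that $\Theta(X)$ and $X$ agree on the vector $v_k:=d_+^{*k}\epsilon_0$ modulo the kernel, for $X=d_-d_+^*-1$ and $X=d_+ + q^k y_1 d_+^*$.

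For \eqref{eq:I1}, $\Theta(d_-d_+^*-1)v_k=d_-(1+s^*)^{-1}d_+^*v_k - v_k = d_-d_+^*v_k - v_k$. This uses \Cref{lem:s*_acts_as_0} again, now for $d_+^{*(k+1)}\epsilon_0$ with $k+1\geq1$. The result lies in the kernel.

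For \eqref{eq:I2}, I would use \Cref{cor:theta_yz}, which gives $\Theta(y_1d_+^*)=(1+y_1)^{-1}y_1d_+^*$, so
\[\Theta(d_+ + q^k y_1 d_+^*)v_k=(1-s)d_+v_k+q^k(1+y_1)^{-1}y_1d_+^*v_k.\]
Modulo the kernel, $d_+v_k=-q^ky_1d_+^*v_k$, so the expression becomes $q^k\big(-(1-s)+(1+y_1)^{-1}\big)y_1d_+^*v_k$. Expanding $s=(1+y_1)^{-1}y_1\z_1$, this equals $q^k(1+y_1)^{-1}y_1(\z_1-1)y_1d_+^*v_k$. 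This vanishes because $\z_1$ acts as the identity on $y_1d_+^{*(k+1)}\epsilon_0$, which is exactly what the proof of \Cref{lem:s*_acts_as_0} shows.

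The main obstacle is the legitimacy of replacing vectors by kernel-equivalent ones inside infinite series, and of using the algebra-map property in the completion. One needs to check that $\Theta$ of the ideal generated by \eqref{eq:I1} and \eqref{eq:I2} lands in the closure of that ideal, and that this closure still acts by zero. I would handle this using the grading of \Cref{rmk:u}: fixing the power of $u$ reduces every identity to a finite one in $\A_{q,t}$.
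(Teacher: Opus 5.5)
Your proof is correct and is essentially the paper's: reduce to the images of the generators \eqref{eq:I1} and \eqref{eq:I2}, then use \Cref{lem:s*_acts_as_0} to discard every factor $(1+s^*)^{-1}$ standing in front of $d_+^{*j}\epsilon_0$ with $j \geq 1$. The only difference is in the \eqref{eq:I2} step: the paper skips \Cref{cor:theta_yz} and just factors out $(1-s)$ on the left to get $(1-s)(d_+ + q^k y_1 d_+^*)d_+^{*k}\epsilon_0$, while your closing identity $(\z_1-1)y_1 d_+^{*(k+1)}\epsilon_0 = 0$ is the same annihilation fact in disguise, since $(\z_1 - 1)y_1 = -s^*$.
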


\begin{proof}
    We need to show that $\Theta$ preserves the relations \eqref{eq:I1} and \eqref{eq:I2}.

    First, we check \eqref{eq:I1}. We have
    \[ \Theta((d_- d_+^* - 1) (d_+^*)^k \epsilon_0) = (d_- (1+s^*)^{-1} d_+^* - 1) ((1+s^*)^{-1} d_+^*)^k \epsilon_0, \]
    but by \Cref{lem:s*_acts_as_0}, we may omit $s^*$ throughout and obtain
    \[ \Theta((d_- d_+^* - 1) (d_+^*)^k \epsilon_0) = (d_- d_+^* - 1) (d_+^*)^k \epsilon_0 = 0, \]
    so the relation is exactly preserved.
    
    Next, we check \eqref{eq:I2}. We have
    \[ \Theta((d_+ + q^k y_1 d_+^*) d_+^{*k} \epsilon_0) = ((1-s)d_+ + q^k (1-s) y_1 (1+s^*)^{-1} d_+^*) ((1+s^*)^{-1} d_+^*)^k \epsilon_0, \]
    and again by \Cref{lem:s*_acts_as_0} we may omit $s^*$ throughout and obtain
    \[ \Theta((d_+ + q^k y_1 d_+^*) d_+^{*k} \epsilon_0) = (1-s) (d_+ + q^k y_1 d_+^*) (d_+^*)^k \epsilon_0 = 0, \]
    as desired.
\end{proof}

\begin{theorem}
    \label{thm:theta_e1_D1}
    For any $L \in \epsilon_0 \A_{q,t} \epsilon_0$, we have the identity
    \[ \Theta(L) = \Theta(u) L \Theta(u)^{-1} \]
    as operators acting on $\Lambda$, where the product is the composition of operators and
    \[ \Theta(u) = \sum_{n \geq 0} u^n \Theta_{e_n} \qquad \text{ and } \qquad \Theta(u)^{-1} = \sum_{n \geq 0} (-u)^n \Theta_{h_n}. \]
\end{theorem}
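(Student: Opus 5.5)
The strategy is to reduce to generators of $\epsilon_0 \A_{q,t}\epsilon_0$. Since $\Theta$ is an algebra endomorphism and preserves the kernel of the action (\Cref{lem:preserve_kernel}), both sides of the claimed identity define algebra homomorphisms from $\epsilon_0\A_{q,t}\epsilon_0$ into operators on $\Lambda[[u]]$. The map $L\mapsto\Theta(u)L\Theta(u)^{-1}$ is multiplicative because it is a conjugation. It therefore suffices to check the identity on a set of elements generating $\epsilon_0\A_{q,t}\epsilon_0$ modulo the annihilator, or at least on a set of operators that generates the image.

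The natural candidates are the multiplication operators $e_1^\bullet$ (or all $p_k^\bullet$) together with $\Delta$-type operators diagonal on Macdonald polynomials. By Mellit's results, multiplication by $e_1$ is $-d_-d_+$ up to sign, and $d_-d_+^*$ acts as the identity on $V_0$ by (I1). I would instead use the cleaner generators $\D_m = d_-(-y_1)^{m-1}\z_1 d_+$ for $m\in\Z$. These contain $e_m^\bullet=\D_m\cdot(\,)$ at $q=1$-type specializations, and more usefully $\D_0=\mathsf{id}-M\Delta_{e_1}$ and $\D_1$ together with $e_1^\bullet$ generate an algebra acting irreducibly enough. My plan is to verify two cases. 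The first is $L=d_-d_+$, i.e.\ multiplication by $-e_1$. The second is $L=d_-\z_1 d_+ \cdot y_1^{-1}$-type elements giving $\D_0$, which encodes the Macdonald eigenvalues $B_\mu$. These two generate a dense subalgebra because $\D_0$ separates the Macdonald basis and $e_1$ connects degrees.

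For $\D_0$, the computation uses \Cref{cor:theta_yz}:
\[
\Theta(d_-(-y_1)^{m-1}\z_1d_+)=d_-(-y_1)^{m-1}\Theta(y_1)^{-1}\ldots
\]
More concretely, $\Theta(\z_1 d_+)=(1+uy_1)^{-1}\z_1 d_+$ gives
\[
\Theta(\D_m)=\sum_{j\ge0}u^j\D_{m+j},
\]
since $\Theta(y_1)$ appears only via $y_1^{m-1}$. The $\Theta(y_1)$ factors need care, but \Cref{cor:theta_yz} with $y_1$ replacing $d_+$ handles them. On the symmetric function side I must show
\[
\Theta(u)\,\D_m=\Big(\sum_j u^j\D_{m+j}\Big)\Theta(u).
\]
For $m=0$ this is the known relation that $\Theta_{e_k}$ intertwines $\Delta_{e_1}$ with $e_1$-multiplication-type shifts. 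It follows from $\Theta_f=\mathbf{\Pi}f[X/M]\mathbf{\Pi}^{-1}$, the fact that $\mathbf{\Pi}$ commutes with $\Delta_{e_1}$, and the commutator $[\D_m,p_k^\bullet]=(-1)^k(1-q^k)(1-t^k)\D_{m+k}$ from \Cref{lem:commutator}, reproved via \Cref{thm:dgamma-recursion}. Indeed, $\Theta(u)=\mathbf{\Pi}\,\Exp[uX/M]^\bullet\mathbf{\Pi}^{-1}$ away from degree $0$, and conjugating $\D_m$ by $\Exp[uX/M]$ using the commutator with power sums yields exactly $\sum_j u^j\D_{m+j}$ (a generating-function identity $\Exp[-u\,\cdot]$ against $(1-q^k)(1-t^k)p_k/M$). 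What remains is the commutation of $\mathbf{\Pi}$ with $\D_m$. I would try to avoid this by taking as generators $\D_0$, which commutes with $\mathbf{\Pi}$, and $d_-d_+$. For $d_-d_+$, \Cref{thm:Theta_def} gives $\Theta(d_-d_+)=d_-(1-s)d_+$, which must be matched to $\Theta(u)e_1\Theta(u)^{-1}$, the known identity $\Theta_{e_k}$ versus $e_1$ from the Theta-operator literature.

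I expect the main obstacle to be the degree-$0$ exceptional case in \Cref{def:theta} together with the $\mathbf{\Pi}$ conjugation. The formula $\Theta_f=\mathbf{\Pi}f[X/M]\mathbf{\Pi}^{-1}$ fails on constants, so the conjugation identity must be checked separately on $\Lambda^{(0)}$, using \Cref{lem:s*_acts_as_0} to see that $\Theta$ acts trivially at the bottom level. I would organize the argument as an induction on degree. First I would verify $\Theta(L)F=\Theta(u)L\Theta(u)^{-1}F$ for $F=1$ directly. Then I would propagate to all $F=\Ht_\mu$ using the generator relations. If the generator argument proves too delicate, my fallback is to compare both sides on the Macdonald basis via the Pieri rules.
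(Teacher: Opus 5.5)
Your proposal has a genuine gap, and it starts with the one explicit computation you rely on. The formula $\Theta(\D_m)=\sum_{j\ge 0}u^j\D_{m+j}$ holds only for $m=1$. \Cref{cor:theta_yz} controls $\Theta(\z_1 d_+)$ and $\Theta(\z_1 y_1)$, but a bare factor $y_1$ is sent to $(1-s)y_1\neq y_1$.

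For $m=2$, expanding $d_-(1-s)(-y_1)(1+uy_1)^{-1}\z_1 d_+$ shows that the coefficient of $u$ in $\Theta(\D_2)$ is $\D_3+\D_{21}$, in line with \Cref{thm:shuffle_theta_D}. For $m=0$ you have $\D_0=d_-d_+^*$ on $V_0$, so $\Theta(\D_0)=d_-(1+s^*)^{-1}d_+^*$. Consequently, the identity you plan to check for your second generator, $\Theta(u)\D_0\Theta(u)^{-1}=\sum_{j\ge0}u^j\D_j$, already fails on $F=1$. The left-hand side gives $\Theta(u)\D_0\cdot 1=1$, whereas $\sum_j u^j\D_j\cdot 1=\sum_j u^j e_j$.

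The $\mathbf{\Pi}$-argument cannot rescue this. It is true that $\mathbf{\Pi}$ commutes with $\D_0$. But conjugating $\D_0$ by the multiplication operator $\sum_n u^n e_n[X/M]$ produces, via $[\D_0,p_k^\bullet]\propto\D_k$, the operators $\D_j$ with $j\ge 1$, and these do not commute with $\mathbf{\Pi}$.

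Independently of this, your reduction needs $e_1^\bullet$ and $\D_0$ to generate the image of $\epsilon_0\A_{q,t}\epsilon_0$ \emph{as an algebra}. The facts that $\D_0$ separates the Macdonald basis and that $e_1$ connects degrees show at most that $\Lambda$ is a cyclic module generated by $1$. That is a different and much weaker statement, and ``dense'' has no meaning without a topology. You also cannot simply switch to $e_1^\bullet,\D_1$ inside your framework. Both raise degree by one, so the algebra they generate cannot contain the non-scalar, degree-preserving operator $\D_0$, which lies in that image.

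The idea you are missing is that \Cref{lem:preserve_kernel} already provides an intertwiner, which makes algebra generators unnecessary. Because $\Theta$ preserves the kernel of the action, the rule $\Theta'(L\cdot 1)=\Theta(L)\cdot 1$ defines an operator $\Theta'$ on $\A_{q,t}\cdot 1\supseteq\Lambda$. It satisfies $\Theta' L=\Theta(L)\Theta'$ for \emph{every} $L$, so the theorem reduces to showing $\Theta'=\Theta(u)$ on $\Lambda$.

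For that step, cyclicity of $\Lambda$ is exactly what is needed. Both operators fix $1$, and both satisfy the same commutation relations with $e_1^\bullet$ and $\D_1$:
\begin{itemize}
\item On the $\A_{q,t}$ side, $\Theta(d_-d_+)=e_1^\bullet+\sum_{n\ge2}u^{n-1}\D_n$; note that $d_-d_+=+e_1^\bullet$ on $V_0$, not $-e_1^\bullet$.
\item Also on the $\A_{q,t}$ side, $\Theta(\D_1)=\sum_{n\ge1}u^{n-1}\D_n$. This is precisely the case $m=1$, where your computation is valid.
\item On the symmetric-function side, the matching relations are those of \cite[Proposition~10.1]{DAdderioIraciVandenWyngaerd2021ThetaOperators} and \cite[Proposition~1.2]{Romero2022}.
\end{itemize}
Since $\Lambda$ is spanned by words in $e_1^\bullet$ and $\D_1$ applied to $1$, induction on word length gives $\Theta'=\Theta(u)$. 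This route never involves $\mathbf{\Pi}$, the degree-zero exception in \Cref{def:theta}, or the Pieri rules.
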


\begin{proof}
    As in \Cref{rmk:u}, the parameter $u$ merely keeps track of the degree, so it is enough to prove the statement when $u=1$.

    By \Cref{lem:preserve_kernel}, $\Theta$ induces an endomorphism of $V$, denoted by $\Theta'$, such that $\Theta' L = \Theta(L) \Theta'$ as operators on $V$. 

    Denote by $e_1^\bullet$ the operator of multiplication by $e_1$. It is easy to check that, on $V_0$, $d_- d_+ = e_1^\bullet$.
    
    Applying this identity to $e_1^\bullet$ and $\D_1$, we obtain on $V_0$
    \begin{align*}
        \Theta' e_1^\bullet & = \Theta' d_- d_+ \\
        & = d_- (1 - (1+y_1)^{-1} y_1 \z_1) d_+ \Theta' \\
        & = d_- d_+ \Theta' + d_- (1+y_1)^{-1} (-y_1) \z_1 d_+ \Theta' \\
        & = d_- d_+ \Theta' + d_- (1+y_1)^{-1} (-y_1)^2 d_+^* \Theta' \\
        & = e_1^\bullet \Theta' + \sum_{n \geq 2} \D_n \Theta',
    \end{align*}
    and
    \[ \Theta' \D_1 = \Theta' d_- (-y_1) d_+^* = d_- (1+y_1)^{-1} (-y_1) d_+^* \Theta' = \sum_{n \geq 1} \D_n \Theta', \]
    so \[ [\Theta', e_1^\bullet] = \sum_{n \geq 2} \D_n \Theta' \quad \text{ and } \quad [\Theta', \D_1] = \sum_{n \geq 2} \D_n \Theta'. \]
    By \cite[Proposition~10.1]{DAdderioIraciVandenWyngaerd2021ThetaOperators} and \cite[Proposition~1.2]{Romero2022}, and up to our redefinition of the signs, these are exactly the commutation relations satisfied by $\Theta(u)$ (the operator on $\Lambda$). Since $e_1^\bullet$ and $\D_1$ generate $\Lambda$ \cite[Theorem~2.1]{GarsiaHaimanTesler1999ExplicitPlethysticFormulas}, we deduce that $\Theta = \Theta'$, as desired.
\end{proof}

\subsection{Relations between \texorpdfstring{$\Theta$}{Theta} and \texorpdfstring{$\D_\gamma$}{Dgamma}}
\label{section:theta_D}

The goal of this subsection is to derive the commutation relations between $\Theta$ and $\D_\gamma$, which we will then exploit to prove the Theta conjecture.

\begin{definition}
    Let $\alpha \vDash n$ and $\ell = \ell(\alpha)$. We define 
    \[ w(\alpha) = \mathsf{rev}(1 \cdots 1\, 2 \cdots 2\, 3 \cdots 3\, \cdots\, (\ell-1) \cdots (\ell-1)\, \ell \cdots \ell), \]
    where $i$ occurs $\mathsf{rev}(\alpha)_i$ times, and $\mathsf{rev}$ denotes the reverse of a word.
\end{definition}

For instance, if $\alpha=(2,3)$, then $\mathsf{rev}(\alpha)=(3,2)$, hence $w(\alpha)=\mathsf{rev}(11122)=22111$.

Given two words $v$ and $w$ without common letters, we denote by $v \shuffle w$ the set of all words obtained by \emph{shuffling} the letters of $v$ with those of $w$, while preserving the relative order in the original words. 

For instance, if $v=21$ and $w=\textcolor{red}{3}\textcolor{red}{4}$, then
\[v \shuffle w = \{21\textcolor{red}{3}\textcolor{red}{4}, 2\textcolor{red}{3}1\textcolor{red}{4},2\textcolor{red}{3}\textcolor{red}{4}1, \textcolor{red}{3}21\textcolor{red}{4},\textcolor{red}{3}2\textcolor{red}{4}1,\textcolor{red}{3}\textcolor{red}{4}21\}. \]
Finally, we will also use the standard exponential notation for repeated letters. For example, we can denote the word $1111221332$ in the alphabet $\{1,2,3\}$ also by $1^42^21^13^22^1$, or $1^42^213^22$.

\begin{theorem}
    \label{thm:shuffle_theta_D}
    Let $\gamma = (\gamma_1, \dots, \gamma_\ell) \vDash n$. Then
    \[ \Theta \D_\gamma = \sum_{k=0}^\infty \sum_{\substack{w \in (\ell+1)^k \shuffle w(\gamma) \\ w_1 = \ell}} \D_{\Des(w)} \Theta. \]
    In particular, when applied to $1$, $\Theta_{e_k} \D_\gamma \cdot 1$ is the sum over all $\D_{\tilde{\gamma}} \cdot 1$ for which ${\tilde{\gamma}}$ is the descent composition of a word obtained by shuffling $k$ occurrences of $\ell+1$ into the word $w(\gamma)$.
\end{theorem}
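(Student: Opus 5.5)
The plan is to work at the level of the algebra. We prove an identity $\Theta(\D_\gamma) = \sum_k \sum_w \D_{\Des(w)}$ inside the completion of $\A_{q,t}$, and then transport it to operators on $\Lambda$.

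\textbf{Reduction to the algebra.} By \Cref{lem:preserve_kernel}, $\Theta$ induces an operator $\Theta'$ on $V$ with $\Theta' L = \Theta(L)\Theta'$, and the proof of \Cref{thm:theta_e1_D1} identifies $\Theta'$ on $\Lambda$ with $\Theta(u)$. So the first displayed identity amounts to computing $\Theta(\D_\gamma)$, with $u$ tracking the number $k$ of inserted letters. The final claim then follows by applying both sides to $1$. Since $\Theta_{e_k}\cdot 1 = 0$ for $k \geq 1$, we have $\Theta(u)\cdot 1 = 1$. Hence $\Theta(u)\D_\gamma \cdot 1 = \Theta(\D_\gamma)\cdot 1$, and taking the coefficient of $u^k$ gives the statement about $\Theta_{e_k}\D_\gamma\cdot 1$.

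\textbf{Computing $\Theta(\D_\gamma)$.} Write
\[ \D_\gamma = d_- (-y_1)^{\gamma_1-1}\z_1(-y_1)^{\gamma_2}\cdots\z_1(-y_1)^{\gamma_\ell}\z_1 d_+ . \]
Since every $\gamma_i\geq 1$, we can regroup the factors into blocks whose images under $\Theta$ are known:
\begin{itemize}
\item the final $\z_1 d_+$, with $\Theta(\z_1 d_+)=(1+y_1)^{-1}\z_1 d_+$ by \Cref{cor:theta_yz};
\item products $\z_1 y_1$ and $y_1\z_1$, handled by the same corollary;
\item the leftover isolated $y_1$'s on $V_1$, where $y_1=[d_+,d_-]/(q-1)$.
\end{itemize}
For an isolated $y_1$ we get $\Theta(y_1)=(1-s)y_1 = (1+y_1)^{-1}y_1(1+s^*)$ by \eqref{eq:y1(1+s*)}. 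We then push the factors $(1+s^*)$ to the right through the neighbouring $\z_1$ using \eqref{eq:z1(1-s)}, in the form $(1+s^*)(1+y_1)^{-1}\z_1=\z_1(1-s)$, so that consecutive corrections telescope. After this step $\Theta(\D_\gamma)$ should be a word in $d_-$, $-y_1$ and $\z_1$. Each block carries a correction factor $(1+y_1)^{-1}=\sum_j(-y_1)^j$, and there are possibly extra factors coming from $(1+s^*)^{-1}=\sum_j(-1)^j((1-\z_1)y_1)^j$, which insert new $\z_1$'s.

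\textbf{Matching with shuffles.} Expanding these geometric series and rereading each resulting word as some $\D_{\tilde\gamma}$ (every $-y_1$ raises a part, every $\z_1$ starts a new part) yields a sum of $\D_{\tilde\gamma}$'s. We then match it with the right-hand side. We have $w(\gamma)=\ell^{\gamma_1}(\ell-1)^{\gamma_2}\cdots 1^{\gamma_\ell}$, whose descents sit exactly at the block boundaries. Inserting a run of $m$ copies of $\ell+1$ inside or after a block creates an ascent into the run and a descent out of it. In operator language, this means the $m$ new letters lengthen the current part by the portion preceding the run, and then a new part starts. This corresponds to one term $(-y_1)^{m}$ together with an extra $\z_1$. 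The condition $w_1=\ell$ forbids a run at the very front, which matches the fact that no correction factor appears to the left of the first $(-y_1)^{\gamma_1-1}$, because $\Theta(d_-)=d_-$. Finally, the total number of inserted letters is the exponent of $u$.

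\textbf{Main obstacle.} The hard part is the bookkeeping in the normal-ordering step. Moving the $(1+s^*)^{\pm1}$ and $(1\pm s)$ factors past $\z_1$ and $y_1$ must produce exactly one geometric series per admissible insertion slot, with signs $(-1)$ cancelling against the $(-y_1)$ conventions. I would first carry this out for $\ell=1$ and $\ell=2$, checking that $\Theta(\D_m)=d_-(1+y_1)^{-1}(-y_1)^m d_+^*$ reproduces $\sum_k \D_{\Des(\ell^{m}\shuffle(\ell+1)^k)}$ restricted to $w_1=\ell$. I would then set up an induction on $\ell$ via the recursion of \Cref{thm:dgamma-recursion}: peel off the last block and verify that the shuffle sets satisfy the same recursion.
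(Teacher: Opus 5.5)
\textbf{There is a genuine gap, in the image of the isolated letters.} Your frame agrees with the paper's. You apply the endomorphism factor by factor to $\D_\gamma$, use \Cref{cor:theta_yz} for the blocks $\z_1(-y_1)$ and $\z_1 d_+$, and read the expanded words as shuffles. Your reduction to $\Theta_{e_k}\D_\gamma\cdot 1$ via $\Theta(u)\cdot 1=1$ is also fine. But the image of the isolated $-y_1$'s is where all the content lies, and the route you propose for it does not work.

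Rewriting $\Theta(y_1)=(1+y_1)^{-1}y_1(1+s^*)$ and pushing $(1+s^*)$ to the right does not telescope. The identity \eqref{eq:z1(1-s)} only converts $(1+s^*)(1+y_1)^{-1}\z_1$ into $\z_1(1-s)$. So, for example, $\Theta(y_1)\Theta(\z_1 y_1)=(1+y_1)^{-1}y_1\z_1(1-s)y_1$: the same correction just reappears at the next letter, and at the far right you are left with $(1-s)d_+=\Theta(d_+)$. Expanding $(1+s^*)^{\pm 1}$ instead, with $s^*=u(1-\z_1)y_1$, produces signed words whose cancellations you never control.

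Your proposed base check is in fact false. For $m\geq 2$ we have $d_-(1+uy_1)^{-1}(-y_1)^m d_+^*=\sum_j u^j\D_{m+j}$. The statement, however, requires the coefficient of $u$ in $\Theta(\D_2)$ to be $\D_{21}+\D_3$, coming from the words $121$ and $112$, and $\D_{21}\neq 0$. What your formula loses is exactly the insertions strictly inside a block.

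\textbf{The missing observation: no reordering is needed.} Since $s=u(1+uy_1)^{-1}y_1\z_1$ is already a power series of words in $y_1$ and $\z_1$, you can expand directly:
$\Theta(-y_1)=(1-s)(-y_1)=(-y_1)+\sum_{r\geq 1}u^r(-y_1)^r\z_1(-y_1)$. This expansion is subtraction-free. Take a letter of $w(\gamma)$ that is not the first letter of its block, so it is preceded by an equal letter. The $r$-th term inserts $r$ copies of $\ell+1$ just before it, creating an ascent followed by a \emph{new} descent, i.e.\ a new $\z_1$.

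The factors $(1+uy_1)^{-1}=\sum_j u^j(-y_1)^j$ in $\Theta(\z_1(-y_1))$ and $\Theta(\z_1 d_+)$ play a different role. They insert a run just before an existing descent or at the end of the word. Such a run lengthens a part without creating a new one, so, contrary to your matching paragraph, these insertions carry no extra $\z_1$.

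Multiplying out the product then yields the shuffle sum slot by slot, with $u$ counting the inserted letters. No commutation relations and no induction on $\ell$ via \Cref{thm:dgamma-recursion} are needed; that fallback is only named in your proposal anyway.
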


\begin{example}
    Consider our operators applied to the constant $1$ and a fixed degree $k$. If $\gamma = (1,2)$ and $k=2$, we have $w(\gamma) = 211$, and the set of words appearing in the sum is \[ 2|1133, \; 2|13|13, \; 23|113, \; 2|133|1, \; 23|13|1, \; 233|11, \] where the vertical bars denote descents. It follows that \[ \Theta_{e_2} \D_{12} \cdot 1 = (\D_{14} + \D_{122} + \D_{23} + \D_{131} + \D_{221} + \D_{32}) \cdot 1. \]
\end{example}

\begin{proof}
    By definition, \[ \D_\gamma = d_- (-y_1)^{\gamma_1-1} \z_1 (-y_1)^{\gamma_2} \z_1 \cdots (-y_1)^{\gamma_\ell} \z_1 d_+. \]
    If we think of $\gamma$ as the descent composition of a word, then each $(-y_1)$ corresponds to a letter; we must add an initial letter, and there is an occurrence of $\z_1$ whenever there is a descent or at the end of the word.

    By \Cref{cor:theta_yz}, we have 
    $\Theta(\z_1 y_1) = (1+y_1)^{-1} \z_1 y_1$ and $\Theta(\z_1 d_+) = (1+y_1)^{-1} \z_1 d_+$, so it is convenient to rewrite
    \[ \D_\gamma = d_- (-y_1)^{\gamma_1-1} \z_1 (-y_1) (-y_1)^{\gamma_2-1} \cdots \z_1 (-y_1) (-y_1)^{\gamma_\ell-1} \z_1 d_+. \] Thus,
    \begin{align*} 
        \Theta (\D_\gamma) & =\Theta(d_-) \Theta(-y_1)^{\gamma_1-1} \Theta(\z_1 (-y_1)) \Theta(-y_1)^{\gamma_2-1} \cdots \Theta(\z_1 (-y_1)) \Theta(-y_1)^{\gamma_\ell-1} \Theta(\z_1 d_+) \\
        & = d_- \Theta(-y_1)^{\gamma_1-1} (1+y_1)^{-1} \z_1 (-y_1) \cdots \Theta(-y_1)^{\gamma_\ell-1} (1+y_1)^{-1} \z_1 d_+.
    \end{align*}
    The factors $(1+y_1)^{-1}$ let us insert an arbitrary number of occurrences of $(-y_1)$ before each $\z_1$, and these correspond to inserting an arbitrary number of occurrences of $\ell+1$ before each descent or at the end of the word.

    Consider a single occurrence of $\Theta(-y_1)$. This occurrence corresponds to a letter that is not a descent, as we have already isolated the last occurrence of each letter. We have
    \[ \Theta(-y_1) = (1 - (1 + y_1)^{-1} y_1 \z_1) (-y_1), \]
    which means that the term $-y_1$ can remain fixed or be preceded by a positive number (say, $r$) of occurrences of $(-y_1)$ and then a $\z_1$. In the word, this corresponds to inserting $r$ occurrences of $\ell+1$ into that position, creating a descent. Insertion at the starting position is not allowed because there is no $y_1$ that matches the first letter of the word. This completes the proof.
\end{proof}

Given $l,k \in \mathbb{N}$, we denote by $W\left(0^l,1^k\right)$ the set of all words in the alphabet $\{0,1\}$ consisting of $l$ letters equal to $0$ and $k$ letters equal to $1$.

\begin{corollary}
    \label{cor:theta_theta_e1}
    For $k, l \in \mathbb{N}$, we have
    \[ \Theta_{e_k} \Theta_{e_l} e_1 = \sum_{w\in W\left(0^l,1^k\right)} \D_{\Des(0w)} \cdot 1. \]
\end{corollary}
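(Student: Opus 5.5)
Apply \Cref{thm:shuffle_theta_D} twice, starting from $e_1 = \D_{(1)} \cdot 1$ (by \eqref{dop:onepart} of \Cref{thm:dgamma-recursion}, $\D_1 \cdot 1 = e_1$). We have $\Theta_{e_0}=\mathsf{id}$, and taking the degree-$k$ component of $\Theta(u)\D_\gamma\cdot 1$ (the $u$-grading in \Cref{thm:theta_e1_D1} together with $\Theta(u)^{-1}\cdot 1 = 1$) shows that $\Theta_{e_k}\D_\gamma\cdot 1$ is the sum of $\D_{\Des(w)}\cdot 1$ over words $w\in(\ell+1)^k\shuffle w(\gamma)$ with $w_1=\ell$. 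So $\Theta_{e_k}\Theta_{e_l}e_1$ becomes an explicit sum of terms $\D_{\tilde\gamma}\cdot 1$, and what remains is to match index sets by a bijection with $W(0^l,1^k)$.

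First I apply $\Theta_{e_l}$ to $\D_{(1)}\cdot 1$. Here $\gamma=(1)$, $\ell=1$, $w(\gamma)=1$, and the words are $1\,2^l$ with first letter forced to be $1$, so $\Theta_{e_l}e_1=\D_{\Des(12^l)}\cdot 1 = \D_{(l+1)}\cdot 1 = e_{l+1}$. This is consistent with the claimed formula for $k=0$: $\Des(0\,0^l)=(l+1)$. Next I apply $\Theta_{e_k}$ to $\D_{(l+1)}\cdot 1$. Now $\ell=1$, $w((l+1))=1^{l+1}$, and the allowed words are shuffles of $2^k$ into $1^{l+1}$ whose first letter is $1$. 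Such a word is $1v$, where $v$ ranges over all shuffles of $1^l$ and $2^k$. Relabeling $1\mapsto 0$ and $2\mapsto 1$ is order-preserving, so it preserves descent sets and compositions, and it sends $1v$ to $0w$ with $w\in W(0^l,1^k)$ bijectively. Summing gives exactly $\sum_{w}\D_{\Des(0w)}\cdot 1$.

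The main obstacle is justifying that \Cref{thm:shuffle_theta_D} applies at the level of $\Theta_{e_k}$ acting on $\D_\gamma\cdot 1$. That theorem is an identity of operators $\Theta(\D_\gamma)$ in the completed algebra, and \Cref{thm:theta_e1_D1} converts it to $\Theta(u)\D_\gamma\Theta(u)^{-1}$. Applying to $1$ uses $\Theta(u)^{-1}\cdot 1 = 1$, which holds because $\Theta_{h_n}\cdot 1 = 0$ for $n\geq 1$ by \Cref{def:theta}. Extracting the coefficient of $u^k$ then requires checking that each insertion of a letter $\ell+1$ contributes exactly one factor $u$. By \Cref{rmk:u}, $u$ always accompanies $y_1$, so the $u$-degree is the number of inserted letters. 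I would also double-check the convention that the $\ell$ in $w_1=\ell$ is the largest original letter, which here is $1$, so the first-letter constraint is the one used above.
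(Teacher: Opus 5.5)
Your proposal is correct and follows essentially the paper's proof. The second step is exactly the paper's: apply \Cref{thm:shuffle_theta_D} to $\gamma=(l+1)$, using $\Theta(u)^{-1}\cdot 1=1$, then relabel $1\mapsto 0$, $2\mapsto 1$. The first step differs only cosmetically: the paper gets $\Theta_{e_l}e_1=\D_{(l+1)}\cdot 1$ by extracting $\langle u^l\rangle$ from $\Theta(d_-d_+)\cdot 1=d_-(1-u(1+uy_1)^{-1}y_1\z_1)d_+\cdot 1$, whereas you apply \Cref{thm:shuffle_theta_D} with $\gamma=(1)$, which is the same expansion of $(1+uy_1)^{-1}$ in front of $\z_1 d_+$.
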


\begin{proof}
    We have 
    \begin{align*}
    \Theta_{e_l} e_1 & =\Theta_{e_l} e_1^\bullet \cdot 1=\langle u^l\rangle \Theta(d_-d_+) \cdot 1 \\
    & = \langle u^l \rangle d_- (1 - u(1+uy_1)^{-1} y_1\z_1) d_+ \cdot 1 \\
    & = d_-(-y_1)^l\z_1 d_+\cdot 1 \\
    & = d_-(-y_1)^{l+1}d_+^* \cdot 1 \\
    & = \D_{(l+1)} \cdot 1.
    \end{align*}
    By \Cref{thm:shuffle_theta_D}, up to a shift in the letters,
    \[ \Theta_{e_k} \D_{(l+1)} \cdot 1 = \sum_{\substack{w \in 1^k \shuffle \, 0^{l+1} \\ w_1 = 0}} \D_{\Des(w)} \cdot 1 
        = \sum_{w\in W\left(0^l,1^k\right)} \D_{\Des(0w)} \cdot 1, \]
    as desired.
\end{proof}

\section{The Theta conjecture}
\label{sec:theta_conjecture}

We now use the results developed in the previous sections to prove the Theta conjecture \cite[Conjecture~9.1]{DAdderioIraciVandenWyngaerd2021ThetaOperators}. Namely, we prove the following result.

\begin{theorem}
    \label{thm:theta_q1}
    For $n,k,l \in \mathbb{N}$ with $n > k+l$, we have
    \[ \left.\Theta_{e_k} \Theta_{e_l} \nabla e_{n-k-l} \right\rvert_{q=1} = \sum_{\pi \in \LD(n)^{\ast k, \bullet l}} t^{\area(\pi)} x^\pi. \]
\end{theorem}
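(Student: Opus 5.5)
The plan is to reduce the left-hand side to a sum of values $\D_{\tilde\gamma}\cdot 1$ at $q=1$, and then evaluate them with \Cref{thm:D.1_comb}.

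\textbf{Step 1: write $\nabla e_{n-k-l}$ in terms of $\D_\gamma\cdot 1$.} Set $m = n-k-l \geq 1$. Using \Cref{thm:rational_shuffle} in the square case $a=b=1$, $d=m$, we get $\gamma=(1,\dots,1)$ and $\D_{1^m}\cdot 1 = e_m[-MX^{1,1}]\cdot 1$, which equals $\nabla e_m$ up to the sign conventions of \Cref{rmk:eha-notation}. Alternatively, we can expand $\nabla e_m$ over compositions, using the compositional refinement $\nabla e_m = \sum_{\alpha\vDash m} \nabla C_\alpha$. The simplest route is to take $\nabla e_m = \pm \D_{1^m}\cdot 1$ and fix the sign by comparing both sides at $q=t=1$.

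\textbf{Step 2: apply the two Theta operators.} By \Cref{thm:theta_e1_D1}, $\Theta_{e_l}$ acting on $\D_\gamma\cdot 1$ is the degree-$l$ part of $\Theta(\D_\gamma)\cdot 1$. \Cref{thm:shuffle_theta_D} then gives
\[ \Theta_{e_l}\D_{1^m}\cdot 1 = \sum \D_{\Des(w)}\cdot 1, \]
summed over the shuffles $w$ of $(m+1)^l$ into $w(1^m)=m(m-1)\cdots 1$ with first letter $m$. Applying \Cref{thm:shuffle_theta_D} again, now with the length $\ell(\Des(w))$, adds $k$ copies of a new largest letter. This gives an explicit double-shuffle expansion
\[ \Theta_{e_k}\Theta_{e_l}\nabla e_m = \sum_{\tilde\gamma} c_{\tilde\gamma}\,\D_{\tilde\gamma}\cdot 1. \]
One subtlety needs checking: the second application uses the descent composition of $w$, not $w$ itself. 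I would handle this by re-encoding $w(\Des(w))$ and tracking how its letters relate to those of $w$.

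\textbf{Step 3: specialize at $q=1$ and match with decorated paths.} By \Cref{thm:D.1_comb}, each $\D_{\tilde\gamma}\cdot 1|_{q=1}$ equals $\sum_{\tau\in\R(\tilde\gamma)} t^{\area(\tau)} e_{\mu(\tau)}$. On the other side, I would rewrite the right-hand side of \Cref{thm:theta_q1} using the standard fact that at $q=1$ the labeled paths contribute $e_{\mu}$ for the composition of vertical runs, since the labels become free once $\dinv$ is gone. This turns it into a sum over decorated Dyck paths (with $k$ decorated rises and $l$ decorated valleys) weighted by $t^{\area}\, e_{\text{runs}}$.

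\textbf{Main obstacle: the bijection.} The heart of the argument is a weight-preserving bijection between
\begin{itemize}
\item pairs $(w,\tau)$, where $w$ is a double shuffle and $\tau\in\R(\Des(w))$, and
\item decorated Dyck paths in $\LD(n)^{\ast k,\bullet l}$.
\end{itemize}
The idea is that the inserted letters equal to $m+1$ create valley decorations (each one creates a descent, matching the $\z_1$ inserted in the proof of \Cref{thm:shuffle_theta_D}), while the letters from the $k$-insertion mark decorated rises. The area of $\tau$ above $\delta(\Des(w))$ should then match the area of the decorated path once decorated rows are discounted. I would first check small cases using \Cref{cor:theta_theta_e1}, which is the case $m=1$: there the words in $W(0^l,1^k)$ should encode directly which of $k$ rises and $l$ valleys are decorated. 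I would then extend by induction on $m$, peeling off the last letter.

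The hardest part is making the area statistics agree; the decorated valleys are where the discrepancy is likely to appear. I would first try defining the map row by row and verifying the cardinalities via a generating function in $t$.
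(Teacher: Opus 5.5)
Your Steps~1 and~2 are a sound reduction. \Cref{thm:rational_shuffle} gives $\D_{1^m}=e_m[-MX^{1,1}]$. Once this is identified with $\nabla e_m$ on $1$ (only the $q=1$ value is needed, by \Cref{lem:theta_commutes_with_q1_eval}), two applications of \Cref{thm:shuffle_theta_D} and then \Cref{thm:D.1_comb} turn the left-hand side into a positive sum of $t^{\area(\tau)}e_{\mu(\tau)}$. The sum runs over pairs consisting of a double-shuffle word and a path $\tau$ above the path of its descent composition.

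But this only reformulates the theorem. All of its combinatorial content is in the bijection you call the main obstacle, and you neither define it nor prove any property of it. That is the gap, and it is not routine, for two reasons.

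\emph{Your target weight is wrong.} At $q=1$ the labels are not free. A decorated valley must be contractible, so if it is preceded by a single horizontal step, its label must exceed that of the vertical step before that horizontal step. The correct weight is $e_{\mu(\cc(\pi))}$, the rise composition of the contracted path. Already for $n=2$, $k=0$, $l=1$, the left-hand side is $\D_{2}\cdot 1=e_2$, while counting the runs of $\pi$ gives $e_1^2$. For the same reason, a cardinality or $t$-generating-function check cannot suffice: the bijection must send $\mu(\tau)$ to $\mu(\cc(\pi))$.

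\emph{Your heuristic for $m=1$ is off.} When $k+l=n-1$, every non-initial vertical step is decorated, so $w\in W(0^l,1^k)$ has no choice of decorations to record. The paper handles even this case with the recursive map $\Psi$ of \Cref{def:psi}. That map is built from the decomposition $\pi=(UR)^aU\pi_1U^bR^{b+1}\pi_2$, couples the word to $\tau$, and needs \Cref{lem:well-defined,lem:area,lem:comp} plus an explicit inverse.

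For general $m$ you do not say what the inductive step is, and the obvious candidates fail. $\D_{1^m}$ is not a product of one-part operators, so the $q=1$ multiplicativity of \Cref{lem:Dgamma_q1} gives no factorization. Splitting with \eqref{dop:recursion} produces sequences with zero parts, such as $(1^{m-2},2,0)$, which \Cref{thm:shuffle_theta_D} does not cover.

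The paper never builds a bijection with shuffle data beyond $m=1$. Its route is:
\begin{enumerate}
\item Replace $e_m$ by $E_{m,m}$ and use $\nabla E_{m,m}|_{q=1}=e_1^m=\D_1^m\cdot 1|_{q=1}$ (\Cref{rem:Enk}).
\item Conjugating $\D_1$ by $\Theta(u)$ gives $\sum_{a\geq 1}u^{a-1}\D_a$, which at $q=1$ is a sum of multiplication operators. Hence the touch-number-$m$ case is a product of $m$ copies of the $m=1$ case (\Cref{thm:theta_q1_multiplicative,thm:theta_theta_Enn}).
\item Treat $\nabla E_{m,m-u}$ by induction on $u$, reading the $h_j^\perp$ recursion of \Cref{thm:full_recursion} combinatorially as the removal of the maximal labels (\Cref{thm:combinatorial_recursion}).
\item Sum over $u$ using $\sum_u E_{m,m-u}=e_m$.
\end{enumerate}
To complete your route, you would need an area- and $\mu(\cc(\cdot))$-preserving bijection valid for every touch number. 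That would be a direct combinatorial proof bypassing \Cref{thm:full_recursion}, and nothing in the proposal indicates how to build it. Otherwise, adopt the $E_{m,m}$ reduction above.
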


The proof proceeds as follows. First, we give all the combinatorial definitions appearing in this theorem; then, we show that the statement holds for $k+l=n-1$; next, we use this result to prove the statement for paths with touch number $n-k-l$; finally, we give a combinatorial interpretation of a symmetric function identity that allows us to complete the proof.

\subsection{The combinatorics of the Theta conjecture}

We begin by giving precise definitions for the combinatorics of the Delta and Theta conjectures.

\begin{definition}
    A \emph{Dyck path} of size $n$ is a lattice path starting at $(0,0)$, ending at $(n,n)$, using only unit up (vertical) steps and right (horizontal) steps, and staying weakly above the line $x=y$. A \emph{labeled Dyck path} is a Dyck path $\pi$ together with a labeling $w \colon [n] \to \mathbb{Z}_+$ of its vertical steps, such that consecutive vertical steps have strictly increasing labels (from bottom to top). We will draw the labels of the vertical steps in the square immediately to the right of each step.

    A \emph{rise} of a labeled Dyck path is a vertical step that is preceded by another vertical step.

    A \emph{valley} of a labeled Dyck path is a vertical step $v$ preceded by at least one horizontal step. A valley $v$ is \emph{contractible} if it is preceded by either two horizontal steps or a horizontal step that is itself preceded by a vertical step whose label is strictly smaller than $v$'s label.

    A \emph{decorated labeled Dyck path} $\pi$ is a labeled Dyck path together with a choice of rises and contractible valleys to be \emph{decorated}.
    We set
    \begin{align*}
         & \mathsf{dr}(\pi) = \{i \in [n] \mid \text{the $i\th$ vertical step of $\pi$ is a decorated rise}\}     \\
         & \mathsf{dv}(\pi) = \{i \in [n] \mid \text{the $i\th$ vertical step of $\pi$ is a decorated valley}\}.
    \end{align*}
    We decorate rises with a $\ast$ and valleys with a $\bullet$, and these decorations are displayed in the square to the left of the vertical step. The set of decorated labeled Dyck paths of size $n$ with $k$ decorated rises and $l$ decorated valleys is denoted by $\LD(n)^{\ast k, \bullet l}$.
\end{definition}

See \Cref{fig:path-example} for an example of an element of $\LD(8)^{\ast 2, \bullet 2}$.

\begin{figure}
    \centering
    \begin{tikzpicture}[scale=.72]
        \draw[step=1.0, gray!60, thin] (0,0) grid (8,8);
        \draw[gray!60, thin] (0,0) -- (8,8);
        \draw[blue!60, line width = 1.6 pt] (0,0) -- (0,1) -- (0,2) -- (1,2) -- (1,3) -- (2,3) -- (2,4) -- (2,5) -- (2,6) -- (3,6) -- (4,6) -- (4,7) -- (5,7) -- (6,7) -- (7,7) -- (7,8) -- (8,8);
        \node at (0.5,0.5) {$2$};
        \draw (0.5,0.5) circle (.4cm);
        \node at (0.5,1.5) {$3$};
        \draw (0.5,1.5) circle (.4cm);
        \node at (1.5,2.5) {$4$};
        \draw (1.5,2.5) circle (.4cm);
        \node at (2.5,3.5) {$1$};
        \draw (2.5,3.5) circle (.4cm);
        \node at (2.5,4.5) {$2$};
        \draw (2.5,4.5) circle (.4cm);
        \node at (2.5,5.5) {$4$};
        \draw (2.5,5.5) circle (.4cm);
        \node at (4.5,6.5) {$3$};
        \draw (4.5,6.5) circle (.4cm);
        \node at (7.5,7.5) {$2$};
        \draw (7.5,7.5) circle (.4cm);
        \node at (1-1-0.5,1+0.5) {$\ast$};
        \node at (5-3-0.5,5+0.5) {$\ast$};
        \node at (2-1-0.5,2+0.5) {$\bullet$};
        \node at (6-2-0.5,6+0.5) {$\bullet$};
    \end{tikzpicture}
    \caption{An element of $\LD(8)^{\ast 2, \bullet 2}$.}
    \label{fig:path-example}
\end{figure}

\begin{definition}[Monomial]
    \label{def:monomial}
    For a decorated labeled Dyck path $\pi$, we define the monomial $x^\pi = \prod_{i=1}^n x_{w_i}$, where $w_i$ is the label of the $i\th$ vertical step of $\pi$.
\end{definition}

\begin{definition}[Area]
    \label{def:area}
    Given a decorated labeled Dyck path $\pi$ of size $n$, its \emph{area word} is the word of nonnegative integers whose $i\th$ letter equals the number of unit squares between the $i\th$ vertical step of the path and the line $x=y$. If $a$ is the area word of $\pi$, the \emph{area} of $\pi$ is \[\area(\pi) \coloneqq \sum_{i \in [n] \setminus \mathsf{dr}(\pi)} a_i.\]
\end{definition}

For example, the decorated labeled Dyck path in \Cref{fig:path-example} has area $6$. Since we only consider the specialization $q=1$, we disregard the dinv statistic.

\begin{definition}[Touch number]
    The \emph{touch number} of a path $\pi \in \LD(n)^{\ast k, \bullet l}$ is the number of non-decorated vertical steps that touch the main diagonal. We denote it by $\touch(\pi)$, and we let $\LD(n,r)^{\ast k, \bullet l}$ be the subset of $\LD(n)^{\ast k, \bullet l}$ of paths with touch number $r$.
\end{definition}

Notice that $1 \leq \touch(\pi) \leq n-k-l$. 

For example, the path in \Cref{fig:path-example} has touch number $2$.

\begin{remark}
	\label{rmk:rises-falls-correspondence}
	Note that, if we call a horizontal step followed by another horizontal step a \emph{fall}, then there is a natural bijection between rises and falls.
    Indeed, the joining point of the two vertical steps of a rise is a point where a path $\pi$ vertically crosses a certain diagonal parallel to the main diagonal.
    Since the path must end at the main diagonal, it must cross the same diagonal horizontally at least once, via a fall. We map the rise to the first such fall; this yields a bijection (see \Cref{fig:rises-falls-correspondence}).
    Therefore, we might equivalently decorate falls instead of the corresponding rises.
\end{remark}

\begin{figure}[ht]
	\centering
	\begin{tikzpicture}[scale=0.6]
		\draw[gray!60, thin] (0,0) grid (11,11) (0,0) -- (11,11) (-1,1) -- (10,12);
		\draw[blue!60, ultra thick] (0,0) -- (0,2) -- (1,2) -- (1,3) -- (2,3) -- (2,6) -- (4,6) -- (4,9) -- (5,9) -- (5,10) -- (7,10) -- (7,11) -- (11,11);
		\draw[ultra thick] (2,3) -- (2,5) (8,11) -- (10,11);
		\fill[pattern=north west lines, pattern color=gray] (2,4) rectangle (4,5) (8,11) rectangle (9,9);
		\draw  (1.5,4.5) node {$\ast$};
	\end{tikzpicture} 
	\caption{Correspondence between rises and falls.}
	\label{fig:rises-falls-correspondence}
\end{figure}

\subsection{Case \texorpdfstring{$k+l=n-1$}{k+l=n-1}}

In this subsection, we prove that the statement holds when $k+l=n-1$, that is, when the first vertical step is the only non-decorated one.

\begin{theorem}
    \label{thm:theta_theta_e1}
    For $k,l \in \mathbb{N}$, we have
    \[ \left.\Theta_{e_k} \Theta_{e_l} e_1 \right\rvert_{q=1} = \sum_{\pi \in \LD(k+l+1)^{\ast k, \bullet l}} t^{\area(\pi)} x^\pi. \]
\end{theorem}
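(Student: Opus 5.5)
By \Cref{cor:theta_theta_e1}, the left-hand side equals
\[ \sum_{w\in W(0^l,1^k)} \left.\D_{\Des(0w)} \cdot 1\right\rvert_{q=1}, \]
and each summand has the explicit combinatorial expansion of \Cref{thm:D.1_comb}:
\[ \left.\D_{\Des(0w)}\cdot 1\right\rvert_{q=1}=\sum_{\tau\in\R(\Des(0w))} t^{\area(\tau)} e_{\mu(\tau)}. \]
The left-hand side therefore becomes a generating function over pairs $(w,\tau)$. Here $w$ is a $0/1$ word with $l$ zeros and $k$ ones, and $\tau$ is a lattice path weakly above $\delta(\Des(0w))$. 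The plan is to build a weight-preserving bijection between these pairs and \emph{unlabeled} decorated Dyck paths of size $k+l+1$ with $k$ decorated rises and $l$ decorated contractible valleys, whose only non-decorated step is the first one. We keep track of $t^{\area}$ and of a composition recording the label constraints.

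The second step handles the labels. The right-hand side is a sum over labelings of a fixed decorated path shape. Labels must increase strictly along each maximal run of consecutive vertical steps. A decorated contractible valley imposes an additional constraint (it must exceed the label of the previous vertical step) only when it is preceded by exactly one horizontal step. I would group labelings by these chains of forced strict increases. For a given shape, the labelings form a product of chains, and the generating function of strictly increasing labelings of a chain of length $r$ is $e_r$. So each decorated shape should contribute $t^{\area} e_\mu$, where $\mu$ is the composition of the maximal strictly-increasing chains. One subtlety: valleys preceded by a single horizontal step after a larger label are not contractible, so they cannot be decorated. This should match the fact that $e_\mu$ counts only the forced increases. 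I would check this bookkeeping on small cases, for instance $k+l\le 2$, against the example $\Theta_{e_2}\D_{12}$ style expansion.

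The third step, which is the heart of the argument, is the bijection itself. Read the word $0w$ as a sequence of vertical steps of the Dyck path, excluding the first step. A letter $1$ corresponds to a decorated rise, which contributes no area, and a letter $0$ corresponds to a decorated valley. The descents of $0w$, that is, the positions where a $1$ is followed by a $0$, cut the word into the blocks of $\Des(0w)$. The path $\tau$ weakly above $\delta(\Des(0w))$ then encodes how the chains are merged, and $\area(\tau)$ records the heights of the non-decorated-rise steps above the diagonal. Concretely, I would use \Cref{rmk:rises-falls-correspondence} to decorate falls instead of rises. Since every step except the first is decorated, the area of the decorated path comes only from the valley steps, and the area of $\tau$ should count exactly those cells.

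I expect this bijection to be the main obstacle. Both objects are easy to describe, but matching $\area(\tau)$ and $\mu(\tau)$ simultaneously with area and labeling chains needs care, particularly at the boundary between a valley block and a rise block. The paper formalized this step in Lean, which suggests it is fiddly. If a direct bijection proves hard, my fallback is a recursive argument. I would peel off the first letter of $w$ and show that both sides satisfy the same recursion: on the algebraic side via \Cref{thm:dgamma-recursion} combined with \Cref{lem:Dgamma_q1}, and on the combinatorial side by removing the second vertical step of the path. Then I would conclude by induction on $k+l$.
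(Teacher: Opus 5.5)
\textbf{There is a genuine gap: the bijection is not constructed, and the correspondence you sketch for it is false.}

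Your reduction is the right one and is the same as the paper's. \Cref{cor:theta_theta_e1} and \Cref{thm:D.1_comb} give $\sum_{w}\sum_{\tau\in\R(\Des(0w))}t^{\area(\tau)}e_{\mu(\tau)}$ on the left. Grouping labelings by chains of forced strict increases gives $\sum_{\pi\in\DP(k+l+1)^{\ast k,\bullet l}}t^{\area(\pi)}e_{\mu(\cc(\pi))}$ on the right. After this reduction, the entire content of the theorem is an area- and $\mu$-preserving bijection between the two index sets, and you do not construct one.

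Moreover, the correspondence you sketch cannot work, even as an equality of generating functions for fixed $w$. In your sketch, the letters of $0w$ record the types (rise or valley) of the successive vertical steps. Take $k=2$, $l=1$.
\begin{itemize}
\item The paths whose vertical steps are, in order, the undecorated first step, a rise, a valley, a rise are $UURUURRR$ and $UURRUURR$. They contribute $te_4+e_2^2$. But $\R(\Des(0101))=\R((2,2))$ contributes $e_2^2+te_3e_1+t^2e_4$.
\item The pattern $0110$ gives $UUURURRR$, $UUURRURR$, $UUURRRUR$, i.e.\ $t^2e_4+te_3e_1+e_3e_1$. Against this, $\R((3,1))$ gives $e_3e_1+te_4$.
\end{itemize}
Only the totals agree. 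Recording valleys and falls in the order of the horizontal steps fails as well: $URUUURRR$ and $UURUURRR$ both give $0011$, while $\R((4))$ has a single element. So $w$ cannot be read off the path letter by letter. The step you call ``the main obstacle'' is exactly the missing proof.

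What is needed is a genuinely recursive decomposition, as in the paper. Write $\pi=(UR)^aU\pi_1U^bR^{b+1}\pi_2$, where:
\begin{itemize}
\item $a$ is maximal, so $(UR)^a$ collects the initial diagonal valleys;
\item $U^bR^{b+1}$ is the block at the first double fall, with $b$ maximal subject to $\pi_1$ having no two consecutive horizontal steps.
\end{itemize}
Emit $0\,0^a1^b$, and let the first entry of $\tau$ be $a+b+h+1$, where $h$ is the height of $\pi_1$. Then recurse on $\pi_1\pi_2$, whose area is $\area(\pi)-h$. One must then check three things: that $\tau\in\R(\Des(0w))$; that $\mu$ is preserved (two cases, according to whether $\pi_2$ starts with $R$ or $U$); and that the map is invertible, by cutting the recursive preimage at row $h$. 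On the example above this map sends $UURRUURR$, $UUURRURR$, $UUURURRR$ to $w=101$, and $UURUURRR$, $UUURRRUR$ to $w=110$. So it genuinely mixes step patterns.

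Your fallback does not fill this gap. At $q=1$, \Cref{thm:dgamma-recursion} only reproduces the cell-removal recursion for $\R(\gamma)$ already used in \Cref{thm:D.1_comb}, so the remaining problem is purely combinatorial. On the path side, deleting the second vertical step (when it is a rise) together with its matching fall lowers the area of every valley up to that fall. You would need a refined recursion tracking such data on both sides. You have not stated one, and it is essentially what the decomposition above encodes.
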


Combining the commutation relations between $\Theta$ and $\D_\gamma$ established in \Cref{cor:theta_theta_e1} with the combinatorial interpretation of $\left.\D_\gamma\cdot 1\right\rvert_{q=1}$ given in \Cref{thm:D.1_comb}, we obtain a combinatorial interpretation for $\left.\Theta_{e_k} \Theta_{e_l} e_1 \right\rvert_{q=1}$.

\begin{proposition}
    For $k,l \in \mathbb{N}$, we have
    \[ 
        \left.\Theta_{e_k} \Theta_{e_l} e_1 \right\rvert_{q=1} 
        = \sum_{w\in W\left(0^l,1^k\right)} \left. \D_{\Des(0w)}\cdot 1\right\rvert_{q=1}=\sum_{w\in W\left(0^l,1^k\right)}\sum_{\tau\in \R(\Des(0w))}t^{\area(\tau)}e_{\mu(\tau)}
    \]
\end{proposition}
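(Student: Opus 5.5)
The plan is to obtain the statement by chaining two results already proved, so the proof is essentially a specialization argument. For the first equality, we start from \Cref{cor:theta_theta_e1}, which gives the exact identity
\[ \Theta_{e_k} \Theta_{e_l} e_1 = \sum_{w\in W\left(0^l,1^k\right)} \D_{\Des(0w)} \cdot 1 \]
in $\Lambda$. We then set $q=1$ on both sides, which requires that both sides be regular at $q=1$. The right-hand side is a finite sum. Each $\D_\gamma\cdot 1$ with $\gamma$ a composition is given by \Cref{thm:Don1} as a polynomial part of a series whose coefficients lie in $\Z[q,t]$: the factors $1/(1-q x_i/x_j)$ are expanded geometrically, so no denominators in $q$ appear. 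Hence each summand specializes, and so does the left-hand side, being equal to it. The specialization is then taken termwise.

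For the second equality, we apply \Cref{thm:D.1_comb} to each summand with $\gamma = \Des(0w)$. We must check that $\Des(0w)$ is a genuine composition of $n=k+l+1$ with positive parts, as that theorem requires. This holds because a descent composition of a word of length $k+l+1$ has, by definition, strictly positive parts summing to the word length. The leading $0$ guarantees that the first letter is the minimal one, so the first part is nonzero and the word is nonempty. Then
\[ \left.\D_{\Des(0w)}\cdot 1\right\rvert_{q=1} = \sum_{\tau\in \R(\Des(0w))}t^{\area(\tau)}e_{\mu(\tau)}, \]
and summing over $w\in W(0^l,1^k)$ gives the claim.

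The only potentially delicate point is the regularity at $q=1$ needed to commute specialization with the identity. \Cref{lem:Dgamma_q1} also implicitly handles this for $F=1$, so we would cite it together with \Cref{thm:Don1}. Everything else is direct substitution.
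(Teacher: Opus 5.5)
Your proposal is correct and matches the paper's proof: specialize \Cref{cor:theta_theta_e1} at $q=1$, then apply \Cref{thm:D.1_comb} to each summand $\D_{\Des(0w)}\cdot 1$. You also justify regularity at $q=1$, via the constant-term formula underlying \Cref{thm:Don1} or the induction in \Cref{lem:Dgamma_q1}; the paper leaves this implicit, and your justification is sound.
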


\begin{proof}
    The first equality follows by specializing \Cref{cor:theta_theta_e1} at $q=1$, while the second follows by applying \Cref{thm:D.1_comb} to each term of the sum.
\end{proof}

\begin{definition}
    Given a decorated Dyck path $\pi$, we define its \emph{contracted path $\cc(\pi)$} as the lattice path obtained by deleting every horizontal step that precedes a decorated valley of $\pi$. 
\end{definition}

\begin{example}
    The contracted path of $\pi$ in \Cref{fig:path-example} is the path $(3,6,7,7,7,8)$, expressed in the notation of \Cref{rmk:heights}.
\end{example}

\begin{remark}
    In the definition of $\LD(n)^{\ast k, \bullet l}$, the restrictions on the labels depend solely on the rise composition of the contracted paths. In particular, by grouping labeled paths according to their contracted paths and observing that the area statistic is independent of the labels, we may rewrite
    \[
        \sum_{\pi \in \LD(k+l+1)^{\ast k, \bullet l}} t^{\area(\pi)} x^\pi
        = \sum_{\pi \in \DP(k+l+1)^{\ast k, \bullet l}} t^{\area(\pi)} e_{\mu(\cc(\pi))}
    \]
    where $\DP(n)^{\ast k, \bullet l}$ is the set of decorated Dyck paths without labels and, in the unlabeled setting, every valley is considered contractible.
\end{remark}

From now on, we will refer to the elements of $\DP(k+l+1)^{\ast k, \bullet l}$ by indicating only their paths, with the understanding that every vertical step other than the first is decorated.

To prove \Cref{thm:theta_theta_e1}, it is enough to find a bijection $\Psi$ between $\DP(k+l+1)^{\ast k, \bullet l}$ and the set
\[ \{ (w, \tau) \mid w\in W \left(0^l,1^k\right), \tau \in \R(\Des(0w)) \} \]
that preserves both the area and the rise composition. In other words, for every $\pi \in \DP(k+l+1)^{\ast k, \bullet l}$ with $\Psi(\pi) = (w,\tau)$ for some $w \in W\left(0^l,1^k\right)$ and $\tau \in \R(\Des(0w))$, we want to have $\area(\pi)=\area(\tau)$ and $\mu(\cc(\pi))=\mu(\tau)$.

Notice that $\area(\tau)$ depends on $\Des(0w)$, and thus on $w$. We now define such a map $\Psi$ recursively via the following algorithm. The definition of $\Psi$ and the fact that it provides a bijection have been implemented and formally verified in Lean 4 \cite{Iraci2026Code}.

\begin{definition}
    \label{def:psi}
    Let $\pi \in \DP(k+l+1)^{\ast k, \bullet l}$. We construct a word $w_0 = 0w$, with $w \in W(0^l, 1^k)$, and a path $\tau \in \R(\Des(0w))$.
    
    We start with $(\varepsilon, \varnothing)$, where $\varepsilon$ denotes the empty word and $\varnothing$ is the empty path. We then apply the following algorithm.

    \begin{enumerate}
      \setcounter{enumi}{-1}
        \item Append $0$ to $w_0$. \label{step:0}

        \item Let $a$ be the maximum integer such that $\pi = (UR)^a \pi_0$, for some nonempty Dyck path $\pi_0$. Append $0^a$ to $w_0$. \label{step:valleys}

        \item Let $b$ be the maximum integer such that $\pi_0 = U \pi_1 \,U^b R^{b+1} \pi_2$, for some lattice paths $\pi_1$ and $\pi_2$ such that $\pi_1$ contains no two consecutive horizontal steps. Append $1^b$ to $w_0$. \label{step:rises}

        \item Let $h$ be the height (that is, the number of vertical steps) of $\pi_1$. Append $(\ell(0w)+h)$ to $\tau$ and repeat the process with $\pi' = \pi_1 \pi_2$. \label{step:induction}
    \end{enumerate}

    The algorithm terminates when $\pi_1 \pi_2$ is empty, and we define $\Psi(\pi) = (w,\tau)$, where $w_0 = 0w$.
\end{definition}

The path $\pi$ is transformed as follows. In Step~\ref{step:valleys} we remove $a$ valleys from the beginning of $\pi$; in Step~\ref{step:rises} we remove the first $b$ falls of $\pi$; in Step~\ref{step:induction} we remove the first $U$ step and the $R$ step that is a fall (overall, we remove a valley, cf.\ \Cref{lem:well-defined}).

At the end of each iteration, we append a nondecreasing string to $w$ and determine the height of a column of $\tau$. Inductively, we decompose $0w$ as $0 0^a 1^b w'$, where $w'$ is either empty or starts with a $0$. We now prove some properties of $\Psi$.

\begin{lemma}
    \label{lem:well-defined}
    If $\pi \in \DP(k+l+1)^{\ast k, \bullet l}$ and $\Psi(\pi) = (w,\tau)$, then $w \in W(0^l,1^k)$ and $\tau \in \R(\Des(0w))$.
\end{lemma}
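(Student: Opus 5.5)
We will argue by induction on the number of iterations of the algorithm in \Cref{def:psi}, or equivalently by strong induction on the size $k+l+1$ of $\pi$. We track three things: the well-definedness of each step, the letter counts of $w$, and the componentwise inequality $\tau \geq \delta(\Des(0w))$ in the height-vector notation of \Cref{rmk:heights}.

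\textbf{Well-definedness.} Since every vertical step of $\pi$ other than the first is decorated, the first step is the only undecorated one. After removing the prefix $(UR)^a$ (whose $U$'s are decorated valleys, except possibly the very first step, which accounts for Step~\ref{step:0}), the path $\pi_0$ starts with $U$. In Step~\ref{step:rises} we take $\pi_1$ to be the maximal prefix after that $U$ without two consecutive horizontal steps, and $U^bR^{b+1}$ to be the first block at which a fall occurs. Via \Cref{rmk:rises-falls-correspondence}, the $b$ falls removed correspond to $b$ decorated rises.

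The key structural claim is that deleting the first $U$ together with the single remaining $R$ of $R^{b+1}$ (the horizontal step preceding a valley), and deleting $U^b$ together with the $b$ falls, yields $\pi' = \pi_1\pi_2$. We need $\pi'$ to be again a Dyck path (suitably shifted) in which every step except the first is a decorated rise or a contractible valley. The count then works as follows: the decorations consumed in one iteration are $a$ valleys, $b$ rises, and one more valley, which is the leading $0$ of the next iteration (Step~\ref{step:0}). Summing over iterations gives $l$ zeros after the initial one and $k$ ones, so $w \in W(0^l,1^k)$. The only point needing care is the last iteration, where the final appended $0$ must be the mandatory initial $0$ of $0w$ rather than a consumed valley; we will handle this by matching the initial undecorated step with the leading $0$.

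\textbf{The inequality $\tau\geq\delta(\Des(0w))$.} Each iteration appends the nondecreasing block $0\,0^a1^b$, and consecutive blocks are separated by a descent $1\to0$ exactly when $b>0$. We will show that the $j$-th entry appended to $\tau$, namely $\ell(0w)+h$ with $\ell(0w)$ the current length, dominates the $j$-th partial sum of $\Des(0w)$. The partial sums of the descent composition sit at the positions of descents, which are at most the current length of $0w$ at the end of the corresponding iteration, and $h\ge0$. The delicate part is the alignment between the number of columns of $\tau$ (one per iteration) and the length of $\Des(0w)$. Blocks with $b=0$ merge with the following block, so the number of descent parts is at most the number of iterations. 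To compare paths in the same $n\times\ell$ grid, we will pad $\delta$ by reading $\Des(0w)$ with trailing zeros allowed, as in the weak-composition convention preceding \Cref{thm:D.1_comb}. We then need to check that the final entry of $\tau$ equals $|0w| = k+l+1$ exactly: in the last iteration $\pi_1\pi_2$ is empty, so $h=0$. The monotonicity of $\tau$ follows because $h$ can only drop by the height removed, which is compensated by the growth of $\ell(0w)$.

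I expect the main obstacle to be the structural claim that $\pi_1\pi_2$ is again a valid decorated Dyck path with all non-initial steps decorated. In particular, we must show that removing $U^bR^{b+1}$ and the first $U$ does not push $\pi_2$ below the diagonal, and that the rise/fall correspondence of \Cref{rmk:rises-falls-correspondence} is respected by the deletion. We plan to prove this by a direct diagonal-crossing argument: since $\pi_1$ has no consecutive $R$'s, it never descends to a lower diagonal, so $\pi_1$ stays weakly above the diagonal shifted by one after removing the leading $U$.
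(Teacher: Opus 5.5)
Your skeleton is the same as the paper's: count the valleys and falls consumed in each iteration, then compare $\tau$ with $\delta(\Des(0w))$ column by column. Two steps, however, do not go through as written.

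\textbf{The per-iteration count.} Your count rests on the parenthetical claim that the last $R$ of $R^{b+1}$ is ``the horizontal step preceding a valley''. This is false whenever $\pi_2$ begins with $R$. In that case all $b+1$ steps of $R^{b+1}$ are falls, and none of the deleted horizontal steps precedes a valley. The missing idea is the paper's case split. If $\pi_2$ starts with $R$, maximality of $b$ forces $\pi_1$ to be nonempty and to end with $R$:
\begin{itemize}
\item if $\pi_1$ ended with $U$, you could enlarge $b$;
\item if $\pi_1$ were empty, then $\pi_0=U^{b+1}R^{b+2}\cdots$ would cross the diagonal.
\end{itemize}
That final $R$ of $\pi_1$ precedes a valley in $\pi$ but becomes a fall in $\pi_1\pi_2$. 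So the net loss is again exactly one valley and $b$ falls.

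Relatedly, the ``main obstacle'' you name is not the real one. Since $k+l=n-1$ and every valley counts as contractible in the unlabeled setting, any Dyck path with $l'$ valleys and $k'$ rises already lies in $\DP(k'+l'+1)^{\ast k',\bullet l'}$. What must be proved is the exact valley/fall count of $\pi_1\pi_2$, not that it can be decorated.

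\textbf{Aligning columns of $\tau$ with parts of $\Des(0w)$.} Padding $\delta(\Des(0w))$ with trailing zeros is not legitimate, for two reasons:
\begin{itemize}
\item $\R(\gamma)$ lives in a grid with exactly $\ell(\gamma)$ columns. If $\tau$ had more columns than $\Des(0w)$ has parts, the lemma would simply be false.
\item A merge of blocks in the middle would make the $j$-th partial sum \emph{exceed} the length of $0w$ at iteration $j$, so your inequality would point the wrong way.
\end{itemize}
What you need is that merges never occur. If $b=0$, then $\pi_0$ contains no two consecutive horizontal steps, so $\pi_0=(UR)^m$. Maximality of $a$ then forces $\pi_0=UR$, so that iteration is the last one. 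Hence every non-final block $0\,0^a1^b$ ends in $1$, and $\Des(0w)=(a+b+1,\Des(0w'))$. The partial sums are then exactly the successive lengths of $0w$, and $\tau\ge\delta(\Des(0w))$ reduces to $h\ge 0$.

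The actual content is that $\tau$ is a lattice path, i.e.\ $\tau_1\le\tau_2$, equivalently $h\le\tau'_1$. Your ``compensation'' sentence only asserts this. It holds because $\tau'_1=1+a'+h'+b'$ counts the vertical steps of $\pi'$ before its first pair of consecutive horizontal steps (or its full height if there is none). Since $\pi_1$ is a prefix of $\pi'$ of height $h$ containing no such pair, $h\le\tau'_1$.
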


\begin{proof}
    To prove this statement, it is convenient to change perspective and work with falls and valleys instead, as in \Cref{rmk:rises-falls-correspondence}. We also identify the valleys of $\pi$ with the horizontal steps that precede them, and compute the area by counting the number of unit squares below the valleys.

    In Step~\ref{step:valleys} of \Cref{def:psi}, we remove exactly $a$ valleys from $\pi$ and append $0^a$ to $w$. In Step~\ref{step:rises}, we remove exactly $b$ decorated falls and append $1^b$. Thus, in both cases the numbers agree.

    Finally, we need to show that, in Step~\ref{step:induction} of \Cref{def:psi},
    we either complete the process by going from $UR$ to the empty path, or we remove exactly one valley: we can equivalently think of its contribution to $w$ as either the starting $0$ in $0w$ (which does not correspond to a valley) or as the starting $0$ of $w'$, if it is nonempty.
    
    If the $R$ step between $\pi_1$ and $\pi_2$ is a valley in $\pi_0$, then we remove exactly one valley. If it is not a valley, then it is a fall and is therefore followed by a horizontal step. By construction, $\pi_1$ must then end with a horizontal step (otherwise $b$ is not maximal), and that step is a valley that becomes a fall in $U \pi_1 R \pi_2$ after removing $U^b R^b$, so we ultimately remove exactly one valley.

    If $\pi_1 \pi_2$ is empty, the claim follows. Otherwise, by induction, $\Psi(\pi_1 \pi_2) = (w',\tau')$ for some $w' \in W(0^{l-a},1^{k-b})$ and $\tau' \in \R(\Des(0w'))$. Since $w = 0^a 1^b 0 w'$, we have $w \in W(0^l,1^k)$. It remains to verify that $\tau_1 \leq \tau'_1 + a + b + 1$, which is true because $\tau_1 = a + b + h + 1$ and $h \leq \tau'_1$ by construction. Therefore, $\tau \in \R(\Des(0w))$, as desired.
\end{proof}

\begin{lemma}
    \label{lem:area}
    If $\pi \in \DP(k+l+1)^{\ast k, \bullet l}$ and $\Psi(\pi) = (w,\tau)$, then $\area(\pi) = \area(\tau)$.
\end{lemma}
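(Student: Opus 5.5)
We argue by induction on the number of iterations of the algorithm in \Cref{def:psi}, that is, on the size $k+l+1$ of $\pi$, mirroring the inductive structure used in the proof of \Cref{lem:well-defined}. As there, it is convenient to compute $\area(\pi)$ via falls and valleys (\Cref{rmk:rises-falls-correspondence}): decorated rises contribute nothing, so $\area(\pi)$ equals the sum, over the non-decorated vertical steps, of their area-word entries. Here every vertical step except the first is decorated, so $\area(\pi)$ is the total number of unit squares lying below the valleys and above the main diagonal. On the other side, by \Cref{rmk:heights}, $\area(\tau)=\sum_i (\tau_i - \delta_i)$ with $\delta=\delta(\Des(0w))$.

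For the inductive step, write $0w = 0\,0^a 1^b\,0w'$ and $\Psi(\pi')=(w',\tau')$ with $\pi'=\pi_1\pi_2$. First we determine how $\delta(\Des(0w))$ relates to $\delta(\Des(0w'))$. The prefix $0\,0^a1^b$ is weakly increasing, and the next letter is $0$. Hence, if $b>0$ or $w'$ is empty, the first part of $\Des(0w)$ is $a+b+1$ and the remaining parts are those of $\Des(0w')$. If $b=0$, there is no descent at the junction, and the first block merges with the first block of $0w'$; this subcase has to be handled separately.

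In the generic case, $\delta(\Des(0w)) = (a+b+1,\ a+b+1+\delta'_1,\dots)$. Since $\tau=(a+b+1+h,\ \ldots)$, the remaining entries of $\tau$ are those of $\tau'$ shifted by $a+b+1$, because of the term $\ell(0w)$ appearing in Step~\ref{step:induction}. This gives
\[ \area(\tau) = h + \area(\tau'). \]
It then remains to check that $\area(\pi) = \area(\pi') + h$. Removing the initial $(UR)^a$ costs no area, since those valleys lie on the diagonal. Removing $U^bR^b$ together with the first $U$ and the fall $R$ lowers the portion $\pi_1$ by one diagonal. This should make each valley inside $\pi_1$ lose exactly one square, while the valley removed in Step~\ref{step:induction} accounts for the rest. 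I expect the net change to come out as the height $h$ of $\pi_1$, using that $\pi_1$ has no two consecutive horizontal steps, so that its valleys are in bijection with its vertical steps.

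The main obstacle is this local area bookkeeping: matching the squares lost in $\pi$ to exactly $h$. It is complicated by the case distinction from the proof of \Cref{lem:well-defined}, namely whether the $R$ between $\pi_1$ and $\pi_2$ is itself a valley or a fall whose role is taken over by the last horizontal step of $\pi_1$. The subcase $b=0$, where blocks merge in $\Des(0w)$, is a second difficulty. I would first try to verify it by showing that the shifted indexing $\ell(0w)+h$ still yields the same increment $h$. Failing that, I would redo the computation directly from the height vectors. Small examples, such as the path in \Cref{fig:path-example}, would serve as a check.
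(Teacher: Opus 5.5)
Your strategy is the paper's: induct along the iterations of $\Psi$ and show that both areas drop by exactly $h$. Your $\tau$-side computation $\area(\tau)=h+\area(\tau')$ is fine. The gap is on the $\pi$ side: the identity $\area(\pi)=\area(\pi')+h$ is the actual content of the lemma, and you only announce it as an expectation. The heuristic you give for it is also false. A path with no two consecutive horizontal steps need not have as many valleys as vertical steps: for $\pi_0=U\,UUU\,URR\,URRRR$ one gets $b=1$, $\pi_1=UUU$, $h=3$, and no valley inside $\pi_1$.

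The missing count is as follows. Let $v$ be the number of valleys inside $\pi_1$, i.e.\ its horizontal steps other than a final one.
\begin{itemize}
\item In $\pi$ the subpath $\pi_1$ starts one unit above the diagonal, and in $\pi'$ it starts on it. So each of these $v$ valleys loses exactly one cell.
\item The deleted steps $U$, $U^b$, $R^{b+1}$ have zero net displacement, so $\pi_2$ keeps its distance from the diagonal.
\item Exactly one valley, at distance $h-v$, disappears. If $\pi_1$ ends with $R$, it has $v+1$ horizontal steps. The first step of $U^b$ is then a valley at distance $(h+1)-(v+1)$, and it is deleted, while the first step of $\pi_2$ is preceded by an $R$ in both $\pi$ and $\pi'$. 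If $\pi_1$ is empty or ends with $U$, then in a non-final iteration $\pi_2$ begins with $U$, by the maximality of $b$ and the Dyck condition. That $U$ is a valley at distance $h-v$ in $\pi$ and becomes a rise, or the first step, in $\pi'$.
\end{itemize}
So the drop is $v+(h-v)=h$. Your opening formula is also off: only decorated \emph{rises} are excluded from the area. Decorated valleys do contribute, which is exactly why the area is the sum over valleys.

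Second, the $b=0$ subcase cannot be ``handled separately'' by checking the increment. If the last block of one iteration merged with the first block of the next, $\tau$ would have more entries than $\delta(\Des(0w))$, and $\tau\in\R(\Des(0w))$ would fail outright. What you need is that this never happens, and it doesn't:
\begin{itemize}
\item In a non-final iteration, the maximality of $a$ forces $\pi_0$ to begin with $UU$. If $\pi_0=UR\pi_0''$ with $\pi_0''$ nonempty, $a$ was not maximal; if $\pi_0=UR$, then $\pi_1\pi_2$ is empty and the iteration is final.
\item A Dyck path beginning with $UU$ contains a double fall, and the first one is immediately preceded by a $U$.
\item Cutting there with $U^1R^2$ gives an admissible decomposition, so $b\geq 1$.
\end{itemize}
Hence every junction between iterations reads $1\,0$, which is a descent. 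So $\delta(\Des(0w))$ always consists of $a+b+1$ followed by the entries of $\delta(\Des(0w'))$ shifted by $a+b+1$. The paper's one-line count of the $h$ cells in the first column uses this implicitly.
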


\begin{proof}
    In Step~\ref{step:0}, the path does not change; in Step~\ref{step:valleys} of \Cref{def:psi}, the area does not change because we remove only steps on the main diagonal; in Step~\ref{step:rises}, we remove only decorated falls, and these do not contribute to the area.

    Let $v$ be the number of valleys in $\pi_1$. Each of these valleys now contributes one less to the area, since we removed the first vertical step. The valley that is removed (or converted into a rise) in Step~\ref{step:induction} is at height $h$, and it is preceded by $v$ horizontal steps, so it contributes $h-v$ to the area. Altogether, the area decreases by $h$.

    Since there are $h$ cells in the first column between $\tau$ and $\delta(\Des(0w))$, the claim follows by induction.
\end{proof}

\begin{lemma}
    \label{lem:comp}
    If $\pi \in \DP(k+l+1)^{\ast k, \bullet l}$ and $\Psi(\pi) = (w,\tau)$, then $\mu(\cc(\pi)) = \mu(\tau)$.
\end{lemma}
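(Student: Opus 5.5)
We will argue by induction on the number of iterations of the algorithm in \Cref{def:psi}, in the same way as \Cref{lem:well-defined} and \Cref{lem:area}. We use the heights encoding of \Cref{rmk:heights}: a path $\rho$ is given by the heights of its horizontal steps, and $\mu(\rho)$ is obtained from the successive height differences after deleting the zeros. In particular, the rise composition of a path is the multiset of lengths of its maximal vertical runs, read in order.

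The plan is to compare, at each iteration, the first nonzero entry of $\mu(\tau)$ with the first vertical run of $\cc(\pi)$, and to show that what remains matches the recursive call.

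First consider $\tau$. By construction, $\tau=(\tau_1,\tau')$ after a shift, with $\tau_1=a+b+h+1$ and $\tau'$ the path produced by $\Psi(\pi_1\pi_2)$, shifted up by $a+b+1$. Hence $\mu(\tau)=(\tau_1-\text{(shift)}+\dots)$; more precisely, the first run of $\tau$ has length $\tau_1$ plus the first run of $\tau'$ beyond height $h$. The clean way to say this is that $\mu(\tau)$ is obtained from $\mu(\tau')$ by adding $a+b+1$ to the run containing height $h$ (the first run if $h$ lies within it), using $h\le\tau'_1$.

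Next consider $\cc(\pi)$. Contraction deletes the horizontal step before each decorated valley. Since every non-initial vertical step is decorated, all valleys are contracted.

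1. The prefix $(UR)^a$ therefore contracts to $U^a$.
2. The steps $U^bR^{b+1}$ consist of decorated rises, equivalently falls, which are untouched by contraction.
3. The initial $U$ of $\pi_0$ is the first vertical step.

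The key claim is the following. In $\cc(\pi)$, the first vertical run consists of the $a$ contracted valley steps, the initial $U$, the $b$ rises, and the first vertical run of $\cc(\pi_1\pi_2)$ up to height $h$, since $\pi_1$ contains no two consecutive horizontal steps. Because every valley inside $\pi_1$ gets contracted, the steps of $\pi_1$ merge into one vertical run of height $h$ in $\cc$. So $\cc(\pi)$ is obtained from $\cc(\pi_1\pi_2)$ by inserting $a+b+1$ extra vertical steps at the point of height $h$. This matches the description of $\tau$ in terms of $\tau'$. The conclusion $\mu(\cc(\pi))=\mu(\tau)$ then follows from the inductive hypothesis $\mu(\cc(\pi_1\pi_2))=\mu(\tau')$.

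The main obstacle is the key claim in the last step, namely checking that the insertion points agree. We need to show that the position in $\cc(\pi_1\pi_2)$ at height $h$ sits inside a single vertical run. We must also treat the case singled out in \Cref{lem:well-defined}, where the $R$ between $\pi_1$ and $\pi_2$ is a fall rather than a valley: there, the last horizontal step of $\pi_1$ turns from a valley into a fall after $U^bR^b$ is removed, and so it stops being contracted.

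I would handle this by tracking, as in the proof of \Cref{lem:area}, which horizontal steps are deleted by $\cc$ before and after the iteration. I would then verify that in both cases the total number of surviving horizontal steps below height $h$ is zero in $\cc(\pi)$ and matches the corresponding count in $\cc(\pi_1\pi_2)$.
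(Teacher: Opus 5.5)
Your framework is the right one: induction on the iterations of \Cref{def:psi}, relating $\mu(\tau)$ to $\mu(\tau')$ and $\mu(\cc(\pi))$ to $\mu(\cc(\pi'))$ with $\pi'=\pi_1\pi_2$. However, the key relation you propose is false on both sides.

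You claim that $\cc(\pi)$ arises from $\cc(\pi')$ by inserting $a+b+1$ vertical steps at height $h$. You also claim that $\mu(\tau)$ arises from $\mu(\tau')$ by adding $a+b+1$ to the part containing height $h$. Neither operation changes the number of parts, yet $\mu(\tau)$ has one more part than $\mu(\tau')$ whenever $\pi_2$ starts with $U$. In \Cref{ex:psi} one has $a+b+1=6$, $h=2$ and $\mu(\tau')=\mu(\cc(\pi'))=(4)$. Your rule therefore predicts $(10)$, while in fact $\mu(\tau)=\mu(\cc(\pi))=(8,2)$. The rule even contradicts your own correct observation that the first run of $\cc(\pi)$ has length $a+b+h+1$. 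Likewise, the first run of $\tau$ is exactly $\tau_1$, not $\tau_1$ plus a piece of $\tau'$.

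What is actually inserted at height $h$ is the following:
\begin{itemize}
\item into $\tau'$, the block $U^{a+b+1}R$, because $\tau=U^{a+b+h+1}R\,U^{\tau'_1-h}R\cdots$;
\item into $\cc(\pi')$, the block $U^{a+b+1}R^{b}$, because the last step of $R^{b+1}$ is either contracted (if $\pi_2$ starts with $U$) or merges with the leading horizontal run of $\pi_2$.
\end{itemize}
These horizontal steps \emph{split} the first run rather than lengthen it. So the obstacle you single out, height $h$ lying inside a single run, is the wrong target. Your planned final check, that no horizontal step of $\cc(\pi)$ survives below height $h$, cannot detect the problem, since the relevant horizontal steps sit exactly at height $h$. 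Because both of your descriptions are wrong in the same way, the ``match'' you obtain is between two incorrect formulas and does not prove the lemma.

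The correct relation is this: if $\mu(\cc(\pi'))=\mu(\tau')=(c_1,c_2,\dots)$, then both $\mu(\cc(\pi))$ and $\mu(\tau)$ equal $(a+b+h+1,\,c_1-h,\,c_2,\dots)$, with $c_1-h$ omitted when it vanishes. Establishing it needs three ingredients that are missing from your proposal.
\begin{itemize}
\item $c_1\geq h$. Every horizontal step of $\pi_1$ except possibly its last is followed by a vertical step, so the first $h$ vertical steps of $\cc(\pi')$ form one run.
\item $b\geq 1$ in every non-final iteration. By maximality of $a$, $\pi_0\neq UR$ starts with $UU$; not being of the form $(UR)^n$, it contains a double fall. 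The vertical step immediately preceding its first double fall is not the first step of $\pi_0$. Hence $R^{b}$ genuinely separates the first run of $\cc(\pi)$ from the rest.
\item The paper's case distinction on how $\pi_2$ starts. If $\pi_2$ starts with $R$, then $\pi_1$ ends with $R$ by maximality of $b$, so $c_1=h$ and $\tau_1=\tau_2$. If $\pi_2$ starts with $U$, then the contractions of $\pi_1$ and $\pi_2$ join in $\cc(\pi')$, so $c_1>h$ and $\tau_1<\tau_2$.
\end{itemize}
With this relation in hand, the induction closes at once.
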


\begin{proof}
    By construction, $\mu(\cc(\pi))_1 = \mu(\tau)_1 = a+b+h+1$, and inductively $\mu(\cc(\pi')) = \mu(\tau')$.
    
    We have $\pi = (UR)^a U \pi_1 U^b R^{b+1} \pi_2$. By construction, $\pi_1$ has no two consecutive $R$ steps, so $\cc(\pi) = U^a U U^h U^b R^b \cc(R \pi_2)$; in particular, note that
    \[ \mu(\cc(\pi)) = (a+b+h+1, \mu(\cc(\pi_2))). \]    
    We distinguish two cases.

    \textbf{Case 1: $\pi_2$ starts with an $R$ step.}    
    If $\pi_2$ starts with an $R$ step, then $\pi_1$ must end with an $R$ step (by maximality of $b$), so $\pi_1$ and $\pi_2$ are separated by at least two $R$ steps, and $\mu(\cc(\pi')) = (h, \mu(\cc(\pi_2)))$.
    By construction, $\tau'_1 = \mu(\cc(\pi'))_1 = h$, and since $\tau'_1 = \tau_2 - (a+b+1)$, we have $\tau_1 = \tau_2$ and so
    \[ \mu(\tau) = (a+b+h+1, \mu(\tau')_2, \dots). \]

    \textbf{Case 2: $\pi_2$ starts with a $U$ step.}
    If $\pi_2$ starts with a $U$ step instead, then $\pi_1$ and $\pi_2$ are separated by at most one $R$ step in $\pi'$, so their contractions join and we have
    \[ \mu(\cc(\pi')) = (h+\mu(\cc(\pi_2))_1, \mu(\cc(\pi_2))_2, \dots). \]
    By construction, $\tau'_1 = \mu(\cc(\pi'))_1 = h + \mu(\cc(\pi_2))_1$, so $\mu(\cc(\pi_2))_1=\tau'_1-h$. This also implies that $\tau_1 < \tau_2$, and hence
    \[ \mu(\tau) = (a+b+h+1, \mu(\tau')_1-h, \mu(\tau')_2, \dots). \]

    In either case, by induction we have $\mu(\cc(\pi')) = \mu(\tau')$, and so $\mu(\cc(\pi)) = \mu(\tau)$, as desired.
\end{proof}

\begin{figure}[ht]
	\centering
	\begin{tikzpicture}[scale=0.6]
		\draw[gray!60, thin] (0,0) grid (10,10) (0,0) -- (10,10);
        \draw[valleys, ultra thick, -sharp >, sharp angle = 45] (0,0) -- (0,1) -- (1,1) -- (1,2) -- (2,2);
		\draw[ud, ultra thick, sharp <-, sharp angle = 45] (2,2) -- (2,3);        
        \draw[rho, ultra thick] (2,3) -- (2,4) -- (3,4) -- (3,5);
        \draw[rises, ultra thick] (3,5) -- (3,8) -- (6,8);
        \draw[ud, ultra thick, -sharp >, sharp angle = 45] (6,8) -- (7,8);
        \draw[rhoprime, ultra thick, sharp <-, sharp angle = 45] (7,8) -- (7,9) -- (8,9) -- (8,10) -- (10,10);
    \end{tikzpicture}
	\caption{The decomposition of $\pi = \textcolor{valleys}{{(UR)}^2} U \textcolor{rho}{{URU}} \textcolor{rises}{U^3 R^3} R \textcolor{rhoprime}{U R U R R}$ into the subpaths prescribed by \Cref{def:psi}.}
    \label{fig:pp0}
\end{figure}

\begin{example}
    \label{ex:psi}
    Let us compute $\Psi$ for the path $\pi=(1,2,4,7,7,7,8,10,10,10)$, shown in \Cref{fig:pp0}. Notice that $\pi$ has $4$ rises, $5$ valleys, $\area(\pi) = 3$, and $\mu(\cc(\pi)) = (8,2)$. We will compute $\Psi(\pi) = (w,\tau)$, where $w \in W(0^5, 1^4)$ and $\tau \in \R(\Des(0w))$.

    We start with the empty word $\varepsilon$ and the empty path $\tau=()$. After Step~\ref{step:0}, we have $0w = 0$.

    We then write $\pi=(UR)^2\pi_0$ where $\pi_0$ is the path $(2,6,6,6,6,7,8,8)$, so $a=2$. After Step~\ref{step:valleys}, we obtain $0w = 000$.

    Then, we write $\pi_0 = U \pi_1 U^3 R^4 \pi_2$ where $\pi_1 = URU$ and $\pi_2 = URURR$ are the subpaths in orange and violet, respectively, in \Cref{fig:pp0}. We have $b=3$, so we update $0w$ to $0w = 000111$.

    Next, in Step~\ref{step:induction}, we compute the height of $\pi_1$, which is $h=2$, and we append $6+2=8$ to $\tau$. We restart the iteration with the path $\pi' = \pi_1\pi_2 = URUURURR$.

    Its decomposition is $\pi' = (UR)^1 U (UR) (U^1 R^1) R$, so we have $a=1$, $b=1$, and $h = 1$, and we update $0w$ to $0w = 000111001$ and $\tau$ to $\tau = (8, 10)$. We have $\pi'_1 = UR$ and $\pi'_2 = \varnothing$.

    Iterating one more time, we get $\pi'' = \pi'_1\pi'_2 = UR$, so $a=0$, $b=0$, and $h=0$. We update $0w$ to $0w = 0001110010$ and $\tau$ to $\tau = (8, 10, 10)$. At this point, $\pi''_1\pi''_2$ is empty, so we stop and obtain $\Psi(\pi) = (w,\tau)$ with $w = 001110010$ and $\tau=(8,10,10)$.

    Indeed, $\Psi(\pi) = (w,\tau)$ satisfies the required properties: $w \in W(0^5,1^4)$, $\Des(0w) = (6, 9, 10)$ and $\tau \in \R(\Des(0w))$. The area of $\tau$ is $(8-6) + (10-9) = 2 + 1 = 3$, which equals $\area(\pi)$, and the rise composition of $\tau$ is $(8,2)$, which equals the rise composition of $\cc(\pi)$.
\end{example}

\begin{figure}
    \centering
    \begin{tikzpicture}[scale=0.8]
        \draw[draw=none, use as bounding box] (0,-3) rectangle (4,7);
		\draw[gray!60, thin] (0,0) grid (4,4) (0,0) -- (4,4);
        \draw[rho, ultra thick] (0,0) -- (0,1) -- (1,1) -- (1,2);
        \draw[rhoprime, ultra thick] (1,2) -- (1,3) -- (2,3) -- (2,4) -- (4,4);
    \end{tikzpicture}\hspace{2 cm}
    \begin{tikzpicture}[scale=0.8]
        \draw[step=1.0, gray!60, thin] (0,0) grid (3,10);
        \draw[blue!60, line width = 1.6 pt] (0,0) -- (0,6) -- (1,6) -- (1,9) -- (2,9) -- (2,10) -- (3,10);
        \begin{scope}[shift={(-0.05,0.05)}]
            \draw[red!60, line width = 1.6 pt] (0,-0.05) -- (0,8) -- (1,8) -- (1,10) -- (3.05,10);
        \end{scope}
    \end{tikzpicture}
    \caption{The path $\pi'$ obtained from $\pi$ after one iteration of the algorithm in \Cref{def:psi} (left), and the path $\tau$ in red above $\delta(\Des(0w))$ in blue (right).}
\end{figure}

We can finally prove the main statement of this section.

\begin{theorem}
    The map $\Psi$ is a bijection between $\DP(k+l+1)^{\ast k, \bullet l}$ and the set
    \[ \{ (w, \tau) \mid w \in W \left(0^l,1^k\right), \tau \in \R(\Des(0w)) \} \]
    that preserves both the area and the rise composition.
\end{theorem}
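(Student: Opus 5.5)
Well-definedness and preservation of area and rise composition are already \Cref{lem:well-defined}, \Cref{lem:area}, and \Cref{lem:comp}. So the only thing left is bijectivity. I plan to prove it by constructing an explicit inverse $\Phi$, defined recursively in parallel with \Cref{def:psi}, and checking $\Phi\circ\Psi=\mathsf{id}$ and $\Psi\circ\Phi=\mathsf{id}$ by induction on $k+l$.

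Here is the decoding step. Let $(w,\tau)$ be given with $\tau\in\R(\Des(0w))$. Write $0w = 0\,0^a 1^b w'$, where $a$ and $b$ are maximal, so that $w'$ is empty or begins with $0$. This recovers the parameters $a$ and $b$ of Steps~\ref{step:valleys} and~\ref{step:rises}. The first block of $\Des(0w)$ has size $a+b+1$, so $h=\tau_1-(a+b+1)$, which is nonnegative since $\tau\in\R(\Des(0w))$. Removing the first column of $\tau$ and shifting down by $a+b+1$ gives a path $\tau'$. We need $\tau'\in\R(\Des(0w'))$, where the leading $0$ of $w'$ plays the role of the initial $0$ of the recursion. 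This holds because the blocks of $\Des(0w)$ after the first are exactly the blocks of $\Des(w')$, up to that shift. By induction, $\pi'=\Phi(w',\tau')$ is defined.

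Next we reconstruct $\pi$ from $\pi'$ and $h$. Split $\pi'=\pi_1\pi_2$, where $\pi_1$ is the shortest prefix of $\pi'$ of height $h$ containing no two consecutive $R$ steps. More precisely, we cut $\pi'$ right after its $h$-th $U$ step, and additionally after a following single $R$ if that is forced. Then set
\[
\pi=(UR)^a\,U\,\pi_1\,U^bR^{b+1}\,\pi_2 .
\]

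The main obstacle is that this splitting is genuinely ambiguous. It must satisfy two constraints. First, it has to reproduce the maximality of $b$, which needs $\pi_1$ to end with $R$ exactly when $\pi_2$ starts with $R$. Second, it has to reproduce the condition that $\pi_1$ contains no two consecutive horizontal steps. I would resolve this using the case analysis in the proof of \Cref{lem:comp}: whether $\pi_2$ starts with $R$ or with $U$ is detected by whether $\tau_1=\tau_2$ or $\tau_1<\tau_2$. This comparison, together with $h$, pins down the cut uniquely. I would also check that the resulting $\pi$ is a Dyck path, that $a$ is maximal, and that the decoration counts are right, using the same valley/fall bookkeeping as in \Cref{lem:well-defined}.

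With uniqueness of the decomposition in hand, both compositions are the identity: each recursive step of $\Psi$ and of $\Phi$ undoes the other, and the induction closes. As a fallback, once injectivity of $\Psi$ is established (via the uniqueness argument above), it would suffice to show that the two finite sets have equal cardinality. That equality follows from the preserved statistics together with \Cref{cor:theta_theta_e1}, \Cref{thm:D.1_comb}, and the known case $k+l=n-1$ of the rise/valley Delta conjectures at $t=1$. However, the direct inverse is cleaner and independent of those results, so it is my preferred route.
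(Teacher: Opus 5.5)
Your global plan is the paper's: read off $a,b$ from $w$ and $h=\tau_1-(a+b+1)$ from $\tau$, recurse on $(w',\tau')$, then reinsert $U$ and $U^bR^{b+1}$. However, the one step with real content, the splitting $\pi'=\pi_1\pi_2$, is wrong as you state it.

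\textbf{The biconditional is false.} Maximality of $b$ does not require $\pi_1$ to end with $R$ \emph{exactly when} $\pi_2$ starts with $R$. It only forbids $\pi_1$ ending with $U$ while $\pi_2$ starts with $R$, and the mixed configuration ($\pi_1$ ending with $R$, $\pi_2$ starting with $U$) does occur. Take $\pi=U\,UR\,URR\,UR\in\DP(4)^{\ast1,\bullet2}$. Then $\Psi$ gives $a=0$, $b=1$, $\pi_1=UR$, $\pi_2=UR$, $h=1$, and $\Psi(\pi)=(100,(3,4))$. Decoding, you get $\pi'=URUR$ and $h=1$. Neither height-$1$ cut of $\pi'$ satisfies your biconditional. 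Your ``shortest prefix'' rule gives $\pi_1=U$, $\pi_2=RUR$, hence $UUURRRUR$. That path has two rises, and $\Psi$ returns $b=2$ on it. So the proposed resolution via $\tau_1$ versus $\tau_2$ either contradicts your own constraint or produces the wrong path.

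\textbf{The correct cut and the missing justification.} There is in fact no ambiguity, and $\tau_2$ is not an input. Take $\pi_1=\varnothing$ if $h=0$. Otherwise cut right after the $h$-th $U$ of $\pi'$ if the next step is $U$, and right after the next step if it is $R$. This is the paper's rule; the dichotomy $\tau_1=\tau_2$ versus $\tau_1<\tau_2$ from \Cref{lem:comp} is then a consequence.

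What your proposal does not supply is why this $\pi_1$ contains no two consecutive $R$ steps. This is where $\tau_1\le\tau_2$, i.e.\ $h\le\tau'_1$, must be combined with the inductive identity $\mu(\cc(\pi'))=\mu(\tau')$. Since every valley is decorated, $\cc$ deletes exactly the $R$ steps followed by a $U$. Hence $\mu(\cc(\pi'))_1=\tau'_1$ says the first $RR$ of $\pi'$ starts after its $\tau'_1$-th $U$. So neither the prefix ending at the $h$-th $U$ nor the one ending one step later contains $RR$.

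\textbf{Further points you should record.} First, $b\ge1$ whenever $w'$ is nonempty, since the first descent of $0w$ is a $1\to0$; this is needed for maximality of $a$ in $\Psi(\Phi(w,\tau))$. Second, the $\pi_1$ produced by $\Psi$ always follows the rule above, which is needed for $\Phi\circ\Psi=\mathsf{id}$. Finally, your fallback is circular: the case $k+l=n-1$ with both decoration types is not part of either Delta conjecture. It is \Cref{thm:theta_theta_e1}, which this bijection is meant to prove.
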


\begin{proof}
    We prove that $\Psi$ is bijective by exhibiting an inverse, using induction on the number of descents of $w$. Let $w \in W(0^l, 1^k)$ and $\tau \in \R(\Des(0w))$.
    
    If $w$ has no descents, then $w = 0^l 1^k$ for some $k,l \in \mathbb{N}$, and $\tau = (k+l+1)$. In this case, we can recover $\pi$ as $\pi = (UR)^l U^{k+1} R^{k+1}$, which has $l$ valleys, $k$ rises, area $0$, and rise composition $(k+l+1)$, as desired.

    Otherwise, let $w = 0^a 1^b 0 w'$ for some $a,b \in \mathbb{N}$ and some word $w' \in W \left(0^{l-a},1^{k-b}\right)$, and let $\tau' \in \R(\Des(0w'))$ be such that $\tau - \delta(\Des(0w)) = (h, \tau' - \delta(\Des(0w')))$ for some $h \in \mathbb{N}$. 

    These vectors encode the area of the paths, namely the number of area cells in each column, so $\area(\tau) = \area(\tau') + h$.

    By induction, we can find a unique path $\pi' \in \DP(k-b+l-a+1)^{\ast k-b, \bullet l-a}$ such that $\Psi(\pi') = (w',\tau')$, with $\area(\pi') = \area(\tau')$ and $\mu(\cc(\pi')) = \mu(\tau')$.

    By construction, since $\mu(\cc(\pi')) = \mu(\tau')$, $\pi'$ has no two consecutive horizontal steps in the first $\mu(\tau')_1 \geq h$ rows. Consider the vertical step in the $h\th$ row of $\pi'$, and call it $U_h$. If it is followed by a vertical step, let $\pi_1$ be the path ending with $U_h$, and $\pi_2$ be the path starting after $U_h$; if it is followed by a horizontal step, call it $R_h$, and let $\pi_1$ be the path ending with $R_h$, and $\pi_2$ be the path starting after $R_h$. Split $\pi' = \pi_1 \pi_2$. By construction, $\pi_1$ has height $h$ and no two consecutive horizontal steps.

    Let $\pi = (UR)^a U \pi_1 U^b R^{b+1} \pi_2$. By construction, $\Psi(\pi) = (w, \tau)$: the only nontrivial check is that $b$ is maximal, but either $\pi_1$ ends with an $R$ step or $\pi_2$ starts with a $U$ step, so $b$ is indeed maximal.

    The result now follows by \Cref{lem:well-defined,lem:area,lem:comp}.
\end{proof}

\subsection{Case with touch number \texorpdfstring{$n-k-l$}{n-k-l}}

In this subsection, we give a combinatorial interpretation of a symmetric-function identity that allows us to prove the general case from the case $k+l=n-1$.

First, we need to prove that specialization at $q=1$ commutes with the Theta operators. This is not immediate, since the Theta operators include a plethystic evaluation $f[X/M]$, which is not defined at $q=1$.

\begin{lemma}
    \label{lem:theta_commutes_with_q1_eval}
    If $F \in \Lambda$ is regular at $q=1$ (i.e.\ its coefficients when expanded in the Schur basis have no poles at $q=1$), then $\Theta_{e_k} F$ is also regular at $q=1$, and we have
    \[ \left. \left( \Theta_{e_k} F \right) \right\rvert_{q=1} = \left. \left( \Theta_{e_k} \left( \left. F \right\rvert_{q=1} \right) \right) \right\rvert_{q=1}. \]
\end{lemma}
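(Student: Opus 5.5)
The plan is to avoid the definition $\mathbf{\Pi} e_k[X/M]\mathbf{\Pi}^{-1}$, which has poles at $q=1$, and instead express $\Theta_{e_k}$ through operators that are manifestly regular at $q=1$. By \Cref{thm:theta_e1_D1}, for $F$ of positive degree we have $\Theta(u) F = \Theta'(F)$, where $\Theta'$ is the endomorphism of $V$ induced by the algebra endomorphism $\Theta$ of \Cref{thm:Theta_def} (via \Cref{lem:preserve_kernel}). The operator $\Theta'$ is determined by its commutation with the generators: $\Theta' e_1^\bullet = (e_1^\bullet + \sum_{n\geq 2} u^{n-1}\D_n)\Theta'$ and $\Theta' \D_1 = (\sum_{n \geq 1} u^{n-1} \D_n)\Theta'$. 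So I would write an arbitrary $F$ of positive degree as a polynomial in $e_1^\bullet$ and $\D_1$ applied to $1$ (possible by \cite[Theorem~2.1]{GarsiaHaimanTesler1999ExplicitPlethysticFormulas}), move $\Theta'$ to the right, and use $\Theta' 1 = 1$. This exhibits $\Theta(u)F$ as a combination of words in $e_1^\bullet$, $\D_n$ acting on $1$.

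The difficulty is that expressing $F$ as such a polynomial may introduce coefficients with poles at $q=1$, for instance because $\D_1$ degenerates to $e_1^\bullet$ at $q=1$ by \Cref{lem:Dgamma_q1}. So rather than a global generation argument, I would work directly in $\A_{q,t}$, where everything is manifestly polynomial. Write $F = L\cdot 1$ with $L = d_- \cdots$ a word built from $d_-$, $d_+$, $T_i$ (the action of $\A_q$ on $V_0$ reaches all of $\Lambda$, for example by writing $F$ in terms of $e_1^\bullet = d_-d_+$ products and suitable $d_-$ combinations). More simply, I would use the $\A_q$-action formulas: $d_+$, $d_-$ and $T_i$ act by operators whose coefficients lie in $\Z[q,q^{-1}]$ after clearing the $(y_{i+1}-y_i)$ denominators, which are divided exactly. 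Then $\Theta(L)$ is a word in $d_-$, $T_i$, $(1-s)d_+$, with $s = u(1+uy_1)^{-1}y_1\z_1$.

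The key check is that $\z_1$ acts regularly at $q=1$ on $V$. By \Cref{lemma:z1y1a}, $\z_1 y_1^a F$ is given by $\frac{1}{t(1-q)}(1-\Exp[(q-1)tw_1/w_2])$ applied to a regular expression, and $(1-\Exp[(q-1)t w_1/w_2])/(1-q)$ is regular: by the identity used in \Cref{cor:Dgamma1plethystic}, it equals $t\frac{w_1}{w_2}(1-qtw_1/w_2)^{-1}$. Hence every operator appearing is regular at $q=1$. It follows that $\Theta(u) F$ is regular whenever $F$ is, and that its value at $q=1$ depends only on $F\rvert_{q=1}$, because the specialized operators are well-defined operators applied to $F\rvert_{q=1}$. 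Taking the coefficient of $u^k$ gives the claim for $\deg F>0$. The degree $0$ case is immediate from \Cref{def:theta}.

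The main obstacle is organizing the argument so that $\Theta_{e_k}$ is represented as a single operator on $V$ that is regular at $q=1$ and independent of the chosen presentation of $F$. I expect to get this from $\Theta'$ being a genuine endomorphism of $V$, built from $\mathcal{T}$, $\mathcal{P}$, constant-term extractions and the regular kernel above, instead of from $F$'s presentation. Concretely, I would make this explicit via the formula $\Theta(u)F = \Theta' F$ restricted to $V_0$, and verify regularity operator by operator.
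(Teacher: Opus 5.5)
Your route is different from the paper's. The paper expands $F$ in the Macdonald basis, uses Schur positivity and Cramer's rule, and then proves regularity of the coefficients $\frac{\Pi_\mu}{\Pi_\nu}d^{(k)}_{\mu,\nu}$ via the Pieri rules and a lemma from the tiered-trees paper. The analytic half of your argument is sound. Via \Cref{lemma:z1y1a}, $\z_1$ acts on $V_1$ through the kernel $\frac{1}{t(1-q)}(1-\Exp[(q-1)tx]) = \frac{x}{1-qtx}$, so $d_+\colon V_0\to V_1$, $d_-$, $y_1$, $(1+uy_1)^{-1}$ and $\z_1$ on $V_1$ all preserve regularity at $q=1$. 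Also $\Theta(u)F = \Theta(L)\cdot 1$ whenever $F = L\cdot 1$, by \Cref{thm:theta_e1_D1} and $\Theta(u)^{-1}\cdot 1 = 1$.

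The gap is the presentation $F = L\cdot 1$. Your argument needs $L$ to have coefficients regular at $q=1$, and words in $d_-$, $d_+$, $T_i$ cannot provide this. For example, $d_-^2 d_+^2\cdot 1 = e_1^2+(q-1)e_2$ while $(d_-d_+)^2\cdot 1 = e_1^2$, so $e_2$ only arises with a factor $1/(q-1)$. In general, at $q=1$ one has $T_i^{\pm 1} = s_i$, $d_+F = -y_1F[X;y_2,\dots,y_{k+1}]$ and $d_-F = \langle y_k^0\rangle F\,\Exp[-X/y_k]$, while $d_+^*$ merely relabels variables. So every $y_i$ occurs with exponent at most one, and any regular combination of such words sends $1$ to a polynomial in $e_1$ at $q=1$. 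Hence $e_2$ has no regular presentation at all.

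Your claim that in $\A_{q,t}$ ``everything is manifestly polynomial'' is exactly the step that fails. The pole you set out to avoid is hidden in $y_1 = [d_+,d_-]/(q-1)$ on $V_1$, and once $L$ carries a $1/(q-1)$, regularity of the individual factors of $\Theta(L)$ proves nothing. Your fallback, an explicit formula for $\Theta'$ on $V_0$ built from $\mathcal{T}$, $\mathcal{P}$ and constant terms, is not available either: $\Theta'$ is only defined through $\Theta'(L\cdot 1) = \Theta(L)\cdot 1$.

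The repair is to use $y_1$ itself as a letter, since it acts by multiplication:
\begin{itemize}
\item On $V_0$ the two plethystic shifts cancel and $d_-(-y_1)^a d_+ = e_{a+1}^\bullet$. So $e_\lambda = L_\lambda\cdot 1$ with $L_\lambda = \prod_i d_-(-y_1)^{\lambda_i-1}d_+$, and every $F$ regular at $q=1$ is a combination of the $e_\lambda$ with regular coefficients.
\item Since $d_-$ commutes with $y_1$ and $\z_1$, we get $\Theta([d_+,d_-]) = (1-s)[d_+,d_-]$, i.e.\ $\Theta(y_1) = (1-s)y_1$, as used in the proof of \Cref{thm:shuffle_theta_D}. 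Thus $\Theta(L_\lambda)$ only involves $V_0$ and $V_1$, which is exactly where your $\z_1$ computation applies. Words with two consecutive $d_+$ would need $\z_1$ on $V_k$ for $k\geq 2$, which you did not check.
\item This gives regularity of $\Theta_{e_k}F$. The second assertion then follows by linearity, with no need for ``specialized operators'': $F - F\rvert_{q=1} = (q-1)G$ with $G$ regular, so $\Theta_{e_k}(F - F\rvert_{q=1}) = (q-1)\Theta_{e_k}G$ vanishes at $q=1$.
\end{itemize}
Once repaired, your proof stays inside the paper's $\A_{q,t}$ framework, resting on \Cref{thm:theta_e1_D1}. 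It avoids the Macdonald positivity, Pieri formulas and external regularity lemma that the paper's proof imports.
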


\begin{proof}
    Recall that the modified Macdonald polynomials are Schur-positive \cite{Haiman2001nFactorial}, so the transition matrix from the Macdonald basis to the Schur basis has entries in $\mathbb{N}[q,t]$ (and thus no poles at $q=1$).
    By Cramer's rule, the inverse transition matrix (from the Schur basis to the Macdonald basis) is also regular at $q=1$. Therefore, if $F$ is regular at $q=1$, its expansion $F = \sum_\nu c_\nu(q,t) \Ht_\nu$ in the Macdonald basis has coefficients $c_\nu(q,t)$ that are regular at $q=1$.

    It suffices to prove regularity for $F = \Ht_\nu$, since the $\Ht_\nu$ form a basis of $\Lambda$. By \Cref{def:theta}, we have
    \begin{align*}
        \Theta_{e_k} \Ht_\nu & = \mathbf{\Pi} e_k \left[\frac{X}{M}\right] \mathbf{\Pi}^{-1} \Ht_\nu \\
        & = \mathbf{\Pi} e_k \left[\frac{X}{M}\right] \Pi_{\nu}^{-1} \Ht_\nu \\
        & = \mathbf{\Pi} \sum_{\mu} d^{(k)}_{\mu,\nu}(q,t) \Pi_{\nu}^{-1}  \Ht_\mu \\
        & = \sum_{\mu} \frac{\Pi_{\mu}}{\Pi_{\nu}} d^{(k)}_{\mu,\nu}(q,t) \Ht_\mu,
    \end{align*}
    where the sum is over partitions $\mu \supset \nu$ with $|\mu| - |\nu| = k$, and $d^{(k)}_{\mu,\nu}(q,t)$ is the Macdonald Pieri coefficient for $e_k^\ast \Ht_\nu = \sum_\mu d^{(k)}_{\mu,\nu} \Ht_\mu$.
    
    We refer to \cite{GarsiaHaglundXinZabrocki2016PieriRules} for notation. Explicitly, by \cite[Equation~(3.23)]{GarsiaHaglundXinZabrocki2016PieriRules} we have
    \[ d^{(k)}_{\mu,\nu}(q,t) = \frac{w_\nu}{w_\mu} c^{(k)}_{\mu,\nu}(q,t), \]
    where $w_\nu$ is the Macdonald weight of $\nu$, and by \cite[Theorem~3.2]{GarsiaHaglundXinZabrocki2016PieriRules},
    \[ c^{(k)}_{\mu,\nu}(q,t) = \frac{1}{B_{\mu/\nu}} \sum_{\nu \subset_1 \alpha \subset_{k-1} \mu} c^{(k-1)}_{\mu, \alpha}(q,t) c^{(1)}_{\alpha,\nu}(q,t) \frac{T_\alpha}{T_\nu}, \]
    where $B_{\mu/\nu} = B_\mu - B_\nu$.
    
    Combining these expressions, up to the global factor $B_{\mu/\nu}^{-1}$ (which is regular at $q=1$ since $B_{\mu/\nu}(1,t) \neq 0$), the coefficient $\frac{\Pi_\mu}{\Pi_\nu} d^{(k)}_{\mu,\nu}(q,t)$ of $\Ht_\mu$ in $\Theta_{e_k} \Ht_\nu$ decomposes into a sum of products over chains $\nu \subset_1 \dots \subset_1 \mu$ of terms of the form
    \[ \frac{\Pi_\beta T_\alpha w_\alpha}{\Pi_\alpha T_\beta w_\beta} c^{(1)}_{\beta, \alpha}(q,t) \qquad (\text{for } \alpha \subset_1 \beta). \]
    By the proof of \cite[Lemma~5.3]{DAdderioIraciLeBorgneRomeroVandenWyngaerd2022TieredTrees}, each of these terms is regular at $t=1$; applying partition conjugation $\mu \mapsto \mu'$ (which interchanges $q$ and $t$ in Macdonald polynomials) yields regularity at $q=1$.

    Finally, expanding any regular $F = \sum_\nu c_\nu(q,t) \Ht_\nu$ yields 
    \[ \left( \Theta_{e_k} F \right)_{q=1} = \sum_\nu c_\nu(1,t) \left( \Theta_{e_k} \Ht_\nu \right)_{q=1} = \left( \Theta_{e_k} \left( \left. F \right\rvert_{q=1} \right) \right)_{q=1}, \]
    as the Pieri coefficients for the modified Hall--Littlewood polynomials $\Ht_\mu[X;1,t]$ are the $q=1$ specializations of the corresponding Macdonald Pieri coefficients. This completes the proof.
\end{proof}

Next, we show that, when $q=1$, the operators involved are multiplicative. 

\begin{theorem}
    \label{thm:theta_q1_multiplicative}
    For $n,k,l \in \mathbb{N}$ with $k+l < n$, we have \[ \left. \Theta_{e_k} \Theta_{e_l} \nabla E_{n-k-l, n-k-l} \right\rvert_{q=1} = \left. \sum_{\sum k_i = k} \sum_{\sum l_i = l} \prod_{i=1}^{n-k-l} \Theta_{e_{k_i}} \Theta_{e_{l_i}} e_1 \right\rvert_{q=1}, \]
    where the $k_i$ and $l_i$ are nonnegative integers for $1 \leq i \leq n-k-l$.  
\end{theorem}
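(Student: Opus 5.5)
Set $m = n-k-l$. The plan is to show that at $q=1$ the Theta operators act multiplicatively in a suitable sense, and then to use \Cref{rem:Enk}, which gives $\nabla E_{m,m}\rvert_{q=1} = e_1^m$.

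\textbf{Step 1: the generating function at $q=1$.} Work with the generating function $\Theta(u)=\sum_n u^n\Theta_{e_n}$ from \Cref{thm:theta_e1_D1}, together with a second parameter $v$ for $\Theta(v)$. By \Cref{lem:theta_commutes_with_q1_eval}, applied twice, the left-hand side equals $\Theta_{e_k}\Theta_{e_l} e_1^m$ evaluated at $q=1$. The statement is then equivalent to the identity
\[ \left.\Theta(u)\Theta(v)\, e_1^m\right\rvert_{q=1} = \left.\left(\Theta(u)\Theta(v)\, e_1\right)^m\right\rvert_{q=1}, \]
after extracting $\langle u^k v^l\rangle$. It therefore suffices to prove that, at $q=1$, $\Theta(u)$ sends $G\cdot e_1$ to $\Theta(u)G\cdot\Theta(u)e_1$, at least for $G$ in the subalgebra generated by the relevant images.

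\textbf{Step 2: conjugating $e_1^\bullet$.} Rather than multiplicativity on arbitrary $G$, I would use the conjugation formula of \Cref{thm:theta_e1_D1} with $L = e_1^\bullet = d_-d_+$. This gives
\[ \Theta(u)\, e_1^\bullet\, \Theta(u)^{-1} = \Theta(d_-d_+) = d_-(1-s)d_+ = e_1^\bullet + \sum_{n\ge 2}u^{n-1}\D_n, \]
as computed in that proof. By \Cref{lem:Dgamma_q1}, at $q=1$ each $\D_n$ acts as multiplication by $\D_n\cdot 1\rvert_{q=1}=e_n$. Hence at $q=1$, $\Theta(u)e_1^\bullet\Theta(u)^{-1}$ is multiplication by the fixed symmetric function $\sum_{n\ge1}u^{n-1}e_n = \Theta(u)e_1\rvert_{q=1}$. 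Iterating, $\Theta(u)e_1^m = \Theta(u)(e_1^\bullet)^m\cdot 1 = (\Theta(u)e_1)^m\,\Theta(u)\cdot 1$, and $\Theta(u)\cdot 1=1$ by \Cref{def:theta}.

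\textbf{Step 3: the second Theta.} For $\Theta(v)$, I need the same argument with $e_1^\bullet$ replaced by multiplication by $P:=\Theta(u)e_1\rvert_{q=1}$. The cleanest route is to stay in $\A_{q,t}$: the element $\Theta_u(d_-d_+)=d_-(1-s_u)d_+$ is a multiplication operator at $q=1$. Applying the endomorphism $\Theta_v$ gives $\Theta_v\Theta_u(d_-d_+)$, which is a combination of $\D_\gamma$'s by the argument of \Cref{thm:shuffle_theta_D} (as in \Cref{cor:theta_theta_e1}). By \Cref{lem:Dgamma_q1}, this combination again acts at $q=1$ as multiplication by its value on $1$, namely $\Theta(v)\Theta(u)e_1\rvert_{q=1}$. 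Conjugation then gives
\[ \Theta(v)\Theta(u)\,e_1^m\rvert_{q=1} = \left(\Theta(v)\Theta(u)e_1\right)^m\rvert_{q=1}. \]
Since the Theta operators commute (they are $\mathbf{\Pi}$-conjugates of multiplications), the order of $u$ and $v$ does not matter. Extracting coefficients then yields the sum over $\sum k_i=k$ and $\sum l_i=l$.

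\textbf{Main obstacle.} The hard part is making the specialization rigorous. The conjugation identity holds for generic $q$, while the multiplication property holds only at $q=1$. I need the operators $\D_\gamma$ and $\Theta$ to preserve regularity at $q=1$ and to commute with specialization. For $\Theta$ this is \Cref{lem:theta_commutes_with_q1_eval}. For $\D_\gamma$ I would check it directly from \Cref{thm:dgamma-recursion}\eqref{dop:onepart}, whose formula is polynomial in $q$. With this, Steps 2 and 3 are justified by applying both sides to $\Theta(v)\Theta(u)e_1^{m-1}$ and inducting on $m$.
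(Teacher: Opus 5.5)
Your proposal is correct and follows essentially the paper's argument. Both reduce to $e_1^{m}$ via \Cref{lem:theta_commutes_with_q1_eval} and \Cref{rem:Enk}, conjugate an operator that is multiplicative at $q=1$ through $\Theta(u)$ and $\Theta(v)$ using \Cref{thm:theta_e1_D1} and \Cref{thm:shuffle_theta_D}, and apply \Cref{lem:Dgamma_q1}. The only differences are minor: you conjugate $e_1^\bullet=d_-d_+$ rather than $u\D_1$, so your operator is $e_1^\bullet$ plus a combination of $\D_\gamma$'s rather than purely such a combination, and you spell out the induction on $m$ justifying the specialization, which the paper leaves implicit.
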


\begin{proof}
    By \Cref{lem:theta_commutes_with_q1_eval} and \Cref{rem:Enk},
    \[ \left. \Theta_{e_k} \Theta_{e_l} \nabla E_{n-k-l, n-k-l} \right\rvert_{q=1} = \left. \Theta_{e_k} \Theta_{e_l} e_1^{n-k-l} \right\rvert_{q=1}. \]
    When $q=1$, the $\D_a$ operators are multiplicative by \Cref{lem:Dgamma_q1} and $\D_1$ coincides with $e_1^\bullet$, so
    \[ \left. \Theta_{e_k} \Theta_{e_l} e_1^{n-k-l} \right\rvert_{q=1} = \left. \Theta_{e_k} \Theta_{e_l} \D_1^{n-k-l} \cdot 1\right\rvert_{q=1}. \]
    Introduce the generating series \[ \mathsf{\D}(u) = \sum_{a \geq 1} u^a \D_a. \] Passing to generating series and using the proofs of \Cref{thm:theta_e1_D1,thm:shuffle_theta_D}, we obtain
    \[ \Theta(v) \Theta(u) u \D_1 = \Theta(v) \mathsf{\D}(u) \Theta(u) = \sum_{k=0}^\infty \sum_{l=0}^\infty \sum_{w \in W(0^l, 1^k)} u^{l+1} v^k \D_{\Des(0w)} \Theta(v) \Theta(u), \]
    so, when $q=1$, multiplicativity gives
    \[ \left. \Theta(v) \Theta(u) u^{n-k-l} \D_1^{n-k-l} \cdot 1 \right\rvert_{q=1} = \left( \sum_{k'=0}^\infty \sum_{l'=0}^\infty \sum_{w \in W(0^{l'}, 1^{k'})} u^{l'+1} v^{k'} \D_{\Des(0w)} \cdot 1 \right)_{q=1}^{n-k-l}. \]
    The result follows from \Cref{cor:theta_theta_e1} by taking the coefficient of $u^{n-k} v^k$.
\end{proof}

In particular, we can conclude the following.

\begin{theorem}
    \label{thm:theta_theta_Enn}
    For $n,k,l \in \mathbb{N}$ with $n > k+l$, we have
    \[ \left. \Theta_{e_k} \Theta_{e_l} \nabla E_{n-k-l, n-k-l} \right\rvert_{q=1} = \sum_{\pi \in \LD(n, n-k-l)^{\ast k, \bullet l}} t^{\area(\pi)} x^\pi. \]
\end{theorem}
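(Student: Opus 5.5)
By \Cref{thm:theta_q1_multiplicative}, the left-hand side equals
\[ \left. \sum_{\sum k_i = k} \sum_{\sum l_i = l} \prod_{i=1}^{r} \Theta_{e_{k_i}} \Theta_{e_{l_i}} e_1 \right\rvert_{q=1}, \qquad r = n-k-l, \]
and by \Cref{thm:theta_theta_e1} each factor at $q=1$ is the generating function $\sum_{\pi_i \in \LD(k_i+l_i+1)^{\ast k_i, \bullet l_i}} t^{\area(\pi_i)} x^{\pi_i}$. The product is therefore a sum over tuples $(\pi_1,\dots,\pi_r)$ of decorated labeled Dyck paths, where $\pi_i$ has size $k_i+l_i+1$, all vertical steps except the first are decorated, and the weight is $t^{\sum \area(\pi_i)} x^{\pi_1}\cdots x^{\pi_r}$. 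So the plan is to give a weight-preserving bijection between such tuples (ranging over all compositions $(k_i)$, $(l_i)$) and $\LD(n, n-k-l)^{\ast k, \bullet l}$.

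The bijection is concatenation. Given $\pi\in\LD(n,r)^{\ast k,\bullet l}$, the $r$ non-decorated steps touching the diagonal are exactly the non-decorated steps in the whole path, since $\touch(\pi)\le n-k-l$ with equality forcing every undecorated step to touch. Cut $\pi$ at the lattice points on the diagonal immediately preceding each of these $r$ steps. Each piece is a Dyck path whose first step is undecorated and touches the diagonal, and all its other vertical steps are decorated. Conversely, concatenating $\pi_1\cdots\pi_r$ gives a Dyck path of size $n$ whose undecorated steps are the $r$ first steps, each starting on the diagonal, so the touch number is $r$.

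We then check that the statistics and constraints are preserved. The area word is computed relative to the main diagonal, so it is additive under concatenation, and decorated rises are excluded in both. The monomials multiply, and the rise condition on labels (strictly increasing along consecutive vertical steps) is local to a piece, because pieces meet at a horizontal step followed by a vertical one.

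The main point to verify is decorations at the junctions. The first step of $\pi_{i+1}$ (for $i\ge1$) becomes a valley in the concatenation, but it is undecorated, so contractibility plays no role. We must also confirm that decorated valleys inside a piece stay contractible, and decorated rises stay rises. Contractibility depends only on the two preceding steps and the preceding label, all of which lie in the same piece, since a decorated valley is never the first step of a piece. Rises are likewise internal. Likewise, a step that is a contractible valley in the concatenation is never the first step of a piece, so the cut is well defined and inverse to concatenation.

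Summing over all compositions $(k_i),(l_i)$ of $k,l$ then enumerates each $\pi$ exactly once, with $k_i,l_i$ recovered as the numbers of decorated rises and valleys in the $i$-th piece. This proves \Cref{thm:theta_theta_Enn}.
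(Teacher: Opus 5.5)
Your proof is correct and follows the same route as the paper: you rewrite the left-hand side as a sum of products via \Cref{thm:theta_q1_multiplicative}, interpret each factor via \Cref{thm:theta_theta_e1}, and identify paths with touch number $n-k-l$ with concatenations of $n-k-l$ paths whose only undecorated step is the first one, using that area is additive and $x^\pi$ is multiplicative; the paper leaves the locality checks on labels, rises and contractibility implicit, and you spell them out. One small slip: a junction step preceded by $RR$ \emph{is} a contractible valley of the concatenation, so your sentence should say that a \emph{decorated} valley is never the first step of a piece, which is all the argument actually uses.
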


\begin{proof}
    By \Cref{thm:theta_q1_multiplicative}, we can rewrite the left-hand side as a sum of products of expressions of the form
    \[ \left.\Theta_{e_{k'}} \Theta_{e_{l'}} e_1 \right\rvert_{q=1} \]
    for some $k', l'$. By \Cref{thm:theta_theta_e1}, these can be interpreted as the generating series
    \[ \sum_{\pi \in \LD(k'+l'+1)^{\ast k', \bullet l'}} t^{\area(\pi)} x^\pi. \]
    A path with touch number exactly $n-k-l$ is the concatenation of $n-k-l$ paths in which only the first (vertical) step is not decorated. Since we are disregarding the dinv and the area of the larger path is the sum of the areas of the $n-k-l$ smaller paths, the result follows by multiplying the generating series.
\end{proof}

\subsection{Proof of the theorem}

The following symmetric function identity is \cite[Theorem~8.2]{DAdderioRomero2023ThetaIdentities}, rewritten in the notation of \cite{IraciNadeauVandenWyngaerd2024Smirnov}.

\begin{theorem}[{\cite[Theorem~8.2]{DAdderioRomero2023ThetaIdentities}}]
    \label{thm:full_recursion}
    For $n,k,l,j \in \mathbb{N}$ with $n > k+l$, we have
    \begin{align*}
        h_j^\perp & \Theta_{e_k} \Theta_{e_l} \nabla E_{n-k-l, n-k-l} = \sum_{r=0}^{j} \sum_{a=0}^{j} \sum_{b=0}^{j} \sum_{i=0}^{j} \qbinom{n-k-l-(j-r-a+b)-1}{i} \qbinom{n-k-l}{j-r-a+i+b} \\
        & \times q^{\binom{a-i-b}{2}} \qbinom{n-k-l-(j-r-a+i+b)}{a-i-b}  q^{\binom{r-i-b}{2}} \qbinom{n-k-l-(j-r-a+i+b)}{r-i-b} \\
        & \times \Theta_{e_{k-r}} \Theta_{e_{l-a}} \nabla E_{n-k-l-j+a+r,n-k-l-j+a+r-b}
    \end{align*}
\end{theorem}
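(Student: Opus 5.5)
The plan is to obtain the identity by translating \cite[Theorem~8.2]{DAdderioRomero2023ThetaIdentities} into the present notation, rather than by proving it from scratch. As a sanity check on the bookkeeping, I would also keep a second route in view: assembling the identity from three simpler commutation rules. Throughout, write $m \coloneqq n-k-l$ for the degree of $\nabla E_{m,m}$.

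\textbf{Step 1: align the notation.} I would first write down the statement of \cite[Theorem~8.2]{DAdderioRomero2023ThetaIdentities} exactly as it appears there. I expect it to be phrased via $\Theta_{e_k}\Theta_{e_l}\nabla E_{m,m}$, or via $\Delta$-type operators combined with $E_{m,r}$, and with possibly different summation variables. Then I would match their conventions to ours on three points:
\begin{itemize}
\item the normalization of $E_{n,k}$, which here is the one of \cite[Section~1.3]{DAdderioRomero2023ThetaIdentities}, so it should agree;
\item the sign conventions of $\Theta_f$ in \Cref{def:theta};
\item the $q$-binomial conventions.
\end{itemize}
The notation of \cite{IraciNadeauVandenWyngaerd2024Smirnov} uses the same $\Theta$ and $E$, so the translation should amount to a reparametrization of indices.

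\textbf{Step 2: reparametrize the sums.} Their theorem is presumably indexed by how many of the $j$ removed cells come from each of three places: the rise-decorations ($r$), the valley-decorations ($a$), and the undecorated part (giving the shift $b$ in the second index of $E$). The other counts are expressed through $j-r-a+b$ and a Vandermonde-type internal variable $i$. I would perform this change of variables explicitly. I would check that the exponent $q^{\binom{a-i-b}{2}}$ and the binomials $\qbinom{m-(j-r-a+i+b)}{a-i-b}$ and $\qbinom{m-(j-r-a+i+b)}{r-i-b}$ come out correctly. I would also verify that terms falling out of range vanish, using the convention that negative $q$-binomials are $0$ and that $\Theta_{e_{-1}}=0$.

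\textbf{Step 3 (independent check): build the formula from simpler rules.} The formula can also be assembled from three ingredients:
\begin{enumerate}
\item the commutation rule of $h_j^\perp$ with a single $\Theta_{e_k}$, namely $h_j^\perp\Theta_{e_k}=\sum_r \Theta_{e_{k-r}}(\cdots)h_{j-r}^\perp$ with a $\Delta$-correction, proved in \cite{DAdderioRomero2023ThetaIdentities};
\item the same rule applied again for $\Theta_{e_l}$;
\item the classical formula $h_s^\perp \nabla E_{m,m}=\sum_b (\text{$q$-binomials})\,\nabla E_{m-s,m-s-b}$.
\end{enumerate}
Composing these and collapsing the resulting double sum with the $q$-Vandermonde identity produces the inner index $i$ and the stated coefficient. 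Running this composition on small cases, for instance $j=1$, is how I would confirm the reparametrization in Step~2.

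\textbf{Main obstacle.} I expect the difficulty to be purely bookkeeping. The correction terms in the $h^\perp$–$\Theta$ commutation interact with the second index of $E$, and the $q$-powers must match exactly. If the direct translation of Step~2 becomes unclear, I would fall back on checking the identity coefficientwise for small $n,k,l,j$ in SageMath, using the code of \cite{Iraci2026Code}, to pin down each index shift.
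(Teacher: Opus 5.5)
Your Steps~1--2 match the paper: it gives no independent proof of this identity and simply imports \cite[Theorem~8.2]{DAdderioRomero2023ThetaIdentities}, rewritten in the notation of \cite{IraciNadeauVandenWyngaerd2024Smirnov}. If you pursue the optional Step~3, note that ingredient~(3) is misstated: since $\nabla E_{m,m}=\widetilde{H}_{(m)}$, one has $h_s^\perp \nabla E_{m,m}=\qbinom{m}{s}\nabla E_{m-s,m-s}$ with no sum over $b$, and the coefficients in the statement force $b\leq\min(r,a)$, so the shift $b$ and the index $i$ must come from the interaction of the two $\Theta$ operators.
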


We now prove the following result.

\begin{theorem}
    \label{thm:combinatorial_recursion}
    For $n,k,l,u \in \mathbb{N}$ with $n > k+l+u$, we have
    \[ \left.\Theta_{e_k} \Theta_{e_l} \nabla E_{n-k-l, n-k-l-u} \right\rvert_{q=1} = \sum_{\pi \in \LD(n, n-k-l-u)^{\ast k, \bullet l}} t^{\area(\pi)} x^\pi. \]  
\end{theorem}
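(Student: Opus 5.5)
We argue by induction on $u$, keeping $k$ and $l$ arbitrary. The base case $u=0$ is exactly \Cref{thm:theta_theta_Enn}. For the inductive step, we evaluate both sides at $q=1$ and compare them through the recursion of \Cref{thm:full_recursion}. Since a symmetric function is determined by its images under all $h_j^\perp$ with $j\geq 1$ together with its constant term, and here the degree is $n>0$, it suffices to check that both sides satisfy the same recursion under $h_j^\perp$, with $j\geq 1$, at $q=1$. By \Cref{lem:theta_commutes_with_q1_eval}, specialization at $q=1$ commutes with the Theta operators. Setting $q=1$ in \Cref{thm:full_recursion} turns every $q$-binomial into an ordinary binomial and every power of $q$ into $1$. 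The right-hand side of that recursion involves only terms $\Theta_{e_{k'}}\Theta_{e_{l'}}\nabla E_{n',n'-b}$ of strictly smaller total degree $n-j$. By induction on $n$, combined with the induction on $u$ (note that $b$ plays the role of $u$ in smaller degree), all of these terms are already known to equal the combinatorial generating functions over $\LD(n', n'-b)^{\ast k', \bullet l'}$.

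The combinatorial core is to prove that
\[ G_{n,k,l,u}\coloneqq \sum_{\pi \in \LD(n, n-k-l-u)^{\ast k, \bullet l}} t^{\area(\pi)} x^\pi \]
satisfies the same $q=1$ recursion under $h_j^\perp$. We would use the standard interpretation of $h_j^\perp$ on labeled path generating functions: $\<h_j^\perp G, h_\lambda\> = \<G, h_j h_\lambda\>$. This means we may assume that the labels $1,\dots,j$ are the $j$ smallest labels, each occurring at most once as a strict-increase-compatible block, so that $h_j^\perp G$ counts paths in which the $j$ smallest labels are $1$'s (placed with repetition), and then deletes them. Because labels increase up columns, each label $1$ is the bottom step of its column. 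Such a step is therefore either a step on the diagonal (a touch point, possibly decorated), a decorated valley, or the first step of a column whose predecessor column ends lower. We delete these $j$ steps and recontract the path, following the construction in \cite{IraciNadeauVandenWyngaerd2024Smirnov} in the $q=1$ setting, which yields a path of size $n-j$.

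We then classify the deleted steps. Let $r$ be the number of deleted steps that carried rise decorations, which transfer to neighbouring steps. Let $a$ be the number of deleted decorated valleys. Let $b$ record how the touch number changes. Finally, $i$ counts the coincidences in which a deletion both removes a diagonal touch and merges a decoration. With this classification, the products of binomial coefficients in \Cref{thm:full_recursion} at $q=1$ should count the ways of choosing which diagonal blocks, valleys and rises receive the label $1$. Since the area and the remaining labels are preserved, up to a shift absorbed by the touch points, the bijection would give
\[ h_j^\perp G_{n,k,l,u}|=\sum \binom{\cdot}{i}\binom{\cdot}{\cdot}\cdots G_{n-j,k-r,l-a,\cdot}. \]

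The main obstacle is this last matching. We must produce an explicit deletion and insertion bijection whose fibers have exactly the cardinalities of the four binomials, and the hardest part is accounting for the index $i$ and the constraint involving $j-r-a+i+b$. Our first attempt would be to reinterpret \Cref{thm:full_recursion} at $q=1$ through the known $q=1$ bijective proofs of the rise and valley Delta recursions. Failing that, we would exploit multiplicativity: both sides factor over the touch decomposition, as in \Cref{thm:theta_q1_multiplicative}. This reduces the problem to checking that $E_{m,m-u}|_{q=1}$ expands in $E_{m',m'}$ via $e_1$-products consistently with the decomposition of paths according to which touches are non-decorated.
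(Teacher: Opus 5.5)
Your proposal has a genuine gap in the inductive step. The base case and the $q=1$ bookkeeping are fine. The problem is that \Cref{thm:full_recursion} computes $h_j^\perp$ only of the maximal-touch objects $\Theta_{e_k}\Theta_{e_l}\nabla E_{m,m}$. It gives no formula for $h_j^\perp\Theta_{e_k}\Theta_{e_l}\nabla E_{m,m-u}$ when $u>0$. You plan to show that both sides of the target identity have the same images under every $h_j^\perp$, but on the symmetric-function side there is nothing to compare against. It is true that a positive-degree symmetric function is determined by its $h_j^\perp$-images. However, you never obtain an expression for $h_j^\perp$ of the left-hand side you want to identify, and induction on $n$ does not supply one.

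The missing idea is that the $u>0$ objects already occur on the \emph{right-hand side} of the $u=0$ recursion, indexed by $b$, and can be isolated there. Apply \Cref{thm:full_recursion} with parameters $(n+u,k+u,l+u)$ and $j=u$.
\begin{itemize}
\item Among the terms with $b=u$, the factors $\binom{\cdot}{a-i-b}$ and $\binom{\cdot}{r-i-b}$ force $a=r=u$ and $i=0$.
\item All coefficients of that surviving term are $1$, and the term is exactly $\Theta_{e_k}\Theta_{e_l}\nabla E_{n-k-l,n-k-l-u}$. The hypothesis $n+u>(k+u)+(l+u)$ is precisely your $n>k+l+u$.
\item Every other term has $b<u$, so it is known by induction on $u$.
\item The left-hand side is $h_u^\perp$ of a $u=0$ object, known by \Cref{thm:theta_theta_Enn}.
\end{itemize}
So the target is \emph{solved for}, and no determination-by-skewing argument is needed. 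Correspondingly, the combinatorial fact you need is that $h_j^\perp G_{n,k,l,0}$ equals the same binomially weighted sum of the $G_{n-j,k-r,l-a,b}$. This is a recursion out of the touch-number-$(n-k-l)$ family, not one for $h_j^\perp G_{n,k,l,u}$.

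That combinatorial matching is the second gap. You leave it open, and your set-up would not produce it.
\begin{itemize}
\item A step labeled $1$ is the bottom of its column, so it is never a rise; your index $r$ therefore counts nothing.
\item A valley labeled $1$ is contractible only when preceded by two horizontal steps.
\end{itemize}
The paper instead deletes the $j$ occurrences of the \emph{largest} label $M$. An $M$-step is the top of its column, so it can itself be a decorated rise. A decorated $M$-valley off the diagonal is forced to be followed by two horizontal steps, hence paired with a rise, and in the maximal-touch family that rise is necessarily decorated. With this choice, the quantities $b$, $i$, $r-i-b$, $a-i-b$, $j-r-a+i+b$ count configurations whose reinsertion numbers are exactly the binomials of \Cref{thm:full_recursion} at $q=1$.

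Your multiplicativity fallback is also unsupported. \Cref{thm:theta_q1_multiplicative} rests on $\nabla E_{m,m}\rvert_{q=1}=e_1^m=\D_1^m\cdot 1\rvert_{q=1}$ together with commuting $\Theta$ past $\D_1$. Nothing analogous is available for a primitive component containing non-decorated steps off the diagonal. A factorization of $\Theta_{e_k}\Theta_{e_l}\nabla E_{m,m-u}\rvert_{q=1}$ over touch points would therefore be a new theorem, not a reduction.
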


For a similar but simpler argument, see \cite[Theorem~4.8]{IraciNadeauVandenWyngaerd2024Smirnov}.

\begin{proof}
    We proceed by strong induction on $u$. The base case $u=0$ is exactly the statement of 
    \Cref{thm:theta_theta_Enn}. If $u > 0$, set $j=u$ in \Cref{thm:full_recursion}. There is, among the terms whose summation index $b$ equals the induction parameter $u$, only one term for which $E_{m,m-b}$ (for some $m$) does not vanish, namely the term with $a=r=j$ and $i=0$ (so $m=n+j$). In every other case, at least one binomial coefficient vanishes.
    
    It follows that $\Theta_{e_{k-u}} \Theta_{e_{l-u}} \nabla E_{n+u,n}$, which represents the general case after a suitable choice of $n,k,l \in \mathbb{N}$, can be written in terms of expressions of the form $\Theta_{e_{k'}} \Theta_{e_{l'}} \nabla E_{n',n'-b'}$ for appropriate $n',k',l' \in \mathbb{N}$ and $b'<u$. Each such expression has a combinatorial interpretation by the induction hypothesis. 

    It remains to check that \Cref{thm:full_recursion} admits a combinatorial interpretation that is compatible with the statement of the theorem. We can interpret the operator $h_j^\perp$ as removing exactly the $j$ occurrences of the maximal label, say $M$, in a path with touch number $n-k-l$.
    
    We interpret the indices $b, i, r, a$, and the value $j-r-a+i-b$, as follows (see \Cref{fig:recursion_indices}):
    \begin{itemize}
        \item $b$ is the number of vertical steps labeled $M$ that are either decorated valleys off the main diagonal, or decorated rises labeled $M$ such that the vertical step in the next row is off the main diagonal;
        \item $i$ is the number of decorated rises labeled $M$ such that the vertical step in the next row is a decorated valley on the main diagonal;
        \item $r$ is the number of decorated rises labeled $M$, and in particular, $r-i-b$ is the number of such rises such that the vertical step in the next row is on the main diagonal, but it is not a decorated valley;
        \item $a$ is the number of decorated valleys labeled $M$, and in particular, $a-i-b$ is the number of such valleys on the main diagonal that are not preceded by a decorated rise labeled $M$;
        \item $j-r-a+i+b$, given the other interpretations, must be the number of non-decorated steps labeled $M$, which must all be on the main diagonal since they are not decorated.
    \end{itemize}

    In each case, the combinatorial interpretation of the identity is obtained by removing the vertical step labeled $M$, the following horizontal step, and any associated decoration. For $i$ and the second contribution to $b$, we also remove the decoration in the row immediately above $M$; the first contribution to $b$ requires additional care.

    Suppose that a maximal label $M$ is assigned to a decorated valley off the main diagonal. Since $M$ is maximal, it must be a peak. It must also be followed by another horizontal step: otherwise, the next vertical step would be a decorated, hence contractible, valley (because every step off the main diagonal is decorated), and would therefore require a label larger than $M$, a contradiction. Thus, $M$ is followed by a double fall and is paired with a double rise. Since all double rises are decorated, we remove that decoration as well.
 
    To understand the combinatorial interpretation, we start from a path in
    \[ \LD(n-j, n-k-l-j+r+a-b)^{\ast k-r, \bullet l-a} \]
    and try to obtain a path in $\LD(n, n-k-l)^{\ast k, \bullet l}$ with exactly $j$ maximal labels.
    
    First, for each of the $b$ non-decorated steps off the main diagonal, we decorate it appropriately: if it is a rise, we also insert a decorated valley labeled $M$ into the corresponding double fall; if it is a valley, we also insert a decorated rise labeled $M$ in the previous row. This can be done in a unique way. After this operation, all the non-decorated steps are on the main diagonal, the size increases by $b$, as do the numbers of decorated rises and valleys.
    
    Then, among the $n-k-l-j+r+a-b$ non-decorated steps on the main diagonal, we choose $i$ of them (excluding the first). For each chosen step, we insert a decorated rise labeled $M$ in the previous row and decorate the chosen step (they must now be contractible valleys, as we inserted a new horizontal step right before them). The number of possible choices is \[ \binom{n-k-l-(j-r-a+b)-1}{i}. \]
    Next, among the $n-k-l-j+r+a-b-i$ remaining non-decorated steps on the main diagonal, excluding the first step but allowing the endpoint of the path, we choose $r-i-b$ of them and insert decorated rises labeled $M$ in the row right before them. The number of ways in which this can be done is \[ \binom{n-k-l-(j-r-a+i+b)}{r-i-b}. \]
    Afterward, among the same $n-k-l-j+r+a-b-i$ remaining non-decorated steps on the main diagonal, again excluding the first step but allowing the endpoint of the path, we choose $a-i-b$ of them and insert decorated valleys labeled $M$ right before them. These valleys are necessarily contractible since their label is maximal. The number of possible choices is \[ \binom{n-k-l-(j-r-a+i+b)}{a-i-b}. \]
    Finally, again among the $n-k-l-j+r+a-b-i$ remaining non-decorated steps on the main diagonal, we choose $j-r-a+i+b$ spots, with repetition allowed and possibly including the endpoint of the path, and insert non-decorated steps labeled $M$ right before them. The number of ways in which this can be done is \[ \binom{n-k-l}{j-r-a+i+b}. \]
    We end up with a path in $\LD(n,n-k-l)^{\ast k, \bullet l}$ with exactly $j$ maximal labels, as expected. Since $q=1$, the binomials coincide with the $q$-analogs, matching the symmetric-function identity, which we can now interpret combinatorially.
\end{proof}

\begin{figure}
    \centering
    \begin{subfigure}{0.32\textwidth}
        \centering
        \begin{tikzpicture}[scale=0.8]
            \draw[step=1.0, gray!60, ultra thin] (0,0) grid (6,6);
            \draw[red] (1,2) -- (4,5);
            \draw[ultra thick] (1,1) -- (1,3);
            \draw[ultra thick, -sharp >, sharp angle = 45] (2,4) -- (3,4);
            \draw[ultra thick, red, sharp <-, sharp angle = 45] (3,4) -- (3,5) -- (4,5);
            \draw[ultra thick] (4,5) -- (5,5);

            \node[red] at (0.5, 2.5) {$\ast$};
            \node[red] at (2.5, 4.5) {$\bullet$};
            \node[red] at (3.5, 4.5) {M};
        \end{tikzpicture}
        \caption{Patterns for $b$, valley}
        \label{subfig:decorate-rise-add-valley}
    \end{subfigure}
    \hfill
    \begin{subfigure}{0.32\textwidth}
        \centering
        \begin{tikzpicture}[scale=0.8]
            \draw[step=1.0, gray!60, ultra thin] (0,0) grid (6,6);
            \draw[gray!60] (2,0) -- (6,4);
            \draw[ultra thick] (1,2) -- (1,3);
            \draw[ultra thick, red] (1,3) -- (1,4) -- (2,4);
            \draw[ultra thick] (2,4) -- (3,4);
            \draw[ultra thick, dashed] (3,4) -- (5,4);
            \draw[ultra thick] (5,4) -- (5,5);

            \node[red] at (0.5, 3.5) {$\ast$};
            \node[red] at (4.5, 4.5) {$\bullet$};
            \node[red] at (1.5, 3.5) {M};
        \end{tikzpicture}
        \caption{Patterns for $b$, rise}
        \label{subfig:decorate-valley-add-rise}
    \end{subfigure}
    \hfill
    \begin{subfigure}{0.32\textwidth}
        \centering
        \begin{tikzpicture}[scale=0.8]
            \draw[step=1.0, gray!60, ultra thin] (0,0) grid (6,6);
            \draw[gray!60] (2,1) -- (6,5);
            \draw[ultra thick] (1,2) -- (1,3);
            \draw[ultra thick, red] (1,3) -- (1,4) -- (2,4);
            \draw[ultra thick] (2,4) -- (3,4);
            \draw[ultra thick, dashed] (3,4) -- (5,4);
            \draw[ultra thick] (5,4) -- (5,5);

            \node[red] at (0.5, 3.5) {$\ast$};
            \node[red] at (4.5, 4.5) {$\bullet$};
            \node[red] at (1.5, 3.5) {M};
        \end{tikzpicture}
        \caption{Patterns for $i$}
        \label{subfig:decorate-valley-on-diagonal-add-rise}
    \end{subfigure}

    \vspace{0.8cm}

    \begin{subfigure}{0.32\textwidth}
        \centering
        \begin{tikzpicture}[scale=0.8]
            \draw[step=1.0, gray!60, ultra thin] (0,0) grid (6,6);
            \draw[gray!60] (2,1) -- (6,5);
            \draw[ultra thick] (1,2) -- (1,3);
            \draw[ultra thick, red] (1,3) -- (1,4) -- (2,4);
            \draw[ultra thick] (2,4) -- (3,4);
            \draw[ultra thick, dashed] (3,4) -- (5,4);
            \draw[ultra thick] (5,4) -- (5,5);

            \node[red] at (0.5, 3.5) {$\ast$};
            \node[red] at (1.5, 3.5) {M};
        \end{tikzpicture}
        \caption{Patterns for $r-i-b$}
        \label{subfig:add-rise}
    \end{subfigure}
    \hfill
    \begin{subfigure}{0.32\textwidth}
        \centering
        \begin{tikzpicture}[scale=0.8]
            \draw[step=1.0, gray!60, ultra thin] (0,0) grid (6,6);
            \draw[gray!60] (1,1) -- (5,5);
            \draw[ultra thick, -sharp >, sharp angle = 45] (1,2) -- (2,2);
            \draw[ultra thick, red, sharp <-sharp >, sharp angle = 45] (2,2) -- (2,3) -- (3,3);
            \draw[ultra thick, sharp <-, sharp angle = 45] (3,3) -- (3,4);

            \node[red] at (2.5, 2.5) {M};
            \node[red] at (1.5, 2.5) {$\bullet$};
        \end{tikzpicture}
        \caption{Patterns for $a-i-b$}
        \label{subfig:add-valley}
    \end{subfigure}
    \hfill
    \begin{subfigure}{0.32\textwidth}
        \centering
        \begin{tikzpicture}[scale=0.8]
            \draw[step=1.0, gray!60, ultra thin] (0,0) grid (6,6);
            \draw[gray!60] (1,1) -- (5,5);
            \draw[ultra thick, -sharp >, sharp angle = 45] (1,2) -- (2,2);
            \draw[ultra thick, red, sharp <-sharp >, sharp angle = 45] (2,2) -- (2,3) -- (3,3);
            \draw[ultra thick, sharp <-, sharp angle = 45] (3,3) -- (3,4);

            \node[red] at (2.5, 2.5) {M};
        \end{tikzpicture}
        \caption{Patterns for $j-r-a+i+b$}
        \label{subfig:add-non-dec}
    \end{subfigure}
    
    \caption[Indices of the combinatorial interpretation]{The interpretation of the indices in the combinatorics of \Cref{thm:full_recursion}. The parts in red (segments and decorations) are removed in the transformation.}
    \label{fig:recursion_indices}
\end{figure}

The proof of \Cref{thm:theta_q1} is now straightforward.

\begin{proof}[Proof of \Cref{thm:theta_q1}]
    The result follows from \Cref{thm:combinatorial_recursion} by taking the sum over $b$, recalling that
    \[ \sum_{b=0}^{n-k-l-1} E_{n-k-l,n-k-l-b} = \sum_{b=1}^{n-k-l} E_{n-k-l,b} = e_{n-k-l}. \]
\end{proof}

It is worth noting that \Cref{thm:combinatorial_recursion} provides substantial information about the form that a dinv statistic on our objects might take. Indeed, the operations in \Cref{subfig:decorate-rise-add-valley,subfig:decorate-valley-add-rise} should not change the dinv at all, while the contributions of the other operations should be described by $q$-binomial coefficients. For \Cref{subfig:add-rise,subfig:add-valley,subfig:add-non-dec} the interpretation is relatively straightforward, even for the dinv inherited from the valley version of the Delta conjecture (disregarding the decorations on the rises), but for \Cref{subfig:decorate-valley-on-diagonal-add-rise} the behavior should be significantly more complicated and similar to that of the sdinv statistic on segmented Smirnov words (cf.\ \cite[Lemma~3.7]{IraciNadeauVandenWyngaerd2024Smirnov}), or a variant thereof.

Indeed, if we had a global dinv statistic for the $k+l=n-1$ case, we could use this combinatorial interpretation to derive one in general. This reduces the problem to a much simpler one. Although that problem is solved when the area is $0$, a further step is still needed to formulate a conjecture in general.

\section{Acknowledgments}

The authors would like to thank Anton Mellit for providing a SageMath implementation of the $\A_{q,t}$ algebra, and Giovanni Paolini for some helpful discussions about the extension of $\Theta$ to $\A_{q,t}$.

The authors used OpenAI GPT-5.6 Sol and GPT-6 Astra to assist with proofreading, LaTeX consistency, and checking and clarifying the proof of \Cref{thm:same-operators}; the authors verified all resulting changes and take full responsibility for the final content.

D'Adderio, Iraci and Pagaria are partially supported by PRIN 2022A7L229 ALTOP, and by the GNSAGA--INdAM research group.

Iraci is partially supported by the GNSAGA--INdAM research group, CUP E53C25002010001.

\bibliographystyle{amsalpha}
\bibliography{references}

% \bib, bibdiv, biblist are defined by the amsrefs package.
\begin{bibdiv}
\begin{biblist}

\bib{Achim2025Aristotle}{article}{
      author={Achim, Tudor},
      author={Best, Alex},
      author={Bietti, Alberto},
      author={others},
       title={Aristotle: {IMO}-level automated theorem proving},
        date={2025},
      eprint={2510.01346},
         url={https://arxiv.org/abs/2510.01346},
}

\bib{BenDaliBonzomDoulega2026PathOperators}{misc}{
      author={Ben~Dali, Houcine},
      author={Bonzom, Valentin},
      author={Do{\l}{\k{e}}ga, Maciej},
       title={Path operators and $(q,t)$-tau functions},
        date={2025},
         url={https://arxiv.org/abs/2506.06036},
}

\bib{BergeronGarsia1999ScienceFiction}{incollection}{
      author={Bergeron, Fran{\c{c}}ois},
      author={Garsia, Adriano~M.},
       title={Science fiction and {M}acdonald's polynomials},
        date={1999},
   booktitle={Algebraic methods and $q$-special functions},
      series={CRM Proceedings \& Lecture Notes},
      volume={22},
   publisher={American Mathematical Society},
     address={Providence, RI},
       pages={1\ndash 52},
}

\bib{BergeronGarsiaHaimanTesler1999IdentitiesPositivityConjectures}{article}{
      author={Bergeron, Fran{\c{c}}ois},
      author={Garsia, Adriano~M.},
      author={Haiman, Mark},
      author={Tesler, Glenn},
       title={Identities and positivity conjectures for some remarkable
  operators in the theory of symmetric functions},
        date={1999},
     journal={Methods and Applications of Analysis},
      volume={6},
      number={3},
       pages={363\ndash 420},
         url={https://doi.org/10.4310/MAA.1999.v6.n3.a7},
}

\bib{BergeronGarsiaSergelXin2015CompositionalShuffleConjectures}{article}{
      author={Bergeron, Fran{\c{c}}ois},
      author={Garsia, Adriano~M.},
      author={Sergel~Leven, Emily},
      author={Xin, Guoce},
       title={Compositional $(km, kn)$-shuffle conjectures},
        date={2016},
     journal={International Mathematics Research Notices},
      volume={2016},
      number={14},
       pages={4229\ndash 4270},
         url={https://doi.org/10.1093/imrn/rnv272},
}

\bib{BergeronGarsiaSergelXin2016PlethysticOperators}{article}{
      author={Bergeron, Fran{\c{c}}ois},
      author={Garsia, Adriano~M.},
      author={Sergel~Leven, Emily},
      author={Xin, Guoce},
       title={Some remarkable new plethystic operators in the theory of
  {M}acdonald polynomials},
        date={2016},
     journal={Journal of Combinatorics},
      volume={7},
      number={4},
       pages={671\ndash 714},
         url={https://doi.org/10.4310/JOC.2016.v7.n4.a6},
}

\bib{BHMPS2023ProofExtendedDelta}{article}{
      author={Blasiak, Jonah},
      author={Haiman, Mark},
      author={Morse, Jennifer},
      author={Pun, Anna},
      author={Seelinger, George~H.},
       title={A proof of the extended {D}elta conjecture},
        date={2023},
     journal={Forum of Mathematics, Pi},
      volume={11},
       pages={e6},
         url={https://doi.org/10.1017/fmp.2023.3},
}

\bib{BHMPS2023ShuffleAnyLine}{article}{
      author={Blasiak, Jonah},
      author={Haiman, Mark},
      author={Morse, Jennifer},
      author={Pun, Anna},
      author={Seelinger, George~H.},
       title={A shuffle theorem for paths under any line},
        date={2023},
     journal={Forum of Mathematics, Pi},
      volume={11},
       pages={e5},
         url={https://doi.org/10.1017/fmp.2023.4},
}

\bib{BHMPS2024LLT}{article}{
      author={Blasiak, Jonah},
      author={Haiman, Mark},
      author={Morse, Jennifer},
      author={Pun, Anna},
      author={Seelinger, George~H.},
       title={{LLT} polynomials in the {S}chiffmann algebra},
        date={2024},
     journal={Journal f{\"u}r die reine und angewandte Mathematik},
      volume={2024},
      number={811},
       pages={93\ndash 133},
         url={https://doi.org/10.1515/crelle-2024-0012},
}

\bib{BHMPS2025LoehrWarrington}{article}{
      author={Blasiak, Jonah},
      author={Haiman, Mark},
      author={Morse, Jennifer},
      author={Pun, Anna},
      author={Seelinger, George~H.},
       title={Dens, nests and the {L}oehr--{W}arrington conjecture},
        date={2025},
     journal={Journal of the American Mathematical Society},
      volume={38},
      number={4},
       pages={1049\ndash 1106},
         url={https://doi.org/10.1090/jams/1057},
}

\bib{BHMPS2025RaisingOperatorFormula}{article}{
      author={Blasiak, Jonah},
      author={Haiman, Mark},
      author={Morse, Jennifer},
      author={Pun, Anna},
      author={Seelinger, George~H.},
       title={A raising operator formula for {M}acdonald polynomials},
        date={2025},
     journal={Forum of Mathematics, Sigma},
      volume={13},
       pages={e47},
         url={https://doi.org/10.1017/fms.2025.8},
}

\bib{CarlssonMellit2018ShuffleConjecture}{article}{
      author={Carlsson, Erik},
      author={Mellit, Anton},
       title={A proof of the shuffle conjecture},
        date={2018},
     journal={Journal of the American Mathematical Society},
      volume={31},
      number={3},
       pages={661\ndash 697},
         url={https://doi.org/10.1090/jams/893},
}

\bib{DAdderioIraci2023}{article}{
      author={D'Adderio, Michele},
      author={Iraci, Alessandro},
       title={Some consequences of the valley {D}elta conjectures},
        date={2023},
     journal={Annals of Combinatorics},
      volume={27},
       pages={727\ndash 764},
         url={https://doi.org/10.1007/s00026-023-00663-1},
}

\bib{DAdderioIraciLeBorgneRomeroVandenWyngaerd2022TieredTrees}{article}{
      author={D'Adderio, Michele},
      author={Iraci, Alessandro},
      author={Le~Borgne, Yvan},
      author={Romero, Marino},
      author={Vanden~Wyngaerd, Anna},
       title={Tiered trees and {T}heta operators},
        date={2022},
     journal={International Mathematics Research Notices},
      volume={2022},
      number={20},
       pages={16231\ndash 16269},
         url={https://doi.org/10.1093/imrn/rnac258},
}

\bib{DAdderioIraciVandenWyngaerd2021ThetaOperators}{article}{
      author={D'Adderio, Michele},
      author={Iraci, Alessandro},
      author={Vanden~Wyngaerd, Anna},
       title={{T}heta operators, refined {D}elta conjectures, and
  coinvariants},
        date={2021},
     journal={Advances in Mathematics},
      volume={376},
       pages={107447},
         url={https://doi.org/10.1016/j.aim.2020.107447},
}

\bib{DAdderioMellit2022CompositionalDelta}{article}{
      author={D'Adderio, Michele},
      author={Mellit, Anton},
       title={A proof of the compositional {D}elta conjecture},
        date={2022},
     journal={Advances in Mathematics},
      volume={402},
       pages={108342},
         url={https://doi.org/10.1016/j.aim.2022.108342},
}

\bib{DAdderioRomero2023ThetaIdentities}{article}{
      author={D'Adderio, Michele},
      author={Romero, Marino},
       title={New identities for theta operators},
        date={2023},
     journal={Transactions of the American Mathematical Society},
      volume={376},
      number={8},
       pages={5875\ndash 5901},
         url={https://doi.org/10.1090/tran/8911},
}

\bib{GarsiaHaglundXinZabrocki2016PieriRules}{incollection}{
      author={Garsia, Adriano~M.},
      author={Haglund, James},
      author={Xin, Guoce},
      author={Zabrocki, Mike},
       title={Some new applications of the {S}tanley--{M}acdonald {P}ieri
  rules},
        date={2016},
   booktitle={The mathematical legacy of {R}ichard {P}. {S}tanley},
   publisher={American Mathematical Society},
     address={Providence, RI},
       pages={141\ndash 168},
}

\bib{GarsiaHaiman1993GradedRepresentationModel}{article}{
      author={Garsia, Adriano~M.},
      author={Haiman, Mark},
       title={A graded representation model for {M}acdonald's polynomials},
        date={1993},
     journal={Proceedings of the National Academy of Sciences},
      volume={90},
      number={8},
       pages={3607\ndash 3610},
         url={https://doi.org/10.1073/pnas.90.8.3607},
}

\bib{GarsiaHaimanTesler1999ExplicitPlethysticFormulas}{article}{
      author={Garsia, Adriano~M.},
      author={Haiman, Mark},
      author={Tesler, Glenn},
       title={Explicit plethystic formulas for {M}acdonald {$q,t$}-{K}ostka
  coefficients},
        date={1999},
     journal={S{\'e}minaire Lotharingien de Combinatoire},
      volume={42},
       pages={Art. B42m, 45 pp.},
         url={http://www.emis.de/journals/SLC/wpapers/s42garsia.html},
}

\bib{GillespieGorskyGriffin2025Skewing}{article}{
      author={Gillespie, Maria},
      author={Gorsky, Eugene},
      author={Griffin, Sean~T.},
       title={A combinatorial skewing formula for the {R}ise {D}elta
  {T}heorem},
        date={2025},
     journal={Combinatorial Theory},
      volume={5},
      number={3},
}

\bib{GorskyNegut2015RefinedKnotInvariants}{article}{
      author={Gorsky, Eugene},
      author={Negu\textcommabelow{t}, Andrei},
       title={Refined knot invariants and {H}ilbert schemes},
        date={2015-09},
     journal={Journal de Math{\'e}matiques Pures et Appliqu{\'e}es},
      volume={104},
      number={3},
       pages={403\ndash 435},
         url={https://doi.org/10.1016/j.matpur.2015.03.003},
}

\bib{Haglund2008Book}{book}{
      author={Haglund, James},
       title={The {$q,t$}-{C}atalan numbers and the space of diagonal
  harmonics},
      series={University Lecture Series},
   publisher={American Mathematical Society},
     address={Providence, RI},
        date={2008},
      volume={41},
        note={With an appendix on the combinatorics of Macdonald polynomials},
}

\bib{HaglundHaimanLoehrRemmelUlyanov2005ShuffleConjecture}{article}{
      author={Haglund, James},
      author={Haiman, Mark},
      author={Loehr, Nicholas~A.},
      author={Remmel, Jeffrey~B.},
      author={Ulyanov, Anatoly},
       title={A combinatorial formula for the character of the diagonal
  coinvariants},
        date={2005},
     journal={Duke Mathematical Journal},
      volume={126},
      number={2},
       pages={195\ndash 232},
         url={https://doi.org/10.1215/S0012-7094-04-12621-1},
}

\bib{HaglundMorseZabrocki2012CompositionalShuffleConjecture}{article}{
      author={Haglund, James},
      author={Morse, Jennifer},
      author={Zabrocki, Mike},
       title={A compositional shuffle conjecture specifying touch points of the
  {D}yck path},
        date={2012},
     journal={Canadian Journal of Mathematics},
      volume={64},
      number={4},
       pages={822\ndash 844},
         url={https://doi.org/10.4153/CJM-2011-078-4},
}

\bib{HaglundRemmelWilson2018DeltaConjecture}{article}{
      author={Haglund, James},
      author={Remmel, Jeffrey~B.},
      author={Wilson, Andrew~T.},
       title={The {D}elta conjecture},
        date={2018},
     journal={Transactions of the American Mathematical Society},
      volume={370},
      number={6},
       pages={4029\ndash 4057},
         url={https://doi.org/10.1090/tran/7096},
}

\bib{Haiman2001nFactorial}{article}{
      author={Haiman, Mark},
       title={Hilbert schemes, polygraphs and the {M}acdonald positivity
  conjecture},
        date={2001},
     journal={Journal of the American Mathematical Society},
      volume={14},
      number={4},
       pages={941\ndash 1006},
         url={https://doi.org/10.1090/S0894-0347-01-00373-3},
}

\bib{Haiman2002HilbertScheme}{article}{
      author={Haiman, Mark},
       title={Vanishing theorems and character formulas for the {H}ilbert
  scheme of points in the plane},
        date={2002},
     journal={Inventiones Mathematicae},
      volume={149},
      number={2},
       pages={371\ndash 407},
         url={https://doi.org/10.1007/s002220200219},
}

\bib{Iraci2026Code}{misc}{
      author={Iraci, Alessandro},
       title={Code accompanying ``{L}eaving the {H}all''},
        date={2026},
        note={GitHub repository,
  \url{https://github.com/SashaIr/leaving-the-hall}. Co-authored by Aristotle
  (Harmonic)},
}

\bib{IraciNadeauVandenWyngaerd2024Smirnov}{article}{
      author={Iraci, Alessandro},
      author={Nadeau, Philippe},
      author={Vanden~Wyngaerd, Anna},
       title={{S}mirnov words and the {D}elta conjectures},
        date={2024},
     journal={Advances in Mathematics},
      volume={452},
       pages={109793},
         url={https://doi.org/10.1016/j.aim.2024.109793},
}

\bib{IraciPagariaPaolini2026FallingStars}{article}{
      author={Iraci, Alessandro},
      author={Pagaria, Roberto},
      author={Paolini, Giovanni},
       title={Falling stars: a fall-decorated rational shuffle theorem},
        date={2026},
     journal={Forum of Mathematics, Sigma},
      volume={14},
       pages={e92},
}

\bib{Macdonald1995Book}{book}{
      author={Macdonald, Ian~G.},
       title={Symmetric functions and {H}all polynomials},
     edition={2},
      series={Oxford Mathematical Monographs},
   publisher={Clarendon Press},
     address={Oxford},
        date={1995},
        ISBN={0-19-853489-2},
        note={With contributions by A. Zelevinsky},
}

\bib{Mathlib2020}{inproceedings}{
      author={mathlib Community, The},
       title={The {L}ean mathematical library},
        date={2020-01},
   booktitle={Proceedings of the 9th acm sigplan international conference on
  certified programs and proofs (cpp '20)},
   publisher={ACM},
       pages={367\ndash 381},
}

\bib{Mellit2020SpringerFibers}{article}{
      author={Mellit, Anton},
       title={{P}oincar\'e polynomials of character varieties, {M}acdonald
  polynomials and affine {S}pringer fibers},
        date={2020-07},
        ISSN={0003-486X},
     journal={Annals of Mathematics},
      volume={192},
      number={1},
       pages={165\ndash 228},
}

\bib{Mellit2021Rational}{article}{
      author={Mellit, Anton},
       title={Toric braids and $(m,n)$-parking functions},
        date={2021-12},
        ISSN={0012-7094},
     journal={Duke Mathematical Journal},
      volume={170},
      number={18},
       pages={4123\ndash 4169},
}

\bib{Mellit2022HomologyTorusKnots}{article}{
      author={Mellit, Anton},
       title={Homology of torus knots},
        date={2022-04},
        ISSN={1465-3060},
     journal={Geometry \& Topology},
      volume={26},
      number={1},
       pages={47\ndash 70},
}

\bib{Negut2014ShuffleAlgebra}{article}{
      author={Negu\textcommabelow{t}, Andrei},
       title={The {S}huffle {A}lgebra {R}evisited},
        date={2014-08},
        ISSN={1073-7928},
     journal={International Mathematics Research Notices},
      volume={2014},
      number={22},
       pages={6242\ndash 6275},
}

\bib{Romero2022}{article}{
      author={Romero, Marino},
       title={A proof of the theta operator conjecture},
        date={2022-01},
        ISSN={0097-3165},
     journal={Journal of Combinatorial Theory, Series A},
      volume={185},
       pages={105535},
}

\bib{SchiffmannVasserot2011EllipticHallAlgebra}{article}{
      author={Schiffmann, Olivier},
      author={Vasserot, Eric},
       title={The elliptic {H}all algebra, {C}herednik {H}ecke algebras and
  {M}acdonald polynomials},
        date={2011},
        ISSN={0010-437X},
     journal={Compositio Mathematica},
      volume={147},
      number={1},
       pages={188\ndash 234},
}

\end{biblist}
\end{bibdiv}

\end{document}